\pgfplotsset{major grid style={dashed}}
\pgfplotsset{compat=1.18}
\definecolor{hanblue}{rgb}{0.27, 0.42, 0.81}
\definecolor{mordantred19}{rgb}{0.68, 0.05, 0.0}
\definecolor{red}{rgb}{0.68, 0.05, 0.0}
\definecolor{green}{rgb}{0.0, 0.5, 0.0}
\DeclareMathOperator*{\argmin}{arg\,min}
\newcommand{\abs}[1]{|#1|}
\DeclareMathOperator{\supp}{supp}
\newcommand{\R}{\mathbb{R}}
\newcommand{\Z}{\mathbb{Z}}
\newcommand{\N}{\mathbb{N}}
\renewcommand{\S}{\mathbb{S}}
\newcommand{\1}{\mathds 1}
\renewcommand{\L}{\mathcal{L}}
\newcommand{\M}{\mathcal{M}}
\newcommand{\Ha}{\mathcal{H}}
\newcommand{\TV}{\operatorname{TV}}
\newcommand{\BV}{\operatorname{BV}}
\newcommand{\Per}{\operatorname{P}}
\newcommand{\GPer}{\mathbb{P}}
\newcommand{\Ext}{\operatorname{Ext}}
\renewcommand{\div}{\operatorname{div}}
\DeclareMathOperator*{\essinf}{ess\,inf}
\newcommand{\dd}{\, \mathrm{d}}
\DeclareMathOperator{\esssupp}{ess\,supp}
\newcommand{\res}{\mathop{\hbox{\vrule height 7pt width .5pt depth 0pt
\vrule height .5pt width 6pt depth 0pt}}\nolimits}
\newcommand{\mres}{\mathbin{\vrule height 1.4ex depth 0pt width
0.13ex\vrule height 0.13ex depth 0pt width 1.0ex}}
\newcommand\restr[2]{{
  \left.\kern-\nulldelimiterspace
  #1 
  \vphantom{\big|} 
  \right|_{#2}
  }}
\theoremstyle{plain}
\newtheorem{thm}{Theorem}
\numberwithin{thm}{section}
\newtheorem{lemma}[thm]{Lemma}
\newtheorem{prop}[thm]{Proposition}
\newtheorem{cor}[thm]{Corollary}
\theoremstyle{definition}
\newtheorem{defi}[thm]{Definition}
\newenvironment{rem}
  {\pushQED{\qed}\remarkx}
  {\popQED\endremarkx}
\newenvironment{example}
  {\pushQED{\qed}\examplex}
  {\popQED\endexamplex}
\theoremstyle{remark}
\renewcommand{\epsilon}{\varepsilon}
\newcommand{\Qcal}{\mathcal{Q}}
\newcommand{\Pcal}{\mathcal{P}}
\newcommand{\Scal}{\mathcal{S}}
\newcommand{\Acal}{\mathcal{A}}
\newcommand{\Bcal}{\mathcal{B}}
\newcommand{\Hcal}{\mathcal{H}}
\newcommand{\Lcal}{\mathcal{L}}
\newcommand{\Mcal}{\mathcal{M}}
\newcommand{\Fcal}{\mathcal{F}}
\DeclareMathOperator{\dist}{dist}
\DeclareMathOperator{\Div}{div}
\renewcommand{\phi}{\varphi}
\renewcommand{\leq}{\leqslant}
\renewcommand{\geq}{\geqslant}
\title{Nonlocal perimeters and variations: Extremality and decomposability for finite and infinite horizons\footnotetext{2020 Mathematics Subject Classification: 46A55, 35R11, 26B30, 49Q20.}}
\author{ Marcello Carioni\thanks{Department of Applied Mathematics, University of Twente, 7500AE Enschede, The Netherlands \\
(\texttt{m.c.carioni@utwente.nl}, \texttt{l.delgrande{@}utwente.nl}, \texttt{jose.iglesias@utwente.nl})},\ \ Leonardo Del Grande\footnotemark[1],\ \ José A. Iglesias\footnotemark[1],\ \ Hidde Sch{\"o}nberger\thanks{Research Institute in Mathematics and Physics, Universit\'{e} catholique de Louvain, Chemin du Cyclotron 2, 1348 Louvain-la-Neuve, Belgium (\texttt{hidde.schonberger@uclouvain.be})} }
\date{}
\begin{document}

\maketitle

\begin{abstract}
\noindent
We analyze the extremality and decomposability properties with respect to two types of nonlocal perimeters available in the literature, the Gagliardo perimeter based on the eponymous seminorms and the nonlocal distributional Caccioppoli perimeter, both with finite and infinite interaction ranges. 
A nonlocal notion of indecomposability associated to these perimeters is introduced, and we prove that in both cases it can be characterized solely in terms of the interaction range or horizon $\varepsilon$. Utilizing this, we show that it is possible to uniquely decompose a set into its $\varepsilon$-connected components, establishing a nonlocal analogue of the decomposition theorem of Ambrosio, Caselles, Masnou and Morel. 
Moreover, the extreme points of the balls induced by the Gagliardo and nonlocal total variation seminorm are identified, which naturally correspond to the two nonlocal perimeters. Surprisingly, while the extreme points in the former case are normalized indicator functions of $\varepsilon$-simple sets, akin to the classical TV-ball, in the latter case they are instead obtained from a nonlocal transformation applied to the extreme points of the TV-ball. Finally, we explore the nonlocal-to-local transition via a $\Gamma$-limit as $\varepsilon \rightarrow 0$ for both perimeters, recovering the classical Caccioppoli perimeter.
\end{abstract}

\vskip .3truecm \noindent \textbf{Keywords.}
Gagliardo-Slobodeckij spaces, fractional variation, nonlocal perimeters, decomposability, extreme points, $\Gamma$-convergence.	

\section{Introduction}

In recent years, nonlocal functionals have been extensively studied as a counterpart to local ones, offering a powerful framework for capturing interactions that extend over long distances without relying on gradient-based quantities. This distinctive feature has made nonlocal functionals particularly appealing in contexts where traditional local models may fall short. As a result, a wide range of nonlocal analogs to classical concepts have been developed and thoroughly analyzed, finding application across various domains such as image processing \cite{antil2017spectral, bartels2020parameter, davoli2023structural, iglesias2022convergence, AntDiaJinSch24, bessas2025non}, where they enhance edge detection and denoising techniques, data science \cite{BunTriMur23,BunKer24, antil2020fractional}, where capturing global relationships between data points is critical, and in mechanical models \cite{mengesha2015variational, bellido2023non, bellido2015hyperelasticity}, which benefit from their ability to describe phenomena involving long-range interactions. Due to the plethora of different nonlocal functionals that are available in the literature, we dedicate the next part of the introduction to describe those that are relevant for the present paper.

\textbf{Nonlocal Sobolev spaces, gradients, perimeters and BV functions.} The classical concept of fractional Sobolev spaces \cite{DinPalVal12} (also known as Gagliardo–Slobodeckij spaces) denoted by $W^{\alpha,p}(\R^d)$ for $\alpha \in (0,1)$ and $1 \leqslant p < \infty$, is defined as the space of $L^p(\R^d)$ functions for which the so-called Gagliardo–Slobodeckij (or fractional) seminorm
\[|u|^p_{W^{\alpha,p}(\R^d)} := \int_{\R^d} \int_{\R^d} \frac{|u(x)-u(y)|^p}{|x-y|^{d+\alpha p}} \dd x \dd y\]
is finite. This notion provides a foundation for the definition of the fractional perimeter, given by
\begin{align}\label{eq:fracperintro}
    \GPer_\alpha(E) := \big|\1_E\big|_{W^{\alpha,1}(\R^d)},
\end{align}
that has been extensively investigated as a nonlocal version of the classical Caccioppoli perimeter. Celebrated results on nonlocal minimal surfaces \cite{CafRoqSav10, CesDipNovVal18} and nonlocal free-discontinuity problems \cite{frank2008non,figalli2015isoperimetry} have extended well-known classical results, shedding light on the peculiar properties of fractional perimeters.
Moreover, in recent years, the concept of fractional perimeter described above, along with that of fractional Sobolev spaces, has been further generalized by exploring kernels with properties that differ from the standard fractional kernel $\rho_\alpha(x-y) = |x-y|^{-d+1-\alpha}$. This has led to the development of a more general notion of the Gagliardo-Slobodeckij space, $W^{\rho,p}(\R^d)$, and nonlocal perimeters, named Gagliardo perimeters, whose properties heavily depend on the chosen kernel $\rho$ \cite{CesNov17, CesNov18, berendsen2019asymptotic}. 

The fractional Sobolev spaces $W^{\alpha,p}(\R^d)$ do not directly arise from the definition of a fractional gradient. Indeed, if one considers the Riesz fractional gradient introduced in its seminal form in \cite{Hor59} and revitalized in \cite{shieh2015new,shieh2018new}, that is,
\[D^\alpha u(x) := c_{d, \alpha} \int_{\R^d} \frac{u(y)-u(x)}{\abs{y-x}^{d+\alpha}}\frac{y-x}{\abs{y-x}}\dd y \quad \text { with } \quad c_{d, \alpha}:=2^\alpha \pi^{-d / 2} \frac{\Gamma((d+\alpha+1) / 2)}{\Gamma((1-\alpha) / 2)},
\]
one can observe that the relation between $\|D^\alpha u\|_{L^p(\R^d)}$ and $|u|_{W^{\alpha,p}(\R^d)}$, and thus the corresponding embeddings, are far from straightforward. In fact, the natural spaces associated to the fractional gradient are the Bessel potential spaces $H^{\alpha,p}(\R^d)$ which are different from $W^{\alpha,p}(\R^d)$ when $p \not =2$, cf.~\cite{shieh2015new}. The fractional gradients have been successfully used to study nonlocal variational models \cite{shieh2015new,shieh2018new}, with applications in hyperelasticity \cite{bellido2023non} and image processing \cite{AntDiaJinSch24}.

Moreover, while the Gagliardo-Slobodeckij spaces are not suitable for defining a nonlocal notion of bounded variation (BV) function, this shortcoming has been addressed in \cite{ComSte19, ComSte23, brue2022distributional}, where a fractional BV space, denoted by $\BV^\alpha(\R^d)$, is defined through the fractional divergence operator 
\[
\div^\alpha p(x):= c_{d,\alpha}\int_{\R^d} \frac{p(y)-p(x)}{\abs{y-x}^{d+\alpha}}\cdot\frac{y-x}{\abs{y-x}} \dd y
\]
as all $L^1(\R^d)$ functions such that 
\begin{align}\label{eq:disBVintro}
\TV_\alpha (u):=\sup\left\{ \int_{\R^d} u \div^\alpha p \dd x \,\middle\vert\, p \in C_c^{\infty}(\R^d;\R^d),\, \|p\|_{L^{\infty}(\R^d;\R^d)}\leqslant 1\right\}<\infty.
\end{align}
Associated to the fractional variation is a notion of fractional Caccioppoli perimeter defined as
\begin{align}\label{eq:distrperintro}
\Per_\alpha(E) = \TV_\alpha(\1_E),  
\end{align}
for all measurable sets $E\subset \R^d$. The properties of $\Per_\alpha(E)$ and the relation to the more classical $\GPer_\alpha(E)$ are nowadays not known in full generality and remain active research directions. We refer to \cite{ComSte19,ComSte23, brue2022distributional} for the known properties and relations between different types of perimeters.

\textbf{Geometry of the total variation ball: extremality and decomposability.} The geometry of the total variation ball for BV functions has been carefully studied in the last years, shedding light on properties of variational problems involving total variation (TV) norms and perimeter penalization. 
Building on the foundational works by Federer \cite{federer2014geometric} and Fleming \cite{fleming60, Fle57}, the paper \cite{AmbCasMasMor01} deeply analyzes geometric properties of the TV-ball, pointing out the importance of indecomposable sets of finite perimeters to derive a natural notion of measure-theoretical connectedness and to characterize the extreme points of the total variation unit ball $\mathcal{B}_{\TV}$ (see also \cite{BonGus22, bredies2020sparsity}) as follows:
\begin{align}\label{eq:extbvintro}
\Ext(\mathcal{B}_{\TV}) = \left\{\pm \frac{\1_E}{\Per(E)} \,\middle|\, \Per(E) < \infty, \ E \ \text{is simple}\right\}.    
\end{align}
Moreover, further advances have been made in the study of the faces of the total variation unit ball in \cite{duval2022faces}. These results have significantly advanced the understanding of the geometry of the total variation ball for BV functions, paving the way for the development of a sparsity theory for TV-regularized optimization problems. Indeed, recent research trends have demonstrated that knowledge of extreme points can be leveraged to establish representer theorems \cite{boyer2019representer, bredies2020sparsity}, analyze the stability of sparsity under perturbations \cite{de2024exact, carioni2023general}, and, ultimately, design sparse optimization algorithms \cite{de2023towards, bredies2024asymptotic, cristinelli2023conditional}.  
Despite extensions to higher-order TV energies \cite{ambrosio2023functions, IglWal22} and vector-valued cases \cite{bredies2024extremal}, such questions remain essentially unexplored for the respective nonlocal variants. This paper aims to fill this gap by analyzing the geometry of the nonlocal functionals described above, highlighting the role of decomposability for the various notions of sets of finite nonlocal perimeter. We summarize here the results obtained in this paper. 

\textbf{Roadmap of the paper and main results.} In Section \ref{sec:nonlocalper} we start by considering Gagliardo-Slobodeckij-type spaces $W^{\rho,1}(\R^d)$ defined for a general kernel $\rho$ and their associated Gagliardo perimeter $\GPer_\rho(E) = |\1_E|_{W^{\rho,1}(\R^d)}$. In particular,  we focus on finite horizon settings, where the kernel $\rho(\cdot) = \rho_\varepsilon(\cdot)$ is supported on a ball of radius $\varepsilon$ and the associated Gagliardo $\GPer_\varepsilon$-perimeter. We demonstrate that a natural concept of $\varepsilon$-decomposability emerges, which can be defined through partitions with measure-theoretic distances greater or equal than $\varepsilon$. Building on this framework, we generalize the decomposition theorem established in \cite[Theorem 1]{AmbCasMasMor01} in the case of the sets of finite perimeters (see also \cite[4.2.25]{federer2014geometric} and \cite{kirchheim1998lipschitz} for similar statements). Specifically, we prove that any set with finite 
$\GPer_\varepsilon$-perimeter can be uniquely decomposed into a countable collection of its $\varepsilon$-connected components, that can be understood as its maximal $\varepsilon$-indecomposable subsets. Leveraging a recently established nonlocal isoperimetric inequality \cite{CesNov17}, we further provide a lower bound on the size of the 
$\varepsilon$-connected components of minimizers in variational problems involving the $\GPer_\varepsilon$-perimeter.
In Section \ref{sec:decfractional} we briefly show how the notion of indecomposability in the fractional case becomes meaningless. We prove that every set of finite fractional perimeter is indeed indecomposable, which can be seen in the previous framework as the case where $\varepsilon$ becomes $\infty$ (infinite horizon). Then, in Section \ref{sec:extepsfrac} we characterize the extreme points of the unit ball of the Gagliardo-Slobodeckij-type seminorm $|\cdot|_{W^{\rho_{\varepsilon}, 1}\left(\mathbb{R}^d\right)}$ and the fractional seminorm $|\cdot|_{W^{\alpha, 1}\left(\mathbb{R}^d\right)}$. In the former case, we prove that
\[
\Ext(\Bcal_{W^{\rho_\epsilon,1}})=\left\{ \pm \frac{\1_E}{\GPer_\epsilon(E)} \,\middle|\, \GPer_\epsilon(E) < \infty, \ E \ \text{is $\epsilon$-simple}\right\},
\]
where $\epsilon$-simple means that $E$ is $\epsilon$-indecomposable and $E^c$ has no $\epsilon$-connected component with finite measure. In the fractional case the extreme points are also indicator functions, however, in contrast to the extreme points of the classical TV ball \eqref{eq:extbvintro}, a notion of fractional indecomposability on $E$ is not required as pointed out above.

To round off the case of Gagliardo-type seminorms, in Section \ref{sec:gammaeps} we show how the notion of $\varepsilon$-indecomposability behaves when $\varepsilon \rightarrow 0$, i.e. when we localize the $\GPer_\varepsilon$-perimeter, and when $\varepsilon \rightarrow +\infty$, i.e. when we transition from finite horizons to infinite horizons. In order to derive a formal statement we consider $\GPer_\epsilon^c$, the so-called connected $\GPer_\varepsilon$-perimeter, as introduced in \cite{dayrens2022connected} for sets of finite perimeter. Under mild assumptions on the kernel $\rho_\varepsilon$ we prove that for a suitable dimensional constant $K_d$ it holds that 
    \[
    \Gamma\text{-}\lim_{\epsilon \to 0} \GPer_\epsilon^c = K_d\Per.
    \]
By choosing as $\rho_\varepsilon$ a truncated kernel that behaves asymptotically as the fractional kernel it also holds that 
   \[
    \Gamma\text{-}\lim_{\epsilon \to \infty} \GPer^c_\epsilon = \GPer_\alpha.
    \]
    
Turning to the distributional setting, in Section \ref{sec:distrBV} we extend the notion of fractional BV-functions from \eqref{eq:disBVintro} to more general radial kernels $\rho$ and study the associated function spaces $\BV^\rho(\R^d)$. If the kernel $\rho$ is compactly supported and under further suitable assumptions (see Section \ref{sec:distrBV}), we can exploit the recent results from \cite{BelMorSch24} to prove general properties for $\BV^\rho(\R^d)$, such as Poincar\'{e} inequalities, compactness results and the existence of an isomorphism $\mathcal{P}_\rho : \BV(\R^d) \rightarrow \BV^\rho(\R^d)$ that turns classical derivatives into their nonlocal counterpart. The corresponding notion of Caccioppoli perimeter $\Per_\rho$ and its decomposability is considered in Section \ref{sec:distrDec}, in which we are able to prove an analogous characterization of decomposability to the Gagliardo setting, see Theorem~\ref{thm:epsdecomp}, albeit with the additional assumption that the sets possess continuous unit normals. This is needed to ensure the normalized nonlocal gradient asymptotically aligns with the unit normal near the boundary. In Section \ref{sec:distrExt}, the extreme points of the unit ball $\Bcal_{\TV_\rho}$ are characterized, which are surprisingly not related to a nonlocal notion of decomposability, nor are they given by indicator functions. Instead, they are related to the extreme points of the unit ball in $\BV(\R^d)$ via the isomorphism $\Pcal_\rho$, that is,
\begin{align*}
    \Ext(\Bcal_{\TV_\rho})&=\left\{\Pcal_\rho\left(\frac{\pm \1_E}{\Per(E)}\right) \,\middle|\,   \Per(E)<\infty, \ E\ \text{is simple} \right\}\\
    &=\left\{u \in \Bcal_{\TV_\rho} \,\middle|\, D_\rho u = \pm\frac{\nu_E}{\Per(E)}\,\Hcal^{d-1}\mres{\partial^*E}, \ \Per(E)<\infty, \   E \ \text{is simple} \right\},
\end{align*}
with $\partial^\ast E$ the reduced boundary of $E$, and $\nu_E$ the generalized inner unit normal. While these extreme points are not indicator functions and will generally not have compact support, their nonlocal variation measure $D_\rho u$ is supported on the $(d-1)$-dimensional boundary of a simple set. A similar characterization also holds in the fractional case, where compact support of the kernel is missing. In fact, in that case the operator $\Pcal_\rho$ is simply the fractional Laplacian $(-\Delta)^{\frac{1-\alpha}{2}}$ and the extreme points can be exactly given in the one-dimensional case, cf.~Remark~\ref{rem:ext1d}\,(ii). 

Finally, Section \ref{sec:gammadistributional} complements the localization of the Gagliardo perimeter, by addressing the localization, in the sense of $\Gamma$-convergence, of the functionals $\TV_\rho$ and $\Per_\rho$ as the interaction range of the kernel vanishes. This extends the localization for nonlocal gradients in the Sobolev setting from \cite{cueto2024gamma} to the case of $\BV^\rho(\R^d)$; we also mention the results in \cite{MenSpe15}, which cover localization for functionals involving closely related nonlocal gradients but without an associated compactness statement, and the results in \cite{ComSte23} that contain localization results for the fractional variation and perimeter by instead letting the fractional parameter $\alpha$ tend to 1. As an application of our result, we are able to deduce that the minimizers of an isoperimetric problem involving $\Per_\rho$ must converge to balls as the interaction vanishes. This is noteworthy since it is currently unknown whether balls are the solution of the isoperimetric problem for the nonlocal or fractional Caccioppoli perimeter $\Per_\rho$ and $\Per_\alpha$. 
 
\textbf{Summary of the notations.} Due to the variety of different notions of nonlocal energies, we summarize here the main notations used in the paper.

\begin{itemize}
    \item $W^{\alpha,p}(\R^d)$: Gagliardo-Slobodeckij space for $1\leqslant p < \infty$ and $0<\alpha<1$.
    \item $W^{\rho,p}(\R^d)$: Gagliardo-Slobodeckij-type space defined for a general kernel $\rho$.
    \item $\TV_\rho$: Nonlocal total variation for a general kernel $\rho$.
    \item $\TV_\alpha$: $\TV_\rho$ when $\rho(\cdot) =  c_{d,\alpha}|\cdot|^{-d+1-\alpha}$, cf.~\eqref{eq:disBVintro}.
    \item $\BV^\rho(\R^d)$: Functions of nonlocal 
 bounded variations for a general kernel $\rho$.
    \item $\BV^\alpha(\R^d)$: $\BV^\rho(\R^d)$ when $\rho(\cdot) =  c_{d,\alpha}|\cdot|^{-d+1-\alpha}$.
    \item $\Per(E)$: Caccioppoli perimeter of $E \subset \R^d$.
    \item $\mathbb{P}_\rho(E)=|\1_E|_{W^{\rho,1}(\R^d)}$: Gagliardo perimeter of $E$ defined for a general kernel $\rho$.
    \item $\mathbb{P}_\varepsilon(E)$: $\mathbb{P}_\rho(E)$, when $\rho(\cdot) = \rho_\varepsilon(\cdot)$ is a general kernel supported in $\overline{B_\varepsilon(0)}$.
    \item $\mathbb{P}_\alpha(E)$: $\mathbb{P}_\rho(E)$ when $\rho(\cdot) = \rho_\alpha(\cdot)= |\cdot|^{-d+1-\alpha}$ is the fractional kernel, cf. \eqref{eq:fracperintro}.
    \item $\Per_\rho(E)$:  Nonlocal Caccioppoli perimeter of $E$ defined for a general kernel $\rho$.
    \item $\Per_\alpha(E)$: $\Per_\rho(E)$ when $\rho(\cdot) =  c_{d,\alpha}|\cdot|^{-d+1-\alpha}$, cf.~\eqref{eq:distrperintro}.
    \item $\Per_\varepsilon(E)$: $\Per_\rho(E)$ when $\rho(\cdot) = \rho_\varepsilon(\cdot)$ is a general kernel supported in $\overline{B_\varepsilon(0)}$.

\end{itemize}

\section{Extremality and decomposability with respect to Gagliardo perimeters}\label{sec:nonlocalper}
In this first section, we will consider Gagliardo-type seminorms with a general measurable interaction kernel $\rho : \R^d \rightarrow [0,\infty]$,
\begin{equation}
    |u|_{W^{\rho,1}(\R^d)}:= \int_{\R^d} \int_{\R^d} \frac{|u(x)-u(y)|}{|x-y|}\rho(x-y) \dd x \dd y
\end{equation}
and their associated Gagliardo perimeter $\GPer_\rho(E) = |\1_E|_{W^{\rho,1}(\R^d)}$. The following coarea formula (see \cite{Vis91}, \cite[Proposition 2.3]{CesNov17}) is valid for all $W^{\rho,1}(\R^d)$ seminorms:
\begin{equation}
\label{eq:Wrho1coarea}
|u|_{W^{\rho,1}(\R^d)} = \int_{-\infty}^{+\infty}  \GPer_{\rho} \big(\big\{ u > t \big\}\big) \dd t.
\end{equation}
We also remind the reader that by choosing $\rho(\cdot) = \rho_\alpha(\cdot)=|\cdot|^{-(d+\alpha-1)}$, one recovers the classical Gagliardo space 
$W^{\alpha,1}(\R^d)$ and the classical fractional perimeter $\GPer_\alpha(E)$, and refer to \cite{DinPalVal12, mazon2019nonlocal, Val13} for more details about the properties of such seminorms and perimeters.

We will mostly focus on the decomposability for interactions with a finite horizon, leaving the fractional Sobolev space case to the shorter Section \ref{sec:decfractional}. This decision is motivated by the fact that this is the case where nontrivial decomposability properties can be observed.  
To model finite horizon interactions we consider $\rho_{\varepsilon}$ with essential support given by $\overline{B_{\epsilon}(0)}$
and we define the corresponding $\GPer_\varepsilon$-perimeter as 
\begin{equation}
     \GPer_{\varepsilon} (E) := \big|\1_E\big|_{W^{\rho_{\varepsilon},1}(\R^d)}= 2\int_{E} \int_{E^c} \frac{1}{|x-y|}\rho_{\varepsilon}(x-y) \dd x \dd y.
\end{equation}
We will heavily use the so-called two-point gradient of a function $f:\R^d\to\R$, as introduced in \cite{DuGunLehZho13} for general interactions and similar to the notions used in \cite{MazSch18, AntDiaJinSch24} for the fractional case
\begin{equation}\label{eq:2pnonlocal}
    d_{\varepsilon} f:\R^d \times \R^d \to \R, \quad (d_{\varepsilon} f)(x,y):=\frac{f(x)-f(y)}{|x-y|}\rho_\varepsilon(x-y),
\end{equation}
\begin{equation}\label{eq:2pg}
    d_{\alpha} f:\R^d \times \R^d \to \R, \quad (d_{\alpha} f)(x,y):=\frac{f(x)-f(y)}{|x-y|^{d+\alpha}}.
\end{equation}

\subsection{Decomposability for finite horizon Gagliardo perimeters}\label{sec:finitehorizon}
A notion of decomposability for sets of finite Gagliardo perimeter can be given similarly to the classical notion of decomposability for Caccioppoli sets of finite perimeter \cite{AmbCasMasMor01}. Since we consider $\varepsilon$-horizon perimeters with kernels that are supported precisely in $\overline{B_\varepsilon(0)}$, our definition will depend on $\varepsilon$. Moreover, as will appear clear from Proposition \ref{prop:charepsilon}, such a notion depends only on $\varepsilon$ and not on the specific choice of the kernel $\rho_\varepsilon$.

\begin{defi}[$\varepsilon$-decomposability]\label{def:epsdec}
    We say that a set $E$ of finite $\GPer_\varepsilon$-perimeter is $\epsilon$-\emph{decomposable} if there exists a partition $\{E_1,E_2\}$ of $E$ into sets of finite $\GPer_\varepsilon$-perimeter such that $|E_1|,|E_2|>0$ and $ \GPer_{\varepsilon}(E)= \GPer_{\varepsilon}(E_1)+ \GPer_{\varepsilon}(E_2)$.
    On the contrary, we say that $E$ is $\epsilon$-\emph{indecomposable} if it is not $\epsilon$-decomposable.
\end{defi}

We now prove several properties of $\varepsilon$-indecomposable sets. We start with a characterization that will be fundamental for the rest of this section, linking the $\varepsilon$-decomposability of $E$ with the mutual distance of the supports of the Lebesgue measure restricted to the sets that partition $E$. We recall that for a nonnegative measure $\mu$ on $\R^d$, its support, denoted as $\supp \mu$,   is defined as the smallest closed set $F$ such that $\mu(\R^d \setminus F) = 0$, which directly implies
\[\supp \mu = \left\{ x \in \R^d \,\middle|\, \mu\big(B_{r}(x)\big) > 0 \text{ for all } r > 0\right\}.\]

\begin{prop}\label{prop:charepsilon}
    A set $E\subset \R^d$ is $\epsilon$-decomposable as in Definition $\ref{def:epsdec}$ if and only if there exists a partition $\{E_1,E_2\}$ of $E$ such that $|E_1|,|E_2|>0$ and
    \begin{equation}\label{eq:distance}
        \dist^e(E_1,E_2) := \inf _{\substack{x \in \supp(\L^d \mres E_1) \\ y \in \supp(\L^d \mres E_2)}}|x-y| \geqslant \varepsilon.
    \end{equation}
\end{prop}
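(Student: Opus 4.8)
The plan is to prove both implications by exploiting the fact that the kernel $\rho_\varepsilon$ is supported exactly on $\overline{B_\varepsilon(0)}$, so that the two-point gradient $d_\varepsilon \1_F$ at $(x,y)$ vanishes whenever $|x-y| > \varepsilon$, and also whenever $x,y$ lie in the essential interior or exterior of $F$. The key observation is the following pointwise additivity identity: if $\{E_1, E_2\}$ is a partition of $E$, then for a.e.\ $(x,y)$,
\begin{equation}
\big|\1_E(x) - \1_E(y)\big| \leqslant \big|\1_{E_1}(x) - \1_{E_1}(y)\big| + \big|\1_{E_2}(x) - \1_{E_2}(y)\big|,
\end{equation}
with equality unless both $x \in E_1, y\in E_2$ (or vice versa). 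Integrating against $\rho_\varepsilon(x-y)/|x-y|$ gives $\GPer_\varepsilon(E) \leqslant \GPer_\varepsilon(E_1) + \GPer_\varepsilon(E_2)$ always, and the deficit equals $2\int_{E_1}\int_{E_2} |x-y|^{-1}\rho_\varepsilon(x-y)\dd x\dd y$. Hence $\GPer_\varepsilon(E) = \GPer_\varepsilon(E_1) + \GPer_\varepsilon(E_2)$ if and only if this double integral vanishes, i.e.\ $\rho_\varepsilon(x-y) = 0$ for a.e.\ $(x,y) \in E_1 \times E_2$, which (since $\rho_\varepsilon > 0$ a.e.\ on $B_\varepsilon(0)$) means $|x-y| \geqslant \varepsilon$ for a.e.\ such pair.

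The remaining step is to upgrade this almost-everywhere separation to the topological separation of supports in~\eqref{eq:distance}. For this I would first note that if $|E_1|, |E_2| > 0$, then $\supp(\L^d\mres E_i)$ is nonempty, and by the characterization $\supp(\L^d\mres E_i) = \{x : |E_i \cap B_r(x)| > 0 \text{ for all } r>0\}$, the measure of the symmetric differences with $E_i$ is controlled in the sense that $\L^d\big(E_i \setminus \supp(\L^d\mres E_i)\big) = 0$. Thus $\big|\{(x,y) \in E_1\times E_2 : |x-y| < \varepsilon\}\big| = 0$ is equivalent to $\big|\{(x,y) \in S_1 \times S_2 : |x-y| < \varepsilon\}\big| = 0$ where $S_i := \supp(\L^d \mres E_i)$. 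Now if there were points $x_0 \in S_1$, $y_0 \in S_2$ with $|x_0 - y_0| < \varepsilon$, choose $r$ small enough that $|x-y| < \varepsilon$ for all $(x,y)\in B_r(x_0)\times B_r(y_0)$; by definition of support both $|E_1 \cap B_r(x_0)| > 0$ and $|E_2 \cap B_r(y_0)| > 0$, producing a positive-measure set of close pairs, a contradiction. This shows the deficit condition forces $\dist^e(E_1, E_2) \geqslant \varepsilon$; conversely, if $\dist^e(E_1,E_2)\geqslant\varepsilon$ then trivially $|x-y|\geqslant\varepsilon$ for a.e.\ $(x,y)\in E_1\times E_2$ (as $E_i \subset S_i$ up to null sets), so the deficit vanishes.

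Finally I must check the bookkeeping that a partition realizing~\eqref{eq:distance} automatically consists of sets of finite $\GPer_\varepsilon$-perimeter, so that it is admissible in Definition~\ref{def:epsdec}: this is immediate from $\GPer_\varepsilon(E_i) \leqslant \GPer_\varepsilon(E) < \infty$ (each summand is nonnegative and their sum is $\GPer_\varepsilon(E)$). The main obstacle I anticipate is the passage between the measure-theoretic statement ``$|x-y|\geqslant\varepsilon$ for a.e.\ $(x,y)\in E_1\times E_2$'' and the closed-support statement~\eqref{eq:distance}; this requires care because $E_i$ and $S_i$ differ by a $\L^d$-null set and one must argue that a null set in $E_i$ cannot spoil the separation of the \emph{closed} supports. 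The argument above via the Lebesgue density characterization of the support handles this cleanly, and everything else is a direct computation with the coarea-type splitting of the double integral defining $\GPer_\varepsilon$.
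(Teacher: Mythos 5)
Your proof is correct and follows essentially the same route as the paper: the pointwise additivity identity showing that $\GPer_\varepsilon(E_1)+\GPer_\varepsilon(E_2)-\GPer_\varepsilon(E)$ is a positive multiple of $\int_{E_1\times E_2}\rho_\varepsilon(x-y)|x-y|^{-1}\dd x\dd y$ (with the paper's normalization the constant is $4$, not $2$, which is immaterial), followed by the density-point argument at points of $\supp(\L^d\mres E_1)$ and $\supp(\L^d\mres E_2)$ to pass between a.e.\ separation and separation of the closed supports, plus the same bookkeeping that additivity forces $\GPer_\varepsilon(E_i)<\infty$. The only point to watch is your parenthetical ``$\rho_\varepsilon>0$ a.e.\ on $B_\varepsilon(0)$'', which is slightly stronger than the standing assumption $\esssupp\rho_\varepsilon=\overline{B_\varepsilon(0)}$; under the weaker hypothesis one should contradict the vanishing of the cross integral directly on $(E_1\cap B_r(\bar x))\times(E_2\cap B_r(\bar y))$, as the paper does, rather than first upgrading to ``$|x-y|\geqslant\varepsilon$ for a.e.\ $(x,y)\in E_1\times E_2$''.
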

\begin{proof}
    \textit{Sufficiency.} Consider finite $\GPer_\varepsilon$-perimeter sets $E_1,E_2 \subset E$ such that $E=E_1\cup E_2$ with $E_1\cap E_2=\emptyset$ and \eqref{eq:distance} holds. Since $E_1 \cap E_2=\emptyset$, we obtain using Tonelli's theorem that
    \begin{align}\label{eq:dec}
      \GPer_{\epsilon}(E_1) +  \GPer_{\epsilon}(E_2)=   \GPer_{\epsilon}(E)+ 4\int_{E_1\times E_2} \frac{1}{|x-y|}\rho_{\varepsilon}(x-y) \dd x \dd y.
\end{align}
Thanks to \eqref{eq:distance}, $\esssupp \rho_\epsilon = \overline{B_{\epsilon}(0)}$ and the fact that $|E_i\setminus \supp(\L^d \mres E_i)|=0$ for $i=1,2$, we immediately obtain that $  \GPer_{\epsilon}(E_1) +  \GPer_{\epsilon}(E_2)=   \GPer_{\epsilon}(E)$. This implies that $E_1$ and $E_2$ have finite $\GPer_\varepsilon$-perimeter and thus $E$ is $\varepsilon$-decomposable as in Definition \ref{def:epsdec}. 

\textit{Necessity.} Assume that $E$ is $\varepsilon$-decomposable as in Definition \ref{def:epsdec}, that is, there exists a partition $\{E_1,E_2\}$ of $E$ into sets of finite $\GPer_\varepsilon$-perimeter such that $|E_1|, |E_2| > 0$, and $\GPer_{\varepsilon}(E)= \GPer_{\varepsilon}(E_1)+ \GPer_{\varepsilon}(E_2)$. By \eqref{eq:dec}, it also holds that
\begin{equation}\label{eq:neczero}
    \int_{E_1\times E_2} \frac{1}{|x-y|}\rho_{\varepsilon}(x-y) \dd x \dd y=0.
\end{equation}
Suppose by contradiction that
\begin{equation}
        \inf _{\substack{x \in \supp(\L^d \mres E_1) \\ y \in \supp(\L^d \mres E_2)}}|x-y| < \varepsilon.
    \end{equation}
This implies that there exists a pair 
\begin{equation}
   (\bar x, \bar y)\in[\supp(\L^d \mres E_1)\times \supp(\L^d \mres E_2)]\cap [(E_1\times E_2)\cap \{|x-y|<\varepsilon\}]. 
\end{equation}
If we define $r:=\frac{\varepsilon-|\bar x-\bar y|}{2}$, then we obtain:
\begin{equation}\label{eq.1:prop2.4}
    [(E_1\cap B_{r}(\bar x))\times (E_2\cap B_{r}(\bar y))]\subset [(E_1\times E_2)\cap \{|x-y|<\varepsilon\}].
\end{equation}
Since $(\bar x, \bar y)\in \supp(\L^d \mres E_1)\times \supp(\L^d \mres E_2)$, we have that $|E_1\cap B_{r}(\bar x)|>0$ and $|E_2\cap B_{r}(\bar y)|>0$, which, thanks to \eqref{eq.1:prop2.4} and the essential support of $\rho_\epsilon$, immediately contradicts \eqref{eq:neczero}.
\end{proof}

\begin{rem}\label{rem:essinf}
    Note that 
    \begin{align*}
        \dist^e(E_1,E_2) = \essinf_{x \in E_1 , y \in  E_2}|x-y|,
    \end{align*}
    where the essential infimum is meant with respect to $\mathcal{L}^d \times \mathcal{L}^d$, as can be deduced immediately from the definition of $\supp(\L^d \mres E_i)$ and continuity of $z \mapsto |z|$.
\end{rem}
In the following, we denote by $E^{(1)}$ the measure theoretic interior of $E$, defined as

\begin{equation}
   E^{(1)}:=\left\{x \in \mathbb{R}^d: \lim _{r \rightarrow 0} \frac{\left|E \cap B_r(x)\right|}{\left|B_r(x)\right|}=1\right\},
\end{equation}
and by $E^{(0)}$ the measure theoretic exterior, defined as
\begin{equation}
   E^{(0)}:=\left\{x \in \mathbb{R}^d: \lim _{r \rightarrow 0} \frac{\left|E \cap B_r(x)\right|}{\left|B_r(x)\right|}=0\right\}.
\end{equation}
We emphasize that $E^{(0)}$ and $E^{(1)}$ satisfy
\begin{equation}\label{eq:densityeq}
\big|E \Delta E^{(1)}\big| = 0 \quad\text{and}\quad \big|E^c \Delta E^{(0)}\big| = 0,
\end{equation}
which in turn implies
\begin{equation}\label{eq:essdistint}\dist^e(E_1,E_2)=\dist\big(E_1^{(1)},E_2^{(1)}\big).\end{equation}
Moreover, we define the essential boundary as $\partial^e E = \R^d \setminus (E^{(0)} \cup E^{(1)})$, for which the following lemma holds.
\begin{lemma}\label{lem:essentialbdyspt}
Let $E$ be any measurable set. Then, we have that
\begin{equation}\label{eq:essentialbdyspt} \partial^e E \subset \supp(\L^d \mres E).\end{equation}
\end{lemma}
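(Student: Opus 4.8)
The plan is to prove the contrapositive-flavored inclusion directly: take a point $x \in \partial^e E$ and show $x \in \supp(\L^d \mres E)$, i.e. that $|E \cap B_r(x)| > 0$ for every $r > 0$. Since $x \notin E^{(0)}$, the density quotient $|E \cap B_r(x)|/|B_r(x)|$ does not tend to $0$ as $r \to 0$; hence there is a sequence $r_k \downarrow 0$ and $\delta > 0$ with $|E \cap B_{r_k}(x)| \geq \delta |B_{r_k}(x)| > 0$. Given any $r > 0$, pick $r_k < r$, so that $B_{r_k}(x) \subset B_r(x)$ and therefore $|E \cap B_r(x)| \geq |E \cap B_{r_k}(x)| > 0$. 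This shows $x$ lies in the support of $\L^d \mres E$ as characterized in the excerpt, proving \eqref{eq:essentialbdyspt}.

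The only mildly delicate point is making the statement ``the density quotient does not tend to $0$'' precise: by definition $x \in E^{(0)}$ means $\lim_{r \to 0} |E \cap B_r(x)|/|B_r(x)| = 0$, and $x \notin E^{(0)}$ is the negation of that limit statement. One should note that the limit in the definition of $E^{(0)}$ (and $E^{(1)}$) is always well-defined in $[0,1]$ or may fail to exist; in either case, $x \notin E^{(0)}$ means the limit is not $0$, which is exactly what yields the sequence $r_k \downarrow 0$ with $\liminf_{k} |E \cap B_{r_k}(x)|/|B_{r_k}(x)| > 0$ (one may simply extract along a subsequence realizing the limit superior, which is positive). From there the argument is immediate. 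I do not anticipate a genuine obstacle; the proof is a short topological/measure-theoretic observation, and the main task is phrasing the negation of the density-zero condition cleanly.

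For completeness, here is the argument written out. Let $x \in \partial^e E = \R^d \setminus (E^{(0)} \cup E^{(1)})$. In particular $x \notin E^{(0)}$, so it is not the case that $\lim_{r \to 0} |E \cap B_r(x)| / |B_r(x)| = 0$. Hence there exist $\delta > 0$ and a sequence $r_k \downarrow 0$ such that
\begin{equation*}
  \frac{|E \cap B_{r_k}(x)|}{|B_{r_k}(x)|} \geq \delta \quad \text{for all } k,
\end{equation*}
and in particular $|E \cap B_{r_k}(x)| \geq \delta\, |B_{r_k}(x)| > 0$ for every $k$. Now fix an arbitrary $r > 0$ and choose $k$ large enough that $r_k < r$; then $B_{r_k}(x) \subset B_r(x)$ gives $|E \cap B_r(x)| \geq |E \cap B_{r_k}(x)| > 0$. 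Since $r > 0$ was arbitrary, the characterization of the support recalled above shows $x \in \supp(\L^d \mres E)$, which proves the inclusion $\partial^e E \subset \supp(\L^d \mres E)$.
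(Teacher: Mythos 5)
Your proof is correct and follows essentially the same route as the paper: both arguments use that $x \notin E^{(0)}$ to produce a sequence $r_k \downarrow 0$ with $|E \cap B_{r_k}(x)| > 0$, and then the monotonicity of $r \mapsto |E \cap B_r(x)|$ to conclude $|E \cap B_r(x)| > 0$ for every $r > 0$. Your extra step of extracting a subsequence with density quotient bounded below by $\delta > 0$ is slightly stronger than needed but harmless.
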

\begin{proof}
Let $x \in \partial^e E =  \R^d \setminus (E^{(0)} \cup E^{(1)})$. Then, there must be a sequence $r_n \searrow 0$ for which we have $|E \cap B_{r_n}(x)|/|B_{r_n}(x)| > 0$ for all $n \in \N$, since otherwise we would have $x \in E^{(0)}$. Noticing that $|E \cap B_r(x)|$ is monotone nonincreasing in $r$, this implies that $|E \cap B_r(x)| >0$ for all $r > 0$, that is, $x \in \supp(\L^d \mres E)$.
\end{proof}

As a consequence of the previous proposition, it holds that $\varepsilon$-decomposability implies decomposability with respect to the classical perimeter.  

\begin{prop}
Let $E$ be a set of finite (classical) Cacciopoli perimeter $\Per(E)$ and with $ \GPer_\epsilon(E) < \infty$. If $E$ is $\epsilon$-decomposable in the sense of Definition \ref{def:epsdec}, then it is also decomposable for the classical perimeter.
\end{prop}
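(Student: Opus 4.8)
The plan is to show that any partition of $E$ witnessing its $\varepsilon$-decomposability is simultaneously a decomposition of $E$ in the classical sense. By Proposition~\ref{prop:charepsilon}, since $E$ is $\varepsilon$-decomposable there is a partition $\{E_1,E_2\}$ of $E$ with $|E_1|,|E_2|>0$ and $\dist^e(E_1,E_2)\geq\varepsilon$. Writing $S_i:=\supp(\L^d\mres E_i)$, this says precisely that the closed sets $S_1$ and $S_2$ lie at mutual distance at least $\varepsilon$; in particular they are disjoint. It therefore suffices to check that $E_1$ and $E_2$ have finite classical perimeter and that $\Per(E)=\Per(E_1)+\Per(E_2)$, as then $\{E_1,E_2\}$ is a classical decomposition of $E$.

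The key point is that this positive separation forces $\1_E$ to agree, up to an $\L^d$-null set, with $\1_{E_1}$ on a full $\varepsilon$-neighbourhood of every point of $S_1$, and with $\1_{E_2}$ near every point of $S_2$: if $x\in S_1$, then $B_\varepsilon(x)\cap S_2=\emptyset$, so $|E_2\cap B_\varepsilon(x)|=0$ and hence $\1_E=\1_{E_1}$ $\L^d$-a.e.\ on $B_\varepsilon(x)$, and symmetrically with the roles of $1$ and $2$ exchanged. Since the essential boundary, the reduced boundary and the measure-theoretic inner normal of a set are local notions, depending only on the $\L^d$-equivalence class of its indicator in a neighbourhood of the point, combining this with Lemma~\ref{lem:essentialbdyspt} (which gives $\partial^e E_i\subset S_i$) and with the identity $\supp(\L^d\mres E)=S_1\cup S_2$ (valid because $\L^d\mres E=\L^d\mres E_1+\L^d\mres E_2$), we obtain
\[
\partial^e E_1\subset\partial^e E,\qquad \partial^e E_2\subset\partial^e E,\qquad \partial^* E=\partial^* E_1\sqcup\partial^* E_2,
\]
where the last union is disjoint since $S_1\cap S_2=\emptyset$, and moreover $\nu_E=\nu_{E_i}$ on $\partial^* E_i$.

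It remains to invoke the standard theory of sets of finite perimeter. Since $\Per(E)<\infty$, Federer's theorem yields $\Hcal^{d-1}(\partial^e E)=\Hcal^{d-1}(\partial^* E)=\Per(E)<\infty$, whence $\Hcal^{d-1}(\partial^e E_i)<\infty$ and, by Federer's criterion, each $E_i$ is a set of finite perimeter; alternatively one checks $\1_{E_i}\in\BV(\R^d)$ directly by a cut-off argument, multiplying $\1_E$ by a Lipschitz function equal to $1$ on $S_i$ and vanishing on the other support — the separation ensures that the resulting BV Leibniz/approximation estimate produces no absolutely continuous contribution, leaving only $\|\,\cdot\,\|_\infty\,\Per(E)<\infty$. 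Then $\Per(E_i)=\Hcal^{d-1}(\partial^* E_i)$, and summing over the disjoint decomposition $\partial^* E=\partial^* E_1\sqcup\partial^* E_2$ gives $\Per(E)=\Per(E_1)+\Per(E_2)$; together with $|E_1|,|E_2|>0$ this concludes. I expect the only step requiring genuine care to be the passage from $\Hcal^{d-1}(\partial^e E_i)<\infty$ to finiteness of the distributional perimeter $\Per(E_i)$, i.e.\ the invocation of Federer's criterion (or the cut-off computation replacing it); everything else is a direct consequence of $\dist^e(E_1,E_2)>0$, Lemma~\ref{lem:essentialbdyspt}, and the locality of the boundary notions involved.
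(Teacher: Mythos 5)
Your proof is correct and follows essentially the same route as the paper's: the partition supplied by Proposition~\ref{prop:charepsilon}, Lemma~\ref{lem:essentialbdyspt} and the $\varepsilon$-separation give locality of the measure-theoretic boundaries, and De Giorgi/Federer structure theory (including Federer's criterion to get $\Per(E_i)<\infty$ from $\Ha^{d-1}(\partial^e E_i)<\infty$) yields finiteness and additivity. The only cosmetic difference is the last step: the paper invokes the formula for the perimeter of a union \cite[Thm.~16.3]{Mag12} and shows the cross terms vanish, whereas you sum $\Ha^{d-1}$ over the disjoint splitting $\partial^* E=\partial^* E_1\sqcup\partial^* E_2$ obtained from locality; the two arguments are equivalent.
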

\begin{proof}
By Proposition \ref{prop:frac_indec}, we have that $E = E_1 \cup E_2$ with $E_1,E_2$ satisfying \eqref{eq:distance}, so that by Lemma \ref{lem:essentialbdyspt} applied to $E_1$ and $E_2$ we also have 
\begin{equation}\label{eq:essbdydist}\inf_{\substack{x \in \partial^e E_1 \\ y \in \partial^e E_2}}|x-y| \geqslant \varepsilon.\end{equation}
Moreover, it holds that 
\begin{equation}\label{eq:essbdyunion}\partial^e E = \partial^e E_1 \cup \partial^e E_2.\end{equation}
Indeed, if $x \in \supp(\Lcal^d \mres E_1)$, then we have that $|E_2\cap B_r(x)|=0$ for all $r<\epsilon$ since $\dist^e(E_1,E_2)\geqslant \epsilon$. This implies that $x \not \in \partial^e E_2$ and $|E_1 \cap B_r(x)|=|E \cap B_r(x)|$ for all small $r$, which shows that $x \in \partial^e E$ if and only if $x \in \partial^e E_1 \cup \partial^e E_2$. On the other hand, if $x \not \in \supp(\Lcal^d \mres E_1)$, then $|E_1\cap B_r(x)|=0$ for all $r$ small enough. Hence, we can argue in the same way that $x \in \partial^e E$ if and only if $x \in \partial^e E_1 \cup \partial^e E_2$.

By De Giorgi structure theorem \cite[Thm.~15.9]{Mag12} and Federer theorem \cite[Thm.~16.2]{Mag12}, we have, for any finite perimeter set $F$, that
\begin{equation}\label{eq:perbdy}\partial^\ast F \subset \partial^e F, \quad \Ha^{d-1}\big( \partial^e F \setminus \partial^\ast F \big) = 0, \quad \text{and}\quad \Per(F) = \Ha^{d-1}(\partial^\ast F),\end{equation}
where $\partial^\ast F$ is the reduced boundary of $F$. Together with \eqref{eq:essbdyunion} and the assumption $\Per(E) < \infty$, this implies that $\Per(E_1), \Per(E_2) < \infty$. Now, using \cite[Thm.~16.3]{Mag12}, we can write
\begin{align*}\Per(E_1 \cup E_2) &= \Ha^{d-1}\left(\partial^\ast E_1 \cap E_2^{(0)}\right) + \Ha^{d-1}\left(\partial^\ast E_2 \cap E_1^{(0)}\right) \\&\quad + \Ha^{d-1}\big(\big\{x \in \partial^\ast E_1 \cap \partial^\ast E_2 :\, \nu_{E_1}(x) = \nu_{E_2}(x)\big\}\big),\end{align*}
in which the third term vanishes by \eqref{eq:essbdydist} and \eqref{eq:perbdy}. We are left to prove that 
\[\Ha^{d-1}\left(\partial^\ast E_1 \setminus E_2^{(0)}\right) = 0\quad\text{and}\quad \Ha^{d-1}\left(\partial^\ast E_2 \setminus E_1^{(0)}\right) = 0,\]
which (using again \eqref{eq:essbdydist} and \eqref{eq:perbdy}) is equivalent to 
\begin{equation}\label{eq:noperinside}\Ha^{d-1}\left(\partial^\ast E_1 \cap E_2^{(1)}\right) = 0\quad\text{and}\quad \Ha^{d-1}\left(\partial^\ast E_2 \cap E_1^{(1)}\right) = 0.\end{equation}
By an argument similar to the proof of Lemma \ref{lem:essentialbdyspt}, we have that, if $x \in E_i^{(1)}$, then $|E_i \cap B_r(x)| >0$ for all $r > 0$, so that $E_i^{(1)} \subset \supp(\L^d \mres E_i)$. This, together with \[\partial^\ast E_i \subset \partial^e E_i \subset \supp(\L^d \mres E_i)\] and \eqref{eq:distance}, implies \eqref{eq:noperinside}.
\end{proof}

\subsection{\texorpdfstring{$\varepsilon$}{eps}-connected components and isoperimetric inequality}
Inspired by the concept of $M$-connected components introduced in \cite{AmbCasMasMor01}, the first goal of this section is to develop a theory of $\varepsilon$-connected components of a set $E$. This will allow us to prove an analogous $\varepsilon$-decomposition theorem to \cite[Theorem 1]{AmbCasMasMor01} in our setting. 

We first introduce the notion of \emph{$\varepsilon$-connected component} of $E$ containing a point $x\in E^{(1)}$.
\begin{defi}[$\varepsilon$-connected component]
    Let $E \subset \R^d$, then we say that $x,y \in E^{(1)}$ are $\epsilon$-connected in $E$ if there are points $z_0,\ldots,z_N \in E^{(1)}$ with $z_0=x$, $z_N=y$ and 
    \[
    \dist(z_i,z_{i+1}) < \epsilon \quad \text{for all $i=0,\ldots,N-1$}.
    \]
    The $\varepsilon$-connected component $E^x$ of $E$ containing $x \in E^{(1)}$ is defined as 
    \begin{equation}\label{eq:defcomp}
E^x:=\left\{y \in E^{(1)} \,\middle|\, y \ \text{is $\epsilon$-connected to $x$ in $E$}\right\}.
\end{equation}
\end{defi}
\begin{lemma}\label{le:connind}
Let $E \subset \R^d$ be a finite $\GPer_\varepsilon$-perimeter set and $x,y \in E^{(1)}$. Then the following holds:
\begin{itemize}
    \item[(i)] $B_\epsilon(x) \cap E^{(1)} \subset E^x$.
    \item[(ii)] Either $E^x=E^y$ or $\dist(E^x,E^y)\geq \epsilon$.
    \item[(iii)] $\GPer_\epsilon(E^x)<\infty$ and $E^x$ is $\epsilon$-indecomposable.
\end{itemize}
\end{lemma}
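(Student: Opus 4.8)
The plan is to establish the three claims in order, using Proposition \ref{prop:charepsilon} (the distance characterization of $\varepsilon$-decomposability) as the main tool together with elementary properties of the equivalence-like relation ``$\varepsilon$-connected in $E$''. For \textbf{(i)}, I would argue directly from the definition of $\varepsilon$-connectedness: if $y \in B_\epsilon(x) \cap E^{(1)}$, then $\dist(x,y) < \epsilon$, so the length-one chain $z_0 = x$, $z_1 = y$ witnesses that $y$ is $\epsilon$-connected to $x$ in $E$, hence $y \in E^x$. For \textbf{(ii)}, first observe that ``$\varepsilon$-connected in $E$'' is symmetric and transitive on $E^{(1)}$ (concatenation of chains), so the $\varepsilon$-connected components partition $E^{(1)}$: either $E^x = E^y$ or $E^x \cap E^y = \emptyset$. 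In the disjoint case I would show $\dist(E^x, E^y) \geq \epsilon$ by contradiction: if there were $z \in E^x$ and $w \in E^y$ with $\dist(z,w) < \epsilon$, then $w$ would be $\epsilon$-connected to $z$ (single-edge chain) and hence, by transitivity, to $x$, forcing $w \in E^x$ and contradicting disjointness.

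For \textbf{(iii)}, I would first set up the decomposition of $E^{(1)}$ into the at most countably many distinct $\varepsilon$-connected components. Countability follows because each component has positive measure (it contains a whole ball intersected with $E^{(1)}$ by (i), and points of $E^{(1)}$ have positive density, so in fact each component contains a set of positive Lebesgue measure) and the components are pairwise disjoint subsets of $\R^d$, which has $\sigma$-finite Lebesgue measure. Writing $E = \bigcup_j E^{x_j}$ up to null sets (using $|E \Delta E^{(1)}| = 0$ from \eqref{eq:densityeq}), the finiteness $\GPer_\epsilon(E^x) < \infty$ should come from the subadditivity-type identity \eqref{eq:dec}: since by (ii) any two distinct components are at mutual distance $\geq \epsilon$, the cross terms $\int_{E^{x_i} \times E^{x_j}} |x-y|^{-1}\rho_\epsilon(x-y)\dd x\dd y$ vanish for $i \neq j$, and summing \eqref{eq:dec} over the decomposition yields $\sum_j \GPer_\epsilon(E^{x_j}) \leq \GPer_\epsilon(E) < \infty$; one has to be a little careful to do this as a limit over finite subcollections and invoke monotone convergence, but each $\GPer_\epsilon(E^{x_j})$ is then finite. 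Finally, $\varepsilon$-indecomposability of $E^x$: suppose $E^x = F_1 \cup F_2$ is an $\varepsilon$-decomposition, so by Proposition \ref{prop:charepsilon} we may assume $\dist^e(F_1,F_2) \geq \epsilon$, equivalently (by \eqref{eq:essdistint}) $\dist(F_1^{(1)}, F_2^{(1)}) \geq \epsilon$. Pick points $p \in F_1^{(1)}$ and $q \in F_2^{(1)}$; both lie in $(E^x)^{(1)} = E^{(1)}$ (density of the union equals the sum of densities when the pieces are disjoint, so $F_i^{(1)} \subset (E^x)^{(1)}$), hence both are $\epsilon$-connected to $x$ in $E$, hence to each other by transitivity. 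A connecting chain $z_0 = p, \ldots, z_N = q$ in $E^{(1)}$ must cross from (near) $F_1$ to (near) $F_2$: since $|F_1^{(1)} \cup F_2^{(1)}|$ has full density in $E^x$ and $F_1^{(1)}, F_2^{(1)}$ are at distance $\geq \epsilon$, there must be consecutive $z_i, z_{i+1}$ with, say, $z_i$ ``closer to $F_1$'' and $z_{i+1}$ ``closer to $F_2$'', forcing $\dist(z_i, z_{i+1}) \geq \epsilon$ — contradicting $\dist(z_i, z_{i+1}) < \epsilon$.

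The last contradiction step requires a little care: the chain points $z_i$ live in $E^{(1)}$, which equals $(F_1^{(1)} \cup F_2^{(1)})$ up to a null set, but individual chain points need not lie in $F_1^{(1)} \cup F_2^{(1)}$ exactly. The clean way around this is to note that $\partial^e F_i \subset \supp(\L^d \mres F_i)$ (Lemma \ref{lem:essentialbdyspt}) and that, since $\dist^e(F_1,F_2) \geq \epsilon$, we get $E^x \subset (F_1^{(1)} \cup F_2^{(1)} \cup N)$ with $|N| = 0$, together with $\partial^e E^x$ splitting as $\partial^e F_1 \cup \partial^e F_2$ (the same argument as in the proof of the preceding proposition); hence $E^{(1)} = (E^x)^{(1)}$ is contained in $F_1^{(1)} \cup F_2^{(1)}$ up to a null set and in fact every point of $E^{(1)}$ lies in exactly one of $F_1^{(1)}, F_2^{(1)}$ because these are open-like density sets at mutual distance $\geq \epsilon$ and a density-one point of $E^x$ cannot have density zero for both $F_i$. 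So every chain point $z_i \in E^{(1)}$ is assigned unambiguously to $F_1$ or $F_2$, with $z_0$ assigned to $F_1$ and $z_N$ to $F_2$, and the first index where the assignment switches gives $\dist(z_i, z_{i+1}) \geq \dist(F_1^{(1)}, F_2^{(1)}) \geq \epsilon$, the desired contradiction. I expect this bookkeeping with density sets and null sets to be the only genuinely delicate point; everything else is a routine chain-concatenation argument plus the already-established identity \eqref{eq:dec}.
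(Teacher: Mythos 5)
Your overall strategy is the same as the paper's. Parts (i) and (ii) coincide with the paper's proof verbatim; the finiteness of $\GPer_\epsilon(E^x)$ via vanishing cross-interactions is what the paper does with the single split $\{E^x, E\setminus E^x\}$ (your detour through the full countable decomposition and monotone convergence also works, and essentially anticipates the computation in Theorem~\ref{thm:epsdec}, but is not needed for finiteness); and your indecomposability argument -- label each chain point as $F_1$-side or $F_2$-side and observe that the label cannot switch across an edge of length $<\epsilon$ -- is the same propagation mechanism that the paper runs iteratively starting from $x$, just traversed from a density point of $F_1$ to one of $F_2$ instead.

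Two intermediate assertions are, however, false as stated and need a (short) repair. First, $(E^x)^{(1)}=E^{(1)}$ fails whenever $E$ has more than one $\epsilon$-connected component: a point of another component lies in $E^{(1)}$ but has $E^x$-density zero. For the same reason, ``both lie in $(E^x)^{(1)}=E^{(1)}$, hence both are $\epsilon$-connected to $x$'' is a non sequitur: membership in $E^{(1)}$ alone does not give connectedness to $x$. What you actually need, and what is true, is that $p\in F_1^{(1)}$ and $q\in F_2^{(1)}$ belong to $E^x$: since $F_i\subset E^x\subset E^{(1)}$, one has $p\in(F_1)^{(1)}\subset (E^{(1)})^{(1)}=E^{(1)}$ and $|B_r(p)\cap E^x|\geq |B_r(p)\cap F_1|>0$ for every $r>0$, so there is $w\in B_\epsilon(p)\cap E^x$ and the one-edge chain $p,w$ shows $p\in E^w=E^x$. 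Second, ``every point of $E^{(1)}$ lies in exactly one of $F_1^{(1)},F_2^{(1)}$'' must be restricted to points of $E^x$; there it does hold, because if both $F_1$ and $F_2$ had positive measure in every ball around such a point $z$, then $z\in\supp(\L^d\mres F_1)\cap\supp(\L^d\mres F_2)$, contradicting $\dist^e(F_1,F_2)\geq\epsilon$, so one of them is null near $z$, while by part (i) the $E^x$-density at $z$ equals the $E$-density, namely $1$, forcing the other to have density $1$. Since every point of your chain is $\epsilon$-connected to $p\in E^x$ and hence lies in $E^x$, this restricted statement is all the labelling argument uses, and with these corrections your proof closes as described.
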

\begin{proof}
(i) This is clear by definition, since every point in $B_\epsilon(x) \cap E^{(1)}$ is $\epsilon$-connected to $x$.

(ii) Suppose first that $x$ and $y$ are $\epsilon$-connected in $E$. Then, it is clear by concatenating sequences that $E^x=E^y$. Suppose now for the sake of contradiction that $x$ and $y$ are not $\epsilon$-connected, but $\dist(E^x,E^y)<\epsilon$. Then, there are $z \in E^x$ and $w \in E^y$ with $\dist(z,w)<\epsilon$. Therefore, given that $z$ is $\epsilon$-connected to $x$, we deduce that $w$ is $\epsilon$-connected to $x$ as well. Now using that $w$ is $\epsilon$-connected to $y$, we deduce that $x$ is $\epsilon$-connected to $y$, which is a contradiction.

(iii) We can compute that
\begin{align}
\begin{split}\label{eq:perex}
\GPer_\epsilon(E^x)&=2\int_{E^x}\int_{(E^x)^c}\frac{1}{|y-z|}\rho_\epsilon(y-z)\dd z\dd y\\
&=2\int_{E^x}\int_{E^c}\frac{1}{|y-z|}\rho_\epsilon(y-z)\dd z\dd y +2\int_{E^x}\int_{E \setminus E^x}\frac{1}{|y-z|}\rho_\epsilon(y-z)\dd z\dd y \\
&\leq \GPer_\epsilon(E)+2\int_{E^x}\int_{E \setminus E^x}\frac{1}{|y-z|}\rho_\epsilon(y-z)\dd z\dd y = \GPer_\epsilon(E),
\end{split}
\end{align}
where we have used that the second integral is zero given that $\dist^e(E^x, E \setminus E^x)\geq \epsilon$. This last observation follows from part (ii) together with the fact that $E^{(1)}$ is equal to the union of all $\epsilon$-connected components due to (i).
For the decomposability, assume to the contrary that we can find a partition $\{F_1,F_2\}$ of $E^x$ with $|F_1|,|F_2|>0$ and $\dist^e(F_1,F_2) \geqslant \varepsilon$ according to Proposition~\ref{prop:charepsilon}. Then, due to part (i) we may assume without loss of generality that $|B_r(x) \cap F_1|>0$ for all $r>0$. Note that this immediately implies $|B_\epsilon(x) \cap F_2|=0$, since otherwise $\dist^e(F_1,F_2)<\epsilon$. Next, take any $y \in E^x$ and consider a sequence $z_0,\ldots,z_N \in E^x$ with $z_0=x$, $z_N=y$ and such that $\dist(z_i,z_{i+1})<\epsilon$ for all $i=0,\ldots N-1$. If $|B_r(x_1)\cap F_2|>0$ for all $r>0$, then we find that $\dist^e(F_1,F_2)<\epsilon$. Hence, we must have $|B_r(x_1)\cap F_2|=0$ for all $r$ small enough, which implies $|B_r(x_1) \cap F_1|>0$ for all $r>0$. Continuing like this iteratively, we infer that $|B_r(y) \cap F_1|>0$ for all $r>0$ and, as a consequence, $|B_\epsilon(y)\cap F_2|=0$. Since $y \in E^x$ was arbitrary, we deduce that $|F_2|=0$, which contradicts the assumption.
\end{proof}
\begin{rem}
\leavevmode
\begin{itemize}
    \item [(i)] In the case that $E$ is $\epsilon$-indecomposable, then for any $x \in E^{(1)}$, we have $E^x=E^{(1)}$, that is, $E$ consists of a single $\epsilon$-connected component up to a null set. Indeed, if $|E \setminus E^x|>0$, then $\{E^x, E \setminus E^x\}$ would be a nontrivial partition of $E$ satisfying $\dist^e(E^x,E \setminus E^x) \geq \epsilon$ by Lemma~\ref{le:connind}\,(ii). This would contradict the $\epsilon$-decomposability of $E$ in light of Proposition~\ref{prop:charepsilon}.
    \item [(ii)]  If $E_1,E_2$ are two $\epsilon$-indecomposable sets with $\dist^e(E_1,E_2)<\epsilon$, then $E:=E_1 \cup E_2$ is $\epsilon$-indecomposable. Indeed, if we take $x \in E_1^{(1)}$ and $y \in E_2^{(1)}$, then the first part of this remark shows that $E_1^{(1)} \subset E^x$ and $E^{(1)}_2 \subset E^y$. But this implies that $\dist(E^x,E^y)<\epsilon$, so by Lemma~\ref{le:connind}\,(ii) we must have $E^x=E^y = E^{(1)}$, which yields that $E$ is $\epsilon$-indecomposable by Lemma~\ref{le:connind}\,(iii).\qedhere
\end{itemize}
\end{rem}

We can now prove the main $\epsilon$-decomposition result for sets with finite $\GPer_{\varepsilon}$-perimeter.
\begin{thm}[$\varepsilon$-decomposition theorem]\label{thm:epsdec}
    Let $E$ be a set with finite $\GPer_{\varepsilon}$-perimeter. Then, there exists a unique countable collection $\left\{E^{x_i}\right\}_{i \in I}$ of $\varepsilon$-connected components of $E$ such that $E^{x_i}$ is $\varepsilon$-indecomposable, $\left|E^{x_i}\right|>0$, $\bigcup_{i \in I}E^{x_i}=E^{(1)}$ and
    \begin{equation}\label{eq:epsdecom}
        \GPer_{\varepsilon}(E)=\sum_{i \in I} \GPer_{\varepsilon}(E^{x_i}).
    \end{equation}
\end{thm}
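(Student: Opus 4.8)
The plan is to build the decomposition directly from the equivalence relation of $\varepsilon$-connectedness on $E^{(1)}$, whose equivalence classes are exactly the $\varepsilon$-connected components $E^x$ already studied in Lemma~\ref{le:connind}. First I would observe that these classes partition $E^{(1)}$, and that each one is \emph{open in $E^{(1)}$} in the following measure-theoretic sense: by Lemma~\ref{le:connind}\,(i), if $x \in E^{(1)}$ then $B_\varepsilon(x) \cap E^{(1)} \subset E^x$. Combined with part (ii), this gives $\dist(E^{x}, E^{y}) \geq \varepsilon$ whenever $E^x \neq E^y$. Next I would show the family of distinct components is at most \emph{countable}: since $\GPer_\varepsilon(E) < \infty$ and each component $E^{x_i}$ with $|E^{x_i}| > 0$ carries a positive amount of $\GPer_\varepsilon$-perimeter — this needs a short lower bound, obtained e.g.\ by noting that for a point $x \in (E^{x_i})^{(1)}$ and a point in $(E^{x_i})^{(0)}$ within distance $\varepsilon$ (which exists unless $|E^{x_i}| = 0$ or $E^{x_i}$ fills a whole connected region), the double integral defining $\GPer_\varepsilon$ is bounded below; alternatively one uses the nonlocal isoperimetric inequality cited earlier in this section. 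Summing these positive contributions against the finite total $\GPer_\varepsilon(E)$ forces countability, and in particular only countably many components have positive measure, while the union of the null ones is null, so after discarding them $\bigcup_{i\in I} E^{x_i} = E^{(1)}$ up to the relabeling $E = E^{(1)}$ modulo null sets from \eqref{eq:densityeq}.

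For the additivity \eqref{eq:epsdecom}, I would expand $\GPer_\varepsilon\big(\bigcup_{i} E^{x_i}\big)$ using Tonelli, as in \eqref{eq:perex}: writing $E^{(1)} = \bigsqcup_i E^{x_i}$, the cross terms $\int_{E^{x_i}}\int_{E^{x_j}} |y-z|^{-1}\rho_\varepsilon(y-z)\,\mathrm{d}z\,\mathrm{d}y$ for $i \neq j$ vanish because $\dist(E^{x_i}, E^{x_j}) \geq \varepsilon$ and $\esssupp \rho_\varepsilon = \overline{B_\varepsilon(0)}$, exactly the computation already performed in Proposition~\ref{prop:charepsilon} and Lemma~\ref{le:connind}\,(iii). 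This yields $\GPer_\varepsilon(E) = \sum_i \GPer_\varepsilon(E^{x_i})$; the interchange of the sum with the integrals is legitimate by monotone convergence since all integrands are nonnegative, and each term is finite by Lemma~\ref{le:connind}\,(iii). The $\varepsilon$-indecomposability of each $E^{x_i}$ is then precisely Lemma~\ref{le:connind}\,(iii), and $|E^{x_i}| > 0$ holds by our selection of components.

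For \textbf{uniqueness}, suppose $\{F_j\}_{j \in J}$ is another such collection: $\varepsilon$-indecomposable sets of positive measure, pairwise at distance $\geq \varepsilon$ (this mutual separation follows from \eqref{eq:epsdecom} applied to them together with Proposition~\ref{prop:charepsilon}, or is built into being "$\varepsilon$-connected components"), with $\bigcup_j F_j = E^{(1)}$. Fix $j$ and a density point $x \in F_j^{(1)}$; since $F_j$ is $\varepsilon$-indecomposable, Remark (i) after Lemma~\ref{le:connind} gives $F_j = E^x$ up to a null set, i.e.\ $F_j$ coincides with the $\varepsilon$-connected component of $E$ through $x$. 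Conversely every $E^{x_i}$ arises this way, so the two collections agree up to null sets and reindexing. The main obstacle I anticipate is the \emph{countability / positive lower bound} step: one must rule out a set of finite $\GPer_\varepsilon$-perimeter splitting into uncountably many positive-measure components, which requires either a uniform-in-$i$ lower bound on $\GPer_\varepsilon(E^{x_i})$ (not completely obvious, since a component could be a thin region) — here invoking the isoperimetric inequality $\GPer_\varepsilon(E^{x_i}) \gtrsim \min(|E^{x_i}|, c)^{(d-1)/d}$ from \cite{CesNov17} cleanly resolves it — or a direct argument that the components, being $\varepsilon$-separated open-in-$E^{(1)}$ pieces, are countable because $\R^d$ is separable and each contains a ball of positive measure in its interior.
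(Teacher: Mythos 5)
Your proposal is correct and follows the same overall architecture as the paper's proof: the components are taken as the equivalence classes of $\varepsilon$-connectedness on $E^{(1)}$, their pairwise separation and $\varepsilon$-indecomposability come from Lemma~\ref{le:connind}\,(ii)--(iii), additivity \eqref{eq:epsdecom} is obtained by the same Tonelli expansion with the cross terms killed by $\dist^e(E^{x_i},E\setminus E^{x_i})\geq\varepsilon$, and uniqueness reduces to Lemma~\ref{le:connind}\,(ii) (your version via the remark after that lemma proves slightly more, namely uniqueness among coverings by $\varepsilon$-indecomposable sets, which is fine). The only real divergence is the countability step, where you work harder than necessary: the paper simply notes that $B_\varepsilon(x_i)\cap E^{(1)}\subset E^{x_i}$ and the distinct components are pairwise disjoint, so they form a disjoint family of positive-measure sets and $I$ is at most countable. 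In particular, no lower bound on $\GPer_\varepsilon(E^{x_i})$ (uniform or not) and no isoperimetric inequality are needed; if you do want the perimeter route, mere positivity of each $\GPer_\varepsilon(E^{x_i})$ against the finite, superadditive total already suffices, and your fallback via $\varepsilon$-separation plus separability of $\R^d$ also works. Relatedly, your ``discard the null components'' step is vacuous and should be dropped: since every $x_i\in E^{(1)}$ is a density point of $E$, each $E^{x_i}$ contains $B_\varepsilon(x_i)\cap E^{(1)}$ and hence automatically has positive measure; this matters because the theorem asserts $\bigcup_{i\in I}E^{x_i}=E^{(1)}$ exactly, not merely up to a null set, and an uncountable union of null classes would in any case not be obviously negligible.
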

\begin{proof}
Note that by Lemma~\ref{le:connind}\,(i) and (ii), the $\epsilon$-connected components form a partition of $E^{(1)}$, that is, there exist $x_i \in E^{(1)}$ for $i \in I$ such that $E^{(1)}=\bigcup_{i \in I}E^{x_i}$ and $E^{x_i} \not = E^{x_j}$ for $i \not =j$. The uniqueness of this decomposition is immediate by Lemma~\ref{le:connind}\,(ii). Moreover, since $B_{\epsilon}(x_i)\cap E^{(1)} \subset E^{x_i}$ and all the distinct $\epsilon$-connected component are disjoint, the set $I$ can be at most countable. Finally, it remains to show \eqref{eq:epsdecom}. We compute as in \eqref{eq:perex}
\begin{align*}
    \sum_{i \in I} \GPer_{\varepsilon}(E^{x_i}) &= \sum_{i \in I}2\int_{E^{x_i}}\int_{E^c}\frac{1}{|y-z|}\rho_\epsilon(y-z)\dd z\dd y \\
    &\qquad+\sum_{I \in I}2\int_{E^{x_i}}\int_{E \setminus E^{x_i}}\frac{1}{|y-z|}\rho_\epsilon(y-z)\dd z\dd y\\
    &=\sum_{i \in I}2\int_{E^{x_i}}\int_{E^c}\frac{1}{|y-z|}\rho_\epsilon(y-z)\dd z\dd y\\
    &=2\int_{E^{(1)}}\int_{E^c}\frac{1}{|y-z|}\rho_\epsilon(y-z)\dd z\dd y=\GPer_\epsilon(E),
\end{align*}
where we have used that $\dist^e(E^{x_i},E\setminus E^{x_i}) \geq \epsilon$ in light of Lemma~\ref{le:connind}\,(ii).
\end{proof}

\begin{rem}
Since $|E^{(1)}\Delta E|=0$, we have that $\bigcup_{x \in E^{(1)}}E^x$ equals $E$ up to a null set.
Moreover, due to Lemma~\ref{le:connind}\,(ii) the $\epsilon$-connected components need to be at least $\varepsilon$ far apart from each other. This behavior is illustrated in Figure \ref{fig:epscomps}.
\end{rem}

\begin{figure}[ht]
    \centering
    \begin{tikzpicture}[scale=1.2]               
        \draw[color=OliveGreen, fill=OliveGreen!40, very thick] (0,0.866) circle (0.4);
        \draw[color=OliveGreen, fill=OliveGreen!40, very thick] (-0.5,0) circle (0.4);
        \draw[color=OliveGreen, fill=OliveGreen!40, very thick] (0.5,0) circle (0.4);
        \draw[color=OliveGreen, thick, densely dotted] ([shift=(-17:0.7)]0,0.866) arc (-18:198:0.7);
        \draw[color=OliveGreen, thick, densely dotted] ([shift=(107:0.7)]-0.5,0) arc (107:317:0.7);
        \draw[color=OliveGreen, thick, densely dotted] ([shift=(77:0.7)]0.5,0) arc (77:-138:0.7);
        \draw[color=OliveGreen] (-1.1,0.75) node {$E^{x_1}$}; 
        \draw (0.0,0.65) node {$x_1$};
        \filldraw (0,0.866) circle[radius=1.5pt];
        \draw[semithick] (1.3,0.5) -- (1.6,0.5);
        \draw[semithick] (1.3,0.45) -- (1.3,0.55);
        \draw[semithick] (1.6,0.45) -- (1.6,0.55);
        \draw (1.45,0.7) node {$\varepsilon$};
        \draw[color=RoyalBlue] (3.0,1.35) node {$E^{x_2}$};
        \draw[color=RoyalBlue, fill=RoyalBlue!40, very thick] (2.5,0.5) circle (0.4);
        \draw[color=RoyalBlue, fill=RoyalBlue!40, very thick] (3.5,0.5) circle (0.4);
        \draw[color=RoyalBlue, thick, densely dotted] ([shift=(-135:0.7)]3.5,0.5) arc (-135:135:0.7);
        \draw[color=RoyalBlue, thick, densely dotted] ([shift=(45:0.7)]2.5,0.5) arc (45:315:0.7);
        \draw (2.5,0.284) node {$x_2$};
        \filldraw (2.5,0.5) circle[radius=1.5pt];
        \draw[color=Maroon, fill=Maroon!40, very thick] (4.7,0.15) circle (0.4);
        \draw[color=Maroon, thick, densely dotted] (4.7,0.15) circle (0.7);
        \draw (4.7,-0.066) node {$x_3$};
        \filldraw (4.7,0.15) circle[radius=1.5pt];
        \draw[color=Maroon] (5.5,0.75) node {$E^{x_3}$}; 
    \end{tikzpicture}
    \caption{Decomposition of a set in three different $\varepsilon$-connected components. The dotted lines depict the boundaries of the sets $\{y\,|\, \dist(y, E^{x_i}) < \varepsilon\}$.}\label{fig:epscomps}
\end{figure}
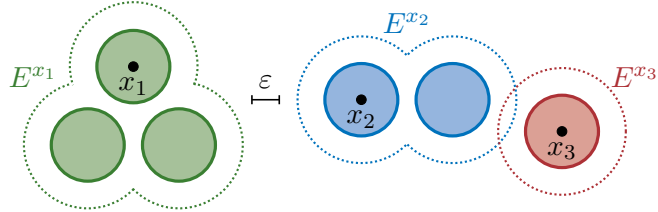

The aim of the next part of the section is to provide a lower bound for the measure of the $\varepsilon$-connected components for minimizers of 
\begin{equation}
    E \mapsto \GPer_{\varepsilon}(E)+\int_E g(x)\, \dd x,
\end{equation}
where $g \in L^{d / \alpha}(\R^d)$ and $\rho_{\varepsilon}(x-y)=\1_{[0, \varepsilon]}(|x-y|) |x-y|^{-(d+\alpha-1)}$. To this end, we need to recall several results on isoperimetric inequalities for Gagliardo perimeters in 
\cite[Proposition 3.1]{CesNov18} and \cite[Theorem 2.4]{mazon2019nonlocal}. 
\begin{thm}[Isoperimetric inequality]\label{thm:isop_ineq}
 For every measurable set $E \subset \mathbb{R}^d$ with finite measure it holds that
\begin{equation}\label{eq:isop_ineq}
    \GPer_{\varepsilon}(E) \geqslant \GPer_{\rho_{\varepsilon}^*}\left(B_{|E|}\right),
\end{equation}
where $\GPer_{\rho_\varepsilon^*}(F) := \int_F \int_{F^c} \frac{\rho^*_\varepsilon(x-y)}{|x-y|}\, \dd x \dd y$, $\rho_{\varepsilon}^*$ is the symmetric decreasing rearrangement of $\rho_{\varepsilon}$, and $B_{|E|}$ is the ball centered in zero such that $\left|B_{|E|}\right|=|E|$. 

In particular, if $\rho_{\varepsilon}$ is a radially nonincreasing function, then
\begin{equation}
  \GPer_{\varepsilon}(E) \geqslant \GPer_{\varepsilon}\left(B_{|E|}\right), 
\end{equation}  
where equality holds if and only if $E$ is a translated set of $B_{|E|}$.
\end{thm}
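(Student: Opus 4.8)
The proof of the isoperimetric inequality \eqref{eq:isop_ineq} reduces to a rearrangement argument, so the plan is to invoke a Riesz-type rearrangement inequality after rewriting $\GPer_\varepsilon$ in a symmetric form. Writing $K_\varepsilon(z) := \rho_\varepsilon(z)/|z|$, which is again supported in $\overline{B_\varepsilon(0)}$, we have
\[
\GPer_\varepsilon(E) = 2\int_E \int_{E^c} K_\varepsilon(x-y)\dd x\dd y = \int_{\R^d}\int_{\R^d} K_\varepsilon(x-y) \big[ \1_E(x)\1_{E^c}(y) + \1_{E^c}(x)\1_E(y)\big]\dd x\dd y.
\]
Using $\1_{E^c} = 1 - \1_E$ and $\int K_\varepsilon < \infty$ (which holds since $\rho_\varepsilon/|\cdot|$ is integrable on the bounded set $\overline{B_\varepsilon(0)}$ when $|E| < \infty$; if the integral diverges the inequality is trivial because then $\GPer_\varepsilon(E) = +\infty$ for any $E$ of finite positive measure whose complement has positive measure in every neighborhood), one obtains the standard identity
\[
\GPer_\varepsilon(E) = 2\|K_\varepsilon\|_{L^1}|E| - 2\int_{\R^d}\int_{\R^d} K_\varepsilon(x-y)\1_E(x)\1_E(y)\dd x\dd y.
\]
Since $|E|$ is fixed, minimizing $\GPer_\varepsilon$ amounts to \emph{maximizing} the double integral $\iint K_\varepsilon(x-y)\1_E(x)\1_E(y)$.

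The key step is then the Riesz rearrangement inequality (see \cite{liebloss2001analysis}): for nonnegative functions $f,g,h$ on $\R^d$,
\[
\int_{\R^d}\int_{\R^d} f(x)g(x-y)h(y)\dd x\dd y \leqslant \int_{\R^d}\int_{\R^d} f^*(x)g^*(x-y)h^*(y)\dd x\dd y,
\]
applied with $f = h = \1_E$ and $g = K_\varepsilon$. Because $\1_E^* = \1_{B_{|E|}}$ and the symmetric decreasing rearrangement of $K_\varepsilon(z)=\rho_\varepsilon(z)/|z|$ is $\rho_\varepsilon^*(z)/|z|$ — here I would note that rearrangement commutes with multiplication by the radially decreasing factor $1/|z|$, or alternatively absorb $1/|z|$ into the kernel and rearrange $K_\varepsilon$ directly, identifying $K_\varepsilon^*$ with $\rho_\varepsilon^*(\cdot)/|\cdot|$ via the layer-cake representation — we get
\[
\int\!\!\int K_\varepsilon(x-y)\1_E(x)\1_E(y)\dd x\dd y \leqslant \int\!\!\int \frac{\rho_\varepsilon^*(x-y)}{|x-y|}\1_{B_{|E|}}(x)\1_{B_{|E|}}(y)\dd x\dd y.
\]
Feeding this back into the identity above (applied with the kernel $K_\varepsilon$ replaced by its rearrangement, which has the same $L^1$ norm, for the ball) yields $\GPer_\varepsilon(E) \geqslant \GPer_{\rho_\varepsilon^*}(B_{|E|})$, which is \eqref{eq:isop_ineq}.

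For the second part, when $\rho_\varepsilon$ is already radially nonincreasing we have $\rho_\varepsilon^* = \rho_\varepsilon$ a.e., so \eqref{eq:isop_ineq} reads $\GPer_\varepsilon(E)\geqslant\GPer_\varepsilon(B_{|E|})$ directly. The equality case is the delicate point and the main obstacle: it requires the rigidity statement for the Riesz inequality, which in the strict-rearrangement form (Lieb, Burchard) asserts that equality with a strictly radially decreasing kernel $g$ forces $f$ and $h$ to be translates of their rearrangements. I would invoke the version for $f=h=\1_E$, $g = K_\varepsilon = \rho_\varepsilon(\cdot)/|\cdot|$: one must check that $K_\varepsilon$ is strictly radially decreasing on its support, which holds as long as $\rho_\varepsilon$ is not merely nonincreasing but the product $\rho_\varepsilon(|z|)/|z|$ is strictly decreasing — a point worth stating as a hypothesis or verifying from "$\rho_\varepsilon$ radially nonincreasing" together with the strictly decreasing factor $1/|z|$. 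Granting this, Burchard's rigidity theorem gives that $E$ coincides up to a translation and a null set with $B_{|E|}$, completing the proof. The main subtlety throughout is bookkeeping the interaction of the $1/|x-y|$ factor with the rearrangement — handled cleanly by treating $K_\varepsilon$ as a single kernel — and, for equality, ensuring the strict-monotonicity hypothesis needed to apply the rigidity version of Riesz's inequality.
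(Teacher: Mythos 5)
First, a point of comparison: the paper does not prove this theorem at all --- it is recalled from \cite[Proposition 3.1]{CesNov18} and \cite[Theorem 2.4]{mazon2019nonlocal} --- and the route you sketch (rewrite the perimeter via $\1_{E^c}=1-\1_E$, maximize the bilinear term by the Riesz rearrangement inequality, and handle equality by Lieb/Burchard rigidity) is exactly the strategy of those references. So the outline is right, but as written it has a genuine gap at the reduction step. The identity $\GPer_\varepsilon(E)=2\|K_\varepsilon\|_{L^1}|E|-2\iint K_\varepsilon(x-y)\1_E(x)\1_E(y)\dd x\dd y$ requires $K_\varepsilon=\rho_\varepsilon/|\cdot|\in L^1(\R^d)$, and both of your justifications for assuming this are false: compact support of $\rho_\varepsilon$ says nothing about the singularity at the origin, and non-integrability of $K_\varepsilon$ does \emph{not} force $\GPer_\varepsilon(E)=+\infty$. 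The truncated fractional kernel $\rho_\varepsilon(z)=\1_{[0,\varepsilon]}(|z|)\,|z|^{-(d+\alpha-1)}$ --- precisely the kernel to which the theorem is applied in Lemma~\ref{lem:profilelimit} and the proposition following it --- gives $K_\varepsilon(z)=|z|^{-(d+\alpha)}$ on $B_\varepsilon(0)$, which is not integrable, while $\GPer_\varepsilon(B_{|E|})<\infty$ (that is the content of Lemma~\ref{lem:profilelimit}); so your dichotomy collapses exactly in the case of interest. The standard repair, which you should include, is a truncation: run your argument for $K_\varepsilon\wedge n\in L^1(\R^d)$, use $(K_\varepsilon\wedge n)^*=K_\varepsilon^*\wedge n$, and let $n\to\infty$ by monotone convergence; the equality case must then be threaded through the truncation as well (e.g.\ split $K_\varepsilon=(K_\varepsilon\wedge n)+(K_\varepsilon-n)_+$, note both pieces are symmetric nonincreasing when $\rho_\varepsilon$ is, so equality in the limit forces equality for each piece, and apply rigidity to a bounded piece or levelwise via the layer-cake decomposition).

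Two further points. Your claim that the rearrangement of $K_\varepsilon=\rho_\varepsilon/|\cdot|$ equals $\rho_\varepsilon^*/|\cdot|$ (``rearrangement commutes with multiplication by the radially decreasing factor'') is false in general: for $\rho=\1_{\{1\leqslant|z|\leqslant 2\}}$ the left-hand side is bounded while the right-hand side blows up at the origin. What Riesz actually yields is the inequality with $(\rho_\varepsilon/|\cdot|)^*$, i.e.\ the rearrangement of the whole kernel, which is how the cited references phrase it; this is harmless for the ``in particular'' statement, since for $\rho_\varepsilon$ radially nonincreasing both readings coincide with $\rho_\varepsilon/|\cdot|$ itself, but as a general identity it should be dropped. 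Finally, the strict-monotonicity hypothesis you flag for the rigidity step is automatic here: radial nonincreasingness together with $\esssupp\rho_\varepsilon=\overline{B_\varepsilon(0)}$ forces $\rho_\varepsilon>0$ a.e.\ on $B_\varepsilon(0)$, hence $r\mapsto\rho_\varepsilon(r)/r$ is strictly decreasing on $(0,\varepsilon)$; the remaining (nontrivial) care is that the kernel vanishes outside $B_\varepsilon(0)$, so one applies Burchard's characterization levelwise to the level sets $\{K_\varepsilon>t\}$, whose radii sweep $(0,\varepsilon)$, rather than Lieb's statement for strictly decreasing kernels on all of $\R^d$.
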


In particular, following \cite[Lemma 3.2]{CesNov18}, if we consider a truncated fractional kernel such as $\rho_{\varepsilon}(r)=\1_{[0, \varepsilon]}(r) r^{-(d+\alpha-1)}$, in the limit of the isoperimetric profile we recover the fractional power $|E|^{(d-\alpha)/d}$.
\begin{lemma}\label{lem:profilelimit}
    Let $\rho_{\varepsilon}(x-y)=\1_{[0, \varepsilon]}(|x-y|) |x-y|^{-(d+\alpha-1)}$. Then, it holds that
    \begin{equation}
    \lim _{|E| \rightarrow 0^{+}} \frac{\GPer_{\varepsilon}\left(B_{|E|}\right)}{|E|^{(d-\alpha)/d}}=c(d,\alpha) \in (0,+\infty).
\end{equation}
\end{lemma}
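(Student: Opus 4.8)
The plan is to compute $\GPer_\varepsilon(B_r)$ for the truncated fractional kernel $\rho_\varepsilon(z) = \1_{[0,\varepsilon]}(|z|)\,|z|^{-(d+\alpha-1)}$ and extract the leading-order term as $r \to 0^+$, where $r$ is the radius of the ball $B_r = B_{|E|}$. By scaling, the key observation is that for radii $r$ much smaller than $\varepsilon$ the truncation becomes irrelevant: $\GPer_\varepsilon(B_r)$ should coincide to leading order with the untruncated fractional perimeter $\GPer_\alpha(B_r)$, which by the exact scaling of the fractional kernel equals $\GPer_\alpha(B_1)\,r^{d-\alpha}$, and $r^{d-\alpha}$ is a dimensional constant times $|B_r|^{(d-\alpha)/d}$. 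So the constant will be $c(d,\alpha) = \GPer_\alpha(B_1)\,\omega_d^{-(d-\alpha)/d}$ (with $\omega_d = |B_1|$), and the content of the proof is to justify that the truncation error is of lower order.

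First I would write, using the change of variables $x = r\tilde x$, $y = r\tilde y$,
\begin{equation}
\GPer_\varepsilon(B_r) = \int_{B_r}\int_{B_r^c} \frac{\1_{[0,\varepsilon]}(|x-y|)}{|x-y|^{d+\alpha}}\dd x\dd y = r^{d-\alpha}\int_{B_1}\int_{B_1^c} \frac{\1_{[0,\varepsilon/r]}(|\tilde x-\tilde y|)}{|\tilde x - \tilde y|^{d+\alpha}}\dd\tilde x\dd\tilde y,
\end{equation}
so that $\GPer_\varepsilon(B_r)/r^{d-\alpha} = I(\varepsilon/r)$, where $I(s) := \int_{B_1}\int_{B_1^c}\1_{[0,s]}(|\tilde x - \tilde y|)\,|\tilde x - \tilde y|^{-(d+\alpha)}\dd\tilde x\dd\tilde y$ is monotone nondecreasing in $s$. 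As $r \to 0^+$ we have $s = \varepsilon/r \to \infty$, so by monotone convergence $I(\varepsilon/r) \to I(\infty) = \GPer_\alpha(B_1)$, and this limit is finite because $B_1$ has finite (untruncated) fractional perimeter — indeed $\GPer_\alpha(B_1) = \GPer_1(B_1) < \infty$ since a ball has finite fractional perimeter, the near-diagonal singularity $|\tilde x-\tilde y|^{-(d+\alpha)}$ being integrable against the indicator of a set with smooth boundary for $\alpha \in (0,1)$. It remains only to convert $r^{d-\alpha}$ into $|B_r|^{(d-\alpha)/d}$: since $|B_r| = \omega_d r^d$, we get $r^{d-\alpha} = \omega_d^{-(d-\alpha)/d}\,|B_r|^{(d-\alpha)/d}$, whence
\begin{equation}
\lim_{|E|\to 0^+}\frac{\GPer_\varepsilon(B_{|E|})}{|E|^{(d-\alpha)/d}} = \omega_d^{-(d-\alpha)/d}\,\GPer_\alpha(B_1) =: c(d,\alpha) \in (0,+\infty),
\end{equation}
with positivity clear since $B_1$ is not null and not of full measure.

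The only genuine point to check carefully — and the main (mild) obstacle — is the finiteness $\GPer_\alpha(B_1) < \infty$, i.e. that the ball has finite untruncated fractional $\alpha$-perimeter; this is classical (it follows, e.g., from the fact that bounded sets with Lipschitz boundary belong to $W^{\alpha,1}$, cf.~\cite{DinPalVal12}), but should be cited. Everything else is the elementary scaling identity plus monotone convergence, so no delicate estimate is needed. One could alternatively invoke \cite[Lemma~3.2]{CesNov18} directly, as the excerpt already signals; the argument above makes the constant explicit.
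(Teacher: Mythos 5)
Your proposal is correct and follows essentially the same route as the paper: rescale by the radius so that $\GPer_\varepsilon(B_r)/r^{d-\alpha}$ becomes an integral over $B_1\times B_1^c$ with the truncation threshold $\varepsilon/r\to\infty$, identify the limit with $C^{d-\alpha}\int_{B_1}\int_{B_1^c}|x-y|^{-(d+\alpha)}\dd x\dd y$, and convert $r^{d-\alpha}$ into $|E|^{(d-\alpha)/d}$. Your version is in fact slightly tidier, since you justify the passage to the limit by monotone convergence and explicitly record the finiteness of the fractional perimeter of the unit ball, points the paper's proof treats implicitly.
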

\begin{proof}
    Set $r = C |E|^{1/d}$ where $C = |B_1(0)|^{-1/d}$.
    Thanks to the change of variables $\bar x=x/r$ and $\bar y=y/r$, we can rewrite the $\GPer_\varepsilon$-perimeter as
    \begin{equation*}
        \GPer_{\varepsilon} (B_{|E|}) = \int_{B_{|E|}} \int_{B_{|E|}^c} \frac{\rho_{\varepsilon}(x-y)}{|x-y|} \dd x \dd y=\int_{B_{1}(0)} \int_{B_{1}(0)^c} \frac{\rho_{\varepsilon}^{r}(\bar x-\bar y)}{|\bar x-\bar y|}\dd \bar x \dd \bar y,
    \end{equation*}
    where $\rho_{\varepsilon}^{r}(x-y)=r^{2d-1} \rho_{\varepsilon}\big(r(x-y)\big)$. In particular, we have:
    \begin{equation*}
        \frac{\rho_{\varepsilon}^{r}(x-y)}{| x- y|}=\frac{r^{d-\alpha}\1_{[0, \frac{\varepsilon}{r}]}(| x- y|)}{| x- y|^{d+\alpha}}=C^{d-\alpha}|E|^{\frac{d-\alpha}{d}}\frac{\1_{[0, \frac{\varepsilon}{r}]}(| x- y|)}{|x- y|^{d+\alpha}}
    \end{equation*}
    for almost every $x,y \in B_1(0)$.    Therefore, as $r\rightarrow 0$
    \begin{equation*}
    \frac{\rho_{\varepsilon}^{r}(x-y)}{r^{d-\alpha}}\rightarrow \frac{1}{|x-y|^{d+\alpha-1}},
    \end{equation*}
    which gives
    \begin{equation*}
        \lim _{|E| \rightarrow 0^{+}} \frac{\GPer_{\varepsilon}\left(B_{|E|}\right)}{|E|^{(d-\alpha)/d}}=C^{d-\alpha} \int_{B_{1}(0)} \int_{B_{1}(0)^c} \frac{1}{|x-y|^{d+\alpha}}\dd x \dd  y=:c(d,\alpha).\qedhere
    \end{equation*}
\end{proof}

\begin{prop}Let $\rho_\varepsilon$ be as in Lemma \ref{lem:profilelimit}, $g \in L^{d/\alpha}(\R^d)$,
\begin{equation}\label{eq:capillarity}E \in \argmin_F \ \GPer_\varepsilon(F) + \int_F g(x) \dd x,\end{equation}
and $E^{x_i}$ be the $\varepsilon$-connected components of $E$ provided by Theorem \ref{thm:epsdec}. Then, there is a constant $c(\alpha,d,g) >0$ such that 
\begin{equation}\label{eq:masslowerbound}|E^{x_i}| > c(\alpha, d, g)\quad\text{for all }i \in I.\end{equation}
\end{prop}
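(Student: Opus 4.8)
The idea is to remove a hypothetically small $\varepsilon$-connected component and contradict minimality. Fix a minimizer $E$ of \eqref{eq:capillarity} and one of its $\varepsilon$-connected components $E^{x_i}$ given by Theorem~\ref{thm:epsdec}; we may assume $|E^{x_i}|<\infty$, since otherwise \eqref{eq:masslowerbound} holds trivially. Set $F:=E\setminus E^{x_i}$. Splitting $(E\setminus E^{x_i})^c=E^c\cup E^{x_i}$ and using that $\dist^e(E^{x_i},E\setminus E^{x_i})\geqslant\varepsilon$ by Lemma~\ref{le:connind}\,(ii) together with $\esssupp\rho_\varepsilon=\overline{B_\varepsilon(0)}$ — exactly as in the computation leading to \eqref{eq:epsdecom} in the proof of Theorem~\ref{thm:epsdec} — one obtains $\GPer_\varepsilon(F)=\GPer_\varepsilon(E)-\GPer_\varepsilon(E^{x_i})$; in particular $F$ has finite $\GPer_\varepsilon$-perimeter and is an admissible competitor. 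Plugging $F$ into the minimality inequality $\GPer_\varepsilon(E)+\int_E g\leqslant\GPer_\varepsilon(F)+\int_F g$ and cancelling the common terms yields the key estimate
\[
\GPer_\varepsilon(E^{x_i})\leqslant-\int_{E^{x_i}}g\leqslant\int_{E^{x_i}}|g|.
\]

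Next I would bound both sides of this inequality by a constant times $|E^{x_i}|^{(d-\alpha)/d}$. For the right-hand side, Hölder's inequality with the conjugate exponents $d/\alpha$ and $d/(d-\alpha)$ gives $\int_{E^{x_i}}|g|\leqslant\|g\|_{L^{d/\alpha}(E^{x_i})}\,|E^{x_i}|^{(d-\alpha)/d}$. For the left-hand side, observe that the truncated kernel $\rho_\varepsilon(r)=\1_{[0,\varepsilon]}(r)\,r^{-(d+\alpha-1)}$ is radially nonincreasing, so Theorem~\ref{thm:isop_ineq} yields $\GPer_\varepsilon(E^{x_i})\geqslant\GPer_\varepsilon(B_{|E^{x_i}|})$, and by Lemma~\ref{lem:profilelimit} there is $\eta_0>0$ such that $\GPer_\varepsilon(B_{|A|})\geqslant\tfrac12 c(d,\alpha)\,|A|^{(d-\alpha)/d}$ for every measurable $A$ with $0<|A|\leqslant\eta_0$. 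Combining these three facts, every component with $|E^{x_i}|\leqslant\eta_0$ satisfies
\[
\tfrac12 c(d,\alpha)\,|E^{x_i}|^{(d-\alpha)/d}\leqslant\|g\|_{L^{d/\alpha}(E^{x_i})}\,|E^{x_i}|^{(d-\alpha)/d},
\qquad\text{hence}\qquad
\|g\|_{L^{d/\alpha}(E^{x_i})}\geqslant\tfrac12 c(d,\alpha).
\]

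The last step uses the absolute continuity of the integral: since $g\in L^{d/\alpha}(\R^d)$, there is $\eta_1>0$ such that $|A|<\eta_1$ implies $\|g\|_{L^{d/\alpha}(A)}<\tfrac12 c(d,\alpha)$. Therefore no $\varepsilon$-connected component can have measure smaller than $\min\{\eta_0,\eta_1\}$, and \eqref{eq:masslowerbound} holds with any $c(\alpha,d,g)<\min\{\eta_0,\eta_1\}$, a quantity depending only on $\alpha$, $d$ and $g$. The one genuinely delicate point is that the crude bound $\GPer_\varepsilon(E^{x_i})\leqslant\int_{E^{x_i}}|g|$ compares two quantities that scale with the \emph{same} power $|E^{x_i}|^{(d-\alpha)/d}$ of the mass, so the estimate cannot be closed by homogeneity alone; this is precisely why the absolute-continuity argument is needed, ensuring that on a sufficiently small component the $L^{d/\alpha}$-mass of $g$ falls strictly below the dimensional constant coming from the isoperimetric profile. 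The remaining ingredients — the perimeter additivity for the competitor and the isoperimetric lower bound — are direct applications of Theorem~\ref{thm:epsdec}, Lemma~\ref{le:connind} and Theorem~\ref{thm:isop_ineq} already established above.
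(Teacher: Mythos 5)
Your proof is correct and follows essentially the same strategy as the paper: compare $E$ with the competitor $E\setminus E^{x_i}$ to obtain $\GPer_\varepsilon(E^{x_i})\leqslant -\int_{E^{x_i}}g\dd x$, then combine the isoperimetric lower bound from Theorem~\ref{thm:isop_ineq} and Lemma~\ref{lem:profilelimit} with H\"older's inequality and the absolute continuity of $\|g\|_{L^{d/\alpha}}$ on sets of small measure. The only difference is cosmetic: you spell out the additivity $\GPer_\varepsilon(E\setminus E^{x_i})=\GPer_\varepsilon(E)-\GPer_\varepsilon(E^{x_i})$, which the paper leaves implicit when asserting that removing the component would decrease the energy.
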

\begin{proof}
For each $\epsilon$-connected component $E^{x_i}$ we have that 
\begin{equation}\label{eq:negative}
\GPer_\epsilon(E^{x_i}) + \int_{E^{x_i}} g(x)\dd x \leq 0,
\end{equation}
since otherwise we can remove the component $E^{x_i}$ from $E$ to reduce the value of the functional. On the other hand, by Theorem \ref{thm:isop_ineq} and Lemma \ref{lem:profilelimit}, there are $m_0 > 0$ and $\tilde{c}(d,\alpha)$ such that 
\[\GPer_\varepsilon(F) \geqslant \tilde{c}(d,\alpha) |F|^{(d-\alpha)/d} \quad \text{ whenever }|F| \leqslant m_0,\]
so that using \eqref{eq:negative} and H\"older's inequality, we get
\[\tilde{c}(d,\alpha) |E^{x_i}|^{(d-\alpha)/d} \leqslant \GPer_\varepsilon(E^{x_i}) \leqslant \left|\int_{E^{x_i}} g(x) \dd x \right| \leqslant \|g\|_{L^{d/\alpha}(E^{x_i})} |E^{x_i}|^{(d-\alpha)/d} \]
if $|E^{x_i}| \leqslant m_0$. Since we have assumed $g \in L^{d/\alpha}(\R^d)$, we have that for each $\delta >0$ there is $m>0$ so that $|F| \leq m$ implies $\|g\|_{L^{d/\alpha}(F)} < \delta$. Applying this with $\delta = \tilde{c}(d,\alpha)$ provides the desired bound \eqref{eq:masslowerbound} with $c(\alpha, d, g) = \min(m, m_0)$, since otherwise the estimate above would lead to a contradiction.
\end{proof}

\subsection{Decomposability for the fractional perimeter}\label{sec:decfractional}

Due to the infinite horizon of the fractional kernel, we show here that the notion of indecomposability for sets of finite fractional perimeter is not meaningful, since all sets with finite fractional perimeter are indecomposable with respect to the fractional perimeter $\GPer_\alpha$.

\begin{prop}\label{prop:frac_indec}
Let $E_1$ and $E_2$ be sets of finite fractional perimeter such that $E = E_1 \cup E_2$ with $|E_1 \cap E_2| = 0$ and $|E_1|,|E_2| > 0$, then
\begin{equation}\label{eq:fracperineq} \GPer_{\alpha}(E) =  \GPer_{\alpha}(E_1 \cup E_2) <  \GPer_{\alpha}(E_1) +  \GPer_{\alpha}(E_2).\end{equation}
\end{prop}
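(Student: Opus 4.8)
The plan is to use the same Tonelli-based decomposition identity that drove the finite-horizon analysis, but now exploit the fact that the fractional kernel $\rho_\alpha(x-y) = |x-y|^{-(d+\alpha-1)}$ is strictly positive everywhere on $\R^d \setminus \{0\}$, so the ``cross term'' between $E_1$ and $E_2$ cannot vanish. Concretely, since $E_1 \cap E_2 = \emptyset$ (up to null sets), writing $\1_E = \1_{E_1} + \1_{E_2}$ and expanding the double integral defining $\GPer_\alpha$ exactly as in \eqref{eq:dec} gives
\begin{equation*}
\GPer_\alpha(E_1) + \GPer_\alpha(E_2) = \GPer_\alpha(E) + 4\int_{E_1 \times E_2} \frac{1}{|x-y|^{d+\alpha}} \dd x \dd y,
\end{equation*}
using the two-point gradient notation \eqref{eq:2pg} and the coarea-type bookkeeping already carried out in Section \ref{sec:finitehorizon}. (One should first observe that if either $\GPer_\alpha(E_1)$ or $\GPer_\alpha(E_2)$ were infinite the claimed strict inequality is trivial, so we may assume both are finite; then every term in the identity is finite and the rearrangement is legitimate.)

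The key step is then to show that the cross term is strictly positive, i.e.
\begin{equation*}
\int_{E_1 \times E_2} \frac{1}{|x-y|^{d+\alpha}} \dd x \dd y > 0.
\end{equation*}
Since $|E_1|, |E_2| > 0$, the product measure $\L^d \times \L^d$ of $E_1 \times E_2$ is strictly positive, and the integrand $1/|x-y|^{d+\alpha}$ is strictly positive for all $(x,y)$ with $x \neq y$; as $|E_1 \cap E_2| = 0$, the diagonal contributes nothing, so the integral of a strictly positive function over a set of positive measure is strictly positive. Plugging this back into the identity yields \eqref{eq:fracperineq}.

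I do not expect a genuine obstacle here — the proposition is essentially immediate once the decomposition identity is in hand. The only point requiring a little care is the bookkeeping for the possibly-infinite case and making sure the Tonelli manipulation is applied to nonnegative integrands (which it is, since $|\1_E(x) - \1_E(y)|$ and the kernel are nonnegative), so that the identity \eqref{eq:dec} holds in $[0,+\infty]$ regardless of finiteness; after that, strict positivity of the cross term is the whole content. It is also worth remarking, as the surrounding text does, that this is the $\varepsilon = +\infty$ degenerate case of Proposition \ref{prop:charepsilon}: there is no partition achieving $\dist^e(E_1, E_2) \geq \varepsilon$ when $\varepsilon = \infty$ and both pieces have positive measure, so no set of finite fractional perimeter can be decomposable.
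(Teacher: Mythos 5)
Your proposal is correct and follows essentially the same route as the paper: expand $\GPer_\alpha(E_1)+\GPer_\alpha(E_2)$ via the Tonelli identity \eqref{eq:dec} specialized to the fractional kernel, and conclude from strict positivity of the cross term $\int_{E_1\times E_2}|x-y|^{-(d+\alpha)}\dd x\dd y$, which holds since the kernel is everywhere positive and $|E_1|,|E_2|>0$. Your extra remarks on the infinite-perimeter case and the $\varepsilon=+\infty$ interpretation are fine but not needed beyond what the paper already records.
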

\begin{proof}
Since $|E_1 \cap E_2| = 0$, we can compute as in \eqref{eq:dec} that
\begin{align}\label{eq:estind}
      \GPer_{\alpha}(E_1) +  \GPer_{\alpha}(E_2)=   \GPer_{\alpha}(E)+ 4\int_{E_1\times E_2} \frac{1}{|x-y|^{d+\alpha}} \dd x \dd y.
\end{align}
Hence, if $|E_1|,|E_2| > 0$, then \eqref{eq:fracperineq} follows from \eqref{eq:estind}.
\end{proof}

\begin{rem}The only property of $\rho_\alpha$ that was used in Proposition \ref{prop:frac_indec} is that $\rho_\alpha(x) > 0$ for all $x \in \R^d$. Roughly speaking, we can also see this result as the case of $\varepsilon = + \infty$ in Proposition \ref{prop:charepsilon}.
\end{rem}

\subsection{Extreme points in \texorpdfstring{$W^{\rho_\varepsilon,1}(\R^d)$}{W1rhoeps} and \texorpdfstring{$W^{\alpha,1}(\R^d)$}{W1alpha}}\label{sec:extepsfrac}

In this section, we aim to characterize the extreme points of the Gagliardo-type seminorm both in the finite horizon case and in the fractional case. The notion of decomposability for sets of finite Gagliardo perimeter is directly linked to the characterization of extreme points for the Gagliardo-type seminorm. This aligns with the case of sets of finite perimeter, where the extreme points of the classical total variation ball are normalized indicator functions of simple sets (indecomposable and saturated sets) \cite{AmbCasMasMor01}.  

We start with a simple lemma.

\begin{lemma}\label{lem:epspermorethanzero}
Let $E \subset \R^d$ be such that $|E| > 0$ and $|E^c|>0$. Then, $\GPer_\varepsilon(E)>0$.
\end{lemma}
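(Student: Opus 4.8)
The plan is to show the double integral defining $\GPer_\varepsilon(E)$ is strictly positive by exhibiting a set of pairs $(x,y) \in E \times E^c$ of positive $\L^d \times \L^d$-measure on which the integrand $\frac{1}{|x-y|}\rho_\varepsilon(x-y)$ is strictly positive. Since $\GPer_\varepsilon(E) = 2\int_E \int_{E^c} \frac{1}{|x-y|}\rho_\varepsilon(x-y)\dd x\dd y$ and the integrand is nonnegative, it suffices to find such a set.

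First I would pick points witnessing the mass of $E$ and $E^c$: since $|E|>0$ and $|E^c|>0$, both $\supp(\L^d \mres E)$ and $\supp(\L^d \mres E^c)$ are nonempty (indeed $E^{(1)} \subset \supp(\L^d \mres E)$ and $E^{(0)} \subset \supp(\L^d \mres E^c)$, and by \eqref{eq:densityeq} these have positive measure). Choose $\bar x \in \supp(\L^d \mres E)$ and $\bar y \in \supp(\L^d \mres E^c)$. The key geometric point is that we cannot simply take $\bar x, \bar y$ with $|\bar x - \bar y| < \varepsilon$ arbitrarily — the supports could in principle be far apart. However, $\R^d$ is connected and $E \cup E^c = \R^d$ up to null sets, so there must be a point near the "interface" where both supports come within distance $\varepsilon$. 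More carefully: the function $x \mapsto \dist(x, \supp(\L^d \mres E^c))$ is continuous, equals $0$ on $\supp(\L^d \mres E^c)$, and I claim it is not bounded below by $\varepsilon$ on $\supp(\L^d \mres E)$. If it were, then $\supp(\L^d \mres E)$ and $\supp(\L^d \mres E^c)$ would be at distance $\geq \varepsilon$, forcing (by the same argument as in Proposition~\ref{prop:charepsilon}, since every pair $(x,y) \in E \times E^c$ lies in the product of these supports up to null sets) that $\R^d = E \cup E^c$ decomposes into two pieces separated by distance $\varepsilon$, contradicting connectedness of $\R^d$ — concretely, $\{x : \dist(x, \supp(\L^d \mres E)) < \varepsilon/2\}$ and its complement would give a nontrivial clopen-like partition, or more simply any segment from $\bar x$ to $\bar y$ must cross the gap. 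Thus there exist $\bar x \in \supp(\L^d \mres E)$, $\bar y \in \supp(\L^d \mres E^c)$ with $|\bar x - \bar y| < \varepsilon$.

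Then I would set $r := \frac{\varepsilon - |\bar x - \bar y|}{2} > 0$, exactly as in the proof of Proposition~\ref{prop:charepsilon}. For any $x \in B_r(\bar x)$ and $y \in B_r(\bar y)$ we have $|x - y| < |\bar x - \bar y| + 2r = \varepsilon$, and also $|x-y| > 0$ for $x \neq y$; since $\esssupp \rho_\varepsilon = \overline{B_\varepsilon(0)}$, the integrand $\frac{1}{|x-y|}\rho_\varepsilon(x-y)$ is positive on a positive-measure subset of $B_r(\bar x) \times B_r(\bar y)$. By the definition of support, $|E \cap B_r(\bar x)| > 0$ and $|E^c \cap B_r(\bar y)| > 0$, so the set $(E \cap B_r(\bar x)) \times (E^c \cap B_r(\bar y))$ has positive $\L^d \times \L^d$-measure, and on it the integrand is positive on a further positive-measure subset. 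Hence $\GPer_\varepsilon(E) \geq 2\int_{E \cap B_r(\bar x)}\int_{E^c \cap B_r(\bar y)} \frac{1}{|x-y|}\rho_\varepsilon(x-y)\dd x\dd y > 0$.

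The main obstacle is the claim that the supports of $\L^d\mres E$ and $\L^d\mres E^c$ come within distance $\varepsilon$ of each other; this is where connectedness of $\R^d$ is essential (and is exactly the reason the statement fails for a disconnected ambient space). I would prove it by contradiction using the distance characterization: if $\dist\big(\supp(\L^d\mres E), \supp(\L^d\mres E^c)\big) \geq \varepsilon$, take any $\bar x \in \supp(\L^d\mres E)$ and $\bar y \in \supp(\L^d\mres E^c)$ and the straight segment $\gamma(t) = (1-t)\bar x + t\bar y$. Let $t^* = \sup\{t : \gamma(t) \in \{z : \dist(z, \supp(\L^d\mres E)) < \varepsilon/2\}\}$; the point $\gamma(t^*)$ is at distance $\geq \varepsilon/2 - \delta$ from $\supp(\L^d\mres E)$ for any $\delta>0$ hence at distance $\geq \varepsilon/2$, but also a limit of points at distance $<\varepsilon/2$, and by the separation assumption $\gamma(t^*)$ avoids $\supp(\L^d\mres E^c)$ as well for $t^*$ slightly less than where we'd hit it — more cleanly, $\gamma(t^*) \notin E^{(0)} \cup E^{(1)}$ leads to $\gamma(t^*) \in \partial^e(E) \subset \supp(\L^d\mres E)$ by Lemma~\ref{lem:essentialbdyspt}, forcing $\dist(\gamma(t^*), \supp(\L^d\mres E)) = 0 < \varepsilon/2$, a contradiction once we check $\gamma(t^*)$ cannot have density $0$ or $1$ (if it had density $1$ it's in $\supp(\L^d\mres E)$ trivially; if density $0$ then a neighborhood witnesses points of $E^c$-support nearby). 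Since this density dichotomy argument is slightly delicate at the endpoints, the cleanest route may instead be: $E^{(1)}$ has positive measure and so does $E^{(0)}$, both are (up to null sets) all of their respective supports, and $\R^d = E^{(1)} \sqcup \partial^e E \sqcup E^{(0)}$; if the two supports were $\varepsilon$-separated then $\partial^e E$ would be empty (any boundary point is in both... no — in $\supp(\L^d\mres E)$ only), so $\R^d = E^{(1)} \cup E^{(0)} \cup (\text{null})$ with $E^{(1)}, E^{(0)}$ open-ish and separated — I would finalize this topological step carefully in the writeup, as it is the crux.
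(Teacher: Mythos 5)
Your reduction of the lemma to the claim that $\supp(\L^d\mres E)$ and $\supp(\L^d\mres E^c)$ come within distance $\varepsilon$ of each other is the right move, and it is exactly what the paper's proof hinges on as well (run there as a contradiction: $\GPer_\varepsilon(E)=0$ would force $\dist^e(E,E^c)=\dist\big(\supp(\L^d\mres E),\supp(\L^d\mres E^c)\big)\geqslant\varepsilon$). However, you do not actually prove this crux, and you say so yourself (``I would finalize this topological step carefully in the writeup''). Neither sketch closes it as written: the segment/$t^*$ argument never arrives at a clean contradiction, and the second route is abandoned after a self-correction that is in fact mistaken --- since $\partial^e E=\partial^e(E^c)$ by the very definition of the essential boundary, Lemma~\ref{lem:essentialbdyspt} applied to both $E$ and $E^c$ shows that every essential boundary point lies in \emph{both} supports; the genuinely missing ingredient on that route is rather that $\partial^e E\neq\emptyset$ whenever $|E|,|E^c|>0$, which for an arbitrary measurable set is not free. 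So as it stands there is a real gap at the decisive step.

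The gap is easy to fill, and the paper does it with a one-line observation that avoids essential boundaries altogether: $\supp(\L^d\mres E)\cup\supp(\L^d\mres E^c)=\R^d$. Indeed, if $x\notin\supp(\L^d\mres E)$, then $|E\cap B_r(x)|=0$ for some $r>0$, hence $|E^c\cap B_s(x)|>0$ for every $s>0$, i.e.\ $x\in\supp(\L^d\mres E^c)$. Two nonempty closed sets covering the connected space $\R^d$ cannot be disjoint (each would be open as the complement of the other), so the two supports intersect and $\dist^e(E,E^c)=0<\varepsilon$; from there your ball argument proceeds as in Proposition~\ref{prop:charepsilon}. One further small remark: your final positivity step uses only $\esssupp\rho_\varepsilon=\overline{B_\varepsilon(0)}$, which does not imply $\rho_\varepsilon>0$ a.e.\ on $B_\varepsilon(0)$; to justify it one can note that $z\mapsto\big|(E\cap B_r(\bar x))\cap\big((E^c\cap B_r(\bar y))+z\big)\big|$ is continuous and has positive integral, hence is positive on a nonempty open subset of $B_\varepsilon(0)$, on which $\rho_\varepsilon$ cannot vanish a.e. This gloss is at the same level of detail as the paper's own argument, so it is a minor point compared with the unfinished connectedness step.
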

\begin{proof}
We proceed by contradiction. We assume that $\GPer_{\varepsilon} (E) = 0$, which can be rewritten using the Tonelli theorem as
\[\GPer_{\varepsilon} (E) = 2\int_{E} \int_{E^c} \frac{1}{|x-y|}\rho_{\varepsilon}(x-y) \dd x \dd y  = 0.\]
Since the integrand is nonnegative and $\esssupp \rho_\varepsilon=\overline{B_{\epsilon}(0)}$, this is equivalent to
\[|x-y| \geqslant \varepsilon \quad \text{ for a.e. } (x,y) \in E^c \times E,\]
or, equivalently,
\[
\dist^e(E,E^c) = \dist( \supp (\Lcal^d \mres E), \supp(\Lcal^d \mres E^c)) \geqslant \epsilon.
\]
However, it can be directly verified from the definition that $\supp (\Lcal^d \mres E) \cup \supp(\Lcal^d \mres E^c) = \R^d$, so that $\dist^e(E,E^c)=0$. This yields the desired contradiction.
\end{proof}

We are now ready to characterize the extreme points of 

\[\Bcal_{W^{\rho_{\varepsilon},1}}:=\left\{u \in W^{\rho_{\varepsilon},1}(\R^d)\,\middle|\,|u|_{W^{\rho_{\varepsilon},1}(\R^d)} \leqslant1\right\}.\]
We introduce the following notion first, mirroring the classical concept of simple sets.
\begin{defi}[$\epsilon$-simple]
    A set $E\subset \R^d$ with $|E|<\infty$ is called $\epsilon$-simple, if $E$ is $\epsilon$-indecomposable and $E^c$ does not have any $\epsilon$-connected component with finite measure.
\end{defi}
\begin{thm}\label{thm:exteps}
  The extreme points of $\Bcal_{W^{\rho_{\varepsilon},1}}$ can be characterized as
  \begin{align}
\Ext(\Bcal_{W^{\rho_{\varepsilon},1}}) = \Bigg\{ \pm\frac{\1_E}{ \GPer_{\epsilon}(E)} \,\Bigg|\, & E \subset \R^d \text{ with } |E|\in (0,\infty), \ \GPer_{\epsilon}(E) \in (0,\infty) \text{ and }E \text{ $\epsilon$-simple}\Bigg\}\label{eq:epsilonext}.
  \end{align}
\end{thm}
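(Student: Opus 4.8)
The plan is to follow the classical strategy from \cite{AmbCasMasMor01} and \cite{bredies2020sparsity}, adapting it to the finite-horizon Gagliardo setting. The proof splits into two inclusions: first, that every normalized indicator of an $\epsilon$-simple set is an extreme point, and second, that there are no other extreme points.

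\textbf{Step 1: $\epsilon$-simple sets give extreme points.} Fix an $\epsilon$-simple $E$ with $|E| \in (0,\infty)$ and $\GPer_\epsilon(E) \in (0,\infty)$, and set $u = \1_E/\GPer_\epsilon(E)$. Suppose $u = \tfrac12(u_1 + u_2)$ with $u_1, u_2 \in \Bcal_{W^{\rho_\epsilon,1}}$; I must show $u_1 = u_2 = u$. Since $|u|_{W^{\rho_\epsilon,1}} = 1$ and the seminorm is convex, both $u_i$ lie on the unit sphere and, by strict convexity of $t \mapsto |t|$ away from sign changes, the two-point gradients $d_\epsilon u_1$ and $d_\epsilon u_2$ must be nonnegative multiples of $d_\epsilon u$ at a.e.\ pair $(x,y)$ where $d_\epsilon u \neq 0$; combined with $|u_i|_{W^{\rho_\epsilon,1}}=1$ this forces, after using the coarea formula \eqref{eq:Wrho1coarea}, that $u_i$ has the same level-set structure. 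The cleaner route: use the coarea formula to write $1 = |u_i|_{W^{\rho_\epsilon,1}} = \int \GPer_\epsilon(\{u_i > t\})\,\dd t$ and the analogous identity for $u$, together with the elementary fact that $\{u > t\} = E$ for $t \in (0, 1/\GPer_\epsilon(E))$ and is empty or full otherwise; one shows each superlevel set $\{u_i > t\}$ must equal $E$ (up to null sets) for a.e.\ relevant $t$, which pins down $u_i = u$. The key extra ingredient over the classical case is that $\epsilon$-simplicity of $E$ (i.e.\ $E$ is $\epsilon$-indecomposable and $E^c$ has no $\epsilon$-connected component of finite measure) is exactly what prevents a nontrivial splitting: if $\{u_1 > t\}$ were a proper subset $F \subsetneq E$ with $0 < |F|$, then either $F$ and $E \setminus F$ witness $\epsilon$-decomposability of $E$ (impossible), or by Proposition~\ref{prop:charepsilon} and Theorem~\ref{thm:epsdec} the "defect" term forces strict inequality in the seminorm, contradicting $u_1$ being on the sphere. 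The role of the saturation condition on $E^c$ is to rule out superlevel sets strictly larger than $E$.

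\textbf{Step 2: extreme points are of this form.} Let $u \in \Ext(\Bcal_{W^{\rho_\epsilon,1}})$; clearly $u \neq 0$ and $|u|_{W^{\rho_\epsilon,1}} = 1$. First reduce to indicator functions: by the coarea formula \eqref{eq:Wrho1coarea}, $u = \int_{-\infty}^{\infty} \1_{\{u>t\}}\,\dd t$ with $\int \GPer_\epsilon(\{u>t\})\,\dd t = 1$; a standard Choquet-type / averaging argument (as in \cite{bredies2020sparsity, AmbCasMasMor01}) shows that an extreme point must be, up to sign and normalization, a single indicator $\1_E/\GPer_\epsilon(E)$ with $0 < |E|$, $\GPer_\epsilon(E) < \infty$; here $|E| < \infty$ follows because $u \in W^{\rho_\epsilon,1}(\R^d) \subset L^1$ forces the finite-measure superlevel set, and $\GPer_\epsilon(E) > 0$ by Lemma~\ref{lem:epspermorethanzero}. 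It remains to show $E$ is $\epsilon$-simple. If $E$ were $\epsilon$-decomposable, Theorem~\ref{thm:epsdec} yields a partition $E = E_1 \cup E_2$ with $|E_i| > 0$ and $\GPer_\epsilon(E) = \GPer_\epsilon(E_1) + \GPer_\epsilon(E_2)$; then $u = \lambda_1 v_1 + \lambda_2 v_2$ with $v_i = \1_{E_i}/\GPer_\epsilon(E_i)$, $\lambda_i = \GPer_\epsilon(E_i)/\GPer_\epsilon(E) > 0$, $\lambda_1 + \lambda_2 = 1$, a nontrivial convex combination of distinct unit-ball elements, contradicting extremality. If $E^c$ had an $\epsilon$-connected component $C$ with $0 < |C| < \infty$, apply the same decomposition argument to $E \cup C$: one has $\GPer_\epsilon(E \cup C) = \GPer_\epsilon(E) - \GPer_\epsilon(C)$ (since $\dist^e(C, E^c \setminus C) \geq \epsilon$ peels off the whole perimeter contribution of $C$), and then $\1_E$ is a convex combination involving $\1_{E \cup C}$ and $\1_C$ after renormalizing — more precisely, writing $\1_E = \1_{E\cup C} - \1_C$ and balancing the seminorms shows $u$ is a proper convex combination of $\pm\1_{E \cup C}/\GPer_\epsilon(E\cup C)$ and $\pm\1_C/\GPer_\epsilon(C)$, again contradicting extremality. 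Hence $E$ is $\epsilon$-simple.

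\textbf{Main obstacle.} The delicate point is Step 1: showing that a convex splitting $u = \tfrac12(u_1+u_2)$ of a normalized $\epsilon$-simple indicator forces $u_1 = u_2$. The subtlety is that $u_1, u_2$ need not a priori be indicators, so one cannot immediately invoke the decomposition theorem; the argument must pass through the coarea formula to reduce to superlevel sets and then use that \emph{every} superlevel set of $u_1$ (and $u_2$) that is neither null nor co-null must coincide with $E$, which is where $\epsilon$-indecomposability (no splitting of $E$) and saturation of $E^c$ (no enlargement of $E$) are both used. One must also be careful that the "defect integral" $\int_{E_1 \times E_2} |x-y|^{-1}\rho_\epsilon(x-y)\,\dd x\,\dd y$ appearing in \eqref{eq:dec} is strictly positive whenever $\dist^e(E_1,E_2) < \epsilon$ and both sets have positive measure — this is exactly Proposition~\ref{prop:charepsilon}, so it is available, but it must be invoked at the level of superlevel sets rather than for $E$ directly. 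I expect the bookkeeping for the saturation condition (the $E^c$ part) to be the most error-prone, as it requires the subtraction identity $\GPer_\epsilon(E \cup C) = \GPer_\epsilon(E) - \GPer_\epsilon(C)$ and a careful sign/normalization chase.
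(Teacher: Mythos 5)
Your proposal is correct and follows essentially the same route as the paper: sufficiency via the two-point-gradient equality/support argument combined with Proposition~\ref{prop:charepsilon} to force constancy on $E$ and vanishing on the (infinite-measure) $\epsilon$-connected components of $E^c$, and necessity via a coarea-based reduction to normalized indicators followed by the two explicit convex splittings, where your subtraction identity $\GPer_\epsilon(E\cup C)=\GPer_\epsilon(E)-\GPer_\epsilon(C)$ is exactly what the paper's choice $u_2=(1-\1_{F_2})/\GPer_\epsilon(F_2)$ encodes. The only points the paper makes more explicit are the reduction to indicators (done by splitting $u$ at a single level $\eta$ into $u_\eta=\max\{u-\eta,0\}$ and $u-u_\eta$ and using coarea additivity together with Lemma~\ref{lem:epspermorethanzero}) and the $E^c$ part of sufficiency (constancy on each $\epsilon$-connected component plus integrability forces $u_i=0$ there), both of which your sketch gestures at correctly.
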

\begin{proof}
\emph{Sufficiency.} Let $u = \frac{1}{ \GPer_{\epsilon}(E)}\1_E$ with $|E| \in (0,\infty)$, $\GPer_\epsilon(E) \in (0,\infty)$ and such that $E$ is $\epsilon$-simple (the proof is similar for $u=-\1_E/\GPer_{\varepsilon}(E)$). We aim to prove that $u \in \Ext(\Bcal_{W^{\rho_{\varepsilon},1}})$. 
Suppose that there exist two functions $u_1,u_2 \in \mathcal{B}_{W^{\rho_\varepsilon,1}}$ and $\lambda \in (0,1)$ such that 
\begin{align}\label{eq:decom}
    u = \lambda u_1 + (1-\lambda) u_2.
\end{align}
Note that since $|u|_{W^{\rho_\varepsilon,1}}=1$, we also have $|u_1|_{W^{\rho_\varepsilon,1}} = |u_2|_{W^{\rho_\varepsilon,1}}=1$.
Then, applying the two-point gradient \eqref{eq:2pnonlocal} to the identity above, we get that $d_\varepsilon u = \lambda d_\varepsilon u_1 + (1-\lambda) d_\varepsilon u_2$. Since $d_\varepsilon u$ is essentially supported on $[E \times E^c] \cup [E^c \times E]$, we deduce that $d_\varepsilon u_1$ and $d_\varepsilon u_2$ are also essentially supported on this set. Indeed, if this were not the case, we would obtain that
 \begin{align}
     |u|_{W^{\rho_\varepsilon,1}(\R^d)} & = 2\int_{E} \int_{E^c} \frac{|u(x)-u(y)|}{|x-y|}\rho_\varepsilon
(x-y) \dd x \dd y\\
     &\leqslant 2\int_{E} \int_{E^c} \lambda\frac{|u_1(x)-u_1(y)|}{|x-y|}\rho_\varepsilon
(x-y) \dd x \dd y \\
& \qquad \qquad \qquad + 2\int_{E} \int_{E^c} (1-\lambda)\frac{|u_2(x)-u_2(y)|}{|x-y|}\rho_\varepsilon
(x-y) \dd x \dd y\\
     & < \lambda |u_1|_{W^{\rho_\varepsilon,1}(\R^d)}+(1-\lambda)|u_2|_{W^{\rho_\varepsilon,1}(\R^d)},
 \end{align}
which gives a contradiction since $1=|u|_{W^{\rho_\varepsilon,1}(\R^d)}=\lambda |u_1|_{W^{\rho_\varepsilon,1}(\R^d)}+(1-\lambda)|u_2|_{W^{\rho_\varepsilon,1}(\R^d)}$.  
This implies that $d_{\varepsilon} u_i=0$ a.e.~in $E\times E$ and $E^c \times E^c$ for $i=1,2$. 
Knowing that $E$ is $\varepsilon$-simple, we want to show that $u_i = c_i\mathds{1}_E$ for $c_i \in \R$. We only consider the case $i=1$ and first prove that $u_1 = c$ a.e.~in $E$. Suppose by contradiction that $u_1$ is not equal to a constant a.e.~in $E$. Then, there exists a constant $\eta \in \R$ such that 
\begin{align}
E_1 := \{x\in E: u_1(x) \geqslant\eta\} \quad \text{and} \quad E_2 := \{x\in E: u_1(x) < \eta\} 
\end{align}
are sets of positive measure. In particular, $|u_1(x) - u_1(y)| >0$ for a.e.~$(x,y) \in E_1 \times E_2$. Note that $\{E_1,E_2\}$ is a partition of $E$. Since $E$ is $\varepsilon$-indecomposable and due to Proposition \ref{prop:charepsilon}, it holds that there exist $\bar x \in \supp(\mathcal{L}^d \res E_1)$ and $\bar y \in \supp(\mathcal{L}^d \res E_2)$ such that $|\bar x-\bar y| < \varepsilon$. In particular, by arguing in a similar way as Proposition \ref{prop:charepsilon} and defining $r:= \frac{\varepsilon - |\bar x-\bar y|}{2}$, it holds that 
$|B_r(\bar x) \cap E_1| > 0$, $|B_r(\bar y) \cap E_2| > 0$ and 
\begin{align}
[(E_1\cap B_{r}(\bar x))\times (E_2\cap B_{r}(\bar y))]\subset [(E_1\times E_2)\cap \{|x-y|<\varepsilon\}].
\end{align}
This implies that $\left.d_{\varepsilon} u_1\right|_{E \times E}>0$ on a set of positive measures, leading to a contradiction. In the same way, one can argue that $u_1$ must be constant on each $\epsilon$-connected component of $E^c$. Since each of these connected components has infinite measure due to $E$ being $\epsilon$-simple, and since $u_1$ is integrable, we must have that $u_1=0$ a.e.~in $E^c$. We conclude that $u_i = c_i\mathds{1}_E$ for $i=1,2$. Finally, since $|u_i|_{W^{\rho_\varepsilon,1}}=1$, we deduce that $c_i = \frac{1}{\GPer_\varepsilon(E)}$, implying that $u_1 = u_2 =u$ and thus concluding the proof.

\emph{Necessity.} Given $u \in \Ext(\Bcal_{W^{\rho_{\varepsilon},1}})$, we want to prove that $u=\pm\frac{1}{ \GPer_{\epsilon}(E)}\1_E$ for some $E \subset \R^d$, with $|E| \in (0, \infty)$, $\GPer_\epsilon(E) \in (0,\infty)$  and $E$ $\varepsilon$-simple. We first note that $|u|_{W^{\rho_\epsilon,1}(\R^d)} =1$ must hold, since otherwise we can write $u$ as a convex combination by scaling. Furthermore, we assume without loss of generality that there is some $\eta>0$ such that $|\{u > \eta \}|>0$, otherwise we can replace $u$ by $-u$. Define the non-constant function $u_\eta:=\max\{u-\eta,0\} \in L^1(\R^d)$, then we find by the coarea formula \eqref{eq:Wrho1coarea} that
\[
|u_\eta|_{W^{\rho_\epsilon,1}(\R^d)} = \int_\eta^\infty \GPer_\epsilon \left(\left\{u > t\right\}\right)\dd t
\]
and
\[
\quad |u-u_\eta|_{W^{\rho_\epsilon,1}(\R^d)}=|\min\{u,\eta\}|_{W^{\rho_\epsilon,1}(\R^d)}= \int_{-\infty}^\eta \GPer_\epsilon \left(\left\{u > t\right\}\right)\dd t.
\]
Hence, we find that
\[
|u|_{W^{\rho_\epsilon,1}(\R^d)} = |u_\eta|_{W^{\rho_\epsilon,1}(\R^d)}+|u-u_\eta|_{W^{\rho_\epsilon,1}(\R^d)}.
\]
Observe that both $|u_\eta|_{W^{\rho_\epsilon,1}(\R^d)}$ and $|u-u_\eta|_{W^{\rho_\epsilon,1}(\R^d)}$ are positive, due to Lemma~\ref{lem:epspermorethanzero} and the coarea formula. Therefore, if we set $\lambda:=|u_\eta|_{W^{\rho_\epsilon,1}(\R^d)} \in (0,1)$ and define the functions
\[
u_1 = \frac{1}{\lambda}u_\eta \quad \text{and} \quad u_2 = \frac{1}{1-\lambda}(u-\lambda u_1),
\]
then $u_1,u_2 \in \Bcal_{W^{\rho_{\varepsilon},1}}$ and $u=\lambda u_1 + (1-\lambda)u_2$. Since $u$ is an extreme point, we must have $u=u_1$, or equivalently, $u=u_\eta/\lambda$. This latter condition is only possible if $u$ is constant on $\{u>\eta\}$ and zero outside this set, that is, with $E:=\{u > \eta \} \subset \R^d$ it holds that $u=\mathds{1}_E/\GPer_\epsilon(E)$.

It remains to prove that $E$ is $\epsilon$-simple. Suppose by contradiction that $E$ is $\varepsilon$-decomposable. Then, there exists a partition $\{E_1,E_2\}$ of $E$ in sets of finite $\GPer_{\varepsilon}$-perimeter such that $|E_1|, |E_2| \in (0,\infty)$ and $\GPer_\varepsilon(E) = \GPer_\varepsilon(E_1) + \GPer_\varepsilon(E_2)$. By Lemma \ref{lem:epspermorethanzero}, we have that $\GPer_\varepsilon(E_1) >0$ and $\GPer_\varepsilon(E_2) > 0$. Then, by defining 
\begin{align}
u_1 = \frac{\1_{E_1}}{\GPer_\varepsilon(E_1)} \quad \text{and} \quad u_2 = \frac{\1_{E_2}}{\GPer_\varepsilon(E_2)},
\end{align}
it holds that $u_1, u_2 \in \Bcal_{W^{\rho_\varepsilon,1}(\R^d)}$. Therefore, since $E_1 \cap E_2= \emptyset$ and $\GPer_\varepsilon(E) = \GPer_\varepsilon(E_1) + \GPer_\varepsilon(E_2)$, we obtain
\begin{align}
    u = \frac{\GPer_\varepsilon(E_1)}{\GPer_\varepsilon(E)} u_1 + \frac{\GPer_\varepsilon(E_2)}{\GPer_\varepsilon(E)}u_2,
\end{align}
contradicting the extremality of $u$. Suppose now by contradiction that $E^c$ has an $\epsilon$-connected component with finite measure. Let $\{F_1,F_2\}$ be an $\varepsilon$-decomposition of $E^c$ according to Definition~\ref{def:epsdec} with $|F_1|\in (0,\infty)$. Define 
\begin{align}
    u_1 := - \frac{\1_{F_1}}{\GPer_\varepsilon(F_1)} \quad \text{and} \quad u_2 := \frac{1 - \1_{F_2}}{\GPer_\varepsilon(F_2)},
\end{align}
which are integrable functions with $u_1,u_2 \in \Bcal_{W^{\rho_\epsilon,1}(\R^d)}$. Then, by similar considerations as before, it holds that 
\begin{align}
u =   \frac{\GPer_\varepsilon(F_1)}{\GPer_\varepsilon(E^c)} u_1 + \frac{\GPer_\varepsilon(F_2)}{\GPer_\varepsilon(E^c)}u_2, 
\end{align}
contradicting the extremality of $u$.
\end{proof}

\begin{rem}
The expression of $\GPer_\varepsilon$ as a double integral is not the only reasonable definition of a perimeter with radius of interaction $\varepsilon$. An alternative is the Minkowski-type perimeter studied in \cite{CesDipNovVal18, CesNov17} and defined (up to a factor $2$ with respect to their notation) as
\begin{equation}\label{eq:minkper}\widetilde{\Per}_{\varepsilon,1,1}(E):= \frac{1}{\varepsilon} \big(\big|E^{(1)} \oplus B_\varepsilon(0)\big| - \big|E^{(1)} \ominus B_\varepsilon(0)\big|\big),
\end{equation}
where for any $F \subset \R^d$
\[
F \oplus B_\varepsilon(0) := \left\{x \in \R^d \,\middle|\, \dist(x,F)<\epsilon\right\} \quad \text{and} \quad F \ominus B_\varepsilon(0) := \left\{x \in \R^d\,\middle|\, \dist(x,F^c)\geq \epsilon\right\}.
\]
Its generalization with respect to two nonnegative measures $\rho_0, \rho_1$ which are absolutely continuous with respect to the Lebesgue measure is given, taking into account \eqref{eq:densityeq}, by
\begin{equation}\label{eq:bunper}\begin{aligned}\widetilde{\Per}_{\varepsilon,\rho_0,\rho_1}(E):&= \frac{1}{\varepsilon} \,\rho_0\!\left( \left\{ x \in E^c \,\middle|\, \operatorname{dist}\big(x, E^{(1)}\big) < \varepsilon \right\}\right) + \frac{1}{\varepsilon}\, \rho_1\!\left( \left\{ x \in E \,\middle|\, \operatorname{dist}\big(x, E^{(0)}\big) < \varepsilon \right\}\right)\\
&=\frac{1}{\varepsilon} \,\rho_0\!\left( \left\{ x \in E^{(0)} \,\middle|\, \operatorname{dist}\big(x, E^{(1)}\big) < \varepsilon \right\}\right) + \frac{1}{\varepsilon}\, \rho_1\!\left( \left\{ x \in E^{(1)} \,\middle|\, \operatorname{dist}\big(x, E^{(0)}\big) < \varepsilon \right\}\right),
\end{aligned}\end{equation}
where for the second line we have used absolute continuity of $\rho_0, \rho_1$ and \eqref{eq:densityeq}. The latter was introduced in \cite{BunTriMur23} and further studied in \cite{BunKer24}, motivated by machine learning applications. Specifically, $\widetilde{\Per}_{\varepsilon,\rho_0,\rho_1}$ can be seen (up to a normalization of both terms) as the functional minimized by a binary classifier to distinguish between the two conditional data distributions $\rho_0,\rho_1$, which should be robust with respect to adversarial perturbations of the data up to a distance $\varepsilon$, referred to in this context as adversarial budget. 

Assuming that $\supp \rho_0 = \R^d$, it holds that a set $E$ is decomposable with respect to $\widetilde{\Per}_{\varepsilon,\rho_0,\rho_1}$ if and only if it is $2\varepsilon$-decomposable. Indeed, if $E=E_1 \cup E_2$ with $|E_1|,|E_2|>0$ and $E_1\cap E_2=\emptyset$, then $\widetilde{\Per}_{\varepsilon,\rho_0,\rho_1}(E) = \widetilde{\Per}_{\varepsilon,\rho_0,\rho_1}(E_1) + \widetilde{\Per}_{\varepsilon,\rho_0,\rho_1}(E_2)$ is equivalent to
\begin{equation}\label{eq:rhozero}
\rho_0\left( \left\{ x \in E^{(0)} \,\middle|\, \operatorname{dist}\big(x, E^{(1)}\big) < \varepsilon \right\}\right)=\sum_{i=1,2}\rho_0\left( \left\{ x \in E_i^{(0)} \,\middle|\, \operatorname{dist}\big(x, E_i^{(1)}\big) < \varepsilon \right\}\right)
\end{equation}
and
\begin{equation}\label{eq:rhoone}
\rho_1\left( \left\{ x \in E^{(1)} \,\middle|\, \operatorname{dist}\big(x, E^{(0)}\big) < \varepsilon \right\}\right)=\sum_{i=1,2}\rho_1\left( \left\{ x \in E_i^{(1)} \,\middle|\, \operatorname{dist}\big(x, E_i^{(0)}\big) < \varepsilon \right\}\right).
\end{equation}
Now if $\dist(E_1^{(1)},E_2^{(1)})\geq 2\epsilon$, then \eqref{eq:rhozero} and \eqref{eq:rhoone} hold due to the fact that the sets on the left hand side are the disjoint union of the sets on the right. On the other hand, if \eqref{eq:rhozero} holds, then due to $\supp \rho_0 = \R^d$ we must have that
\[
\left\{ x \in E_i^{(0)} \,\middle|\, \operatorname{dist}\big(x, E_i^{(1)}\big) < \varepsilon \right\} \cap E_j^{(1)} \quad \text{for $i \not =j$}
\]
and
\[
\left\{ x \in E_1^{(0)} \,\middle|\, \operatorname{dist}\big(x, E_1^{(1)}\big) < \varepsilon \right\} \cap \left\{ x \in E_2^{(0)} \,\middle|\, \operatorname{dist}\big(x, E_2^{(1)}\big) < \varepsilon \right\}
\]
are null sets. Since also $E_1^{(1)} \cap E_2^{(1)}$, $\partial^e E_1$ and $\partial^e E_2$ are null sets, we find by taking the union of all these sets that
\[
\left\{ x \in \R^d \,\middle|\, \operatorname{dist}\big(x, E_1^{(1)}\big) < \varepsilon \right\} \cap \left\{ x \in \R^d \,\middle|\, \operatorname{dist}\big(x, E_2^{(1)}\big) < \varepsilon \right\}
\]
has zero measure. This shows that $\dist(E_1^{(1)},E_2^{(1)})\geq 2\epsilon$, that is, $E$ is $2\epsilon$-decomposable.

Under the same assumption on $\rho_0$, a characterization of extreme points completely analogous to Theorem \ref{thm:exteps} follows for the unit ball of the total variation defined from $\widetilde{\Per}_{\varepsilon,\rho_0,\rho_1}$ through the coarea formula in \cite[Eq.~(4.2)]{BunKer24}.
\end{rem}

With similar techniques we can characterize the extreme points of the unit ball of the fractional seminorm 
\[\Bcal_{W^{\alpha,1}}:=\left\{u \in W^{\alpha,1}(\R^d)\,\middle|\,|u|_{W^{\alpha,1}(\R^d)} \leqslant1\right\}.\]
We note that in this case, as a consequence of Proposition \ref{prop:frac_indec}, the notion of indecomposability does not play a role. This is apparent also from Proposition \ref{prop:charepsilon} by taking $\epsilon=\infty$. The proof is omitted since the argument is identical to Theorem~\ref{thm:exteps} by replacing $\epsilon$ by $\infty$.

\begin{thm}\label{thm:extfrac}
The extreme points of $\Bcal_{W^{\alpha,1}}$ can be characterized as
\begin{equation}\Ext(\Bcal_{W^{\alpha,1}}) = \left\{ \pm\frac{\1_E}{ \GPer_{\alpha}(E)} \,\middle|\, E \subset \R^d, \, |E| \in (0,\infty) \ \text{and} \  \GPer_{\alpha}(E)\in (0,\infty)\right\}.
\label{eq:sobolevext}
\end{equation}
\end{thm}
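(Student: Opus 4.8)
The plan is to reuse verbatim the two-part argument from the proof of Theorem~\ref{thm:exteps}, with $\GPer_\epsilon$ replaced by $\GPer_\alpha$ and the two-point gradient $d_\epsilon$ replaced by $d_\alpha$ of \eqref{eq:2pg}, exploiting the formal identification ``$\epsilon = +\infty$'' noted in the remark after Proposition~\ref{prop:frac_indec}: since $\rho_\alpha(z) = |z|^{-(d+\alpha-1)}$ is \emph{strictly positive} for all $z \neq 0$, the $\epsilon$-connectedness apparatus of Section~\ref{sec:finitehorizon} collapses entirely, and the vanishing of $d_\alpha v$ on a product $A \times A$ will immediately force $v$ to be a.e.\ constant on $A$. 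I would also record at the outset the trivial analogue of Lemma~\ref{lem:epspermorethanzero} (strict positivity of $\rho_\alpha$ gives $\GPer_\alpha(F) > 0$ whenever $|F| > 0$ and $|F^c| > 0$), together with the fact that $W^{\alpha,1}(\R^d) \subset L^1(\R^d)$, so $|E| < \infty$ automatically yields $|E^c| = \infty$; this last point explains why \eqref{eq:sobolevext} carries no saturation requirement, in contrast to \eqref{eq:extbvintro} and \eqref{eq:epsilonext}.

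For \emph{sufficiency}, I would take $u = \1_E/\GPer_\alpha(E)$ with $|E|, \GPer_\alpha(E) \in (0,\infty)$ (the case with a minus sign being analogous) and suppose $u = \lambda u_1 + (1-\lambda) u_2$ with $u_i \in \Bcal_{W^{\alpha,1}}$, $\lambda \in (0,1)$. As in Theorem~\ref{thm:exteps}, the equality case of the triangle inequality gives $|u_1|_{W^{\alpha,1}} = |u_2|_{W^{\alpha,1}} = 1$, and since $d_\alpha u$ is supported on $[E\times E^c]\cup[E^c\times E]$ the same strict-inequality computation forces $d_\alpha u_i = 0$ a.e.\ on $E\times E$ and on $E^c\times E^c$. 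By strict positivity of $\rho_\alpha$ and Tonelli, $u_i$ is then a.e.\ constant on $E$ and a.e.\ constant on $E^c$; integrability plus $|E^c| = \infty$ forces the latter constant to vanish, so $u_i = c_i\1_E$, and normalization together with $\lambda c_1 + (1-\lambda)c_2 = 1/\GPer_\alpha(E) > 0$ and $|c_i| = 1/\GPer_\alpha(E)$ pins down $c_1 = c_2 = 1/\GPer_\alpha(E)$, hence $u_1 = u_2 = u$.

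For \emph{necessity}, I would follow the second half of Theorem~\ref{thm:exteps}: given $u \in \Ext(\Bcal_{W^{\alpha,1}})$ one has $|u|_{W^{\alpha,1}} = 1$ by scaling, and after possibly flipping sign one fixes $\eta > 0$ with $|\{u>\eta\}| > 0$. Writing $u_\eta := \max\{u-\eta,0\}$ and using the coarea formula \eqref{eq:Wrho1coarea} one splits $|u|_{W^{\alpha,1}} = |u_\eta|_{W^{\alpha,1}} + |\min\{u,\eta\}|_{W^{\alpha,1}}$ with both summands strictly positive (for $t \in (\eta,\esssup u)$, resp.\ $t \in (0,\eta)$, the superlevel set $\{u>t\}$ has positive finite measure, so its $\GPer_\alpha$ is positive). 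Setting $\lambda := |u_\eta|_{W^{\alpha,1}} \in (0,1)$ and $u_1 := u_\eta/\lambda$, $u_2 := (u-\lambda u_1)/(1-\lambda)$, extremality forces $u = u_\eta/\lambda$; solving $\max\{u-\eta,0\} = \lambda u$ pointwise shows $u = c\,\1_E$ a.e.\ with $E := \{u>\eta\}$, $c = \eta/(1-\lambda) > 0$, and $c\,\GPer_\alpha(E) = 1$ then yields $\GPer_\alpha(E) \in (0,\infty)$, $|E| \in (0,\infty)$ and $u = \1_E/\GPer_\alpha(E)$.

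I do not expect any real obstacle: compared with Theorem~\ref{thm:exteps} the fractional statement is strictly easier, since strict positivity of $\rho_\alpha$ removes the need for $\epsilon$-connected components (Lemma~\ref{le:connind}), $\epsilon$-indecomposability, and saturation altogether. The only places needing a line of care are verifying $\lambda = |u_\eta|_{W^{\alpha,1}} \in (0,1)$ (i.e.\ that both the upper and lower coarea integrals are positive), the sign-matching step $c_1 = c_2$ in the sufficiency direction, and checking that the ``$\epsilon$-simple'' hypothesis present in the $\epsilon$-horizon and classical cases becomes vacuous here because $u \in L^1(\R^d)$ forces $|E| < \infty$ and hence $|E^c| = \infty$.
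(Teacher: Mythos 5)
Your proposal is correct and is essentially the paper's own argument: the paper omits the proof of Theorem~\ref{thm:extfrac} precisely because it is the proof of Theorem~\ref{thm:exteps} with $\epsilon$ replaced by $\infty$, which is what you carry out, including the right observations that strict positivity of $\rho_\alpha$ trivializes Lemma~\ref{lem:epspermorethanzero} and the connectedness machinery, and that $u \in L^1(\R^d)$ forces $|E^c| = \infty$ so no saturation-type hypothesis is needed. The small extra details you flag (positivity of both coarea integrals, the sign-matching $c_1 = c_2$) are handled correctly and match the structure of the paper's proof of Theorem~\ref{thm:exteps}.
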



\section{Relaxation and \texorpdfstring{$\Gamma$}{Gamma}-convergence for \texorpdfstring{$\GPer_\varepsilon$}{Peps} with \texorpdfstring{$\varepsilon$}{eps}-indecomposability constraints}\label{sec:gammaeps}
In this section, we consider the connected $\GPer_{\varepsilon}$-perimeter and its convergence to the classical perimeter as $\epsilon \to 0$. First, let $\Mcal_d$ denote the collection of Lebesgue measurable subsets of $\R^d$ endowed with the convergence in measure, that is, $E_n \to E$ if $|E_n \Delta E| \to 0$. We note that this definition does not exclude infinite measure sets. Furthermore, following \cite{dayrens2022connected} which shows that the classical perimeter under indecomposability constraints has a nontrivial relaxation when $d=2$, we introduce the connected $\GPer_\epsilon$-perimeter $\GPer_\epsilon^{c}:\Mcal_d \to [0,\infty]$ as
\[
\GPer_\epsilon^c(E):=\begin{cases}
    \GPer_\epsilon(E) &\text{if $E$ is $\epsilon$-indecomposable,}\\
    \infty &\text{else.}
\end{cases}
\]
We aim to show that $\GPer_\epsilon^c$ $\Gamma$-converges to the classical perimeter as $\epsilon \to 0$ for all $d$. To achieve this, we assume $\rho_\epsilon$ to be radial with  $\esssupp \rho_\epsilon = \overline{B_\varepsilon(0)}$ for all $\epsilon >0$ and
\begin{align}\label{eq:mean}
\int_{\R^d} \rho_\epsilon(h)\dd{h} = 1 \quad \text{for all $\epsilon>0$.}
\end{align}
We first state the following lemma, which gives a bound on $\GPer_\epsilon$ and shows that under assumption \eqref{eq:mean}, all finite perimeter sets also have finite $\GPer_\epsilon$-perimeter. The proof is omitted since it is a simple adaptation of \cite[Lemma~1 and 3]{Pon04}.
\begin{lemma}\label{le:perimeterbound}
For all $r>0$ and $E \in \Mcal_d$, it holds that
\[
\GPer_\epsilon(E) \leqslant K_d\Per(E)\int_{|h|<r} \rho_\epsilon(h)\dd{h}  + \frac{2|E|}{r}\int_{|h|>r}\rho_\epsilon(h)\dd{h},
\]
with $K_d:=|\S^{d-1}|^{-1}\int_{\S^{d-1}}|e_1\cdot \sigma|\dd{\Hcal^{d-1}(\sigma)}$. In particular, it holds that $\GPer_\epsilon \leqslant K_d \Per$.
\end{lemma}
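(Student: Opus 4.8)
The plan is essentially to reproduce the argument of \cite[Lemmas~1 and~3]{Pon04}, carrying along the extra weight $1/|h|$ in the kernel. First I would expand the definition, apply Tonelli's theorem, and substitute $h=x-y$:
\[
\GPer_\epsilon(E)=\int_{\R^d}\int_{\R^d}\frac{|\1_E(x)-\1_E(y)|}{|x-y|}\rho_\epsilon(x-y)\dd x\dd y=\int_{\R^d}\frac{\rho_\epsilon(h)}{|h|}\,\big|E\Delta(E+h)\big|\dd h,
\]
where I used that $\int_{\R^d}|\1_E(x)-\1_E(x-h)|\dd x=|E\Delta(E+h)|$. If $\Per(E)=+\infty$ the inequality is trivial, so assume $\Per(E)<\infty$. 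The two pointwise bounds needed are the elementary $|E\Delta(E+h)|\leq 2|E|$ and the directional translation estimate
\[
\big|E\Delta(E+h)\big|\leq\int_{\partial^* E}\big|h\cdot\nu_E\big|\dd\Ha^{d-1}\qquad\text{for all }h\in\R^d,
\]
which is exactly \cite[Lemma~1]{Pon04}: it follows by mollifying $\1_E$, writing $u_\delta(x+h)-u_\delta(x)=\int_0^1\nabla u_\delta(x+th)\cdot h\dd t$, integrating in $x$, and letting $\delta\to 0$ using $\nabla u_\delta\,\mathcal{L}^d\weakstar\nu_E\,\Ha^{d-1}\mres\partial^* E$.

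Next I would split the $h$-integral at $|h|=r$. On $\{|h|>r\}$ use $|E\Delta(E+h)|\leq 2|E|$ and $1/|h|<1/r$, producing the contribution $\tfrac{2|E|}{r}\int_{|h|>r}\rho_\epsilon(h)\dd h$. On $\{|h|\leq r\}$ use the directional estimate, so that the singular factor $1/|h|$ cancels against the $|h|$ in $|h\cdot\nu_E|=|h|\,|\hat h\cdot\nu_E|$ with $\hat h=h/|h|$; Tonelli then gives
\[
\int_{|h|<r}\frac{\rho_\epsilon(h)}{|h|}\big|E\Delta(E+h)\big|\dd h\leq\int_{\partial^* E}\left(\int_{|h|<r}\rho_\epsilon(h)\,\big|\hat h\cdot\nu_E(x)\big|\dd h\right)\dd\Ha^{d-1}(x).
\]
Since $\rho_\epsilon$ is radial, passing to polar coordinates $h=s\omega$ and using the rotational invariance of $\Ha^{d-1}\mres\S^{d-1}$ shows that for any unit vector $\nu$ one has $\int_{|h|<r}\rho_\epsilon(h)|\hat h\cdot\nu|\dd h=K_d\int_{|h|<r}\rho_\epsilon(h)\dd h$, with $K_d$ exactly as in the statement. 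As $\Ha^{d-1}(\partial^* E)=\Per(E)$, the inner integral is bounded by $K_d\Per(E)\int_{|h|<r}\rho_\epsilon(h)\dd h$, and adding the two contributions yields the claimed inequality; when $|E|=\infty$ it is nontrivial only for $r\geq\epsilon$, where the second term already vanishes.

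For the final assertion, take any $r\geq\epsilon$: then $\int_{|h|>r}\rho_\epsilon(h)\dd h=0$ since $\esssupp\rho_\epsilon=\overline{B_\epsilon(0)}$, while $\int_{|h|<r}\rho_\epsilon(h)\dd h=1$ by \eqref{eq:mean}, hence $\GPer_\epsilon(E)\leq K_d\Per(E)$. The only step that is not pure bookkeeping is the directional translation estimate for a general set of finite perimeter, and that is exactly the point handled by the mollification argument above; the rest is Tonelli, a change of variables, and the elementary angular integral over $\S^{d-1}$.
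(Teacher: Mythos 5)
Your proposal is correct and is precisely the adaptation of \cite[Lemmas~1 and~3]{Pon04} that the paper invokes when it omits the proof: Tonelli plus the change of variables $h=x-y$, the two bounds $|E\Delta(E+h)|\leq 2|E|$ and $|E\Delta(E+h)|\leq\int_{\partial^*E}|h\cdot\nu_E|\dd\Hcal^{d-1}$, the split at $|h|=r$, and the angular average producing $K_d$ via radiality of $\rho_\epsilon$, with the normalization \eqref{eq:mean} and $\esssupp\rho_\epsilon=\overline{B_\epsilon(0)}$ giving the final bound $\GPer_\epsilon\leq K_d\Per$. Only a cosmetic remark: in the mollification step the upper bound $\int_{\R^d}|\nabla u_\delta\cdot h|\dd x\leq\int_{\partial^*E}|h\cdot\nu_E|\dd\Hcal^{d-1}$ comes from $\nabla u_\delta=\phi_\delta* D\1_E$ and Young's inequality (weak* convergence alone only gives the inequality in the other direction), but this is exactly the content of Ponce's Lemma~1 that you cite, so there is no gap.
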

As a consequence, we obtain that the $\GPer_\epsilon$-perimeter of a ball converges to zero as the radius vanishes, which is false for the classical perimeter when $d=1$.
\begin{cor}\label{cor:perimeterball}
    It holds that
    \[
    \lim_{\delta \to 0}\GPer_\epsilon(B_\delta(0))=0. 
    \]
\end{cor}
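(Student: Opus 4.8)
The plan is to apply Lemma~\ref{le:perimeterbound} to the ball $E = B_\delta(0)$ and show that both terms on the right-hand side vanish as $\delta \to 0$. Using $\Per(B_\delta(0)) = |\S^{d-1}|\delta^{d-1}$ and $|B_\delta(0)| = |\S^{d-1}|\delta^d/d$, the bound reads
\[
\GPer_\epsilon(B_\delta(0)) \leqslant K_d |\S^{d-1}| \delta^{d-1}\int_{|h|<r} \rho_\epsilon(h)\dd{h} + \frac{2|\S^{d-1}|\delta^d}{dr}\int_{|h|>r}\rho_\epsilon(h)\dd{h}
\]
for every $r>0$. Keeping $\epsilon$ fixed and choosing any fixed $r>0$ (for instance $r=\epsilon$, so that the second integral vanishes outright by $\esssupp\rho_\epsilon = \overline{B_\epsilon(0)}$, though even $r$ arbitrary works since $\int \rho_\epsilon = 1$ makes both integrals bounded by $1$), the right-hand side is bounded by $C(\epsilon,r)(\delta^{d-1} + \delta^d)$, which tends to $0$ as $\delta \to 0$ whenever $d \geqslant 1$. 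Since $\GPer_\epsilon \geqslant 0$, this yields the claim.

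Concretely, I would write: fix $r = \epsilon$; then $\int_{|h|>\epsilon}\rho_\epsilon(h)\dd h = 0$ because $\esssupp\rho_\epsilon \subset \overline{B_\epsilon(0)}$, and $\int_{|h|<\epsilon}\rho_\epsilon(h)\dd h \leqslant 1$ by \eqref{eq:mean}, so Lemma~\ref{le:perimeterbound} gives $0 \leqslant \GPer_\epsilon(B_\delta(0)) \leqslant K_d\Per(B_\delta(0)) = K_d|\S^{d-1}|\delta^{d-1} \to 0$ as $\delta \to 0$. That is essentially the whole argument.

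There is no real obstacle here: the corollary is an immediate consequence of the perimeter bound $\GPer_\epsilon \leqslant K_d\Per$ together with the elementary fact that $\Per(B_\delta(0)) \to 0$. The only thing worth a remark — and the reason the statement is phrased as a corollary rather than being absorbed silently — is the contrast with the classical case: for $d=1$ one has $\Per(B_\delta(0)) = 2$ for all $\delta$, so the local perimeter of a shrinking interval does not vanish, whereas the nonlocal $\GPer_\epsilon$-perimeter does. I would make sure the proof explicitly invokes the estimate $\GPer_\epsilon \leqslant K_d\Per$ from the ``in particular'' part of Lemma~\ref{le:perimeterbound} and then just note $\Per(B_\delta(0)) = |\S^{d-1}|\delta^{d-1} \to 0$.
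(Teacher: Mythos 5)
Your argument has a genuine gap in dimension $d=1$, which is precisely the case this corollary is designed to cover (the paper introduces it with the remark that the statement is \emph{false} for the classical perimeter when $d=1$). With a fixed radius $r$ (whether $r=\epsilon$ or any other fixed choice), Lemma~\ref{le:perimeterbound} only gives $\GPer_\epsilon(B_\delta(0)) \leqslant K_d\Per(B_\delta(0)) \cdot \int_{|h|<r}\rho_\epsilon(h)\dd{h} + C(\delta^d/r)$, and for $d=1$ the first factor $\Per(B_\delta(0)) = |\S^{0}| = 2$ does not vanish as $\delta \to 0$, so the bound stays bounded away from zero. Your claim that $C(\epsilon,r)(\delta^{d-1}+\delta^d) \to 0$ ``whenever $d\geqslant 1$'' is false at $d=1$ since $\delta^{d-1}=1$ there; in fact your own closing remark, that $\Per(B_\delta(0))=2$ for all $\delta$ when $d=1$, contradicts the last line of your proposed proof, where you write $\Per(B_\delta(0)) = |\S^{d-1}|\delta^{d-1}\to 0$. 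So the route via the crude estimate $\GPer_\epsilon \leqslant K_d\Per$ proves the corollary only for $d\geqslant 2$.

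The paper's proof fixes this by letting $r$ shrink with $\delta$: taking $r=\delta^{d/2}$ in Lemma~\ref{le:perimeterbound} gives
\[
\GPer_\epsilon(B_\delta(0)) \leqslant C\left(\delta^{d-1}\int_{|h|<\delta^{d/2}}\rho_\epsilon(h)\dd{h} + \delta^{d/2}\right),
\]
where the second term vanishes because $|B_\delta(0)|/\delta^{d/2} \leqslant C\delta^{d/2}$, and the first term vanishes in \emph{every} dimension: for $d\geqslant 2$ because $\delta^{d-1}\to 0$ with the integral bounded by $1$, and for $d=1$ because $\int_{|h|<\delta^{1/2}}\rho_\epsilon(h)\dd{h}\to 0$ by absolute continuity of the integral of $\rho_\epsilon \in L^1(\R^d)$ over shrinking balls. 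Any choice $r=r(\delta)\to 0$ with $\delta^d/r(\delta)\to 0$ works; the essential idea you are missing is that in $d=1$ the smallness must come from the kernel factor $\int_{|h|<r(\delta)}\rho_\epsilon(h)\dd{h}$ rather than from the classical perimeter of the shrinking ball.
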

\begin{proof}
    Take $r=\delta^{d/2}$ in Lemma~\ref{le:perimeterbound} to find
    \[
    \GPer_\epsilon(B_\delta(0)) \leqslant C \left( \delta^{d-1} \int_{|h|<\delta^{d/2}} \rho_\epsilon(h)\dd{h}  + \delta^{d/2}\right) \xrightarrow{\delta\to 0} 0.\qedhere
    \]
\end{proof}

We now show that the lower semicontinuous envelope of $\GPer_\epsilon^c$ on $\Mcal_d$, i.e., the largest lower semicontinuous function below $\GPer_\epsilon^c$, is equal to $\GPer_\epsilon$.

\begin{prop}\label{prop:connectedrelaxation}
    For all $\epsilon>0$, it holds that
    \[
    \mathrm{lsc}\,\GPer_\epsilon^c = \GPer_\epsilon.
    \]
\end{prop}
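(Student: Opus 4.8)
The plan is to establish the two inequalities $\GPer_\epsilon \leq \mathrm{lsc}\,\GPer_\epsilon^c$ and $\mathrm{lsc}\,\GPer_\epsilon^c \leq \GPer_\epsilon$ separately. For the first, I would recall that $\GPer_\epsilon$ is lower semicontinuous on $\Mcal_d$ with respect to convergence in measure: if $E_n \to E$ in measure, then along a subsequence $\1_{E_n} \to \1_E$ a.e., hence $d_\epsilon\1_{E_n} \to d_\epsilon\1_E$ $\Lcal^d\times\Lcal^d$-a.e., and Fatou's lemma yields $\GPer_\epsilon(E) \leq \liminf_n \GPer_\epsilon(E_n)$. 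Since moreover $\GPer_\epsilon \leq \GPer_\epsilon^c$ trivially, the function $\GPer_\epsilon$ is lower semicontinuous and lies below $\GPer_\epsilon^c$, hence below its lsc envelope.

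For the converse inequality it suffices, given $E$ with $\GPer_\epsilon(E) < \infty$ (otherwise there is nothing to prove), to construct a sequence $E_n \to E$ in measure with each $E_n$ being $\epsilon$-indecomposable and $\limsup_n \GPer_\epsilon(E_n) \leq \GPer_\epsilon(E)$. Applying Theorem~\ref{thm:epsdec}, write $E^{(1)} = \bigcup_{i\in I} E^{x_i}$ with $I$ at most countable, each $E^{x_i}$ $\epsilon$-indecomposable of positive measure, and $\GPer_\epsilon(E) = \sum_i \GPer_\epsilon(E^{x_i})$; if $\#I = 1$ there is nothing to do. Otherwise enumerate $I$ with $1 \in I$, and for each $i \geq 2$ build a ``bridge'' joining $E^{x_i}$ to $E^{x_1}$: pick $a_i \in (E^{x_1})^{(1)}$ and $b_i \in (E^{x_i})^{(1)}$ (nonempty by the Lebesgue density theorem), place points $a_i = z_{i,0}, z_{i,1},\dots,z_{i,M_i} = b_i$ on the segment $[a_i,b_i]$ with consecutive distances $\leq \epsilon/2$ and $M_i = \lceil 2|a_i-b_i|/\epsilon\rceil$, and set $T_i := \bigcup_{k=1}^{M_i-1} B_{\delta_i}(z_{i,k})$ for a radius $\delta_i < \epsilon/4$ to be chosen. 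Given $\eta>0$, use Corollary~\ref{cor:perimeterball} and translation invariance of $\GPer_\epsilon$ to pick $\delta_i$ so small that $M_i\,\GPer_\epsilon(B_{\delta_i}(0)) < \eta\,2^{-i}$ and $M_i\,|B_{\delta_i}(0)| < \eta\,2^{-i}$, and put $E_\eta := E \cup \bigcup_{i\geq 2}T_i$.

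Three properties then need checking. First, $E_\eta$ is $\epsilon$-indecomposable: since $\operatorname{diam}B_{\delta_i}(z_{i,k}) = 2\delta_i < \epsilon$, each such ball is $\epsilon$-indecomposable, and consecutive pieces along a bridge, as well as the first (resp.\ last) ball with $E^{x_1}$ (resp.\ $E^{x_i}$), have essential distance strictly below $\epsilon$ by construction. If $\{F_1,F_2\}$ were a partition of $E_\eta$ into positive-measure pieces with $\dist^e(F_1,F_2)\geq\epsilon$, then each $\epsilon$-indecomposable building block $E^{x_i}$ or $B_{\delta_i}(z_{i,k})$ would lie (up to a null set) entirely in $F_1$ or in $F_2$ by Proposition~\ref{prop:charepsilon}; adjacency forces blocks sharing essential distance $<\epsilon$ to lie on the same side, and since every block is joined to $E^{x_1}$ through the bridge structure, all blocks lie in the same $F_j$, forcing $|F_{3-j}| = 0$, a contradiction (this is the countable analogue of the remark following Lemma~\ref{le:connind}). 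Second, $|E_\eta \Delta E| \leq \sum_{i\geq2} M_i|B_{\delta_i}(0)| < \eta$. Third, using countable subadditivity of $\GPer_\epsilon$ — which follows from $\int_{\bigcup_k A_k}\int_{(\bigcup_k A_k)^c}\frac{\rho_\epsilon(x-y)}{|x-y|}\dd x\dd y \leq \sum_k \int_{A_k}\int_{A_k^c}\frac{\rho_\epsilon(x-y)}{|x-y|}\dd x\dd y$ since $(\bigcup_k A_k)^c \subseteq A_k^c$ — together with translation invariance, $\GPer_\epsilon(E_\eta) \leq \GPer_\epsilon(E) + \sum_{i\geq2} M_i\,\GPer_\epsilon(B_{\delta_i}(0)) < \GPer_\epsilon(E)+\eta$. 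Taking $\eta = 1/n$ gives $E_n := E_{1/n}$ with the required properties, so $(\mathrm{lsc}\,\GPer_\epsilon^c)(E) \leq \GPer_\epsilon(E)$, completing the proof.

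I expect the main obstacle to be the case of infinitely many $\epsilon$-connected components, particularly when $|E| = \infty$, so that one cannot simply discard all but finitely many of them: the remedy is to connect every component by bridges with geometrically shrinking radii, keeping the total added measure and perimeter below $\eta$, and then to argue that the resulting countable union of $\epsilon$-indecomposable blocks is still $\epsilon$-indecomposable. The finite-horizon nature of $\rho_\epsilon$ (its support is exactly $\overline{B_\epsilon(0)}$) and Corollary~\ref{cor:perimeterball}, which crucially fails for the classical perimeter when $d=1$, are what make this bridging argument possible.
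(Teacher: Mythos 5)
Your proof is correct, and the broad strategy matches the paper's: both halves rest on lower semicontinuity of $\GPer_\epsilon$ via Fatou's lemma, and on a recovery sequence obtained by adjoining to $E$ a union of tiny balls whose total measure and $\GPer_\epsilon$-perimeter are made arbitrarily small thanks to Corollary \ref{cor:perimeterball}, countable subadditivity of $\GPer_\epsilon$, and translation invariance. The implementations, however, genuinely differ. The paper never invokes the decomposition theorem: it adds one universal scaffold $F_n=\bigcup_{a\in\Z^d}B_{r_a/n}\bigl(\tfrac{\epsilon}{2}a\bigr)$, a lattice of shrinking balls with spacing $\epsilon/2$, which renders $E\cup F_n$ $\epsilon$-indecomposable for \emph{every} $E$ simultaneously, so no analysis of the components of $E$ is needed; the price is a summability/dominated-convergence argument to control $\sum_a \GPer_\epsilon(B_{r_a/n}(0))$. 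You instead first apply Theorem \ref{thm:epsdec} to extract the $\epsilon$-connected components and then join each one to a fixed component by bridges of balls along segments, with radii shrinking geometrically in the component index; this adds material only where it is needed, but it requires the full decomposition theorem plus the chain argument (each $\epsilon$-indecomposable block must lie, up to a null set, on one side of any hypothetical splitting at essential distance $\geqslant\epsilon$, and adjacent blocks on the same side) to certify indecomposability of $E_\eta$ — an argument you carry out correctly, including the case of countably many components and the subadditivity estimate $\GPer_\epsilon(\bigcup_k A_k)\leqslant\sum_k\GPer_\epsilon(A_k)$. Both routes are sound; the paper's lattice trick is the more economical, precisely because it sidesteps any structural analysis of $E$, while your construction is adapted to $E$ and perturbs it less.
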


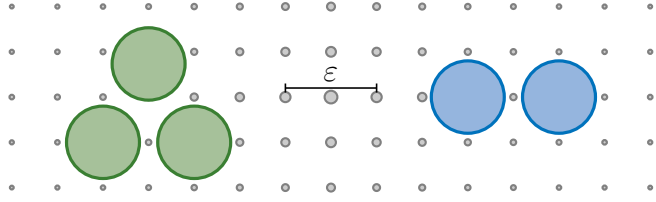
\begin{figure}[ht]
\centering
\begin{tikzpicture}[scale=1.2]               
     \foreach \k in {-1.5,-1.0,...,5.5}{
        \foreach \l in {-0.5,0.0,...,1.5}{
            \draw[color=gray, fill=gray!40, thick] (\k,\l) circle ({0.07/(1+0.5*abs(\k-2)+0.6*abs(\l-0.5))});
        }
    }
    \draw[semithick] (1.5,0.6) -- (2.5,0.6);
    \draw[semithick] (1.5,0.55) -- (1.5,0.65);
    \draw[semithick] (2.5,0.55) -- (2.5,0.65);
    \draw (2.0,0.75) node {\large $\varepsilon$};
    \draw[color=OliveGreen, fill=OliveGreen!40, very thick] (0,0.866) circle (0.4);
    \draw[color=OliveGreen, fill=OliveGreen!40, very thick] (-0.5,0) circle (0.4);
    \draw[color=OliveGreen, fill=OliveGreen!40, very thick] (0.5,0) circle (0.4);
    \draw[color=RoyalBlue, fill=RoyalBlue!40, very thick] (3.5,0.5) circle (0.4);
    \draw[color=RoyalBlue, fill=RoyalBlue!40, very thick] (4.5,0.5) circle (0.4);     
\end{tikzpicture}
\caption{The construction in the proof of Proposition \ref{prop:connectedrelaxation}. Adding the union of balls $F_n$ depicted in gray to the set $E$ with two $\varepsilon$-connected components depicted in green and blue makes $E \cup F_n$ $\varepsilon$-indecomposable.}\label{fig:connectify}
\end{figure}

\begin{proof}
    Note that $\GPer_\epsilon \leqslant\GPer_\epsilon^c$ and one can show that $\GPer_\epsilon$ is lower semicontinuous with the help of Fatou's lemma. To show that $\GPer_\epsilon$ is the largest lower semicontinuous function below $\GPer_\epsilon^c$, it suffices to find, for every $E \in \Mcal_d$, a sequence $E_n \to E$ in $\Mcal_d$ such that
    \[
    \liminf_{n \to \infty} \GPer_\epsilon^c(E_n) \leqslant\GPer_\epsilon(E).
    \]
    In the case where $E$ is $\epsilon$-indecomposable, we can simply take $E_n:=E$ for each $n \in \N$. Otherwise, we define $E_n:=E \cup F_n$ with 
    \begin{equation}\label{eq:fn}
    F_n := \bigcup_{a \in \Z^d} B_{r_a/n}\left(\frac{\epsilon}{2}a\right),
    \end{equation}
    see Figure \ref{fig:connectify}, where $r_a>0$ is chosen such that $\sum_{a \in \Z^d}r_a^d<\infty$ and
    \begin{equation}\label{eq:majorant}
    \GPer_\epsilon(B_{r_a/n}(0)) \leqslant\frac{1}{1+|a|^{d+1}} \quad \text{for all $a \in \Z^d$ and $n \in \N$.}
    \end{equation}
    The latter is possible in light of Corollary~\ref{cor:perimeterball}. Then, it follows that
    \[
    \lim_{n \to \infty}|F_n| \leqslant\lim_{n \to \infty}\frac{1}{n^d}\sum_{a \in \Z^d} r_a^d =0
    \]
    and, by the dominated convergence theorem with the majorant from \eqref{eq:majorant} and Corollary~\ref{cor:perimeterball} again,
    \[
    \lim_{n \to \infty}\GPer_{\epsilon}(F_n) \leqslant\lim_{n \to \infty}\sum_{a \in \Z^d} \GPer_\epsilon(B_{r_a/n}(0))=\sum_{a \in \Z^d} \lim_{n \to \infty}\GPer_\epsilon(B_{r_a/n}(0)) =0.
    \]
     We conclude that $E_n \to E$ as $n\to \infty$ and, since $E_n$ is clearly $\epsilon$-indecomposable by construction,
    \[
    \liminf_{n \to \infty} \GPer_\epsilon^c(E_n) = \liminf_{n \to \infty} \GPer_\epsilon(E_n) \leqslant \GPer_\epsilon(E) +\lim_{n \to \infty} \GPer_\epsilon(F_n) = \GPer_\epsilon(E).\qedhere
    \]
\end{proof}

We can now prove the corresponding $\Gamma$-convergence result, in which the $\epsilon$-indecomposability constraint is lost in the limit in any dimension. The proof is a direct combination of  Proposition~\ref{prop:connectedrelaxation} with the results in \cite{Pon04}, which cover the localization of $\GPer_\epsilon$ to the classical perimeter $P$. We refer the reader to \cite{braides2002gamma, DM92} for a general introduction to $\Gamma$-convergence.

\begin{thm}\label{thm:connectedperimeterlocalization}
    It holds that
    \[
    \Gamma\text{-}\lim_{\epsilon \to 0} \GPer_\epsilon^c = K_d\Per,
    \]
    with $K_d:=|\S^{d-1}|^{-1}\int_{\S^{d-1}}|e_1\cdot \sigma|\dd{\Hcal^{d-1}(\sigma)}$.
\end{thm}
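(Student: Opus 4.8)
The plan is to establish the $\Gamma$-convergence in the two usual steps: a liminf inequality and a recovery sequence (limsup) inequality, both with respect to convergence in measure on $\Mcal_d$. The crucial point is that, thanks to Proposition~\ref{prop:connectedrelaxation}, the connected perimeter $\GPer_\epsilon^c$ has the same lower semicontinuous envelope as $\GPer_\epsilon$; combined with the classical result of Ponce \cite{Pon04} that $\GPer_\epsilon \to K_d \Per$ in the sense of $\Gamma$-convergence under assumption \eqref{eq:mean}, most of the work is already available and we mainly need to handle the indecomposability constraint in the recovery sequence.

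\emph{Liminf inequality.} Let $E_n \to E$ in $\Mcal_d$. Since $\GPer_\epsilon^c \geqslant \GPer_\epsilon$ pointwise, we have $\liminf_{n\to\infty}\GPer_{\epsilon_n}^c(E_n) \geqslant \liminf_{n\to\infty}\GPer_{\epsilon_n}(E_n)$, and the right-hand side is bounded below by $K_d\Per(E)$ by the liminf part of Ponce's $\Gamma$-convergence result \cite{Pon04} (or, alternatively, by combining Lemma~\ref{le:perimeterbound}-type lower bounds with a blow-up/slicing argument; but citing \cite{Pon04} is cleanest). Hence $\liminf_{n\to\infty}\GPer_{\epsilon_n}^c(E_n) \geqslant K_d\Per(E)$, which is the required liminf inequality. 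Note this step does not use the constraint at all — dropping it only decreases the functional, which is the favourable direction here.

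\emph{Recovery sequence.} Fix $E \in \Mcal_d$; we may assume $\Per(E) < \infty$, otherwise there is nothing to prove. By Ponce's result there is a sequence $\widetilde{E}_n \to E$ in measure with $\limsup_{n\to\infty}\GPer_{\epsilon_n}(\widetilde{E}_n) \leqslant K_d\Per(E)$. These $\widetilde{E}_n$ need not be $\epsilon_n$-indecomposable, so we repeat the "connectification" trick from the proof of Proposition~\ref{prop:connectedrelaxation}: set $E_n := \widetilde{E}_n \cup F_n^{(\epsilon_n)}$, where $F_n^{(\epsilon_n)} := \bigcup_{a\in\Z^d} B_{r_a/n}\!\left(\tfrac{\epsilon_n}{2}a\right)$ is the grid of shrinking balls at scale $\epsilon_n$, with the radii $r_a$ chosen (uniformly in $n$ and $\epsilon_n$) so that $\sum_a r_a^d < \infty$ and $\GPer_{\epsilon}(B_{r_a/n}(0)) \leqslant (1+|a|^{d+1})^{-1}$ for all $\epsilon$ in a neighbourhood of $0$ — the latter is possible because Corollary~\ref{cor:perimeterball} gives $\GPer_\epsilon(B_\delta(0))\to 0$ as $\delta\to 0$ with the bound from Lemma~\ref{le:perimeterbound} locally uniform in $\epsilon$. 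Then $E_n$ is $\epsilon_n$-indecomposable by construction (the grid spacing $\epsilon_n/2 < \epsilon_n$ links everything into one $\epsilon_n$-connected component, and it also connects to any component of $\widetilde{E}_n$), so $\GPer_{\epsilon_n}^c(E_n) = \GPer_{\epsilon_n}(E_n)$. Moreover $|F_n^{(\epsilon_n)}| \leqslant n^{-d}\sum_a r_a^d \to 0$, so $E_n \to E$ in measure, and by subadditivity of $\GPer_\epsilon$ together with dominated convergence (majorant \eqref{eq:majorant}),
\[
\limsup_{n\to\infty}\GPer_{\epsilon_n}^c(E_n) \leqslant \limsup_{n\to\infty}\GPer_{\epsilon_n}(\widetilde{E}_n) + \limsup_{n\to\infty}\GPer_{\epsilon_n}(F_n^{(\epsilon_n)}) \leqslant K_d\Per(E) + 0.
\]
This gives the recovery sequence and completes the proof.

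\emph{Main obstacle.} The only genuinely delicate point is making the "connectification" estimate uniform as $\epsilon_n \to 0$ simultaneously with $n\to\infty$: in Proposition~\ref{prop:connectedrelaxation} the horizon $\epsilon$ was fixed, whereas here it varies along the sequence. One must check that the radii $r_a$ can be chosen once and for all, independently of $\epsilon$, which follows from the explicit bound in Lemma~\ref{le:perimeterbound} (take $r = (r_a/n)^{d/2}$ as in Corollary~\ref{cor:perimeterball}) being monotone/continuous in $\epsilon$ and vanishing as the ball radius vanishes, uniformly for $\epsilon$ in a bounded neighbourhood of $0$; the assumption \eqref{eq:mean} is what keeps the $\int \rho_\epsilon$ factors equal to $1$. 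A secondary, purely bookkeeping point is that $\Gamma$-convergence as $\epsilon\to 0$ (a continuous parameter) should be phrased via arbitrary sequences $\epsilon_n \to 0$, which is standard.
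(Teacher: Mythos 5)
Your overall architecture (liminf by dropping the constraint plus Ponce, limsup by connectification) is reasonable, but the limsup step as written has a genuine gap: the claimed uniformity in $\epsilon$ of Corollary~\ref{cor:perimeterball} fails in dimension $d=1$, and the theorem is asserted for all $d$. You want radii $r_a$, chosen once and for all, with $\GPer_\epsilon(B_{r_a/n}(0))\leqslant (1+|a|^{d+1})^{-1}$ for all $\epsilon$ in a neighbourhood of $0$. For $d\geqslant 2$ this is fine (even without Corollary~\ref{cor:perimeterball}, since Lemma~\ref{le:perimeterbound} gives $\GPer_\epsilon(B_\delta)\leqslant K_d\Per(B_\delta)=C\delta^{d-1}$ uniformly in $\epsilon$). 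But for $d=1$, $\Per(B_\delta(0))=2$ for every $\delta>0$, and by the very localization you are proving, $\GPer_\epsilon(B_\delta(0))\to 2K_1>0$ as $\epsilon\to 0$ with $\delta$ fixed; concretely, in the bound of Lemma~\ref{le:perimeterbound} with $r=\delta^{d/2}$ the factor $\int_{|h|<\delta^{1/2}}\rho_\epsilon(h)\dd h$ tends to $1$ (not $0$) as $\epsilon\to0$ because of the normalization \eqref{eq:mean}, and the prefactor $\delta^{d-1}$ equals $1$. So no uniform-in-$\epsilon$ smallness of $\GPer_\epsilon(B_{r_a/n}(0))$ is available, and your choice of $r_a$ is impossible once $(1+|a|^{d+1})^{-1}<2K_1$. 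The construction can be repaired by decoupling the connectification index from $n$: for each $\epsilon_n$ apply Proposition~\ref{prop:connectedrelaxation} (at the fixed horizon $\epsilon_n$) to produce an $\epsilon_n$-indecomposable set within measure $1/n$ of $\widetilde E_n$ and with $\GPer_{\epsilon_n}$ increased by at most $1/n$, i.e., a diagonal argument rather than a uniform choice.

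This is also where your route genuinely differs from the paper's, which avoids the issue entirely: since $\Gamma$-limits are unchanged by replacing the functionals with their lower semicontinuous envelopes (cf.~\cite[Proposition~1.32]{braides2002gamma}), Proposition~\ref{prop:connectedrelaxation} reduces the statement to $\Gamma\text{-}\lim_{\epsilon\to0}\GPer_\epsilon=K_d\Per$, for which the \emph{constant} sequence $E_\epsilon=E$ is a recovery sequence by Ponce's pointwise convergence result; no connectification at varying scale is needed. Your approach buys a self-contained, explicit recovery sequence, at the price of exactly the uniformity problem above. Two secondary points you should also patch: Ponce's results are stated on bounded domains, so the liminf inequality should be run on $B_R(0)$ and then $R\to\infty$ (as the paper does), and the case $\Per(E)<\infty$ with $|E|=\infty$ (e.g.\ a half-space, which belongs to $\Mcal_d$) is not covered by applying Ponce to $\1_E$, since then $\1_E\notin\BV(\R^d)$; one passes to the complement, using $\GPer_\epsilon(E)=\GPer_\epsilon(E^c)$ and $\Per(E)=\Per(E^c)$.
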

\begin{proof}
    It is well-known that if the $\Gamma$-limit exists, then $\Gamma\text{-}\lim_{\epsilon \to 0} \GPer_\epsilon^c=\Gamma\text{-}\lim_{\epsilon \to 0} \mathrm{lsc}\,\GPer_\epsilon^c$ (cf.~\cite[Proposition 1.32]{braides2002gamma}). Therefore, thanks to Proposition~\ref{prop:connectedrelaxation}, we only need to prove that
    \[
    \Gamma\text{-}\lim_{\epsilon \to 0} \GPer_\epsilon = K_d \Per.
    \]
    For the recovery sequence, we consider a finite perimeter set $E \in \Mcal_d$ and take the constant sequence $E_\epsilon = E$ for all $\epsilon >0$. In the case that $|E|<\infty$ we have that $\mathds{1}_{E} \in \BV(\R^d)$ and hence, using \cite[Corollary~1]{Pon04} with $\Omega=\R^d$, we find:
    \[
    \lim_{\epsilon \to 0} \GPer_\epsilon(E)=\lim_{\epsilon \to 0} |\mathds{1}_{E}|_{W^{\rho_\epsilon,1}(\R^d)}=K_{d}|D\mathds{1}_E|(\R^d)=K_d\Per(E).
    \]
    If $|E|=\infty$, then $|E^c|<\infty$ and we can argue as follows
    \[
    \lim_{\epsilon \to 0} \GPer_\epsilon(E)=\lim_{\epsilon \to 0} |1-\mathds{1}_{E}|_{W^{\rho_\epsilon,1}(\R^d)}=K_{d}|D(1-\mathds{1}_E)|(\R^d)=K_d\Per(E).
    \]
    For the liminf-inequality, we consider a sequence $E_\epsilon \to E$ and find for every $R>0$, using \cite[Corollary~8]{Pon04} applied to $\Omega=B_R(0)$, that
    \begin{align*}
        \liminf_{\epsilon \to 0} \GPer_\epsilon(E_\epsilon) &=  \liminf_{\epsilon \to 0}\int_{\R^d}\int_{\R^d}\frac{|\mathds{1}_{E_\epsilon}(x)-\mathds{1}_{E_\epsilon}(y)|}{|x-y|}\rho_\epsilon(x-y)\dd{x}\dd{y} \\
        &\geqslant \liminf_{\epsilon \to 0}\int_{B_R(0)}\int_{B_R(0)}\frac{|\mathds{1}_{E_\epsilon}(x)-\mathds{1}_{E_\epsilon}(y)|}{|x-y|}\rho_\epsilon(x-y)\dd{x}\dd{y} \\
        &\geqslant K_{d}|D\mathds{1}_E|(B_R(0)).
    \end{align*}
    Letting $R \to \infty$ yields the liminf-inequality.
\end{proof}

While the previous $\Gamma$-convergence result is interesting in its own right, it does not immediately provide interesting statements about minimizers. Indeed, the minimizer of $\GPer_\epsilon^c$ and $\Per$ over all of $\Mcal_d$ is simply the empty set. In order to have nontrivial minimizers and convergence of minimizers, we can add a mass constraint in the functional and only consider subsets of a given bounded Lipschitz domain $\Omega \subset \R^d$. Precisely, for $m \in (0,|\Omega|)$, we define $\Fcal_\epsilon:\Mcal_d\to [0,\infty]$ as
\[
\Fcal_\epsilon(E):=\begin{cases}
    \GPer^c_\epsilon(E) &\text{if $E \subset \Omega$ and $|E|=m$},\\
    \infty &\text{else},
\end{cases}
\]
and the local functional $\Fcal:\Mcal_d \to [0,\infty]$ as
\[
\Fcal(E):=\begin{cases}
    K_d\Per(E) &\text{if $E \subset \Omega$ and $|E|=m$},\\
    \infty &\text{else}.
\end{cases}
\]
We can prove the following result with a slight adaptation to Theorem~\ref{thm:connectedperimeterlocalization}.
\begin{cor}
    It holds that
    \[
    \Gamma\text{-}\lim_{\epsilon \to 0} \Fcal_\epsilon = \Fcal
    \]
    and $(\Fcal_\epsilon)_\epsilon$ is equi-coercive with respect to the convergence in $\Mcal_d$. In particular, any sequence $(E_\epsilon)_\epsilon \subset \Mcal_d$ of almost minimizers of $(\Fcal_\epsilon)_\epsilon$ converges up to subsequence to a minimizer of $\Fcal$, that is, to a solution of the minimization problem
    \[
    \inf\left\{ \Per(E) \,\middle|\, E \subset \Omega \ \text{measurable}, |E|=m\right\}.
    \]
\end{cor}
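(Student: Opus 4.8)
The plan is to deduce the statement from Theorem~\ref{thm:connectedperimeterlocalization} by intersecting both $\GPer_\epsilon^c$ and $K_d\Per$ with the constraint set $\{E\st E\subset\Omega,\,|E|=m\}$. The first thing to record is that this constraint is stable under convergence in $\Mcal_d$ once one works inside the bounded set $\Omega$: if $E_n\to E$ in $\Mcal_d$ with each $E_n\subset\Omega$, then $|E\setminus\Omega|\leq|E\Delta E_n|\to0$ and $\big||E_n|-|E|\big|\leq|E_n\Delta E|\to0$. With this, the $\Gamma$-liminf inequality is essentially immediate: if $E_\epsilon\to E$ and $L:=\liminf_{\epsilon\to0}\Fcal_\epsilon(E_\epsilon)<\infty$, then along a subsequence realizing $L$ we eventually have $E_\epsilon\subset\Omega$, $|E_\epsilon|=m$ and $\Fcal_\epsilon(E_\epsilon)=\GPer_\epsilon(E_\epsilon)$; passing to the limit gives $E\subset\Omega$, $|E|=m$, and the liminf part of the proof of Theorem~\ref{thm:connectedperimeterlocalization} (which uses \cite[Corollary~8]{Pon04}) yields $L\geq K_d\Per(E)=\Fcal(E)$; in particular $\Per(E)<\infty$.

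The substantial point is the recovery sequence. I would fix $E$ with $\Fcal(E)<\infty$, i.e.\ $E\subset\Omega$, $|E|=m$, $\Per(E)<\infty$, and adapt the construction in the proof of Proposition~\ref{prop:connectedrelaxation} so as to keep the added set inside $\Omega$ and correct the mass afterwards. Since $\Omega$ is a bounded connected Lipschitz domain it satisfies an interior cone condition; hence, for each small $\epsilon$, one can select finitely many points $\{p_a^\epsilon\}\subset\Omega$ at mutual spacing of order $\epsilon$ which form an $\epsilon$-chain-connected net of $\Omega$ such that every point of $\overline{\Omega}$ — in particular every point of $E^{(1)}$ — lies at distance $<\epsilon$ from some $p_a^\epsilon$. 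Setting $F_\epsilon:=\bigcup_a B_{r_a^\epsilon/N_\epsilon}(p_a^\epsilon)\subset\Omega$ with summable radii $r_a^\epsilon$ chosen (using Corollary~\ref{cor:perimeterball}, exactly as in Proposition~\ref{prop:connectedrelaxation}) so that $\GPer_\epsilon\big(B_{r_a^\epsilon/N_\epsilon}(0)\big)\leq(1+|a|^{d+1})^{-1}$ and then $N_\epsilon$ large, one obtains $|F_\epsilon|\to0$ and $\GPer_\epsilon(F_\epsilon)\to0$, while $E\cup F_\epsilon\subset\Omega$ is $\epsilon$-indecomposable by construction. To restore the mass constraint, I would pick a density point $p\in E^{(1)}$ and remove a ball $B_{s_\epsilon}(p)$ with $\big|(E\cup F_\epsilon)\cap B_{s_\epsilon}(p)\big|=|F_\epsilon\setminus E|$; since $p\in E^{(1)}$ and $|F_\epsilon\setminus E|\to0$ one has $s_\epsilon\to0$, and for $\epsilon$ small the punctured set $E_\epsilon:=(E\cup F_\epsilon)\setminus B_{s_\epsilon}(p)$ is still $\epsilon$-indecomposable because $p\in E^{(1)}$ forces $E^{(1)}$ to have density-$1$ points on every side of the puncture, allowing any $\epsilon$-chain to be rerouted around $B_{s_\epsilon}(p)$. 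Then $E_\epsilon$ is admissible for $\Fcal_\epsilon$, converges to $E$ in $\Mcal_d$, and using subadditivity of $\GPer_\epsilon$ under unions together with $\GPer_\epsilon(A\setminus B)\leq\GPer_\epsilon(A)+\GPer_\epsilon(B)$ we get
\[
\limsup_{\epsilon\to0}\Fcal_\epsilon(E_\epsilon)\;\leq\;\limsup_{\epsilon\to0}\Big(\GPer_\epsilon(E)+\GPer_\epsilon(F_\epsilon)+\GPer_\epsilon\big(B_{s_\epsilon}(p)\big)\Big)\;=\;K_d\Per(E)\;=\;\Fcal(E),
\]
where $\GPer_\epsilon(E)\to K_d\Per(E)$ by \cite[Corollary~1]{Pon04} as in Theorem~\ref{thm:connectedperimeterlocalization}, and the two remaining terms vanish by Corollary~\ref{cor:perimeterball}, Lemma~\ref{le:perimeterbound}, and the freedom in choosing $|F_\epsilon|$ (hence $s_\epsilon$) small. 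This establishes $\Gamma\text{-}\lim_{\epsilon\to0}\Fcal_\epsilon=\Fcal$.

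For equi-coercivity, if $\sup_\epsilon\Fcal_\epsilon(E_\epsilon)<\infty$ then $E_\epsilon\subset\Omega$, $|E_\epsilon|=m$, and $|\mathds 1_{E_\epsilon}|_{W^{\rho_\epsilon,1}(\Omega)}\leq\GPer_\epsilon(E_\epsilon)$ is bounded; the Bourgain--Brezis--Mironescu-type compactness for $W^{\rho_\epsilon,1}$-bounded families on the bounded Lipschitz domain $\Omega$ (the compactness result in \cite{Pon04}) then yields a subsequence converging in $L^1(\Omega)$ to some $\mathds 1_E$ with $E\subset\Omega$, $|E|=m$, i.e.\ $E_\epsilon\to E$ in $\Mcal_d$. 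Finally, $\Fcal$ attains its minimum by the direct method and $\BV$-compactness on the bounded set $\Omega$, and $\inf\Fcal_\epsilon<\infty$ for all small $\epsilon$ because any fixed connected open set $E^0$ with $\overline{E^0}\subset\Omega$ and $|E^0|=m$ is $\epsilon$-indecomposable (being polygonally connected) and satisfies $\Fcal_\epsilon(E^0)=\GPer_\epsilon(E^0)\leq K_d\Per(E^0)$ by Lemma~\ref{le:perimeterbound}. Once $\Gamma$-convergence and equi-coercivity are in place, the convergence of almost-minimizers of $\Fcal_\epsilon$ to minimizers of $\Fcal$ — equivalently, since $\Fcal=K_d\Per$ on the constraint set and $K_d>0$, to solutions of $\inf\{\Per(E)\st E\subset\Omega\text{ measurable},\,|E|=m\}$ — follows from the fundamental theorem of $\Gamma$-convergence \cite{DM92,braides2002gamma}.

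I expect the recovery-sequence construction to be the main obstacle: one must connect every $\epsilon$-connected component of $E$ — including any component touching $\partial\Omega$ — by a skeleton that itself stays inside $\Omega$ and whose measure and $\GPer_\epsilon$-perimeter both vanish, which is precisely where the Lipschitz regularity of $\Omega$ enters through the interior cone condition. A secondary technicality, relevant when $d=1$, is that $\GPer_\epsilon$ of a small ball does not tend to $0$ uniformly in $\epsilon$; but for each fixed $\epsilon$ it still does as the radius shrinks by Corollary~\ref{cor:perimeterball}, which is all the argument above actually uses (the parameters $N_\epsilon$ and $s_\epsilon$ being chosen after $\rho_\epsilon$).
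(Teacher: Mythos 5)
Your proposal is correct and follows essentially the same route as the paper: equi-coercivity via Ponce-type compactness on the bounded domain, the liminf inequality from the unconstrained localization together with closedness of the constraint set, and a recovery sequence built by adding a vanishing skeleton of small balls centered on a finite $\epsilon$-net of $\Omega$ (whose $\epsilon$-connectedness really only needs connectedness of $\Omega$, not the cone condition) and restoring the mass constraint by removing a small ball at a density point of $E$. The only soft spot is your ``rerouting'' justification of $\epsilon$-indecomposability after the puncture; the paper sidesteps this by taking the removed ball centered at a point $x_0 \in E^{(1)}$ off the net and of radius smaller than $\dist(x_0,A)$, so the puncture never meets the skeleton and every surviving density point still connects to it directly.
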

\begin{proof}
    \textit{Equi-coercivity:} Let $(E_\epsilon)_\epsilon \subset \Mcal_d$ be a sequence such that $M:=\sup_\epsilon \Fcal_\epsilon (E_\epsilon)<\infty$. Then, we find that $E_\epsilon \subset \Omega$ for all $\epsilon >0$ and
    \[
    \int_{\Omega}\int_{\Omega}\frac{|\mathds{1}_{E_\epsilon}(x)-\mathds{1}_{E_\epsilon}(y)|}{|x-y|}\rho_\epsilon(x-y)\dd{x}\dd{y} \leqslant |\mathds{1}_{E_\epsilon}|_{W^{\rho_\epsilon,1}(\R^d)} = \GPer_\epsilon(E_\epsilon) \leqslant \Fcal_\epsilon(E_\epsilon)\leqslant M.
    \]
    By \cite[Theorem~1.2 and 1.3]{Pon03} (with the additional assumption on $\rho_\epsilon$ from \cite[Eq.~(8)]{Pon03} if $d=1$), we find that $(\mathds{1}_{E_\epsilon})_\epsilon$ converges up to subsequence to some $u$ in $L^1(\Omega)$. Clearly, it holds that $u=\mathds{1}_{E}$ for some $E \subset \Omega$ and $|E|=m$. Therefore, we find that $E_\epsilon \to E$ in $\Mcal_d$. \medskip

    \textit{$\Gamma$-convergence:} We can use the same argument as in Theorem~\ref{thm:connectedperimeterlocalization}, the only adaptation is to make sure that the recovery sequence in Proposition~\ref{prop:connectedrelaxation} can be chosen to adhere to the constraints $E_n \subset \Omega$ and $|E_n|=m$ for every $n \in \N$. This is possible by taking a finite set $A \subset \Omega$ such that $\Omega \Subset A+B_{\epsilon/2}(0)$ and defining
    \[
    F_n = \bigcup_{a \in A} B_{1/n}(a).
    \]
    Then, we set
    \[
    E_n:=(E \cup (F_n \cap \Omega)) \setminus B_{r_n}(x_0),
    \]
    where $x_0 \in E^{(1)} \setminus A$ and $r_n$ is chosen via the intermediate value theorem to get $|E_n|=m$. The latter relies on the continuity of $r \mapsto |(E \cup (F_n \cap \Omega)) \cap B_{r}(x_0)|$. Moreover, if $|F_n|$ is small enough, then $r_n < \dist(x_0, A)$ given that 
    \[
    \lim_{r \to 0} \frac{ |(E \cup (F_n \cap \Omega)) \cap B_{r}(x_0)|}{|B_r(x_0)|} =1.
    \]
    In particular, $r_n \to 0$ as $n \to \infty$ and $E_n$ is $\epsilon$-indecomposable. The indecomposability relies on the observation that $A$ can not be partitioned into two sets $A_1,A_2$ with $\dist(A_1,A_2)>\epsilon$, since then $A_1+B_{\epsilon/2}(0)$ and $A_2+B_{\epsilon/2}(0)$ would give a nontrivial disjoint open cover of the connected set $\Omega$. Additionally, the convergence $E_n \to E$ is clear, while we also have
    \[
    \GPer^c_\epsilon(E_n) \leqslant \GPer_\epsilon(E) + \GPer_\epsilon(B_{r_n}(x_0)) + \sum_{a \in A} \GPer_\epsilon\left(B_{1/n}\left(a\right) \cap \Omega\right).
    \]
    Now, using that $\GPer_\epsilon\left(B_{1/n}\left(a\right) \cap \Omega\right) \to 0$ by arguing as in Corollary~\ref{cor:perimeterball}, we can obtain:
    \[
    \limsup_{n \to \infty} \GPer_\epsilon^c(E_n) \leqslant \GPer_\epsilon(E).\qedhere
    \]
\end{proof}

When we let the horizon $\epsilon$ instead diverge to $\infty$, we can recover the fractional perimeter as well. For this, we assume again that $\rho_\epsilon$ is radial with $\esssupp \rho_\epsilon = \overline{B_\varepsilon(0)}$ for all $\epsilon >0$, without requiring the normalization \eqref{eq:mean}, and additionally that as $\epsilon \to \infty$
\begin{equation}\label{eq:epsiloninftyhypothesis}
\min\{1,|\cdot|^{-1}\}\,\rho_\epsilon(\cdot) \to \min\{1,|\cdot|^{-1}\}\,\frac{1}{|\cdot|^{d+\alpha-1}} \quad \text{pointwise a.e.~and in $L^1(\R^d)$.}
\end{equation}
\begin{example}
    One example that would satisfy \eqref{eq:epsiloninftyhypothesis} is 
    \[
    \rho_\epsilon(\cdot)=\overline{\rho}_\epsilon(|\cdot|), \quad \overline{\rho}_\epsilon(r):=\mathds{1}_{[0,\epsilon]}(r)\frac{1}{r^{d+\alpha-1}}
    \]
    for $\alpha \in (0,1)$. Another kernel that does not have purely fractional singularity at the origin would be given by
    \[
    \overline{\rho}_\epsilon(r):=\mathds{1}_{[0,\epsilon/2]}(r)\frac{\log(\epsilon)}{\log(\epsilon/r)}\frac{1}{r^{d+\alpha-1}}=\mathds{1}_{[0,\epsilon/2]}(r)\frac{1}{1-\log(r)/\log(\epsilon)}\frac{1}{r^{d+\alpha-1}},
    \]
    see~\cite[Example~4.4\,(b) and Lemma~4.5]{cueto2024gamma}.
\end{example}
We have the following convergence result.
\begin{thm}
    It holds that
    \[
    \Gamma\text{-}\lim_{\epsilon \to \infty} \GPer^c_\epsilon = \GPer_\alpha.
    \]
\end{thm}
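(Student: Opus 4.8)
The strategy is to mirror the structure of the proof of Theorem~\ref{thm:connectedperimeterlocalization}: replace $\GPer_\epsilon^c$ by its lower semicontinuous envelope, which by the same argument as in Proposition~\ref{prop:connectedrelaxation} equals $\GPer_\epsilon$, and then establish the $\Gamma$-convergence $\Gamma\text{-}\lim_{\epsilon\to\infty}\GPer_\epsilon=\GPer_\alpha$ directly. For the lsc-envelope step, one first checks that $\GPer_\epsilon$ is still lower semicontinuous on $\Mcal_d$ (Fatou), and then that the "connectifying" construction of Proposition~\ref{prop:connectedrelaxation} still works: the union of shrinking balls $F_n$ placed on a lattice of spacing $\epsilon/2$ makes $E\cup F_n$ $\epsilon$-indecomposable, and $\GPer_\epsilon(F_n)\to 0$. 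Here one needs an analogue of Corollary~\ref{cor:perimeterball}, i.e.\ $\GPer_\epsilon(B_\delta(0))\to 0$ as $\delta\to 0$ for the (now unnormalized) kernel; this follows from the integrability of $\min\{1,|\cdot|^{-1}\}\rho_\epsilon$ guaranteed by \eqref{eq:epsiloninftyhypothesis}, splitting the double integral over $|x-y|<r$ and $|x-y|\ge r$ as in Lemma~\ref{le:perimeterbound}. Invoking \cite[Proposition~1.32]{braides2002gamma} then reduces everything to proving $\Gamma\text{-}\lim_{\epsilon\to\infty}\GPer_\epsilon=\GPer_\alpha$.

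\textbf{The $\Gamma$-convergence of $\GPer_\epsilon$ to $\GPer_\alpha$.} For the $\limsup$ (recovery) inequality, take the constant sequence $E_\epsilon=E$; then
\[
\GPer_\epsilon(E)=2\int_{E}\int_{E^c}\frac{\rho_\epsilon(x-y)}{|x-y|}\dd x\dd y
\]
and one wants this to converge to $\GPer_\alpha(E)=2\int_E\int_{E^c}|x-y|^{-(d+\alpha)}\dd x\dd y$. Writing the integrand as $\min\{1,|x-y|^{-1}\}\rho_\epsilon(x-y)\cdot g(x,y)$ with $g$ bounded, the $L^1$-convergence in \eqref{eq:epsiloninftyhypothesis} would give convergence of the integral provided one controls the region $|x-y|<1$ uniformly — but since $\rho_\epsilon$ is monotone in $\epsilon$ on any fixed ball once $\epsilon$ is large (in the truncated-fractional example) or, more generally, is dominated by the $L^1$-convergent family near the origin, dominated convergence applies. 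One has to be slightly careful that $\GPer_\alpha(E)$ may be infinite, in which case the recovery inequality is trivial, and when it is finite one splits into $\{|x-y|\le 1\}$, handled by domination, and $\{|x-y|>1\}$, handled directly by \eqref{eq:epsiloninftyhypothesis}. For the $\liminf$ inequality, given $E_\epsilon\to E$ in $\Mcal_d$, one uses lower semicontinuity: for fixed $\delta>0$ one has $\min\{1,|h|^{-1}\}\rho_\epsilon(h)\ge \min\{1,|h|^{-1}\}|h|^{-(d+\alpha-1)}-\eta_\epsilon(h)$ with $\|\eta_\epsilon\|_{L^1}\to 0$, so
\[
\GPer_\epsilon(E_\epsilon)\ge 2\int_{E_\epsilon}\int_{E_\epsilon^c}\frac{\rho_\epsilon(x-y)}{|x-y|}\dd x\dd y \ge \GPer_\alpha^{R}(E_\epsilon)-C\|\eta_\epsilon\|_{L^1},
\]
where $\GPer_\alpha^R$ truncates the kernel to $|x-y|<R$; letting $\epsilon\to\infty$ with $E_\epsilon\to E$ in $L^1_{loc}$ and using Fatou on the fixed kernel $\1_{\{|x-y|<R\}}|x-y|^{-(d+\alpha)}$ gives $\liminf_\epsilon \GPer_\epsilon(E_\epsilon)\ge \GPer_\alpha^R(E)$, and then $R\to\infty$ via monotone convergence yields $\liminf_\epsilon\GPer_\epsilon(E_\epsilon)\ge\GPer_\alpha(E)$.

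\textbf{Main obstacle.} The delicate point is the uniform control near the singularity at the origin in the recovery inequality: the kernels $\rho_\epsilon$ are only assumed to converge in $L^1$ after multiplication by $\min\{1,|\cdot|^{-1}\}$, so one must ensure that $\int_E\int_{E^c}\min\{1,|x-y|^{-1}\}\rho_\epsilon(x-y)\dd x\dd y$ does not lose mass near $x=y$; since $\1_E$ is not BV (there is no $\GPer_\alpha<\infty$ control built in for arbitrary $E$), one cannot simply appeal to a Bourgain--Brezis--Mironescu type statement and must instead exploit the $L^1$-domination directly. This is exactly why the hypothesis is phrased with the weight $\min\{1,|\cdot|^{-1}\}$ rather than pointwise convergence alone, and checking that this suffices — together with verifying that the connectifying balls in the lsc-envelope step remain admissible for large $\epsilon$ (where the lattice spacing $\epsilon/2$ grows, so fewer balls are needed and $|F_n|\to 0$ is even easier) — is the only real work; the rest is a direct transcription of the $\epsilon\to 0$ argument.
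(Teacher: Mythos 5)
There is a genuine gap, and it sits exactly at the point you flag as the ``main obstacle'': your recovery argument with the constant sequence $E_\epsilon=E$ does not go through under the hypothesis \eqref{eq:epsiloninftyhypothesis}. The factor you call $g$ is $\max\{1,|x-y|^{-1}\}$, which is unbounded near the diagonal, so the $L^1$-convergence of $\min\{1,|\cdot|^{-1}\}\rho_\epsilon$ gives you no control on the region $|x-y|<1$: there the weight equals $1$, while the actual integrand carries the extra factor $1/|x-y|$ against a set $E$ for which only $\GPer_\alpha(E)<\infty$ is known, i.e.\ $\int_{\R^d}|\1_E(x+h)-\1_E(x)|\dd x$ may decay like $|h|^{\alpha+\delta}$ rather than like $|h|$. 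Pointwise domination of $\rho_\epsilon$ by the fractional kernel near the origin holds for the two examples in the paper but is \emph{not} implied by \eqref{eq:epsiloninftyhypothesis}; one can add to the truncated fractional kernel a radial bump supported in an annulus of radius $a_\epsilon\to 0$ with vanishing $L^1$-mass (so \eqref{eq:epsiloninftyhypothesis} still holds), and for a set whose fractional perimeter is finite but whose classical perimeter is infinite the bump contributes $\sim M_\epsilon a_\epsilon^{d}\,\omega_E(a_\epsilon)/a_\epsilon$, which can be made to blow up. So $\lim_{\epsilon\to\infty}\GPer_\epsilon(E)=\GPer_\alpha(E)$ is simply false for constant sequences in general, and ``dominated convergence applies'' cannot be justified. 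The same unbounded factor invalidates your liminf estimate $\GPer_\epsilon(E_\epsilon)\geq \GPer_\alpha^R(E_\epsilon)-C\|\eta_\epsilon\|_{L^1}$: passing from the weighted inequality to the perimeter integrand multiplies $\eta_\epsilon$ by $\max\{1,|h|^{-1}\}$, and bounding that error by $\|\eta_\epsilon\|_{L^1}$ again needs $\omega_{E_\epsilon}(h)\lesssim \min\{1,|h|\}$, i.e.\ uniform BV-type control you do not have.

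The paper's proof supplies precisely the missing ingredients. For the limsup it first proves the quantitative bound \eqref{eq:kernerlperimeterbound}, $|\GPer_\epsilon(E)-\GPer_\alpha(E)|\leq C\|\1_E\|_{\BV(\R^d)}\int \min\{1,|h|^{-1}\}\,|\rho_\epsilon(h)-|h|^{-(d+\alpha-1)}|\dd h$, whose proof uses $\int|\phi(x+h)-\phi(x)|\dd x\leq C\min\{1,|h|\}\|\phi\|_{W^{1,1}}$ — this is exactly where the weight $\min\{1,|\cdot|^{-1}\}$ is absorbed, and it requires BV regularity. Since a general $E$ with $\GPer_\alpha(E)<\infty$ need not have $\1_E\in\BV(\R^d)$, the paper then approximates: either $E$ or $E^c$ has finite measure, so by Lombardini's density theorem one finds smooth sets $E_n$ (with $E_n$ or $E_n^c$ bounded) with $E_n\to E$ and $\GPer_\alpha(E_n)\to\GPer_\alpha(E)$, applies \eqref{eq:kernerlperimeterbound} to each $E_n$, and extracts a diagonal sequence $E_{n_\epsilon}$ as recovery sequence (these sets are also $\epsilon$-indecomposable for large $\epsilon$, so no relaxation step is needed). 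For the liminf the paper simply uses Fatou's lemma with the pointwise a.e.\ convergence in \eqref{eq:epsiloninftyhypothesis}, avoiding any quantitative error term. Your lsc-envelope reduction via Proposition~\ref{prop:connectedrelaxation} is a legitimate alternative way to handle the indecomposability constraint, but without the BV estimate and the approximation/diagonal argument the core $\Gamma$-convergence $\GPer_\epsilon\to\GPer_\alpha$ is not established.
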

\begin{proof}
    The liminf-inequality is a straightforward consequence of Fatou's lemma by exploiting the pointwise a.e.~convergence from \eqref{eq:epsiloninftyhypothesis}. For the recovery sequence, we first prove that
    \begin{equation}\label{eq:kernerlperimeterbound}
    |\GPer_\epsilon(E)-\GPer_\alpha(E)| \leqslant C\|\mathds{1}_E\|_{\BV(\R^d)}\int_{\R^d}\min\{1,|h|^{-1}\}|\rho_\epsilon({h})-|h|^{-(d+\alpha-1)}|\dd{h}.
    \end{equation}
    For $\phi \in W^{1,1}(\R^d)$, we can compute that
    \begin{align*}
    ||\phi|_{W^{\rho_\epsilon,1}(\R^d)}-|\phi|_{W^{\alpha,1}(\R^d)}| &\leqslant \int_{\R^d}\int_{\R^d} |\phi(x+h)-\phi(x)|\dd{x}\,\abs{h}^{-1}|\rho_\epsilon({h})-|h|^{-(d+\alpha-1)}|\dd{h} \\
    &\leqslant C\|\phi\|_{W^{1,1}(\R^d)}\int_{\R^d}\min\{1,|h|^{-1}\}|\rho_\epsilon({h})-|h|^{-(d+\alpha-1)}|\dd{h},
    \end{align*}
    by using that 
    \[
    \int_{\R^d} |\phi(x+h)-\phi(x)|\dd{x} \leqslant C\min\{1,|h|\}\|\phi\|_{W^{1,1}(\R^d)}.
    \]
    Hence, by approximating $\mathds{1}_{E}$ with $W^{1,1}$-functions strictly, we deduce \eqref{eq:kernerlperimeterbound}.

    Now, we take $E \in \Mcal_d$ such that $\GPer_\alpha(E)<\infty$. By \cite[Corollary~7.9]{Leo23}, we deduce that either $E$ or $E^c$ has finite measure. Therefore, by \cite[Theorem~1.1]{Lom18}, we can take a sequence $E_n$ of smooth sets such that either $E_n$ or $E_n^c$ is bounded for all $n \in \N$, $E_n \to E$ and $\GPer_\alpha(E_n) \to \GPer_\alpha(E)$. In the case that $E_n$ is bounded, we find, using \eqref{eq:kernerlperimeterbound} and \eqref{eq:epsiloninftyhypothesis}, that
    \[
    \lim_{n \to \infty}\lim_{\epsilon \to \infty}\GPer^c_\epsilon(E_n)=\lim_{n \to \infty}\lim_{\epsilon \to \infty}\GPer_\epsilon(E_n) = \lim_{n \to \infty}\GPer_\alpha(E_n) = \GPer_\alpha(E),
    \]
    where the first equality uses that $E_n$ is bounded and hence $\epsilon$-indecomposable for large enough $\epsilon$. If $E_n^c$ is bounded, we may deduce the same by using that $\GPer_\alpha(E_n)=\GPer_\alpha(E_n^c)$ and $\GPer_\epsilon(E_n)=\GPer_\epsilon(E_n^c)$. By choosing a suitable diagonal sequence $E_{n_\epsilon}$ we obtain the desired recovery sequence.
\end{proof}

\section{Extreme points and decomposability in \texorpdfstring{$\BV^\rho(\R^d)$}{BVrho} and \texorpdfstring{$\BV^\alpha(\R^d)$}{BValpha}.}\label{sec:extdecdistributional}

We now turn to the nonlocal variation and perimeter defined in a distributional sense and study the extreme points and decomposability properties in this context. We start off with some preliminaries on the functions of bounded nonlocal variation, and subsequently investigate decomposability and extremality. It turns out that nonlocal decomposability in this setting is equivalent to $2\epsilon$-decomposability studied in Section~\ref{sec:nonlocalper}, at least for regular enough sets (cf.~Theorem~\ref{thm:epsdecomp}). In contrast, the extreme points are not indicator functions like for the Gagliardo perimeter and also not related to the nonlocal notion of decomposability. Instead they are obtained via the extreme points in $\BV$ using a suitable isomorphism, see~Theorem~\ref{thm:extepsdistributional} and \ref{thm:extalphadistributional}.

\subsection{Functions of bounded nonlocal variation}\label{sec:distrBV}

Here, we introduce the functions of bounded nonlocal variation and some of their properties, which are needed in the paper. These spaces have only been considered in the fractional case \cite{ComSte19,ComSte23, brue2022distributional}, but we extend it to more general kernels, utilizing the recent tools developed in \cite{BelMorSch24}.

Throughout Section~\ref{sec:extdecdistributional} and \ref{sec:gammadistributional} we consider a general radial kernel $\rho:\R^d \setminus\{0\} \to [0,\infty)$ that satisfies for some $\eta>0$
\begin{equation}\label{eq:assumption}
    \inf_{\overline{B_\eta(0)}}\rho>0 \quad \text{and} \quad \rho \cdot \min\{1,\abs{\cdot}^{-1}\} \in L^1(\R^d).
\end{equation}
One can define the associated nonlocal gradient $D_\rho u$ of $u \in C_c^{\infty}(\R^d)$ as
\[
D_\rho u(x):= \int_{\R^d} \frac{u(y)-u(x)}{\abs{y-x}}\frac{y-x}{\abs{y-x}} \rho(y-x)\dd y,
\]
and the nonlocal divergence of $p \in C_c^{\infty}(\R^d;\R^d)$ as
\[
\div_\rho p(x):= \int_{\R^d} \frac{p(y)-p(x)}{\abs{y-x}}\cdot\frac{y-x}{\abs{y-x}}\rho(y-x) \dd y.
\]
These operators are well-defined and define smooth, bounded and integrable functions (cf.~\cite{BelMorSch24}). Moreover, they are dual in the sense that
\begin{equation}\label{eq:intbyparts}
    \int_{\R^d} u\div_\rho p\dd x = -\int_{\R^d} D_\rho u \cdot p \dd x.
\end{equation}Similarly to the fractional case of \cite{ComSte19}, we introduce the following space.
\begin{defi}[Functions of bounded nonlocal variation]
The space $\BV^\rho(\R^d)$ is defined as the functions $u \in L^1(\R^d)$ such that
\[
\TV_\rho (u):=\sup\left\{ \int_{\R^d} u \div_\rho p \dd x \,\middle\vert\, p \in C_c^{\infty}(\R^d;\R^d),\, \|p\|_{L^{\infty}(\R^d;\R^d)}\leq1\right\}<\infty,
\]
endowed with the norm
\[
\|u\|_{\BV^\rho(\R^d)}:=\|u\|_{L^1(\R^d)}+\TV_\rho(u).
\]
\end{defi}
\begin{example}[Fractional case]\label{ex:frac}
    The kernel
    \[
    \rho(\cdot)=c_{d,\alpha}\frac{1}{|\cdot|^{d-1+\alpha}} \quad \text{with} \quad c_{d,\alpha}:=2^\alpha\pi^{-d/2}\frac{\Gamma((d+\alpha+1)/2)}{\Gamma((1-\alpha)/2)}
    \]
    with $\alpha \in (0,1)$ leads to the fractional gradient $D^\alpha$ and $\BV^\alpha$, see~e.g.~\cite{ComSte19,ComSte23, brue2022distributional}.
\end{example}
One can deduce that for $u \in \BV^\rho(\R^d)$, there exists a measure $D_\rho u \in \M(\R^d;\R^d)$ such that
\[
\int_{\R^d} u \div_\rho p \dd x = - \int_{\R^d} p \cdot \dd D_\rho u \ \text{ for all }p \in C^\infty_c(\R^d;\R^d), \ \text{ and }\TV_\rho(u) = |D_\rho u|(\R^d).
\]
In view of \eqref{eq:intbyparts}, the measure $D_\rho u$, which we call the \textit{nonlocal variation measure} of $u$, is a natural extension of the nonlocal gradient to the space $\BV^\rho(\R^d)$.

An important tool we need is the potential $Q_\rho:\R^d \setminus \{0\} \to [0,\infty)$ defined by
\[
Q_\rho(z) = \int_{|z|}^\infty \frac{\overline{\rho}(t)}{t}\dd t \quad \text{for $z \in \R^d \setminus\{0\}$} \quad \text{with} \ \rho(\cdot)=\overline{\rho}(|\cdot|),
\]
which is a locally integrable function that satisfies
\begin{equation}\label{eq:potential}
D_\rho u = Q_\rho * Du = D(Q_\rho * u) \quad \text{for all $u \in C_c^{\infty}(\R^d)$},
\end{equation}
see~\cite[Proposition~2.6]{BelMorSch24}. In the case that $\rho \in L^1(\R^d)$, it holds that $Q_\rho \in L^1(\R^d)$, while in the fractional case, it holds that $Q_\rho$ coincides with the Riesz potential $I_{1-\alpha} \propto 1/|\cdot|^{d-1+\alpha}$, which is not integrable. In the former case, we can extend \eqref{eq:potential} to the $\BV$-setting as follows.
\begin{lemma}\label{le:fromnonlocaltolocal}
If $\rho \in L^1(\R^d)$, then the linear map $\Qcal_\rho:\BV^\rho(\R^d) \to \BV(\R^d), \ u \mapsto Q_\rho *u$ is bounded and satisfies
\[
D_\rho u = D (\Qcal_\rho u) \quad \text{for all $u \in \BV^\rho(\R^d)$.}
\]
Moreover, if $u \in \BV(\R^d)$ then $D_\rho u = Q_\rho * Du \in L^1(\R^d)$.
\end{lemma}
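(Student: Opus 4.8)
The plan is to first establish the statement for smooth compactly supported functions, where \eqref{eq:potential} already gives $D_\rho u = D(Q_\rho * u)$, and then pass to the limit using the density of such functions in $\BV^\rho(\R^d)$ with respect to a suitable (strict-type) convergence. First I would observe that since $\rho \in L^1(\R^d)$, one has $Q_\rho \in L^1(\R^d)$: indeed $\int_{\R^d} Q_\rho(z)\dd z = \int_{\R^d}\int_{|z|}^\infty \overline\rho(t)t^{-1}\dd t\dd z$, and switching the order of integration via Tonelli produces a constant (depending only on $d$) times $\int_{\R^d}\overline\rho(|h|)|h|^{-1}|h|^d\dd h/|h|^{d-1}$ — more carefully, $\int_{\R^d} Q_\rho = c_d \int_0^\infty s^{d-1}\int_s^\infty \overline\rho(t)t^{-1}\dd t \dd s = c_d \int_0^\infty \overline\rho(t)t^{-1}\big(\int_0^t s^{d-1}\dd s\big)\dd t = (c_d/d)\int_0^\infty \overline\rho(t)t^{d-1}\dd t = (c_d/d)|\S^{d-1}|^{-1}\|\rho\|_{L^1(\R^d)}$, which is finite. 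Hence convolution with $Q_\rho$ maps $L^1(\R^d)$ into $L^1(\R^d)$ with $\|Q_\rho * u\|_{L^1}\leq \|Q_\rho\|_{L^1}\|u\|_{L^1}$, so $\Qcal_\rho$ is at least well-defined and bounded as a map $L^1\to L^1$.

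Next I would show $\Qcal_\rho u \in \BV(\R^d)$ with $D(\Qcal_\rho u) = D_\rho u$ (as measures) for every $u \in \BV^\rho(\R^d)$. The cleanest route is duality: for any $p \in C_c^\infty(\R^d;\R^d)$ with $\|p\|_\infty \leq 1$, compute $\int_{\R^d} (\Qcal_\rho u)\,\div p\dd x = \int_{\R^d} (Q_\rho * u)\,\div p\dd x = \int_{\R^d} u\,(\check Q_\rho * \div p)\dd x$ where $\check Q_\rho(z) = Q_\rho(-z) = Q_\rho(z)$ by radiality; since $Q_\rho$ is radial, $Q_\rho * \div p = \div(Q_\rho * p)$, and one identifies $\div(Q_\rho * p)$ with $\div_\rho(\text{something})$. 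Actually the identity to exploit is the adjoint of \eqref{eq:potential}: for $p \in C_c^\infty$, $\div_\rho p = Q_\rho * \div p$ (this is the divergence analogue of \eqref{eq:potential} and follows from it by the duality \eqref{eq:intbyparts} together with the classical integration by parts, or directly from \cite[Proposition~2.6]{BelMorSch24}). Granting this, $\int (\Qcal_\rho u)\div p\dd x = \int u\, \div_\rho p\dd x = -\int p\cdot\dd D_\rho u$, and taking the supremum over admissible $p$ yields $\mathrm{TV}(\Qcal_\rho u) = |D_\rho u|(\R^d) = \mathrm{TV}_\rho(u) < \infty$, so $\Qcal_\rho u \in \BV(\R^d)$, the map is bounded $\BV^\rho \to \BV$, and $D(\Qcal_\rho u) = D_\rho u$ as measures. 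The one subtlety is justifying $Q_\rho * \div p = \div(Q_\rho * p)$ and the convolution manipulations when $Q_\rho$ is merely $L^1$ and $\div p$ is smooth compactly supported; this is standard (differentiation under the convolution, Fubini) but should be stated.

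For the final assertion, take $u \in \BV(\R^d)$. Since $Q_\rho \in L^1(\R^d)$, Young's inequality gives $Q_\rho * Du \in \M(\R^d;\R^d)$ with finite total variation; to see it is in fact absolutely continuous, i.e. lies in $L^1(\R^d;\R^d)$, note that $Q_\rho * Du$ has density obtained by convolving the finite measure $Du$ with the $L^1$ function $Q_\rho$, and the convolution of an $L^1$ function with a finite measure is in $L^1$ with norm bounded by $\|Q_\rho\|_{L^1}|Du|(\R^d)$. Then the identity $D_\rho u = Q_\rho * Du$ for $u\in\BV(\R^d)$ follows by approximating $u$ strictly in $\BV$ by $u_k \in C_c^\infty(\R^d)$ (mollification plus truncation), using \eqref{eq:potential} for each $u_k$ in the form $D_\rho u_k = Q_\rho * Du_k$, and passing to the limit: the left side converges because $u_k \to u$ in $L^1$ forces $D_\rho u_k \to D_\rho u$ (the nonlocal gradient is continuous $L^1 \to \M$ under the assumption \eqref{eq:assumption}, or one invokes the already-established boundedness of $\Qcal_\rho$ together with $D_\rho u = D\Qcal_\rho u$), and the right side converges since $Du_k \weakstar Du$ and convolution against the fixed $L^1$ kernel $Q_\rho$ is continuous for weak-$*$ convergence of measures tested against $C_0$ functions — one gets equality of the two limits as $L^1$ functions by testing against $C_c^\infty$. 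The main obstacle I anticipate is the bookkeeping around the divergence identity $\div_\rho p = Q_\rho * \div p$ and ensuring all convolution/integration-by-parts exchanges are legitimate with only $L^1$ regularity on $Q_\rho$; everything else is a routine density-and-duality argument.
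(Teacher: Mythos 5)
Your proposal is correct and follows essentially the same route as the paper: the core is the duality/Fubini computation $\int (\Qcal_\rho u)\,\div p\,\dd x = \int u\,(Q_\rho * \div p)\,\dd x = \int u\,\div_\rho p\,\dd x = -\int p\cdot \dd D_\rho u$, using $Q_\rho\in L^1(\R^d)$ (Young) and the identity $Q_\rho*\div p=\div_\rho p$ inherited componentwise from \eqref{eq:potential}, exactly as in the paper's proof. For the final assertion the paper merely invokes "a similar duality argument," whereas you use strict approximation of $u\in\BV(\R^d)$ by smooth functions and pass to the limit; this is a legitimate, equally routine alternative.
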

\begin{proof}
    For $u \in \BV^\rho(\R^d)$ it follows that $\Qcal_\rho u \in L^1(\R^d)$ by Young's convolution inequality. Moreover, one can compute with Fubini's theorem that
    \begin{align*}
        \int_{\R^d}\Qcal_\rho u \div p \dd x &= \int_{\R^d} u \Qcal_\rho(\div p) \dd x\\
        &= \int_{\R^d} u \div_\rho p \dd x =-\int_{\R^d} p\dd D_\rho u.
    \end{align*}
    Hence, $D(\Qcal_\rho u)=D_\rho u$, after which the boundedness follows. The second statement can be proven with a similar duality argument.
\end{proof}
There is also an inverse of $\Qcal_\rho$, which requires some additional assumptions on $\rho$. Specifically, we assume that $\rho$ has compact support and satisfies the conditions \ref{itm:h1}-\ref{itm:upper} from \cite{BelMorSch24}. In particular, given $\eta>0$ as in \eqref{eq:assumption}, $\nu >0$ and $0<\sigma \leqslant \gamma <1$, the following holds:
\begin{enumerate}[label = (H\arabic*)]
\item \label{itm:h1} The function $f_\rho:(0,\infty)\to \R, \ r \mapsto r^{d-2}\overline{\rho}(r)$ is nonincreasing on $(0,\infty)$ and $r \mapsto r^\nu f_\rho(r)$ is nonincreasing on $(0,\eta)$;

\item \label{itm:derivatives} $f_\rho$ is smooth on $(0,\infty)$ and for every $k\in \N$ there exists a $C_k>0$ with
\[
\left|\frac{d^k}{dr^k}f_\rho(r)\right| \leqslant C_k \frac{f_\rho(r)}{r^k} \quad \text{for $r\in (0, \eta)$};
\]

\item \label{itm:lower} the function $r \mapsto r^{d+\sigma-1}\overline{\rho}(r)$ is almost nonincreasing on $(0,\eta)$; 

\item \label{itm:upper} the function $r \mapsto r^{d+\gamma-1}\overline{\rho}(r)$ is almost nondecreasing on $(0,\eta)$.
\end{enumerate}
Then, one can define the operator
\[
\Pcal_\rho: \Scal(\R^d) \to \Scal(\R^d), \quad \Pcal_\rho v := \left(\widehat{v}/\widehat{Q}_\rho\right)^{\vee},
\]
see~\cite[Lemma~2.12]{cueto2024gamma}, which is the inverse of $\Qcal_\rho$ on $\Scal(\R^d)$. To show that $\Pcal_\rho$ extends to the $\BV$-setting requires additional work, given that Fourier techniques do not directly work in this setting. A key tool is the nonlocal fundamental theorem of calculus, which we state here for the reader's convenience.
\begin{thm}[{\cite[Theorem~5.2]{BelMorSch24}}]\label{thm:nftocsmooth}
     Let $\rho$ have compact support and satisfy \ref{itm:h1}-\ref{itm:upper}. Then, there exists a vector radial function $V_\rho \in C^{\infty}(\R^d\setminus \{0\},\R^d) \cap L^1_{\rm loc} (\R^d,\R^d)$ such that for all $u \in C_c^{\infty}(\R^d)$,
\begin{equation}\label{eq:ftoc}
u(x) = \int_{\R^d} V_\rho(x-y) \cdot D_\rho u(y)\dd y \quad \text{for all $x \in \R^d$}.
\end{equation}
Moreover, there is a constant $C=C(d,\rho)>0$ such that for all $z \in B_{\eta}(0) \setminus\{0\}$,
\begin{equation}\label{eq:Vrhobound}
\abs{V_\rho(z)}\leqslant \frac{C}{\abs{z}^{2d-1}\rho(z)} \quad \text{and} \quad \abs{\nabla V_\rho(z)} \leqslant \frac{C}{\abs{z}^{2d}\rho(z)}.
\end{equation}
\end{thm}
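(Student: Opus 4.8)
\emph{Proof idea.} We sketch one possible line of argument (the result itself is taken from \cite[Section~5]{BelMorSch24}). The plan is to combine the factorization $D_\rho u = D(Q_\rho * u)$ from \eqref{eq:potential}, the classical fundamental theorem of calculus, and the inverse operator $\Pcal_\rho$. Fix $u \in C_c^{\infty}(\R^d)$ and put $w := Q_\rho * u$. Since $\rho$ has compact support, $Q_\rho$ is compactly supported and locally integrable, hence $w \in C_c^{\infty}(\R^d)$ with $Dw = D_\rho u$, and the classical representation formula gives
\[
w(x) = \int_{\R^d} K(x-y)\cdot D_\rho u(y)\dd y, \qquad K(z) := \frac{1}{|\S^{d-1}|}\,\frac{z}{|z|^d},
\]
where $K$ is the kernel characterised by $\div K = \delta_0$. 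Because $\Pcal_\rho$ is the two-sided inverse of $\Qcal_\rho$ and commutes with convolutions, one expects $u = \Pcal_\rho w = (\Pcal_\rho K)*D_\rho u$, which pins down the candidate $V_\rho := \Pcal_\rho K$, acting componentwise.

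I would make this rigorous on the Fourier side: define $V_\rho$ via $\widehat{V_\rho}(\xi) = \widehat K(\xi)/\widehat{Q_\rho}(\xi)$, with $\widehat K(\xi)$ a constant multiple of $\xi/|\xi|^2$. This is legitimate because $\widehat{Q_\rho}$ is continuous, radial and nowhere vanishing — exactly what is needed for $\Pcal_\rho$ to be well-defined in \cite[Lemma~2.12]{cueto2024gamma}. As $Q_\rho \in L^1(\R^d)$, one has $\widehat{Q_\rho}(0) = \|Q_\rho\|_{L^1(\R^d)} \in (0,\infty)$, so $\widehat{V_\rho}$ behaves like $\widehat K$ near the origin and $V_\rho$ inherits the slow $|z|^{1-d}$ decay of $K$ at infinity, while the monotonicity and derivative conditions \ref{itm:h1}--\ref{itm:derivatives} provide symbol-type bounds on $\widehat{V_\rho}$ and its derivatives away from the origin, yielding $V_\rho \in C^{\infty}(\R^d\setminus\{0\};\R^d)$; the radial vector structure of $V_\rho$ is inherited from that of $\widehat K$ and $\widehat{Q_\rho}$. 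The identity \eqref{eq:ftoc} then follows by undoing the convolution with $Q_\rho$: from $Q_\rho * V_\rho = K$ one obtains $Q_\rho*(V_\rho*D_\rho u) = K * D_\rho u = Q_\rho * u$, and since $\widehat{Q_\rho}$ never vanishes this forces $u = V_\rho*D_\rho u$; after justifying the convolution manipulations by standard integrability considerations ($V_\rho \in L^1_{\mathrm{loc}}$ with $|z|^{1-d}$ tails, $D_\rho u \in L^1\cap L^\infty$), the pointwise identity for every $x$ is recovered from continuity of both sides.

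The quantitative bounds \eqref{eq:Vrhobound} are the technical heart. Here I would use \ref{itm:lower}--\ref{itm:upper} to extract two-sided power estimates for $\overline{\rho}$ near $0$ (namely $r^{-(d-1+\sigma)} \lesssim \overline{\rho}(r) \lesssim r^{-(d-1+\gamma)}$ on $(0,\eta)$), which propagate to comparable bounds for $Q_\rho$ near $0$, then — via standard asymptotics for Fourier transforms of compactly supported functions with a power singularity, with derivative control from \ref{itm:h1}--\ref{itm:derivatives} — to two-sided bounds for $\widehat{Q_\rho}$ at large frequencies, and finally to bounds for $\widehat{V_\rho}(\xi) = \widehat K(\xi)/\widehat{Q_\rho}(\xi)$. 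Inverting the Fourier transform and separating low- and high-frequency contributions then produces $|V_\rho(z)| \lesssim |z|^{-(2d-1)}/\rho(z)$ for small $|z|$, and one extra factor $|z|^{-1}$ for $\nabla V_\rho$, consistent with the above estimates by a direct homogeneity count. The main obstacle is precisely this step: turning the qualitative inversion into these \emph{sharp} pointwise kernel bounds requires the full strength of \ref{itm:h1}--\ref{itm:upper} and a delicate frequency-localised Fourier/potential-theoretic analysis, which is carried out in \cite[Section~5]{BelMorSch24}.
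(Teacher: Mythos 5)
This theorem is not proved in the paper at all: it is quoted verbatim from \cite[Theorem~5.2]{BelMorSch24}, so the only meaningful comparison is with the argument in that reference. Your outline does reconstruct the architecture used there (and implicitly in Lemma~\ref{le:Vrhoestimates} of this paper): define $V_\rho$ on the Fourier side by $\widehat{V}_\rho(\xi)=\widehat{K}(\xi)/\widehat{Q}_\rho(\xi)=-i\xi/(2\pi|\xi|^2\widehat{Q}_\rho(\xi))$, and obtain \eqref{eq:ftoc} by inverting the convolution with $Q_\rho$ in the identity $D_\rho u = D(Q_\rho*u)$; the compact support of $Q_\rho$, the smoothness of $V_\rho$ away from the origin and its vector-radial structure are handled in the way you indicate. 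So the qualitative half of your sketch is on the right track.

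As a proof, however, it has two genuine gaps. First, everything hinges on quantitative control of $\widehat{Q}_\rho$: you assert it is nowhere vanishing and implicitly that $1/\widehat{Q}_\rho$ together with its derivatives has polynomially controlled growth (needed both to define $V_\rho$ as a tempered distribution and to ``divide by $\widehat{Q}_\rho$'' when passing from $Q_\rho*(V_\rho*D_\rho u)=Q_\rho*u$ to $u=V_\rho*D_\rho u$). Deriving positivity and two-sided bounds for $\widehat{Q}_\rho$ from \ref{itm:h1}--\ref{itm:upper} is one of the central technical points of the cited work and is not automatic. Second, and more importantly, the bounds \eqref{eq:Vrhobound} --- precisely the part of the theorem this paper uses quantitatively, e.g.\ in Lemma~\ref{le:Vrhoestimates} and Theorem~\ref{thm:compactness} --- are not established. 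The route you propose, namely extracting the power sandwich $r^{-(d-1+\sigma)}\lesssim \overline{\rho}(r)\lesssim r^{-(d-1+\gamma)}$ from \ref{itm:lower}--\ref{itm:upper} and propagating it through $Q_\rho$, $\widehat{Q}_\rho$ and $\widehat{V}_\rho$, cannot yield \eqref{eq:Vrhobound} when $\sigma<\gamma$: the claimed bound $|V_\rho(z)|\leqslant C|z|^{-(2d-1)}\rho(z)^{-1}$ tracks $\rho$ pointwise and may be as small as $|z|^{-(d-\gamma)}$ on some scales, whereas any estimate obtained from pure power bounds can be no better than $|z|^{-(d-\sigma)}$, which does not imply it. The actual argument must carry $\rho$ itself (equivalently $Q_\rho$ evaluated at scale $1/|\xi|$) through the frequency analysis, using \ref{itm:h1}--\ref{itm:derivatives} in a sharper way than symbol-type power estimates. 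You flag this step honestly as the obstacle and defer it to \cite{BelMorSch24}, which is legitimate for a cited result, but it means the proposal is an outline of the reference's strategy rather than an independent proof of the statement.
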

We can now prove the counterpart to Lemma~\ref{le:fromnonlocaltolocal}. Its proof, and also Lemma~\ref{le:Vrhoestimates} below, utilize some techniques from Fourier theory, for which we refer reader to \cite{Gra14}. We use
\[
\widehat{u}(\xi) := \int_{\R^d} e^{-2\pi i x \cdot \xi}u(x)\dd x \quad \text{for $\xi \in \R^d$ and $u \in L^1(\R^d)$}
\]
to denote the Fourier transform of $u$ and write $u^{\vee}$ for its inverse Fourier transform. The same notation is also used for the extension of the Fourier transform to the space of tempered distributions.
\begin{lemma}\label{le:fromlocaltononlocal}
    Let $\rho$ have compact support and satisfy \ref{itm:h1}-\ref{itm:upper}. Then, $\Pcal_\rho$ extends to a bounded linear operator from $\BV(\R^d)$ to $\BV^\rho(\R^d)$ such that $\Pcal_\rho =(\Qcal_\rho)^{-1}$. In particular, it holds that
    \[
    Dv = D_\rho (\Pcal_\rho v) \quad \text{for all $v \in \BV(\R^d)$.}
    \]
\end{lemma}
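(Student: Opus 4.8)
The plan is to construct the extension by hand through the vector potential $V_\rho$ supplied by Theorem~\ref{thm:nftocsmooth}: for $v\in\BV(\R^d)$ one sets $\Pcal_\rho v:=V_\rho * Dv$, that is, $\Pcal_\rho v(x):=\int_{\R^d}V_\rho(x-y)\cdot\dd Dv(y)$. The four things to check are then (i) that this is a well-defined element of $L^1(\R^d)$ with $\|\Pcal_\rho v\|_{L^1(\R^d)}\leqslant C\|v\|_{\BV(\R^d)}$; (ii) that it coincides with the Schwartz-class operator $\Pcal_\rho=(\widehat{\cdot}/\widehat Q_\rho)^{\vee}$ on smooth functions; (iii) that $\Pcal_\rho v\in\BV^\rho(\R^d)$ with $D_\rho(\Pcal_\rho v)=Dv$; and (iv) that $\Qcal_\rho\Pcal_\rho=\id$ on $\BV(\R^d)$ and $\Pcal_\rho\Qcal_\rho=\id$ on $\BV^\rho(\R^d)$. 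The boundedness estimate in (i) is the heart of the matter.

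For (i), I would split $V_\rho=V_\rho\1_{B_\eta(0)}+V_\rho\1_{\R^d\setminus B_\eta(0)}$. The near-origin piece is integrable: combining the pointwise bound $|V_\rho(z)|\leqslant C|z|^{1-2d}/\rho(z)$ from \eqref{eq:Vrhobound} with the lower bound $\overline\rho(r)\gtrsim r^{1-d-\sigma}$ on $(0,\eta)$ that follows from \ref{itm:lower} and $\inf_{\overline{B_\eta(0)}}\rho>0$, one obtains $|V_\rho(z)|\lesssim |z|^{\sigma-d}$ on $B_\eta(0)$, so $V_\rho\1_{B_\eta(0)}\in L^1(\R^d)$ and Young's inequality gives $\|(V_\rho\1_{B_\eta(0)})*Dv\|_{L^1}\leqslant\|V_\rho\1_{B_\eta(0)}\|_{L^1}\,|Dv|(\R^d)$. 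The far piece is smooth but decays only like $|z|^{1-d}$ at infinity, hence is not globally integrable, and here one must use the gradient structure of $Dv$: integrating by parts transfers the derivative onto $V_\rho$, producing a bulk term $(\div V_\rho\,\1_{\R^d\setminus B_\eta(0)})*v$ together with a boundary term carried by $\Hcal^{d-1}\mres\partial B_\eta(0)$. The boundary term is controlled by $\|v\|_{L^1(\R^d)}$; for the bulk term one needs $\div V_\rho$ to be integrable away from the origin, which is true because the leading Riesz-type behaviour $V_\rho(z)\sim c\,z/|z|^d$ at infinity is divergence-free, so $\div V_\rho$ decays strictly faster than $|z|^{-d}$. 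Making this precise by Fourier methods — essentially the content of Lemma~\ref{le:Vrhoestimates} — is the main obstacle of the proof.

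For (ii)--(iii), on Schwartz functions the nonlocal fundamental theorem of calculus \eqref{eq:ftoc} together with \eqref{eq:potential} forces the multiplier identity $\widehat{V_\rho}(\xi)=\xi/(2\pi i|\xi|^2\widehat Q_\rho(\xi))$ (using that $V_\rho$ is a radial vector field, so $\widehat V_\rho\parallel\xi$), whence $\widehat{V_\rho*\nabla\phi}=\widehat\phi/\widehat Q_\rho=\widehat{\Pcal_\rho\phi}$; thus $\Pcal_\rho\phi=V_\rho*\nabla\phi$ for $\phi\in\Scal(\R^d)$, and by density this persists on $W^{1,1}(\R^d)$. For such $\phi$ one also has $D_\rho(\Pcal_\rho\phi)=D(Q_\rho*\Pcal_\rho\phi)=D(\Qcal_\rho\Pcal_\rho\phi)=D\phi$ via \eqref{eq:potential}, so $\Pcal_\rho\phi\in\BV^\rho(\R^d)$ with $\TV_\rho(\Pcal_\rho\phi)=\|\nabla\phi\|_{L^1}$; combined with (i) this makes $\Pcal_\rho\colon W^{1,1}(\R^d)\to\BV^\rho(\R^d)$ bounded and $\Qcal_\rho\Pcal_\rho=\id$ there. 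Now given $v\in\BV(\R^d)$, mollification gives $\Pcal_\rho(v*\omega_\delta)=(\Pcal_\rho v)*\omega_\delta\to\Pcal_\rho v$ in $L^1(\R^d)$ with $D_\rho(\Pcal_\rho(v*\omega_\delta))=(Dv)*\omega_\delta$; lower semicontinuity of $\TV_\rho$ under $L^1$-convergence then yields $\Pcal_\rho v\in\BV^\rho(\R^d)$, and passing to the limit in $\Qcal_\rho\Pcal_\rho(v*\omega_\delta)=v*\omega_\delta$ (legitimate since $Q_\rho\in L^1(\R^d)$ makes $\Qcal_\rho$ bounded on $L^1(\R^d)$) gives $\Qcal_\rho\Pcal_\rho v=v$. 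Applying Lemma~\ref{le:fromnonlocaltolocal} to $\Pcal_\rho v$ finally gives $D_\rho(\Pcal_\rho v)=D(\Qcal_\rho\Pcal_\rho v)=Dv$, hence $\TV_\rho(\Pcal_\rho v)=|Dv|(\R^d)$ and $\|\Pcal_\rho v\|_{\BV^\rho(\R^d)}\leqslant C\|v\|_{\BV(\R^d)}$.

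It remains to check $\Pcal_\rho\Qcal_\rho=\id$ on $\BV^\rho(\R^d)$. The operator $\Qcal_\rho\colon L^1(\R^d)\to L^1(\R^d)$ is injective: $Q_\rho$ has compact support (since $\rho$ does), so $\widehat Q_\rho$ is real-analytic by Paley--Wiener and, being non-trivial ($\widehat Q_\rho(0)=\|Q_\rho\|_{L^1}>0$), vanishes only on a Lebesgue-null set. For $u\in\BV^\rho(\R^d)$ one has $\Qcal_\rho u\in\BV(\R^d)$ by Lemma~\ref{le:fromnonlocaltolocal}, and $\Qcal_\rho(\Pcal_\rho\Qcal_\rho u-u)=\Qcal_\rho u-\Qcal_\rho u=0$ using the identity $\Qcal_\rho\Pcal_\rho=\id$ on $\BV(\R^d)$ just established; injectivity then gives $\Pcal_\rho\Qcal_\rho u=u$, so that $\Pcal_\rho=(\Qcal_\rho)^{-1}$ as claimed.
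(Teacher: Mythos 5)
Your proposal is correct and follows essentially the same route as the paper: both realize the extension through the representation $\Pcal_\rho v = V_\rho * Dv$, split the kernel into a near-origin part (integrable by \eqref{eq:Vrhobound} together with \ref{itm:lower}) and a far-field part whose divergence is integrable, and conclude boundedness via Young's convolution inequality. The only deviations are minor: you use a sharp cutoff (hence a surface term on $\partial B_\eta(0)$) and the asymptotics of Lemma~\ref{le:Vrhoestimates} for the far field, where the paper uses a smooth cutoff $\chi$ and the Fourier-side decay of $\div V_\rho$, and you make explicit the two-sided inverse identity via mollification and injectivity of $\Qcal_\rho$ (Paley--Wiener), which the paper compresses into ``duality arguments''.
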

\begin{proof}
    Consider the function $V_\rho$, then from its Fourier transform in \cite[Proposition~4.4]{BelMorSch24} one can deduce that $\widehat{\div V_\rho} = 1/\widehat{Q}_\rho$ when it is understood as a tempered distribution. Via integration by parts, it then follows that
    \begin{equation}\label{eq:pcalalternative}
    \Pcal_{\rho} u = V_\rho * Du \quad \text{for all $u \in C_c^{\infty}(\R^d)$.}
    \end{equation}
    In light of \cite[Lemma~2.6]{cueto2024gamma}, we find that $\partial^{\gamma}(1/\widehat{Q}_\rho)$ is integrable for all multi-indices $\gamma \in \N^d_0$ with $|\gamma| \geqslant d+1$. In turn, by using that the inverse Fourier transform is bounded from $L^1$ to $L^\infty$, this implies that
    \[
    (2\pi i \cdot)^\gamma \div V_\rho
    \]
    is a bounded function. The vector-radiality of $V_\rho$ then shows that $\div V_\rho$ decays faster than $|\cdot|^{-N}$ at infinity for any $N \in \N$. In particular, for a radial cut-off function $\chi \in C_c^{\infty}(\R^d)$ with $\chi \equiv 1$ on $B_1(0)$, we deduce that both $\chi V_\rho$ and $\div((1-\chi) V_\rho)$ are integrable functions. From \eqref{eq:pcalalternative} we then find the representation
    \[
    \Pcal_{\rho} u = (\chi V_\rho) * Du + \div((1-\chi) V_\rho) * u  \quad \text{for all $u \in C_c^{\infty}(\R^d)$,}
    \]
    which extends continuously to $\BV(\R^d)$ by Young's convolution inequality. Showing that this extension is indeed the inverse of $\Qcal_\rho$ can be reduced to the smooth case via duality arguments.
\end{proof}
We can also extend the fundamental theorem of calculus to $\BV^\rho$, in order to obtain an alternative characterization of the extreme points later.
\begin{prop}\label{prop:nftoc}
   Let $\rho$ have compact support and satisfy \ref{itm:h1}-\ref{itm:upper}. Then, for every $u \in \BV^\rho(\R^d)$ such that $D_\rho u$ is a compactly supported measure, we have that
    \[
    u(x) = \int_{\R^d} V_\rho(x-y)\cdot \dd D_\rho u(y) \quad \text{for a.e.~$x \in \R^d$.}
    \]
\end{prop}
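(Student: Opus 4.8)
The plan is to reduce the claim to the nonlocal fundamental theorem of calculus for smooth functions (Theorem~\ref{thm:nftocsmooth}) by passing through the local picture via the isomorphisms $\Qcal_\rho$ and $\Pcal_\rho$. As a preliminary, I would observe that a compactly supported $\rho$ satisfying \ref{itm:h1}--\ref{itm:upper} automatically lies in $L^1(\R^d)$: near the origin the almost monotonicity from \ref{itm:lower} gives $\overline\rho(r)\lesssim r^{-(d+\sigma-1)}$ with $\sigma<1$, hence $\int_{B_\eta}\rho<\infty$, while \ref{itm:h1} and compact support bound $\rho$ on $\R^d\setminus B_\eta$. Thus both Lemma~\ref{le:fromnonlocaltolocal} and Lemma~\ref{le:fromlocaltononlocal} are available. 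I then set $v:=\Qcal_\rho u\in\BV(\R^d)$. By Lemma~\ref{le:fromnonlocaltolocal} we have $Dv=D_\rho u$, which by hypothesis is compactly supported, say $\supp D_\rho u\subset\overline{B_{R_0}}$. Since $v\in L^1(\R^d)$ and $Dv$ vanishes on the open set $\R^d\setminus\overline{B_{R_0}}$, the function $v$ is (locally) constant there, and integrability forces that constant to be $0$; hence $v$ has compact support. Moreover $u=\Pcal_\rho v$ because $\Pcal_\rho=(\Qcal_\rho)^{-1}$ by Lemma~\ref{le:fromlocaltononlocal}.

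Next I would invoke the explicit representation of $\Pcal_\rho$ on $\BV(\R^d)$ obtained in the proof of Lemma~\ref{le:fromlocaltononlocal}: with a radial cut-off $\chi\in C_c^\infty(\R^d)$ with $\chi\equiv1$ near the origin, one has $\chi V_\rho\in L^1(\R^d)$, $\div\big((1-\chi)V_\rho\big)\in L^1(\R^d)$, and
\[
\Pcal_\rho w=(\chi V_\rho)*Dw+\div\big((1-\chi)V_\rho\big)*w\qquad\text{for all }w\in\BV(\R^d).
\]
Applying this with $w=v$, the only thing left is to identify $\div\big((1-\chi)V_\rho\big)*v$ with $\big((1-\chi)V_\rho\big)*Dv$, which is an integration by parts. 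The point is that $(1-\chi)V_\rho$ is a \emph{smooth} vector field on all of $\R^d$ — the cut-off $\chi$ absorbs the singularity of $V_\rho$ at the origin, cf.~\eqref{eq:Vrhobound} — and it is bounded together with its derivatives (this is exactly the decay information established on the Fourier side in the proof of Lemma~\ref{le:fromlocaltononlocal}). Since both $v$ and $Dv$ are compactly supported, multiplying $y\mapsto(1-\chi)V_\rho(x-y)$ by a cut-off equal to $1$ near $\supp v$ reduces the identity to the distributional definition of $Dv$, with no boundary contributions, giving $\int\div\big((1-\chi)V_\rho\big)(x-y)\,v(y)\dd y=\int(1-\chi)V_\rho(x-y)\cdot\dd Dv(y)$ for a.e.~$x$. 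Combining the two terms and using $Dv=D_\rho u$,
\[
u=\Pcal_\rho v=(\chi V_\rho)*Dv+\big((1-\chi)V_\rho\big)*Dv=V_\rho*Dv=\int_{\R^d}V_\rho(x-y)\cdot\dd D_\rho u(y),
\]
where every convolution above is well defined in $L^1_{\mathrm{loc}}(\R^d)$ since $V_\rho\in L^1_{\mathrm{loc}}(\R^d)$ and $D_\rho u$ is a compactly supported finite measure (so $\iint_K|V_\rho(x-y)|\dd|D_\rho u|(y)\dd x<\infty$ for compact $K$), and the right-hand side splits along $V_\rho=\chi V_\rho+(1-\chi)V_\rho$.

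The main obstacle is the integration-by-parts step: one must check carefully that $(1-\chi)V_\rho$ is genuinely smooth on all of $\R^d$ and has enough boundedness/decay that its convolutions with the compactly supported data $v$ and $Dv$ converge absolutely and carry no boundary terms. The only other point needing attention is the assertion that $\Qcal_\rho u$ is compactly supported, which, as noted above, follows at once from it being an $L^1$ function whose distributional gradient is compactly supported.
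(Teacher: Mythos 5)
Your proof is correct, but it takes a genuinely different route from the paper. The paper stays entirely in the nonlocal setting: it first checks that $V_\rho * \mu$ is locally integrable for any compactly supported $\mu\in\M(\R^d;\R^d)$, then tests $V_\rho * D_\rho u$ against $\phi\in C_c^\infty(\R^d)$, applies Fubini together with a nonlocal integration by parts for the bounded Lipschitz field $V_\rho*\phi$ (justified as in the fractional case), and concludes from the identity $\div_\rho(V_\rho*\phi)=\phi$, which it verifies by duality using Theorem~\ref{thm:nftocsmooth}. You instead transfer to the classical picture: $v:=\Qcal_\rho u\in\BV(\R^d)$ satisfies $Dv=D_\rho u$ by Lemma~\ref{le:fromnonlocaltolocal}, the compact support of $D_\rho u$ together with $v\in L^1(\R^d)$ forces $v$ to be compactly supported, and you then insert $v$ into the representation $\Pcal_\rho w=(\chi V_\rho)*Dw+\div\big((1-\chi)V_\rho\big)*w$ from the proof of Lemma~\ref{le:fromlocaltononlocal} and undo the cut-off by a purely classical, compactly supported integration by parts. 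Both arguments are sound; yours avoids having to justify nonlocal integration by parts for the non-compactly supported field $V_\rho*\phi$, but it leans on internals of the proof of Lemma~\ref{le:fromlocaltononlocal} (the validity of that convolution formula on $\BV(\R^d)$ and the integrability of $\div((1-\chi)V_\rho)$) rather than only on its statement, and it is intrinsically tied to $\rho\in L^1(\R^d)$ with compact support, so, unlike the paper's duality argument, it does not adapt to the fractional setting invoked in Remark~\ref{rem:fractionalcase}\,(ii). Two minor points: the boundedness of $(1-\chi)V_\rho$ and its derivatives away from the origin is really the content of Lemma~\ref{le:Vrhoestimates} rather than of the Fourier decay estimates in the proof of Lemma~\ref{le:fromlocaltononlocal} (though for your cut-off argument with compactly supported $v$ and $Dv$, smoothness and local boundedness already suffice); and in $d=1$ the complement of $\supp D_\rho u$ has two unbounded components, which your parenthetical ``locally constant'' correctly covers since integrability kills both constants.
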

\begin{proof}
 We first show that for any $\mu \in \M(\R^d;\R^d)$ with compact support, it holds that
    \[
    (V_\rho * \mu) (x):= \int_{\R^d} V_\rho(x-y)\cdot \dd \mu(y)=\int_{\supp \mu} V_\rho(x-y)\cdot \dd \mu(y),
    \]
    is a locally integrable function that is bounded away from the support of $\mu$. Indeed, for any bounded open set $O \subset \R^d$ with $\supp \mu \Subset O$, we find that
    \begin{align*}
    \int_O \int_{\supp \mu} |V_\rho(x-y)|\dd |\mu|(y)\dd x &= \int_{\supp \mu} \int_{O} |V_\rho(x-y)|\dd x\dd |\mu|(y)\\
    &\leqslant\int_{\supp \mu} \int_{O-\supp \mu} |V_\rho(z)|\dd z\dd |\mu|(y)\\
    &\leqslant \int_{\supp \mu} \|V_\rho\|_{L^1(O-\supp \mu;\R^d)}\dd |\mu|(y) <\infty,
    \end{align*}
    and for a.e.~$x \in (\supp \mu)^c$ we find with $\epsilon=\dist(x,\supp \mu)$
    \begin{align*}
        \int_{\supp \mu} |V_\rho(x-y)|\dd |\mu|(y) \leqslant \int_{\supp \mu} \|V_\rho\|_{L^\infty(B_\epsilon(0)^c;\R^d)} d |\mu|(y),
    \end{align*}
    which is uniformly bounded for all $\epsilon\geq\epsilon_0>0$ due to Lemma~\ref{le:Vrhoestimates} below. Given these bounds, the following computation for $\phi \in C_c^{\infty}(\R^d)$ using Fubini's theorem is justified
    \begin{align*}
        \int_{\R^d}\int_{\R^d} V_\rho(x-y)\cdot \dd D_\rho u(y) \phi(x) \dd x & = \int_{\R^d} \int_{\R^d} -V_\rho(y-x)\phi(x)\dd x \cdot \dd D_\rho u(y) \\
        &= \int_{\R^d} -(V_\rho * \phi)(y) \cdot \dd D_\rho u(y)\\
        & = \int_{\R^d} \Div_\rho (V_\rho * \phi) (y) u(y)\dd y,
    \end{align*}
    where we have used nonlocal integration by parts for the bounded Lipschitz function $(V_\rho * \phi)$, which is holds due to an argument similar to the fractional case in~\cite[Proposition~2.7]{ComSte23}. If $\Div_\rho (V_\rho * \phi) = \phi $, then we are done. To show this, we take $\psi \in C_c^{\infty}(\R^d)$ and compute again with integration by parts and Theorem~\ref{thm:nftocsmooth} that
    \begin{align*}
        \int_{\R^d} \Div_\rho (V_\rho * \phi)(x) \psi(x) \dd x & = -\int_{\R^d} (V_\rho * \phi)(x) D_\rho \psi (x) \dd x \\
        &= \int_{\R^d} \phi(x) (V_\rho * D_\rho \psi)(x)\dd x \\
        & = \int_{\R^d} \phi(x)\psi(x)\dd x.\qedhere
    \end{align*}
\end{proof}

Another interesting consequence of the fundamental theorem of calculus is the following Poincar\'e inequality and compactness result.
\begin{lemma}\label{le:poinccompact}
    Let $\rho$ have compact support and satisfy \ref{itm:h1}-\ref{itm:upper} and let $R>0$. Then, there exists a constant $C=C(d,\rho,R)>0$ such that
    \[
    \|u\|_{L^1(\R^d)} \leqslant C|D_\rho u|(\R^d) \quad \text{for all $u \in \BV_\rho(\R^d)$ with $\supp u \subset B_R(0)$.}
    \]
    Moreover, if $(u_n)_n \subset \BV_\rho(\R^d)$ is a sequence such that $\supp u_n \subset B_R(0)$ for all $n \in \N$ and 
    \[
    \sup_n |D_\rho u_n|(\R^d)<\infty,
    \]
    then $u_n \to u$ in $L^1(\R^d)$ up to a non-relabeled subsequence for some $u \in \BV^\rho(\R^d)$.
\end{lemma}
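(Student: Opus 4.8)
The plan is to run the whole argument through the nonlocal fundamental theorem of calculus in the form of Proposition~\ref{prop:nftoc}, so the first step is to make its hypothesis available. Fix $\Lambda>0$ with $\supp\rho\subset\overline{B_\Lambda(0)}$. I claim that if $u\in\BV^\rho(\R^d)$ vanishes a.e.~outside $B_R(0)$, then $D_\rho u$ is supported in $\overline{B_{R+\Lambda}(0)}$: for any $p\in C_c^{\infty}(\R^d;\R^d)$ whose support is disjoint from $\overline{B_{R+\Lambda}(0)}$, only the values of $p$ on $B_{R+\Lambda}(0)$ enter $\div_\rho p(x)$ when $x\in B_R(0)$, so $\div_\rho p\equiv 0$ on $B_R(0)$ and hence $-\int p\dd D_\rho u=\int u\,\div_\rho p\dd x=0$, which forces $D_\rho u$ to vanish on $\R^d\setminus\overline{B_{R+\Lambda}(0)}$. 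Consequently Proposition~\ref{prop:nftoc} applies and gives, for a.e.~$x$,
\[
u(x)=\int_{\overline{B_{R+\Lambda}(0)}} V_\rho(x-y)\cdot\dd D_\rho u(y),
\]
with $V_\rho\in L^1_{\mathrm{loc}}(\R^d;\R^d)$ the kernel from Theorem~\ref{thm:nftocsmooth}.

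For the Poincar\'e inequality I would bound $|u(x)|\leqslant\int|V_\rho(x-y)|\dd|D_\rho u|(y)$ pointwise, integrate over $B_R(0)$ (using $\supp u\subset B_R(0)$) and apply Tonelli's theorem:
\[
\|u\|_{L^1(\R^d)}\leqslant\int_{\overline{B_{R+\Lambda}(0)}}\Big(\int_{B_R(0)}|V_\rho(x-y)|\dd x\Big)\dd|D_\rho u|(y)\leqslant \|V_\rho\|_{L^1(B_{2R+\Lambda}(0);\R^d)}\,|D_\rho u|(\R^d),
\]
since $x-y$ ranges in $B_{2R+\Lambda}(0)$ as $x\in B_R(0)$, $y\in\overline{B_{R+\Lambda}(0)}$, and $V_\rho$ is locally integrable. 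This is the claimed bound with $C=\|V_\rho\|_{L^1(B_{2R+\Lambda}(0);\R^d)}$, which depends only on $d$, $\rho$ and $R$.

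For the compactness statement I would invoke the Fr\'echet--Kolmogorov criterion in $L^1(\R^d)$: the sequence $(u_n)$ is bounded in $L^1$ by the Poincar\'e inequality together with $\sup_n|D_\rho u_n|(\R^d)<\infty$, it is uniformly supported in $B_R(0)$, so it only remains to check $L^1$-equicontinuity. For $|h|\leqslant 1$ the difference $u_n(\cdot+h)-u_n$ vanishes outside $B_{R+1}(0)$, and repeating the Tonelli manipulation on the representation formula yields
\[
\|u_n(\cdot+h)-u_n\|_{L^1(\R^d)}\leqslant |D_\rho u_n|(\R^d)\,\|V_\rho(\cdot+h)-V_\rho\|_{L^1(B_{2R+\Lambda+1}(0);\R^d)},
\]
where the last factor tends to $0$ as $h\to 0$ by continuity of translations in $L^1$, applied to $V_\rho$ truncated to a ball slightly larger than $B_{2R+\Lambda+1}(0)$ on which it is integrable. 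Extracting the $L^1$-convergent (non-relabeled) subsequence $u_n\to u$, the inclusion $\supp u\subset\overline{B_R(0)}$ is automatic, and $u\in\BV^\rho(\R^d)$ follows from lower semicontinuity of $\TV_\rho$ along $L^1$-convergence: for each fixed $p\in C_c^{\infty}(\R^d;\R^d)$ with $\|p\|_{L^{\infty}}\leqslant 1$ one has $\int u_n\,\div_\rho p\dd x\to\int u\,\div_\rho p\dd x$ because $\div_\rho p$ is bounded, so taking the supremum over $p$ gives $\TV_\rho(u)\leqslant\liminf_n\TV_\rho(u_n)<\infty$.

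The one genuinely delicate point is the $L^1$-equicontinuity: it is essential to express $u_n$ through the \emph{fixed} kernel $V_\rho$, so that all $n$-dependence is confined to the measures $D_\rho u_n$, whose total masses are uniformly bounded; the singularity of $V_\rho$ at the origin is then harmless precisely because $V_\rho\in L^1_{\mathrm{loc}}$ and translations are continuous in $L^1$ on compact sets. Everything else is bookkeeping of the radii of the balls involved and routine applications of Tonelli's theorem and lower semicontinuity.
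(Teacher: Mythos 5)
Your proof is correct and follows essentially the same route as the paper: you invoke Proposition~\ref{prop:nftoc} to represent $u$ as the convolution $V_\rho \ast D_\rho u$, get the Poincar\'e bound from Tonelli/Young's inequality with $C = \|V_\rho\|_{L^1}$ on a fixed ball, prove equicontinuity of translations from the same representation, and conclude via Fr\'echet--Kolmogorov and lower semicontinuity. The only difference is that you spell out the preliminary step $\supp D_\rho u \subset \overline{B_{R+\Lambda}(0)}$ by a duality argument, which the paper takes for granted; this is a small but welcome clarification rather than a genuinely different approach.
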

\begin{proof}
    Using Proposition~\ref{prop:nftoc} and the fact that $\supp D_\rho u \subset B_R(0)+\supp \rho$, we deduce that
    \begin{align*}
     \|u\|_{L^1(\R^d)}=\|u\|_{L^1(B_R(0))}&= \|V_\rho * D_\rho u\|_{L^1(B_R(0))} \\
     &\leqslant \|V_\rho\|_{L^1(B_{2R}(0)+\supp \rho;\R^d)}|D_\rho u|(\R^d)=C|D_\rho u|(\R^d).
    \end{align*}
    Using a similar estimate for $|\zeta| \leqslant R$, we find that
    \begin{align*}
    &\lim_{\zeta \to 0} \sup_n \|u_n(\cdot)-u_n(\cdot + \zeta)\|_{L^1(\R^d)} \\
    &\qquad \leqslant \lim_{\zeta \to 0} \|V_\rho(\cdot)-V_\rho(\cdot+\zeta)\|_{L^1(B_{3R}(0)+\supp \rho;\R^d)} \sup_n |D_\rho u_n|(\R^d) =0.
    \end{align*}
    Hence, the convergence up to subsequence to an $u \in L^1(\R^d)$ follows from the Fr\'echet-Kolmogorov criterion. We deduce via duality that $D_\rho u_n$ converges weak* to $D_\rho u$ in $\Mcal(\R^d;\R^d)$ so that
    \[
    |D_\rho u|(\R^d) \leqslant \liminf_{n \to \infty} |D_\rho u_n|(\R^d) <\infty,
    \]
    that is, $u \in \BV^\rho(\R^d)$.
\end{proof}
To prove the previous result also in the setting of vanishing horizons in Section~\ref{sec:gammadistributional}, we prepare the following bounds on the tail of $V_\rho$.

\begin{lemma}\label{le:Vrhoestimates}
     Let $\rho$ have compact support and satisfy \ref{itm:h1}-\ref{itm:upper}. Then, there exists a Schwartz function $\psi \in \Scal(\R^d;\R^d)$ and a constant $c>0$ such that
    \[
    V_\rho(z) = c\frac{z}{\abs{z}^d}+\psi(z) \quad \text{for all $z \in B_1(0)^c$.}
    \]
    In particular, there is a $C>0$ such that
    \[
    |V_\rho(z)|+|z||\nabla V_\rho(z)| \leqslant C \max\left\{\frac{1}{|z|^{d-\sigma}},\frac{1}{|z|^{d-1}}\right\} \quad\text{for all $z \not =0$}.
    \]
\end{lemma}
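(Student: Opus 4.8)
The plan is to leverage the Fourier-analytic description of $V_\rho$ together with the structure of $Q_\rho$ (equivalently $1/\widehat{Q}_\rho$) to peel off the leading singularity. Recall from \cite[Proposition~4.4]{BelMorSch24} that $\widehat{\div V_\rho}=1/\widehat{Q}_\rho$, and that $V_\rho$ is vector-radial, so $V_\rho(z)=\Phi(|z|)\,z/|z|$ for a scalar radial profile; the role of the ansatz $V_\rho(z)=c\,z/|z|^d+\psi(z)$ on $B_1(0)^c$ is precisely that $c\,z/|z|^d$ is (up to normalization) the vector field whose nonlocal-free divergence matches the constant part of $1/\widehat{Q}_\rho$ near $\xi=0$, i.e.\ it carries the non-decaying, non-smooth behaviour of $V_\rho$ at infinity. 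Concretely, $c\,z/|z|^d$ has Fourier transform proportional to $i\xi/|\xi|^2$ (a homogeneous distribution), and $\div$ of it is the Dirac mass; matching this against the value $\widehat{Q}_\rho(0)^{-1}$ (which is finite and nonzero since $Q_\rho\in L^1$ when $\rho$ is compactly supported, recall the discussion before Lemma~\ref{le:fromnonlocaltolocal}) fixes the constant $c$.

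First I would set $W(z):=V_\rho(z)-c\,z/|z|^d$ and show $\psi:=W|_{B_1(0)^c}$ extends to a Schwartz function. The key is that $\widehat{\div W}=1/\widehat{Q}_\rho - \widehat{Q}_\rho(0)^{-1}$, and, by \cite[Lemma~2.6]{cueto2024gamma} (already invoked in the proof of Lemma~\ref{le:fromlocaltononlocal}), the function $1/\widehat{Q}_\rho$ is smooth with all derivatives of order $\geqslant d+1$ integrable; moreover $\rho$ compactly supported and \ref{itm:h1}--\ref{itm:upper} give that $\widehat{Q}_\rho$ is smooth and bounded below away from $0$ (again as in \cite{cueto2024gamma}), so $1/\widehat{Q}_\rho$ is a smooth, bounded symbol. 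Multiplying by $\xi^\gamma$ and inverse-Fourier-transforming, exactly as in the proof of Lemma~\ref{le:fromlocaltononlocal}, shows $(2\pi i\,\xi)^\gamma\,\div W$ is bounded for $|\gamma|\geqslant d+1$, hence $\div W$ decays faster than any polynomial; combined with vector-radiality and the smoothness of $V_\rho$ away from the origin (Theorem~\ref{thm:nftocsmooth}), this forces $W$ itself to be smooth and rapidly decaying on $B_1(0)^c$, i.e.\ $W|_{B_1(0)^c}\in\Scal$. (One must be slightly careful that $V_\rho$ is only $C^\infty$ on $\R^d\setminus\{0\}$; but on $B_1(0)^c$ there is no issue, which is why the statement is restricted there.) Then one extends $\psi$ arbitrarily to a global Schwartz function — harmless since the identity $V_\rho(z)=c\,z/|z|^d+\psi(z)$ is only claimed for $|z|>1$.

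For the second, pointwise bound, I would split according to the scale: for $|z|\geqslant 1$ the decomposition just obtained gives $|V_\rho(z)|\leqslant c|z|^{1-d}+|\psi(z)|\lesssim |z|^{1-d}$ and similarly $|z||\nabla V_\rho(z)|\lesssim |z|^{1-d}$ (differentiating $c\,z/|z|^d$ produces another $|z|^{-d}$, times $|z|$), so on this range one gets $\lesssim |z|^{-(d-1)}=\max\{|z|^{-(d-\sigma)},|z|^{-(d-1)}\}$ since $\sigma<1$ and $|z|\geqslant 1$. For $0<|z|<\eta$ one invokes the small-scale bounds \eqref{eq:Vrhobound} from Theorem~\ref{thm:nftocsmooth}, namely $|V_\rho(z)|\leqslant C/(|z|^{2d-1}\rho(z))$ and $|\nabla V_\rho(z)|\leqslant C/(|z|^{2d}\rho(z))$, and uses assumption \ref{itm:lower}: $r\mapsto r^{d+\sigma-1}\overline\rho(r)$ almost nonincreasing on $(0,\eta)$ means $\overline\rho(r)\gtrsim r^{-(d+\sigma-1)}$ for small $r$ (comparing to the value at $r=\eta$), hence $1/(|z|^{2d-1}\rho(z))\lesssim |z|^{-(2d-1)}|z|^{d+\sigma-1}=|z|^{-(d-\sigma)}$ and likewise $|z|/(|z|^{2d}\rho(z))\lesssim |z|^{-(d-\sigma)}$. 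The intermediate range $\eta\leqslant |z|\leqslant 1$ is compact and bounded away from the origin, where $V_\rho$ and $\nabla V_\rho$ are continuous hence bounded, so the estimate is trivially satisfied after enlarging $C$. Taking the maximum over the three regimes yields the claimed bound for all $z\neq 0$.

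The main obstacle is the first step: rigorously passing from "$\widehat{\div W}$ is a smooth symbol with rapidly decaying inverse transform" to "$W$ is Schwartz on $B_1(0)^c$". The divergence only controls the radial derivative of the scalar profile $\Phi$, not $\Phi$ itself, so one recovers $\Phi$ by integrating $\div W$ from infinity inward (using that $\Phi(r)\to 0$, which itself needs justification from the decay of $V_\rho$, or alternatively from the homogeneity bookkeeping of $\widehat{V_\rho}$ near $\xi=0$ in \cite[Proposition~4.4]{BelMorSch24}); each derivative bound on $\Phi$ then follows from the corresponding bound on $\div W$ and its derivatives. Keeping track of constants and the precise decay exponents, and making sure the "constant part at $\xi=0$" is extracted cleanly as a genuine homogeneous-degree-$(1-d)$ field rather than something with a logarithmic correction, is where the real care is needed; everything else is bookkeeping with the hypotheses \ref{itm:h1}--\ref{itm:upper} already assembled in \cite{BelMorSch24,cueto2024gamma}.
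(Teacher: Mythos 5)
Your route is genuinely different from the paper's, and it has a real gap precisely where you flag one. The paper does not work through $\div W$ and an ODE at all. Instead it decomposes $\widehat{V}_\rho$ itself using a radial cutoff $\chi$: the high-frequency part $W^2_\rho := (1-\chi)\widehat{V}_\rho$ has an inverse transform that decays faster than any polynomial (this is exactly where \cite[Lemma~2.6]{cueto2024gamma} is used), and the low-frequency part $W^1_\rho := \chi\widehat{V}_\rho$ is further split as $Y + U_\rho$, where $Y(\xi) = -i\xi/(2\pi|\xi|^2\widehat{Q}_\rho(0))$ inverts to $c\,z/|z|^d$, and $U_\rho$ is a smooth, bounded symbol (smooth at $0$ by a power-series cancellation, homogeneous at infinity), so $U_\rho^\vee$ is Schwartz away from the origin by a standard kernel estimate \cite[Example~2.4.9]{Gra14}. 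This avoids ever having to reconstruct $V_\rho$ from its divergence.

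The gap in your version is the passage from ``$\div W$ decays rapidly'' to ``$W$ is Schwartz on $B_1(0)^c$''. As you note, $\div W$ only controls the radial profile through the ODE $\Phi'(r)+(d-1)\Phi(r)/r=g(r)$, whose solution is $\Phi(r) = r^{1-d}\bigl(\Phi(1) + \int_1^r g(s)s^{d-1}\,ds\bigr)$. For this to decay faster than $r^{1-d}$ you must show that $\Phi(1) + \int_1^\infty g(s)s^{d-1}\,ds = 0$, i.e.\ that the normalization of $c$ you chose by matching $\widehat{\div W}(0)=0$ is the same normalization that kills the constant of integration. That is a nontrivial consistency check (a form of the divergence theorem at infinity for a vector-radial field with singular origin), and you do not carry it out; without it, all you get is $\Phi(r)=O(r^{1-d})$, which is the leading behaviour you are trying to subtract off, not a Schwartz remainder. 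There is also a smaller slip in the intermediate step: to conclude that $z^\gamma\,\div W(z)$ is bounded for $|\gamma|\geqslant d+1$, you should take $\partial_\xi^\gamma$ of $1/\widehat{Q}_\rho-\widehat{Q}_\rho(0)^{-1}$ (which is what is integrable by \cite[Lemma~2.6]{cueto2024gamma}), not multiply by $\xi^\gamma$; multiplying by $\xi^\gamma$ would control $\partial_z^\gamma(\div W)$ instead, and moreover $\xi^\gamma/\widehat{Q}_\rho(\xi)$ is not integrable since $1/\widehat{Q}_\rho$ grows polynomially at infinity. The paper's direct decomposition of $\widehat{V}_\rho$ sidesteps both issues. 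Your treatment of the second, pointwise bound --- splitting into $|z|<\eta$, intermediate, and $|z|\geqslant 1$, and using \eqref{eq:Vrhobound} together with \ref{itm:lower} near the origin --- is essentially the same as the paper's and is correct.
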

\begin{proof}
    Following the proof of \cite[Theorem~5.2]{BelMorSch24}, we define for some smooth radial cut-off function $\chi \in C_c^{\infty}(\R^d)$ with $\chi=1$ around the origin
    \[
    W^2_\rho(\xi):=(1-\chi(\xi))\widehat{V}_\rho(\xi)=(1-\chi(\xi))\frac{-i\xi}{2\pi|\xi|^2\widehat{Q}_\rho(\xi)}.
    \]
    The proof of \cite[Theorem~5.2]{BelMorSch24} shows that the inverse Fourier transform of this function decays faster than any polynomial at infinity. A similar argument works for its derivatives, so $(W^2_\rho)^{\vee}$ agrees with a Schwartz function away from the origin. For the remaining part
    \[
    W^1_\rho(\xi):=\chi(\xi)\widehat{V}_\rho(\xi)=\chi(\xi)\frac{-i\xi}{2\pi|\xi|^2\widehat{Q}_\rho(\xi)},
    \]
    we can split it as
    \[
    W^1_\rho(\xi)=\frac{-i\xi}{2\pi|\xi|^2\widehat{Q}_\rho(0)} + \left(\frac{\chi(\xi)}{\widehat{Q}_\rho(\xi)}-\frac{1}{\widehat{Q}_\rho(0)}\right)\frac{-i\xi}{2\pi|\xi|^2}=:Y(\xi) + U_\rho(\xi).
    \]
    Note that since $\widehat{Q}_\rho(\xi)$ is analytic, nonnegative and radial, the term $U_\rho$ is smooth around the origin. Indeed, the power series expansion of $\chi(\xi)/\widehat{Q}_\rho(\xi)-1/\widehat{Q}_\rho(0)$ around $0$ only has second order terms or higher which cancel the singularity $1/|\xi|^2$. Furthermore, $U_\rho(\xi)$ agrees with a homogeneous function outside the support of $\chi$, so that an argument as in \cite[Example~2.4.9]{Gra14} shows that $U_\rho^{\vee}$ is a Schwartz function away from the origin. Finally, the inverse Fourier transform of $Y$ is exactly
    \[
    Y^{\vee}(z)=c\frac{z}{\abs{z}^d},
    \]
    for some suitable $c>0$, cf.~\cite[Lemma~B.1\,c)]{bellido2023non}. Since $V_\rho =Y^{\vee}+ (W^2_\rho)^{\vee}+U_\rho^{\vee}$, the first part follows. The second part is now immediate, since we locally have the estimate
    \[
    |V_\rho(z)|+|z||\nabla V_\rho(z)| \leqslant C\frac{1}{|z|^{2d-1}\rho(z)} \leqslant C \frac{1}{|z|^{d-\sigma}} \quad \text{for all $z \in B_\varepsilon(0)$,}
    \]
    by Theorem~\ref{thm:nftocsmooth} and \ref{itm:lower}.
\end{proof}
In the following remark, we summarize the analogs of the preceding results in the fractional case.
\newpage
\begin{rem}\label{rem:fractionalcase}
\leavevmode
\begin{itemize}
    \item[(i)] In the fractional case, the operator $\Qcal_\rho$ agrees with convolution with the Riesz potential $I_{1-\alpha}$, while $\Pcal_\rho$ agrees with the fractional Laplacian $(-\Delta)^{\frac{1-\alpha}{2}}.$ However, even though the fractional Laplacian maps $\BV(\R^d)$ into $\BV^\alpha(\R^d)$ continuously, for $u \in \BV^\alpha(\R^d)$ it only holds that $I_{1-\alpha}*u \in \BV_{\mathrm{loc}}(\R^d)$, see~\cite[Lemma~3.28]{ComSte19}. Therefore, there is no isomorphism like in Lemma~\ref{le:fromnonlocaltolocal} and \ref{le:fromlocaltononlocal}. Still, if $u \in \BV(\R^d)$, one has, by \cite[Theorem~3.18]{ComSte19}, that $D^\alpha u \in L^1(\R^d)$ with
    \begin{equation}\label{eq:translationproperty}
    D^\alpha u = I_{1-\alpha}*Du.
    \end{equation}
    \item[(ii)] The kernel in the fractional fundamental theorem of calculus can be explicitly given by $V^{\alpha}(z)=c_{d,-\alpha}z/|z|^{d+1-\alpha}$, with $c_{d,-\alpha}$ as in Example~\ref{ex:frac}, see~e.g.,~\cite[Theorem~3.12]{ComSte19}. Hence, it holds for $u \in C_c^{\infty}(\R^d)$ that
    \[
    u(x)=c_{d,-\alpha}\int_{\R^d} \frac{x-y}{|x-y|^{d+1-\alpha}} \cdot D^\alpha u(y)\dd y \quad \text{for all $x \in \R^d$}.
    \]
    If instead $u \in \BV^\alpha(\R^d)$ with $D^\alpha u$ compactly supported, then one can argue as in Proposition~\ref{prop:nftoc} to find that
    \[
    u(x)=c_{d,-\alpha}\int_{\R^d} \frac{x-y}{|x-y|^{d+1-\alpha}} \cdot \dd D^\alpha u(y) \quad \text{for a.e.~$x \in \R^d$}.
    \]

    \item[(iii)] Similar estimates and compactness results to Lemma~\ref{le:poinccompact} in the fractional case can be found in \cite[Theorem~3.9 and 3.16]{ComSte19}. \qedhere
    \end{itemize}
\end{rem}

\subsection{Nonlocal Caccioppoli perimeter and decomposability}\label{sec:distrDec}

In this section, we introduce the notion of nonlocal Caccioppoli perimeter following the fractional case in \cite{ComSte19} and investigate the decomposition properties of sets with respect to this perimeter. The main decomposition result in Theorem~\ref{thm:epsdecomp} is very similar to the Gagliardo setting, although the proof relies on completely different techniques. 

We consider a radial kernel $\rho$ satisfying \eqref{eq:assumption} and introduce the following notion analogously to the fractional case in \cite[Definition~4.1]{ComSte19}.
\begin{defi}[Nonlocal Caccioppoli perimeter] 
We define the Caccioppoli $\rho$-perimeter of a measurable set $E \subset \R^d$ as
\[
\Per_\rho(E):= \TV_\rho(\mathds{1}_E)=\sup\left\{ \int_{E} \div_\rho p \dd x \,\middle\vert\, p \in C_c^{\infty}(\R^d;\R^d),\, \|p\|_{L^{\infty}(\R^d;\R^d)}\leq1\right\}.
\]
If $\Per_\rho(E)<\infty$, or equivalently, $D_\rho \mathds{1}_{E} \in \M(\R^d;\R^d)$, then we say that $E$ is a set with finite Caccioppoli $\rho$-perimeter.
\end{defi}

A complete analysis of the Caccioppoli $\rho$-perimeter is not the main focus of this work, but we mention that many properties can be proven by following the strategy from the fractional case in \cite{ComSte19, ComSte23, brue2022distributional} and using the tools from the previous section. For example, the functional $\Per_\rho$ is lower semicontinuous with respect to convergence of sets, and we have the following immediate consequence of Lemma~\ref{le:poinccompact}.
\begin{lemma}\label{le:poinccompper}
     Let $\rho$ have compact support and satisfy \ref{itm:h1}-\ref{itm:upper} and let $R>0$. Then, there exists a constant $C=C(d,\rho,R)>0$ such that
    \[
    |E| \leqslant C\Per_\rho(E) \quad \text{for all measurable $E \subset B_R(0)$.}
    \]
    Moreover, if $(E_n)_n$ is a sequence such that $E_n \subset B_R(0)$ for all $n \in \N$ and 
    \[
    \sup_n \Per_\rho(E_n)<\infty,
    \]
    then $E_n \to E$ up to a non-relabeled subsequence for some measurable set $E \subset B_R(0)$.
\end{lemma}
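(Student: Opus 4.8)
The plan is to obtain both statements as direct corollaries of Lemma~\ref{le:poinccompact} applied to characteristic functions. First I would recall that, by the very definition of the nonlocal Caccioppoli perimeter, a measurable set $E$ satisfies $\mathds{1}_E \in \BV^\rho(\R^d)$ if and only if $\Per_\rho(E)<\infty$, and in that case $|D_\rho \mathds{1}_E|(\R^d)=\TV_\rho(\mathds{1}_E)=\Per_\rho(E)$. Hence, given $E \subset B_R(0)$ with $\Per_\rho(E)<\infty$, the function $u:=\mathds{1}_E$ lies in $\BV^\rho(\R^d)$ and vanishes a.e.\ outside $B_R(0)$, so $\supp u \subset B_{R+1}(0)$ and Lemma~\ref{le:poinccompact} (applied with radius $R+1$) gives $|E|=\|\mathds{1}_E\|_{L^1(\R^d)} \leqslant C\,|D_\rho\mathds{1}_E|(\R^d)=C\,\Per_\rho(E)$ with $C=C(d,\rho,R)>0$; if $\Per_\rho(E)=\infty$ the bound is trivial. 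This proves the first assertion.

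For the compactness statement I would set $u_n:=\mathds{1}_{E_n}\in\BV^\rho(\R^d)$, which satisfy $\supp u_n\subset B_{R+1}(0)$ and $\sup_n |D_\rho u_n|(\R^d)=\sup_n\Per_\rho(E_n)<\infty$. The compactness part of Lemma~\ref{le:poinccompact} then provides a non-relabeled subsequence and a function $u\in\BV^\rho(\R^d)$ with $u_n\to u$ in $L^1(\R^d)$. Passing to a further subsequence that converges pointwise a.e.\ and using that each $u_n$ only takes the values $0$ and $1$, one obtains $u\in\{0,1\}$ a.e., so $u=\mathds{1}_E$ with $E:=\{u=1\}$ measurable; moreover $|E\setminus B_R(0)|\leqslant \liminf_n |E_n\setminus B_R(0)|=0$, hence $E\subset B_R(0)$ up to a null set. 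Since $\|\mathds{1}_{E_n}-\mathds{1}_E\|_{L^1(\R^d)}=|E_n\Delta E|$, the $L^1$-convergence is precisely the convergence $E_n\to E$ in $\Mcal_d$, which concludes the proof.

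There is no genuinely hard step here: the only points needing a line of care are the harmless enlargement of the ball so that the support hypothesis of Lemma~\ref{le:poinccompact} is satisfied, and the standard measure-theoretic fact that an $L^1$-limit of characteristic functions is again a characteristic function. As a by-product, the weak* convergence $D_\rho u_n \weakstar D_\rho u$ established within the proof of Lemma~\ref{le:poinccompact} also yields $\Per_\rho(E)\leqslant \liminf_n \Per_\rho(E_n)<\infty$, so the limit set automatically has finite nonlocal Caccioppoli perimeter.
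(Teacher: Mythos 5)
Your proof is correct and takes exactly the route the paper intends: the lemma is stated there as an ``immediate consequence of Lemma~\ref{le:poinccompact}'', and you carry out precisely that reduction by applying the Poincar\'e inequality and compactness to the indicator functions $\mathds{1}_E$ and $\mathds{1}_{E_n}$, with the harmless enlargement of the ball and the standard observation that an $L^1$-limit of indicator functions is again an indicator function.
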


We now turn to the study of decomposability of sets with respect to the Caccioppoli $\rho$-perimeter.
\begin{defi}
    We say that a set $E$ of finite Caccioppoli $\rho$-perimeter is $\Per_\rho$-decomposable, if there exists a partition $\{E_1,E_2\}$  of $E$ with $|E_1|,|E_2|>0$ and such that $\Per_\rho(E)=\Per_\rho(E_1)+\Per_\rho(E_2)$.
\end{defi}
Our goal is to characterize decomposability in terms of the radius of the support of $\rho$. To this aim, we assume in the following that $\rho$ satisfies \ref{itm:h1}-\ref{itm:upper} and $\supp \rho = \overline{B_\epsilon(0)}$ with $\epsilon \in (0,\infty)$. Moreover, we assume that 
\begin{equation}\label{eq:assumptionstrict}
\text{$f_\rho$ in \ref{itm:h1} is decreasing on $(0,\epsilon)$.}
\end{equation}
We first prove the following lemma.
\begin{lemma}\label{le:divergencenonlocalgrad}
    Let $E$ be a finite Caccioppoli $\rho$-perimeter set. Then, $D_\rho \mathds{1}_E$ is a smooth function on $E \setminus \partial E$. Moreover, for all $x \in E \setminus \partial E$ such that $|B_\epsilon(x) \cap E^c| >0$, we have that
    \[
    \div D_\rho \mathds{1}_E (x) < 0.
    \]
\end{lemma}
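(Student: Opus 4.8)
The plan is to work directly from the convolution representation $D_\rho \mathds{1}_E = Q_\rho * D\mathds{1}_E$ combined with the structure of the Caccioppoli perimeter measure $D\mathds{1}_E = \nu_E \,\Hcal^{d-1}\mres \partial^* E$. Since $\mathds{1}_E \in \BV^\rho(\R^d)$ and $\rho \in L^1(\R^d)$ because $\supp\rho = \overline{B_\epsilon(0)}$, Lemma~\ref{le:fromnonlocaltolocal} gives us that $\mathds{1}_E \in \BV(\R^d)$ as well, so $D\mathds{1}_E$ is a finite $\R^d$-valued Radon measure concentrated on $\partial^*E$. Then for $x \in E \setminus \partial E$ (so that $x$ has positive distance to $\partial E \supset \supp D\mathds{1}_E$, at least locally), the potential $Q_\rho$ is smooth away from the origin, and $Q_\rho(x-\cdot)$ is smooth in a neighbourhood of $\supp D\mathds{1}_E$ restricted near $x$, so $D_\rho\mathds{1}_E$ is smooth near $x$ by differentiating under the integral sign. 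For the divergence, I would compute $\div D_\rho \mathds{1}_E(x) = (\div Q_\rho) * D\mathds{1}_E$ — but $Q_\rho$ is scalar and vector-valued differentiation needs care; more precisely $D_\rho\mathds{1}_E(x) = \int V_\rho$-type kernel... actually the cleanest route: $D_\rho \mathds{1}_E = D(Q_\rho * \mathds{1}_E)$ by \eqref{eq:potential} extended via Lemma~\ref{le:fromnonlocaltolocal}, so $\div D_\rho \mathds{1}_E = \Delta(Q_\rho * \mathds{1}_E)$.

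**The key computation.**

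So the statement reduces to showing $\Delta(Q_\rho * \mathds{1}_E)(x) < 0$ whenever $x \in E\setminus\partial E$ and $|B_\epsilon(x)\cap E^c| > 0$. Write $w := Q_\rho * \mathds{1}_E$; then $w(x) = \int_E Q_\rho(x-y)\dd y = \int_{\R^d} Q_\rho(z) \mathds{1}_E(x-z)\dd z$. Since $x\in E\setminus\partial E$, in a neighbourhood $w$ is smooth and $\Delta w(x) = \int_{\R^d}\Delta_x Q_\rho(x-y)\mathds{1}_E(y)\dd y$, but $\Delta Q_\rho$ is a distribution with a singularity at the origin, so I must be careful: better to use that $Q_\rho$ is radial with $Q_\rho(z) = \int_{|z|}^\infty \overline\rho(t)/t\dd t$, hence $Q_\rho$ is $C^\infty$ on $\R^d\setminus\{0\}$, radially nonincreasing, compactly supported in $\overline{B_\epsilon(0)}$, and for $r>0$ one computes $\overline{Q_\rho}'(r) = -\overline\rho(r)/r \le 0$. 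The radial Laplacian of $Q_\rho$ away from $0$ is $\overline{Q_\rho}''(r) + \frac{d-1}{r}\overline{Q_\rho}'(r)$; using $\overline{Q_\rho}'(r) = -\overline\rho(r)/r = -r^{-(d-1)}\cdot r^{d-2}\overline\rho(r) = -r^{-(d-1)} f_\rho(r)$ with $f_\rho$ from \ref{itm:h1}, one gets $\Delta Q_\rho(z) = -|z|^{-(d-1)} f_\rho'(|z|) \ge 0$ on $B_\epsilon(0)\setminus\{0\}$ since $f_\rho$ is nonincreasing — in fact strictly positive on $(0,\epsilon)$ by the strict monotonicity assumption \eqref{eq:assumptionstrict}. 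Then, since $x\in E\setminus\partial E$ means a whole ball $B_\delta(x)\subset E^{(1)}$, the singular part of $\Delta Q_\rho$ at $0$ only sees $\mathds{1}_E \equiv 1$ near $x$, and $\Delta(Q_\rho*\mathds{1}_E)(x) = \int_{\R^d}\Delta Q_\rho(z)\,\mathds{1}_E(x-z)\dd z$ once we handle the origin correctly. Here one should split: $\Delta(Q_\rho*\mathds{1}_E) = \Delta(Q_\rho*\mathds{1}_{B_\delta(x)}) + \Delta(Q_\rho * \mathds{1}_{E\setminus B_\delta(x)})$. The first term: $Q_\rho * \mathds{1}_{B_\delta(x)}$ — one can use that $\Delta Q_\rho = \mu_0\delta_0 + g$ with $g \ge 0$ (and $g>0$ on $B_\epsilon\setminus\{0\}$) where $\mu_0 \le 0$ is the mass of the singular part (indeed $Q_\rho$ is superharmonic-ish near $0$ when $\overline\rho$ blows up, or the delta has nonpositive coefficient); convolving, $\Delta(Q_\rho*\mathds{1}_{B_\delta(x)})(x) = \mu_0 + \int_{B_\delta(0)} g(z)\dd z$. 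The second term is $\int_{E\setminus B_\delta(x)} g(x-y)\dd y \ge 0$ and is smooth at $x$.

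**Extracting the strict inequality.**

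To conclude $\div D_\rho\mathds{1}_E(x) < 0$, I would instead argue more robustly without pinning down $\mu_0$: note that $\int_{\R^d}\Delta Q_\rho = 0$ in the distributional sense (since $Q_\rho$ is compactly supported and integrable, $\widehat{\Delta Q_\rho}(0) = 0$, i.e. $\mu_0 + \int_{\R^d} g = 0$, giving $\mu_0 = -\int_{\R^d} g(z)\dd z < 0$). Therefore
\[
\div D_\rho \mathds{1}_E(x) = \Delta(Q_\rho*\mathds{1}_E)(x) = \mu_0 + \int_{\R^d} g(z)\,\mathds{1}_E(x-z)\dd z = -\int_{\R^d} g(z)\,\mathds{1}_{E^c}(x-z)\dd z = -\int_{E^c} g(x-y)\dd y,
\]
and since $g > 0$ on $B_\epsilon(0)\setminus\{0\}$ (using \eqref{eq:assumptionstrict}, which forces $f_\rho' < 0$ a.e., hence $g = -|z|^{-(d-1)}f_\rho'(|z|) > 0$), the hypothesis $|B_\epsilon(x)\cap E^c| > 0$ gives $\int_{E^c} g(x-y)\dd y > 0$, so $\div D_\rho\mathds{1}_E(x) < 0$. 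The main obstacle is making the distributional identities rigorous — in particular verifying that $\Delta Q_\rho$ is a signed measure of the claimed form (a negative multiple of $\delta_0$ plus a nonnegative $L^1$ density) and justifying the exchange of $\Delta$ with the convolution at the point $x$ where $\mathds{1}_E$ is locally constant. This requires checking the local integrability of $g$ near the origin, i.e. $|z|^{-(d-1)}|f_\rho'(|z|)|\in L^1_{\rm loc}$, which follows from \ref{itm:derivatives} (giving $|f_\rho'(r)| \le C f_\rho(r)/r$) together with \ref{itm:lower} (controlling the growth of $f_\rho$ near $0$), and the smoothness of $D_\rho\mathds{1}_E$ near $x$, which follows from the decay estimates on $V_\rho$ in Lemma~\ref{le:Vrhoestimates} or directly from smoothness of $Q_\rho$ away from $0$.
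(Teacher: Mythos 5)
Your final formula $\div D_\rho\mathds{1}_E(x) = -\int_{E^c} g(x-y)\,\dd y$ with $g(z)=-f_\rho'(|z|)/|z|^{d-1}$ is exactly the identity the paper establishes, but the route you take to it contains a step that fails under the paper's standing assumptions. You claim $\Delta Q_\rho = \mu_0\delta_0 + g$ with $g\in L^1_{\rm loc}$ and $\mu_0=-\int_{\R^d} g$ finite, and you assert that the local integrability of $g$ near the origin follows from \ref{itm:derivatives} and \ref{itm:lower}. In fact \ref{itm:lower} rules it out: together with $\inf_{\overline{B_\eta(0)}}\rho>0$ it gives $\overline\rho(r)\geqslant c\,r^{-(d-1+\sigma)}$, hence $f_\rho(r)=r^{d-2}\overline\rho(r)\geqslant c\,r^{-1-\sigma}\to\infty$ as $r\to 0$, and since $f_\rho$ is nonincreasing,
\[
\int_{B_\delta(0)} g(z)\,\dd z = |\S^{d-1}|\int_0^\delta |f_\rho'(r)|\,\dd r = |\S^{d-1}|\lim_{s\to 0}\bigl(f_\rho(s)-f_\rho(\delta)\bigr)=\infty .
\]
So $g$ is never locally integrable at $0$ in this setting, $\Delta Q_\rho$ is not a measure (it is a finite-part type distribution), the Fourier argument ``$\mu_0+\int g=0$'' is meaningless, and the intermediate identity $\Delta(Q_\rho*\mathds{1}_E)(x)=\mu_0+\int_E g(x-y)\,\dd y$ is an $\infty-\infty$ expression. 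A second, smaller gap: you deduce $\mathds{1}_E\in\BV(\R^d)$ and $D\mathds{1}_E=\nu_E\,\Hcal^{d-1}\mres\partial^*E$ from $\mathds{1}_E\in\BV^\rho(\R^d)$ via Lemma~\ref{le:fromnonlocaltolocal}; that lemma only says $\Qcal_\rho u\in\BV(\R^d)$, not $u\in\BV(\R^d)$, and the lemma you are proving assumes only finite Caccioppoli $\rho$-perimeter, so the convolution representation $D_\rho\mathds{1}_E=Q_\rho*D\mathds{1}_E$ is not available here.

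Both problems disappear if you avoid $Q_\rho$ altogether and argue pointwise, which is what the paper does: for $x\in E\setminus\partial E$ one has $\mathds{1}_E(x)=1$ and $\dist(x,E^c)>0$, so directly from the definition of the nonlocal gradient,
\[
D_\rho\mathds{1}_E(x) = -\int_{E^c}\frac{y-x}{|y-x|^2}\,\rho(y-x)\,\dd y
  = -\int_{E^c} f_\rho(|y-x|)\,\frac{y-x}{|y-x|^{d}}\,\dd y,
\]
where the integration runs over a region at distance at least $\dist(x,E^c)$ from $x$; by \ref{itm:derivatives} the integrand is smooth there with derivatives dominated uniformly for $x$ in a small ball, so one may differentiate under the integral. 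Using that $z/|z|^d$ is divergence-free away from the origin, this gives $\div D_\rho\mathds{1}_E(x)=\int_{E^c} f_\rho'(|y-x|)\,|y-x|^{-(d-1)}\,\dd y$, which is strictly negative by \eqref{eq:assumptionstrict}, $\supp\rho=\overline{B_\epsilon(0)}$ and $|B_\epsilon(x)\cap E^c|>0$ — your intended sign argument, but reached without the distributional Laplacian of $Q_\rho$ and without any classical $\BV$ regularity of $E$.
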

\begin{proof}
    The smoothness is clear, since for $x \in E \setminus \partial E$, we have that
    \[
    D_\rho \mathds{1}_E(x) = \int_{\R^d}-\mathds{1}_{E^c}(y) \frac{(y-x)\rho(y-x)}{|y-x|^2}\dd{y} = -\int_{E^c} \frac{(y-x)\rho(y-x)}{|y-x|^2}\dd{y},
    \]
    which is smooth by \ref{itm:derivatives} and the fact that $\dist(x,E^c)>0$. Moreover, to compute the divergence, we may interchange it with the integral to obtain
    \[
    \div D_\rho \mathds{1}_E (x) = - \int_{E^c} \div_x \left( \frac{(y-x)\rho(y-x)}{|y-x|^2}\right)\dd{y}=\int_{E^c} \frac{f_\rho'(|y-x|)}{|y-x|^{d-1}}\dd{y}.
    \]
    This last identity follows from a direct computation by using that
    \[
    \frac{(y-x)\rho(y-x)}{|y-x|^2} = f_\rho(|y-x|)\frac{y-x}{|y-x|^d}.
    \]
    Since $f'_\rho$ is negative on $(0,\epsilon)$ by \eqref{eq:assumptionstrict} and $|B_\epsilon(x) \cap E^c| >0$, we conclude that $\div D_\rho \mathds{1}_E (x) < 0.$
\end{proof}
We also need the following lemma for sets with additional regularity, illustrated in Figure \ref{fig:normalsquare}.

\begin{figure}[ht]
    \centering
    \includegraphics[width=0.35\textwidth]{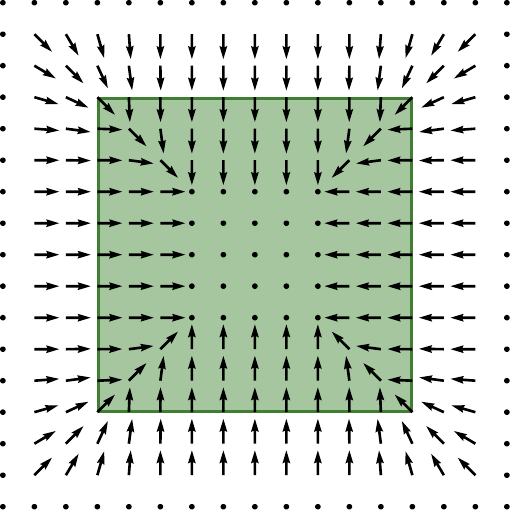}
    \caption{Numerical illustration of the normalized nonlocal gradient $D_\rho \mathds{1}_E / \abs{D_\rho \mathds{1}_E}$ with $E = (-5/4,5/4)^2$ and $\rho(\cdot)=\operatorname{exp}\left(\dfrac{1/10}{|\cdot|^2-(3/4)^2}\right)\1_{B_{3/4}(0)}(\cdot)\dfrac{1}{|\cdot|^{3/2}}.$ The computation was performed at locations $x$ on a grid with spacing $1/4$, with the integral for each of them approximated by a finite sum on a grid of $161 \times 161$ points. The locations where $D_\rho \mathds{1}_E$ vanishes are shown as dots.}\label{fig:normalsquare}
\end{figure}

\begin{lemma}\label{le:unitnormal}
    Let $E$ be a finite perimeter set and $x_0 \in \partial^*E$ be such that the generalized inner unit normal $\nu_E$ restricted to $\partial^*E$ is continuous at $x_0$. Then, for any sequence $(x_n)_n \subset \R^d \setminus \overline{\partial^* E}$ such that $x_n \to x_0$, it holds that
    \[
    \lim_{n \to \infty} \frac{D_\rho \mathds{1}_E(x_n)}{\abs{D_\rho \mathds{1}_E(x_n)}} = \nu_E(x_0).
    \]
\end{lemma}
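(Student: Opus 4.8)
The plan is to exploit the representation $D_\rho \mathds{1}_E = Q_\rho * D\mathds{1}_E$ valid for smooth functions and, via Lemma~\ref{le:fromnonlocaltolocal} and~\ref{le:fromlocaltononlocal}, for $\BV$-functions, but since $\rho$ is compactly supported here the cleanest route is to work directly from the pointwise formula. For $x \notin \overline{\partial^* E}$ we have, as in the proof of Lemma~\ref{le:divergencenonlocalgrad}, that
\[
D_\rho \mathds{1}_E(x) = \int_{\R^d} \bigl(\mathds{1}_E(y) - \mathds{1}_E(x)\bigr)\frac{(y-x)\rho(y-x)}{|y-x|^2}\dd y = -\int_{\R^d} \bigl(\mathds{1}_{E^c}(y) - \mathds{1}_{E^c}(x)\bigr)\frac{(y-x)\rho(y-x)}{|y-x|^2}\dd y,
\]
and after changing variables $h = y-x$ and using that $\rho$ is radial, this equals $-\int_{\R^d} \mathds{1}_E(x+h)\, h\,|h|^{-2}\rho(h)\dd h$ plus a term proportional to $\int h|h|^{-2}\rho(h)\dd h = 0$ when $x \in E^{(1)}$ (and symmetrically for $x \in E^{(0)}$). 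The key point is that the vector field $g(h) := h\,|h|^{-2}\rho(h) = f_\rho(|h|)\,h\,|h|^{-d}$ is, up to the radial weight $f_\rho$, the Riesz-type kernel; its integral against a half-space indicator is a nonzero multiple of the inner normal of that half-space. So I would first establish the \emph{blow-up lemma}: if $x_0 \in \partial^* E$ then $\mathds{1}_E(x_0 + r\,\cdot) \to \mathds{1}_{H_{\nu_E(x_0)}}$ in $L^1_{\mathrm{loc}}$ as $r \to 0$, where $H_\nu := \{h : h\cdot \nu > 0\}$ is the half-space with inner normal $\nu$ — this is exactly De Giorgi's structure theorem (\cite[Thm.~15.5]{Mag12}).

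Next I would handle the rescaling. Write $x_n = x_0 + r_n \omega_n$ with $r_n = |x_n - x_0| \to 0$ and $\omega_n \in \S^{d-1}$; passing to a subsequence, $\omega_n \to \omega$. Substituting $h = r_n k$ in the integral for $D_\rho \mathds{1}_E(x_n)$ and factoring out the radial profile, one gets
\[
D_\rho \mathds{1}_E(x_n) = -\int_{\R^d} \mathds{1}_E(x_0 + r_n(\omega_n + k))\, \frac{k}{|k|^2}\,r_n^{\,d-1}\rho(r_n k)\dd k \;+\; (\text{vanishing correction}),
\]
where the correction is the constant-subtraction term which is exactly zero whenever $B_{r_n}(x_n)$ sees $E$ or $E^c$ with density far from $\frac12$; since $x_n \notin \overline{\partial^* E}$ we may assume $x_n \in E^{(1)} \cup E^{(0)}$, but a slightly more careful argument handling the interface region near $x_0$ is needed (see the obstacle below). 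The factor $r_n^{d-1}\rho(r_n k)$ is where the hypotheses \ref{itm:h1}--\ref{itm:upper} and especially the strict monotonicity \eqref{eq:assumptionstrict} enter: one does \emph{not} expect $r_n^{d-1}\rho(r_n k)$ to converge, but the \emph{normalized} quantity $D_\rho \mathds{1}_E(x_n)/|D_\rho\mathds{1}_E(x_n)|$ is scale-invariant under multiplying the integrand by a positive constant, so what matters is only the \emph{direction} of the limit. Using the $L^1_{\mathrm{loc}}$ blow-up convergence together with a dominated/equi-integrability argument for the tail (the kernel $k/|k|^2 \cdot r_n^{d-1}\rho(r_n k)$ restricted to $|k| \leq \epsilon/r_n$ is controlled via \ref{itm:lower}--\ref{itm:upper} so that the contribution of large $|k|$ is negligible after normalization), I would show
\[
\frac{D_\rho \mathds{1}_E(x_n)}{|D_\rho \mathds{1}_E(x_n)|} \longrightarrow \frac{-\int_{H_{\nu_E(x_0)}} k\,|k|^{-2}\,\mu(|k|)\dd k}{\bigl|\int_{H_{\nu_E(x_0)}} k\,|k|^{-2}\,\mu(|k|)\dd k\bigr|}
\]
for an appropriate nonnegative radial density $\mu$ arising as a limit (or Banach limit) of the rescaled profiles; and by symmetry of $H_\nu$ under rotations fixing $\nu$, this vector is parallel to $\nu_E(x_0)$, with the correct sign because $f_\rho > 0$ makes the inward direction dominate. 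The limit is independent of the subsequence, so the full sequence converges.

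The main obstacle is controlling the behaviour near the interface: when $x_n$ approaches $x_0 \in \partial^* E$, the ball $B_{r_n}(x_n)$ straddles the (approximate) boundary, so the "correction" term from the constant subtraction is \emph{not} exactly zero and the integrand $\mathds{1}_E(x_0 + r_n(\omega_n + \cdot))$ is genuinely a half-space indicator only in the limit, not for finite $n$. I would deal with this by \emph{not} subtracting a constant at all — since $x_n \notin \overline{\partial^* E}$, the formula $D_\rho\mathds{1}_E(x_n) = -\int_{E^c}(y-x_n)\rho(y-x_n)|y-x_n|^{-2}\dd y$ is already an honest absolutely convergent integral over $E^c$ (the singularity at $h=0$ is integrable because $f_\rho(r) = r^{d-2}\bar\rho(r)$ is nonincreasing, so $\rho(h)/|h| \sim f_\rho(|h|)/|h|^{d-1}$ is locally integrable) — and then the blow-up is of $\mathds{1}_{E^c}$ around $x_0$, converging to $\mathds{1}_{H_{\nu_E(x_0)}^c}$, with no constant to worry about. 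The remaining care is the interchange of limit and integral: near $k=0$ the kernel is a fixed integrable function (by monotonicity of $f_\rho$, uniformly in $n$ after absorbing the scaling into $\mu$), and near $|k| = \epsilon/r_n$ a truncation argument using \ref{itm:lower} shows the tail is uniformly small relative to the core, so Vitali's convergence theorem applies. A secondary point is ensuring $|D_\rho \mathds{1}_E(x_n)|$ stays bounded away from zero along the sequence after normalization is meaningful — this follows since the core contribution (a fixed fraction of the Riesz integral over the half-space) has a strictly positive lower bound by \eqref{eq:assumptionstrict}, which guarantees $f_\rho$ does not degenerate near the relevant scales.
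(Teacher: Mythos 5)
The paper's proof takes a completely different route from yours. It uses the identity $D_\rho\mathds{1}_E(x_n) = (Q_\rho * D\mathds{1}_E)(x_n) = \int_{\partial^* E} Q_\rho(x_n-y)\,\nu_E(y)\,\dd\Hcal^{d-1}(y)$ from Lemma~\ref{le:fromnonlocaltolocal}, splits off $\nu_E(x_0)$ to write $D_\rho\mathds{1}_E(x_n) = a_n\,\nu_E(x_0) + b_n v_n$ with $a_n = \int_{\partial^*E}Q_\rho(x_n-y)\,\dd\Hcal^{d-1}(y)$, and then shows $a_n \to\infty$ (via the lower bound $Q_\rho(z)\gtrsim|z|^{-(d-1+\sigma)}$ from \ref{itm:lower}) while $b_n/a_n\to 0$ (from the continuity of $\nu_E$ at $x_0$, by splitting $\partial^*E$ into $B_{1/k}(x_0)$ and its complement). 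This sidesteps blow-up rescaling entirely.

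Your blow-up proposal has a genuine gap, and it sits exactly where you flag "the main obstacle". You rescale by $r_n=|x_n-x_0|$, but the quantity that governs both the magnitude of $D_\rho\mathds{1}_E(x_n)$ and the location of its dominant contribution is $\delta_n := \dist(x_n, \partial^*E)$, and nothing prevents $\delta_n \ll r_n$ (this happens whenever $x_n$ approaches $x_0$ nearly tangentially to $\partial^*E$). In rescaled variables $k' = \omega_n + k$, the kernel $K_n(k') = \frac{k'-\omega_n}{|k'-\omega_n|^2}\,r_n^{d-1}\rho\bigl(r_n(k'-\omega_n)\bigr)$ is singular at $k'=\omega_n$, and by \ref{itm:lower} its magnitude there behaves like $r_n^{-\sigma}|k'-\omega_n|^{-(d+\sigma)}$ — \emph{not} a fixed integrable function uniformly in $n$, contrary to your claim "near $k=0$ the kernel is a fixed integrable function". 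The integral only converges because $\mathds{1}_{E^c}(x_0+r_n\cdot)$ vanishes on a ball of radius $\delta_n/r_n$ around $\omega_n$; when $\delta_n/r_n \to 0$, the mass of the integral concentrates in a vanishing neighborhood of $\omega_n$, precisely where the $L^1_{\mathrm{loc}}$ blow-up convergence of De Giorgi provides no control. Vitali's theorem does not apply: the family is not uniformly integrable near the singularity, and both the numerator and denominator of the normalized vector are dominated by this uncontrolled region. A salvage would require a uniform (not merely $L^1_{\mathrm{loc}}$) closeness of $E$ to a half-space near $x_0$ at all scales between $\delta_n$ and $r_n$, which is stronger than what continuity of $\nu_E$ at $x_0$ gives and is not established. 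The paper's argument avoids this entirely because the continuity of $\nu_E$ directly controls the integrand on $\partial^*E\cap B_{1/k}(x_0)$, which is exactly where the singularity of $Q_\rho(x_n-\cdot)$ lives, regardless of the relation between $\delta_n$ and $r_n$.

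Separately, your heuristic that "$\rho(h)/|h|\sim f_\rho(|h|)/|h|^{d-1}$ is locally integrable" because $f_\rho$ is nonincreasing is incorrect: assumption \ref{itm:lower} forces $\overline\rho(r)\gtrsim r^{-(d-1+\sigma)}$ near $0$, so $\rho(h)/|h|\gtrsim|h|^{-(d+\sigma)}$ is \emph{not} locally integrable. The pointwise formula $D_\rho\mathds{1}_E(x)=-\int_{E^c}\frac{(y-x)\rho(y-x)}{|y-x|^2}\dd y$ converges absolutely only because $\dist(x, E^c)>0$ for $x\in E^{(1)}$, and the magnitude blows up as $x\to\partial^*E$ — which is precisely the behavior the paper exploits via $a_n\to\infty$.
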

\begin{proof}
    We can compute with the help of Lemma~\ref{le:fromnonlocaltolocal}, that 
    \begin{align}
    \begin{split}\label{eq:splitboundary}
    D_\rho \mathds{1}_E(x_n) &= (Q_\rho * D\mathds{1}_E)(x_n)=(Q_\rho * \nu_E \Hcal^{d-1}\mres {\partial^* E})(x_n)\\
    &=\int_{\partial^* E} Q_\rho(x_n-y)\nu_E(y)\dd\Hcal^{d-1}(y) \\
    &= \int_{\partial^* E} Q_\rho(x_n-y)\dd\Hcal^{d-1}(y) \,\nu_E(x_0) \\
    &\qquad+ \int_{\partial^* E} Q_\rho(x_n-y)(\nu_E(y)-\nu_E(x_0))\dd\Hcal^{d-1}(y).
    \end{split}
    \end{align}
    Using \ref{itm:lower}, we find that $Q_\rho(z) \geqslant C|z|^{-d+1-\sigma}$ for all $z \in B_\eta(0)$. This yields
    \[
    a_n:=\int_{\partial^* E} Q_\rho(x_n-y)\dd\Hcal^{d-1}(y) \geqslant C \int_{\partial^* E \cap B_{\eta/2}(x_0)} |x_n-y|^{-d+1-\sigma}\dd\Hcal^{d-1}(y),
    \]
    when $x_n \in B_{\eta/2}(x_0)$, which implies $\lim_{n \to \infty}a_n=\infty$ given that $x_n \to x_0 \in \partial^* E$. For the other term, we note that due to the continuity of the unit normal, $\abs{\nu_E(y)-\nu_E(x_0)}\leqslant \eta_k$ for all $y \in \partial^* E \cap B_{1/k}(x_0)$ with $\eta_k \to 0$ as $k \to \infty$. Hence, we can estimate
    \begin{align*}
    b_n&:= \left|\int_{\partial^* E} Q_\rho(x_n-y)(\nu_E(y)-\nu_E(x_0))\dd\Hcal^{d-1}(y)\right| \\
    &\leqslant \eta_k\int_{\partial^* E \cap B_{1/k}(x_0)} Q_\rho(x_n-y)\dd\Hcal^{d-1}(y) \\
    &\qquad + 2\int_{\partial^* E \setminus B_{1/k}(x_0)} Q_\rho(x_n-y)\dd\Hcal^{d-1}(y) \\
    &\leqslant \eta_k a_n + 2\int_{\partial^* E \setminus B_{1/k}(x_0)} Q_\rho(x_n-y)\dd\Hcal^{d-1}(y).
    \end{align*}
    Since the latter integral is bounded uniformly in $n$, we obtain that $\lim_{n \to \infty} b_n/a_n \leqslant \eta_k$ for all $k \in \N$. Therefore, we find that $\lim_{n\to \infty} b_n/a_n = 0$. Using \eqref{eq:splitboundary}, we obtain unit vectors $(v_n)_n \subset \mathbb{S}^{d-1}$ such that
    \[
    D_\rho\mathds{1}_E(x_n) = a_n\nu_E(x_0) + b_n v_n.
    \]
    Finally, we deduce
    \begin{align*}
    \lim_{n \to \infty} \frac{D_\rho \mathds{1}_E(x_n)}{\abs{D_\rho \mathds{1}_E(x_n)}} &=\lim_{n \to \infty} \frac{a_n\nu_E(x_0) + b_n v_n}{\abs{a_n\nu_E(x_0) + b_n v_n}} \\
    &=\lim_{n \to \infty} \frac{\nu_E(x_0) + \frac{b_n}{a_n} v_n}{\abs{\nu_E(x_0) + \frac{b_n}{a_n} v_n}} = \nu_E(x_0).\qedhere
    \end{align*}
\end{proof}
With this we can prove the following indecomposability result, at least when we are working with $C^1$-domains.
\begin{thm}\label{thm:epsdecomp}
    Let $E$ be a finite Caccioppoli $\rho$-perimeter set and $E_1,E_2$ be non-empty bounded $C^{1}$-domains with $E=E_1 \cup E_2$ and $E_1 \cap E_2 = \emptyset$. Then, $\Per_\rho(E)<\Per_\rho(E_1)+\Per_\rho(E_2)$ if and only if $\dist(E_1,E_2)< 2 \epsilon$.
\end{thm}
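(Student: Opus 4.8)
The plan is to reduce everything to a pointwise comparison of the two $L^1$-vector fields $D_\rho\mathds{1}_{E_1}$ and $D_\rho\mathds{1}_{E_2}$. Since $\rho$ has compact support and satisfies \ref{itm:h1}--\ref{itm:upper}, assumption \ref{itm:lower} forces $\rho\in L^1(\R^d)$, and since $E_1,E_2$ are bounded $C^1$-domains we have $\mathds{1}_{E_i}\in\BV(\R^d)$. Hence Lemma~\ref{le:fromnonlocaltolocal} gives $D_\rho\mathds{1}_{E_i}=Q_\rho*D\mathds{1}_{E_i}\in L^1(\R^d;\R^d)$ and $\Per_\rho(F)=\int_{\R^d}|D_\rho\mathds{1}_F|\dd x$ for $F\in\{E_1,E_2,E\}$. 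As $\mathds{1}_E=\mathds{1}_{E_1}+\mathds{1}_{E_2}$ a.e., we get $D_\rho\mathds{1}_E=D_\rho\mathds{1}_{E_1}+D_\rho\mathds{1}_{E_2}$, so that $\Per_\rho(E)\le\Per_\rho(E_1)+\Per_\rho(E_2)$, with equality if and only if $D_\rho\mathds{1}_{E_1}(x)$ and $D_\rho\mathds{1}_{E_2}(x)$ are non-negatively parallel (one a nonnegative multiple of the other) for a.e.\ $x$. I will also record that, since $\supp Q_\rho=\overline{B_\epsilon(0)}$ by \eqref{eq:assumptionstrict} and $\supp\rho=\overline{B_\epsilon(0)}$, one has $\supp D_\rho\mathds{1}_{E_i}\subseteq\{x:\dist(x,\partial E_i)\le\epsilon\}$.

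For "$\Per_\rho(E)<\Per_\rho(E_1)+\Per_\rho(E_2)\Rightarrow\dist(E_1,E_2)<2\epsilon$" I argue by contraposition: if $\dist(E_1,E_2)\ge2\epsilon$ then, $E_1,E_2$ being disjoint open sets, $\dist(\partial E_1,\partial E_2)=\dist(\overline{E_1},\overline{E_2})\ge2\epsilon$, so the support bands $\{\dist(\cdot,\partial E_i)\le\epsilon\}$ meet only in $\{\dist(\cdot,\partial E_1)=\epsilon\}\cap\{\dist(\cdot,\partial E_2)=\epsilon\}$, a Lebesgue-null set (a distance function has $|\nabla\dist(\cdot,\partial E_1)|=1$ a.e.\ off $\partial E_1$, so none of its level sets can have positive measure). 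Thus $D_\rho\mathds{1}_{E_1}(x)\cdot D_\rho\mathds{1}_{E_2}(x)=0$ for a.e.\ $x$, whence $|D_\rho\mathds{1}_{E_1}+D_\rho\mathds{1}_{E_2}|=|D_\rho\mathds{1}_{E_1}|+|D_\rho\mathds{1}_{E_2}|$ a.e.\ and equality holds.

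For the converse, assume $\delta:=\dist(E_1,E_2)<2\epsilon$; it suffices to exhibit a set of positive measure on which $D_\rho\mathds{1}_{E_1}$ and $D_\rho\mathds{1}_{E_2}$ are both nonzero and not non-negatively parallel, since these fields are continuous off $\partial E_1\cup\partial E_2$. Pick closest points $\bar x\in\partial E_1$, $\bar y\in\partial E_2$ with $|\bar x-\bar y|=\delta$; by the $C^1$ regularity and a supporting-sphere argument one has $\overline{E_1}\subseteq\R^d\setminus B_\delta(\bar y)$, $\overline{E_2}\subseteq\R^d\setminus B_\delta(\bar x)$, and $\nu_{E_1}(\bar x)=-\nu_{E_2}(\bar y)=(\bar x-\bar y)/\delta=:-e$ when $\delta>0$ (when $\delta=0$, $E_1$ and $E_2$ are locally complementary half-spaces at $\bar x=\bar y$, so again $\nu_{E_1}=-\nu_{E_2}$ there). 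Along the open segment $x(t)=\bar x+te$, $t\in(0,\delta)$, one checks $x(t)\notin\overline{E_1}\cup\overline{E_2}$, $\dist(x(t),E_1)=t$, $\dist(x(t),E_2)=\delta-t$, so for $t$ in the nonempty interval $(\max\{0,\delta-\epsilon\},\min\{\delta,\epsilon\})$ both gradients are smooth near $x(t)$ and $B_\epsilon(x(t))$ meets each $E_i$ in positive measure. Combining $|y-x(t)|<\epsilon$ with $|y-\bar y|\ge\delta$ for $y\in E_1$ gives $(y-x(t))\cdot e<\frac{\epsilon^2-\delta^2+(\delta-t)^2}{2(\delta-t)}$, so $D_\rho\mathds{1}_{E_1}(x(t))\cdot e<0$ once $(\delta-t)^2<\delta^2-\epsilon^2$; symmetrically $D_\rho\mathds{1}_{E_2}(x(t))\cdot e>0$ once $t^2<\delta^2-\epsilon^2$. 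These ranges overlap, and meet the admissible interval, exactly when $\delta>2\epsilon/\sqrt3$, and there the two gradients have opposite-signed $e$-components, settling that regime. For $\delta\le2\epsilon/\sqrt3$ (in particular when the closures of $E_1$ and $E_2$ touch or nearly touch) I instead work near $\bar x$ and $\bar y$: by Lemma~\ref{le:unitnormal}, approaching $\bar x$ from $\R^d\setminus\overline{E_1}$ along the segment makes $D_\rho\mathds{1}_{E_1}/|D_\rho\mathds{1}_{E_1}|\to\nu_{E_1}(\bar x)$ with the norm blowing up, and likewise at $\bar y$ for $D_\rho\mathds{1}_{E_2}$, the two limit directions being antipodal; Lemma~\ref{le:divergencenonlocalgrad}, applied to $E_i$ and to $E_i^c$, keeps the other field from vanishing identically near those points, and pushing these facts against non-negative parallelism produces the required point of conflict.

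The step that needs the most care, and which I expect to be the main obstacle, is this last one: near $\bar x$ the field $D_\rho\mathds{1}_{E_2}$ does \emph{not} blow up (it stays bounded away from $\partial E_2$) and could a priori vanish or be badly aligned, so one cannot merely superimpose the two limits of Lemma~\ref{le:unitnormal}. The fix is again to exploit the supporting-sphere inclusions $\overline{E_2}\subseteq\R^d\setminus B_\delta(\bar x)$ and $\overline{E_1}\subseteq\R^d\setminus B_\delta(\bar y)$ to quantify a definite-sign component of the relevant field on a suitable region, arranging the thresholds so that the segment regime and the near-boundary regime together cover all $\delta<2\epsilon$, after which the divergence lemma upgrades non-vanishing at a single point to a neighborhood, yielding the strict inequality.
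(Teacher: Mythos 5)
Your reduction (equality iff $D_\rho\mathds{1}_{E_1}$ and $D_\rho\mathds{1}_{E_2}$ are non-negatively parallel a.e., via Lemma~\ref{le:fromnonlocaltolocal}), your treatment of the case $\dist(E_1,E_2)\geq 2\epsilon$, and your segment computation are fine; in fact the segment argument gives a correct, elementary proof of strictness in the regime $2\epsilon/\sqrt{3}<\dist(E_1,E_2)<2\epsilon$, which is a genuinely different route from the paper's integration-by-parts argument there. But the remaining regime $\dist(E_1,E_2)\leq 2\epsilon/\sqrt{3}$ --- which contains the whole case $\dist(E_1,E_2)<\epsilon$, including touching sets, i.e.\ the heart of the theorem --- is only sketched, and the sketch as described does not close. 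The pointwise facts you invoke (blow-up of $D_\rho\mathds{1}_{E_1}$ with direction $\to\nu_{E_1}$ at $\bar x$, the antipodal limit at $\bar y$, and positivity of $\div D_\rho\mathds{1}_{E_2}$ near $\bar x$ from Lemma~\ref{le:divergencenonlocalgrad} applied to $E_2^c$) are all compatible with non-negative parallelism: near $\bar x$ the field $D_\rho\mathds{1}_{E_2}$ may simply be small or aligned with $\nu_{E_1}$, and no ``point of conflict'' follows. Your proposed fix also cannot work as stated: when $\delta=\dist(E_1,E_2)<\epsilon$ the inclusion $\overline{E_2}\subseteq\R^d\setminus B_\delta(\bar x)$ imposes essentially no directional constraint on $E_2\cap B_\epsilon(x)$ for $x$ near $\bar x$ (think of $E_2$ wrapping around $E_1$), so no definite sign of $D_\rho\mathds{1}_{E_2}\cdot e$ can be extracted there; and the divergence lemma controls $\div$, not the direction of the field.

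What is actually needed --- and what the paper does --- is to convert the pointwise information into an integral obstruction. In the regime $\delta<\epsilon$: a.e.\ parallelism plus Lemma~\ref{le:unitnormal} first rules out common boundary points and then forces $D_\rho\mathds{1}_{E_2}=g\,\nu_{E_1}$ on $\partial E_1$ with $g\geq 0$; the divergence theorem over $E_1$ then gives $\int_{E_1}\div(D_\rho\mathds{1}_{E_2})\,\dd x=-\int_{\partial E_1}g\,\dd\Hcal^{d-1}\leq 0$, while Lemma~\ref{le:divergencenonlocalgrad} (applied to $E_2^c$) makes this integral strictly positive because some positive-measure part of $E_1$ lies within distance $\epsilon$ of $E_2$. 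In the regime $\epsilon\leq\delta<2\epsilon$ the paper instead uses the nonlocal integration by parts \eqref{eq:intbyparts}: parallelism gives $0\leq\int D_\rho\mathds{1}_{E_1}\cdot D_\rho\mathds{1}_{E_2}\,\dd x=-\int_{E_2}\div_\rho D_\rho\mathds{1}_{E_1}\,\dd x$, and the sign analysis of $Q_\rho*\div D_\rho\mathds{1}_{E_1}$ on $E_2$ (nonnegative, and strictly positive where $\dist(\cdot,E_1)<2\epsilon$) yields the contradiction. Without an argument of this integral type, your case $\delta\leq 2\epsilon/\sqrt{3}$ remains a genuine gap.
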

\begin{proof}
Since $\supp D_\rho \mathds{1}_{E_i} \subset \overline{E_i+B_\epsilon(0)}$ for $i=1,2$, it is clear that $\Per_\rho(E)=\Per_\rho(E_1)+\Per_\rho(E_2)$ if $\dist(E_1,E_2)\geqslant  2 \epsilon$. For the converse, we suppose for the sake of contradiction that $\Per_\rho(E)=\Per_\rho(E_1)+\Per_\rho(E_2)$. We distinguish two cases.

\textit{Case 1: $\dist(E_1,E_2)< \epsilon$.} By Lemma~\ref{le:fromnonlocaltolocal} it holds that $D_\rho \mathds{1}_{E_1}=Q_\rho* D \mathds{1}_{E_1}$ and $D_\rho \mathds{1}_{E_2}=Q_\rho * D \mathds{1}_{E_2}$ are $L^1$-functions so that
\[
\Per_\rho(E)=\int_{\R^d} |D_\rho \mathds{1}_E|\dd{x} = \int_{\R^d} |D_\rho \mathds{1}_{E_1}+D_\rho \mathds{1}_{E_2}|\dd{x}=\int_{\R^d} |D_\rho \mathds{1}_{E_1}|+|D_\rho \mathds{1}_{E_2}|\dd{x}
\]
can only hold if $D_\rho \mathds{1}_{E_1}$ and $D_\rho \mathds{1}_{E_2}$ point in the same direction almost everywhere. It follows from Lemma~\ref{le:unitnormal} that $D_\rho \mathds{1}_{E_i}/|D_\rho \mathds{1}_{E_i}|(x') \to \nu_{E_i}(x)$ as $x' \to x \in \partial E_i$ with $\nu_{E_i}$ the inner unit normal to $\partial E_i$. Hence, $E_1$ and $E_2$ cannot share a boundary point, since then the normals point in different directions. We infer that $\dist(E_1,E_2)>0$, which shows that $D_\rho \mathds{1}_{E_2}$ is smooth on $E_1$. Since we additionally have that $D_\rho \mathds{1}_{E_2}= g\nu_{E_1}$ at $\partial E_1$ for some nonnegative $g$ due to the direction of $D_\rho \mathds{1}_{E_1}$ there, we obtain by the divergence theorem that
\[
\int_{E_1} \div(D_\rho \mathds{1}_{E_2})\dd{x} = \int_{\partial E_1} D_\rho \mathds{1}_{E_2} \cdot (-\nu_{E_1}) \dd{\Hcal^{d-1}} = -\int_{\partial E_1} g \dd{\Hcal^{d-1}} \leqslant 0.
\]
However, the left-hand side of this equation is positive by Lemma~\ref{le:divergencenonlocalgrad} (applied to $\mathds{1}_{E_2^c}$) and the fact that $\dist(E_1,E_2)<\epsilon$. This yields the desired contradiction. \smallskip

\textit{Case 2: $\epsilon \leqslant \dist(E_1,E_2) < 2\epsilon$.} Since $D_\rho \mathds{1}_{E_1}$ and $D_\rho \mathds{1}_{E_2}$ point in the same direction, we have, in particular, that
\[
0 \leqslant \int_{\R^d} D_\rho \mathds{1}_{E_1} \cdot D_\rho \mathds{1}_{E_2}\dd{x} = -\int_{E_2} \div_\rho D_\rho \mathds{1}_{E_1}\dd{x},
\]
with the second identity using integration by parts \eqref{eq:intbyparts}. We note that this is valid since $D_\rho \mathds{1}_{E_1}$ is smooth on $E_2+B_\epsilon(0)$ by Lemma~\ref{le:divergencenonlocalgrad}, which is the set where $D_\rho \mathds{1}_{E_2}$ is nonzero. For every $x \in E_2$, we now have that $\dist(x,E_1)>\epsilon$ so by Lemma~\ref{le:divergencenonlocalgrad}
\[
\div_\rho D_\rho \mathds{1}_{E_1}(x) = (Q_\rho * \div D_\rho \mathds{1}_{E_1})(x) \geqslant  0.
\]
In fact, for all $x \in E_2$ with $\dist(x,E_1) < 2 \epsilon$, the latter quantity is positive, so we deduce
\[
\int_{E_2} \div_\rho D_\rho \mathds{1}_{E_1}\dd{x} > 0,
\]
which yields a contradiction.
\end{proof}
\begin{rem}
    Theorem~\ref{thm:epsdecomp} shows that a set $E$ of finite Caccioppoli $\rho$-perimeter is $\Per_\rho$-decom\-posable into two $C^1$-sets $E_1,E_2$ if and only if
    \[
    \dist(E_1,E_2)=\dist^e(E_1,E_2) \geqslant 2 \epsilon.
    \]
    This is exactly the same characterization obtained for the Gagliardo perimeter in Proposition~\ref{prop:charepsilon}, albeit with $\epsilon$ replaced by $2 \epsilon$. This discrepancy is due to the fact that the nonlocal gradients of $\mathds{1}_{E_1}$ and $\mathds{1}_{E_2}$ are supported on $\epsilon-$neighborhoods of each respective set, meaning that interactions can take place when the distance is less than $2\epsilon$. A remaining open question is whether the $C^1$-smoothness assumption on $E_1$ and $E_2$ can be removed, which would yield a complete characterization of decomposability similar to Proposition~\ref{prop:charepsilon}.
\end{rem}

We finish this section with the decomposability result for the fractional Caccioppoli perimeter $\Per_\alpha$ of Example~\ref{ex:frac}. Since in this case $\epsilon =\infty$, we have that decomposability is never possible. The proof is ommitted since it is completely analogous to the case $\epsilon<\infty$.
\begin{prop}
    Let $E$ be a finite Caccioppoli $\alpha$-perimeter set and $E_1,E_2$ be non-empty bounded $C^{1}$-domains with $E=E_1 \cup E_2$ and $E_1 \cap E_2 = \emptyset$. Then, $\Per_\alpha(E)<\Per_\alpha(E_1)+\Per_\alpha(E_2)$.
\end{prop}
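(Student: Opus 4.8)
The plan is to adapt Case~1 of the proof of Theorem~\ref{thm:epsdecomp} to the fractional kernel $\rho_\alpha(\cdot)=c_{d,\alpha}|\cdot|^{-(d-1+\alpha)}$, with the simplification that the case distinction there becomes vacuous: only Case~1 occurs, since $\dist(E_1,E_2)<\infty$ always and the thresholds ``$\epsilon$'' and ``$2\epsilon$'' are now infinite. Because $|E_1\cap E_2|=0$ we have $D^\alpha\mathds{1}_E=D^\alpha\mathds{1}_{E_1}+D^\alpha\mathds{1}_{E_2}$, hence $\Per_\alpha(E)\leq\Per_\alpha(E_1)+\Per_\alpha(E_2)$ by the triangle inequality, and it remains to exclude equality. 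So suppose $\Per_\alpha(E)=\Per_\alpha(E_1)+\Per_\alpha(E_2)$. Since $E_1,E_2$ are bounded $C^1$-domains, $\mathds{1}_{E_1},\mathds{1}_{E_2}\in\BV(\R^d)$, so by Remark~\ref{rem:fractionalcase}\,(i) (i.e.\ \cite[Theorem~3.18]{ComSte19}) the gradients $D^\alpha\mathds{1}_{E_i}=I_{1-\alpha}*D\mathds{1}_{E_i}$ are genuine $L^1(\R^d;\R^d)$ functions, with $D\mathds{1}_{E_i}=\nu_{E_i}\Hcal^{d-1}\mres\partial E_i$ and $\nu_{E_i}$ the continuous inner unit normal. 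The equality then reads
\[
\int_{\R^d}\bigl|D^\alpha\mathds{1}_{E_1}+D^\alpha\mathds{1}_{E_2}\bigr|\dd x=\int_{\R^d}\bigl|D^\alpha\mathds{1}_{E_1}\bigr|\dd x+\int_{\R^d}\bigl|D^\alpha\mathds{1}_{E_2}\bigr|\dd x,
\]
which forces $D^\alpha\mathds{1}_{E_1}(x)$ and $D^\alpha\mathds{1}_{E_2}(x)$ to be nonnegative multiples of a common unit vector for a.e.\ $x\in\R^d$.

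Next I would invoke the fractional counterpart of Lemma~\ref{le:unitnormal}: for $x\to x_0\in\partial E_i$ through $\R^d\setminus\partial E_i$ one has $D^\alpha\mathds{1}_{E_i}(x)/|D^\alpha\mathds{1}_{E_i}(x)|\to\nu_{E_i}(x_0)$. The proof is the same as for Lemma~\ref{le:unitnormal}, with $Q_{\rho_\alpha}=I_{1-\alpha}\propto|\cdot|^{-(d-1+\alpha)}$ playing the role of $Q_\rho$: this lower bound near the origin makes the leading term $a_n=\int_{\partial E_i}I_{1-\alpha}(x_n-y)\dd\Hcal^{d-1}(y)$ diverge, while the remainder over $\partial E_i\setminus B_{1/k}(x_0)$ stays bounded uniformly in $n$ because $\partial E_i$ is compact and $I_{1-\alpha}$ is radially decreasing (alternatively, one may cite the estimates of \cite{ComSte19} recalled in Remark~\ref{rem:fractionalcase}\,(iii)). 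Combined with the alignment from the first paragraph, $E_1$ and $E_2$ cannot have a common boundary point: two disjoint $C^1$-domains touching at $x_0$ lie on opposite sides of the common tangent hyperplane, so $\nu_{E_1}(x_0)=-\nu_{E_2}(x_0)$ there, contradicting that $D^\alpha\mathds{1}_{E_1}$ and $D^\alpha\mathds{1}_{E_2}$ point the same way near $x_0$. Hence $\overline{E_1}\cap\overline{E_2}=\emptyset$ and $\dist(E_1,E_2)>0$.

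Finally, with $\dist(E_1,E_2)>0$ the function $D^\alpha\mathds{1}_{E_2}(x)=c_{d,\alpha}\int_{E_2}\frac{y-x}{|y-x|^{d+1+\alpha}}\dd y$ is smooth on a neighbourhood of $\overline{E_1}$, and by the alignment together with $D^\alpha\mathds{1}_{E_1}/|D^\alpha\mathds{1}_{E_1}|\to\nu_{E_1}$ at $\partial E_1$, one obtains $D^\alpha\mathds{1}_{E_2}=g\,\nu_{E_1}$ on $\partial E_1$ for some $\Hcal^{d-1}$-measurable $g\geq0$ (pass to the limit $x\to\partial E_1$ in the parallelism relation, using continuity of $D^\alpha\mathds{1}_{E_2}$ there). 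The divergence theorem on the $C^1$-domain $E_1$ then gives
\[
\int_{E_1}\div\bigl(D^\alpha\mathds{1}_{E_2}\bigr)\dd x=\int_{\partial E_1}D^\alpha\mathds{1}_{E_2}\cdot(-\nu_{E_1})\dd\Hcal^{d-1}=-\int_{\partial E_1}g\dd\Hcal^{d-1}\leq0.
\]
On the other hand, carrying out the computation of Lemma~\ref{le:divergencenonlocalgrad} with the kernel $\rho_\alpha$ (equivalently, applying it to $\mathds{1}_{E_2^c}$) yields, for every $x\in E_1\subset\R^d\setminus\overline{E_2}$,
\[
\div D^\alpha\mathds{1}_{E_2}(x)=c_{d,\alpha}(1+\alpha)\int_{E_2}\frac{1}{|x-y|^{d+1+\alpha}}\dd y>0,
\]
so $\int_{E_1}\div(D^\alpha\mathds{1}_{E_2})\dd x>0$ since $|E_1|>0$, a contradiction. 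The main obstacle is the verification of the fractional analogues of Lemmas~\ref{le:fromnonlocaltolocal}, \ref{le:unitnormal} and \ref{le:divergencenonlocalgrad}, since $\rho_\alpha$ is not compactly supported and the Fourier/convolution machinery of Section~\ref{sec:distrBV} does not apply verbatim; however, all the needed estimates on $Q_{\rho_\alpha}=I_{1-\alpha}$ hold globally and the decisive integrals localize near $\partial E_i$, so the arguments carry over (cf.\ \cite{ComSte19,ComSte23} and Remark~\ref{rem:fractionalcase}).
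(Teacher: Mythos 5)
Your proposal is correct and follows essentially the route the paper intends: its proof is omitted as ``completely analogous'' to Theorem~\ref{thm:epsdecomp}, and your argument is exactly the Case~1 mechanism (a.e.\ alignment of $D^\alpha\mathds{1}_{E_1}$ and $D^\alpha\mathds{1}_{E_2}$, the boundary-normal limit as in Lemma~\ref{le:unitnormal}, and the divergence-theorem contradiction via the sign of $\div D^\alpha\mathds{1}_{E_2}$), with the fractional substitutes $Q_{\rho_\alpha}=I_{1-\alpha}$ and $D^\alpha\mathds{1}_{E_i}=I_{1-\alpha}*D\mathds{1}_{E_i}\in L^1$ supplied by Remark~\ref{rem:fractionalcase}. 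The only simplification—that Case~2 disappears since the horizon is infinite—is correctly identified, and your explicit computation $\div D^\alpha\mathds{1}_{E_2}(x)=c_{d,\alpha}(1+\alpha)\int_{E_2}|x-y|^{-(d+1+\alpha)}\dd y>0$ is the right fractional analogue of Lemma~\ref{le:divergencenonlocalgrad}.
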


\subsection{Extreme points in \texorpdfstring{$\BV^\rho(\R^d)$}{BVrho} and \texorpdfstring{$\BV^\alpha(\R^d)$}{BValpha}.}\label{sec:distrExt}

In this section we characterize the extreme points of the unit balls of $\BV^\rho(\R^d)$ and $\BV^\alpha(\R^d)$. It turns out that they are not related to the notion of $\Per_\rho$-decomposability, but instead related to the extreme points of $\BV(\R^d)$.

With the tools from Section~\ref{sec:distrBV}, we can immediately provide a characterization of the extreme points in $\BV^\rho(\R^d)$ with respect to the nonlocal total variation seminorm. We assume that $\rho$ has compact support and satisfies \ref{itm:h1}-\ref{itm:upper}. Moreover, we introduce the notation
\[
\Bcal_{\TV_\rho}:=\{u \in \BV^\rho(\R^d)\,|\, \TV_\rho(u) \leqslant1\} \quad \text{and} \quad \Bcal_{\TV}:=\{v \in \BV(\R^d)\,|\, \TV(v)\leqslant1\}.
\]
The extreme points of these balls can be related to each other using the following trivial observation.
\begin{lemma}
    Let $X_1,X_2$ be two vector spaces and $\Phi:X_1 \to X_2$ a linear bijection. Then, for any $\Acal \subset X$ it holds that
    \[
    \Phi(\Ext(\Acal)) = \Ext(\Phi(\Acal)).
    \]
\end{lemma}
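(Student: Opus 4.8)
The plan is to establish the two inclusions $\Phi(\Ext(\Acal)) \subseteq \Ext(\Phi(\Acal))$ and $\Ext(\Phi(\Acal)) \subseteq \Phi(\Ext(\Acal))$ directly from the definition of an extreme point, using only that a linear bijection preserves convex combinations and reflects equality of points. Recall that here an extreme point of a set $\Acal$ is an element $x \in \Acal$ which cannot be written as $\lambda y + (1-\lambda) z$ with $y,z \in \Acal$, $y \neq z$ and $\lambda \in (0,1)$; in particular no convexity of $\Acal$ is needed.

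For the first inclusion I would take $x \in \Ext(\Acal)$, so that $\Phi(x) \in \Phi(\Acal)$, and suppose $\Phi(x) = \lambda a + (1-\lambda) b$ for some $a,b \in \Phi(\Acal)$ and $\lambda \in (0,1)$. Writing $a = \Phi(y)$ and $b = \Phi(z)$ with $y,z \in \Acal$, linearity of $\Phi$ gives $\Phi(x) = \Phi\big(\lambda y + (1-\lambda) z\big)$, so injectivity of $\Phi$ yields $x = \lambda y + (1-\lambda) z$. Extremality of $x$ then forces $y = z = x$, hence $a = b = \Phi(x)$, which is exactly the statement that $\Phi(x)$ is an extreme point of $\Phi(\Acal)$.

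For the reverse inclusion I would apply the inclusion just proved to the linear bijection $\Phi^{-1} \colon X_2 \to X_1$ and the set $\Phi(\Acal) \subseteq X_2$, obtaining $\Phi^{-1}\big(\Ext(\Phi(\Acal))\big) \subseteq \Ext\big(\Phi^{-1}(\Phi(\Acal))\big) = \Ext(\Acal)$, and then apply $\Phi$ to both sides. Combining the two inclusions gives the claimed equality. There is no genuine obstacle in this argument; the only point deserving (minimal) care is to fix the convention that extreme points are taken in the usual sense above, so that the reasoning applies to arbitrary $\Acal \subseteq X_1$ and not merely to convex sets.
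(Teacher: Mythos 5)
Your proof is correct. The paper states this as a ``trivial observation'' and omits the proof entirely, and your argument (direct verification of the forward inclusion via injectivity, then applying it to $\Phi^{-1}$ for the reverse) is exactly the standard one the authors had in mind.
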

Given the fact that $\Pcal_\rho:\BV(\R^d) \to \BV^\rho(\R^d)$ is an isomorphism with $\Pcal_\rho(\Bcal_{\TV})=\Bcal_{\TV_\rho}$, see Lemma~\ref{le:fromnonlocaltolocal} and \ref{le:fromlocaltononlocal}, we obtain the following together with the characterization 
\[
\Ext(\Bcal_{\TV})=\left\{\pm\frac{\mathds{1}_E}{\Per(E)}\,\middle|\, E \in \Scal\right\} \quad \text{with} \ \Scal:=\left\{E \subset \R^d\,\middle|\, E \ \text{simple},  \ |E| \in (0,\infty)\right\},
\]
see~\cite[Proposition~8]{AmbCasMasMor01} and \cite{Fle57}, \cite{fleming60}. Here, simple means that $E$ has finite perimeter and $E$ is indecomposable and saturated. 
\begin{thm}\label{thm:extepsdistributional}
    It holds that
    \[
    \Ext(\Bcal_{\TV_\rho}) = \Pcal_\rho(\Ext(\Bcal_{\TV})),
    \]
    or, more explicitly,
    \begin{align*}
    \Ext(\Bcal_{\TV_\rho})&=\left\{\Pcal_\rho \left(\pm\frac{\mathds{1}_{E}}{\Per(E)}\right) \,\middle|\, E \in \Scal \right\}\\
    &=\left\{u \in \Bcal_{\TV_\rho} \,\middle|\, D_\rho u = \pm\frac{\nu_E}{\Per(E)}\,\Hcal^{d-1}\mres{\partial^*E}, \  E \in \Scal \right\}
    \end{align*}
    with $\partial^*E$ the reduced boundary of $E$, and $\nu_E$ the generalized inner unit normal.
\end{thm}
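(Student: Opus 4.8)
The plan is to derive the theorem from three ingredients that are already available: the abstract lemma stated just above on linear bijections and extreme points, the isomorphism properties of $\Pcal_\rho$ established in Lemmas~\ref{le:fromnonlocaltolocal} and \ref{le:fromlocaltononlocal}, and the classical description of $\Ext(\Bcal_{\TV})$ from \cite{AmbCasMasMor01,Fle57,fleming60}. In short, $\Pcal_\rho$ transports the known characterization of the extreme points of the $\BV$-ball to the $\BV^\rho$-ball, and then De Giorgi's structure theorem makes the transported description explicit.

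First I would check that $\Pcal_\rho$ restricts to a bijection of $\Bcal_{\TV}$ onto $\Bcal_{\TV_\rho}$. Under the standing hypotheses on $\rho$, Lemma~\ref{le:fromlocaltononlocal} states that $\Pcal_\rho:\BV(\R^d)\to\BV^\rho(\R^d)$ is a linear bijection (with inverse $\Qcal_\rho$, by Lemma~\ref{le:fromnonlocaltolocal}) satisfying $D_\rho(\Pcal_\rho v)=Dv$ for all $v\in\BV(\R^d)$. Taking total variations of both sides gives $\TV_\rho(\Pcal_\rho v)=|D_\rho(\Pcal_\rho v)|(\R^d)=|Dv|(\R^d)=\TV(v)$, hence $v\in\Bcal_{\TV}$ if and only if $\Pcal_\rho v\in\Bcal_{\TV_\rho}$; together with bijectivity this yields $\Pcal_\rho(\Bcal_{\TV})=\Bcal_{\TV_\rho}$. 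The first displayed identity $\Ext(\Bcal_{\TV_\rho})=\Pcal_\rho(\Ext(\Bcal_{\TV}))$ is then immediate from the abstract lemma applied with $\Phi=\Pcal_\rho$ and $\Acal=\Bcal_{\TV}$, and substituting $\Ext(\Bcal_{\TV})=\{\pm\1_E/\Per(E)\,:\,E\in\Scal\}$ produces the first explicit formula.

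To obtain the second (more explicit) description, I would compute the nonlocal variation measures of these extreme points. Using $D_\rho(\Pcal_\rho v)=Dv$ once more together with De Giorgi's structure theorem $D\1_E=\nu_E\,\Hcal^{d-1}\mres\partial^*E$ (cf.~\cite[Thm.~15.9]{Mag12}), one has $D_\rho\Pcal_\rho(\pm\1_E/\Per(E))=\pm\tfrac{1}{\Per(E)}\nu_E\,\Hcal^{d-1}\mres\partial^*E$ for every $E\in\Scal$, which gives the inclusion ``$\subseteq$''. For the reverse inclusion, take $u\in\Bcal_{\TV_\rho}$ with $D_\rho u=\pm\tfrac{\nu_E}{\Per(E)}\Hcal^{d-1}\mres\partial^*E$ for some $E\in\Scal$. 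By Lemma~\ref{le:fromnonlocaltolocal}, $\Qcal_\rho u\in\BV(\R^d)\subset L^1(\R^d)$ and $D(\Qcal_\rho u)=D_\rho u=D(\pm\1_E/\Per(E))$; since two $L^1(\R^d)$ functions with the same distributional gradient differ by a constant and no nonzero constant belongs to $L^1(\R^d)$, I conclude $\Qcal_\rho u=\pm\1_E/\Per(E)$, and hence $u=\Pcal_\rho(\pm\1_E/\Per(E))$ because $\Pcal_\rho=\Qcal_\rho^{-1}$. This closes the chain of equalities.

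The argument is essentially bookkeeping, and I do not expect a genuine obstacle. The one step that needs care is the last inclusion, where one must exclude a spurious additive constant when passing back through $\Pcal_\rho$: this relies precisely on the integrability of $\Qcal_\rho u$, i.e.\ on the boundedness $\Qcal_\rho:\BV^\rho(\R^d)\to\BV(\R^d)$ from Lemma~\ref{le:fromnonlocaltolocal} and on the $L^1$-membership built into the definition of $\BV^\rho(\R^d)$. One should also note that the identity $\TV_\rho(\Pcal_\rho v)=\TV(v)$ used in the first step is exactly what makes the two balls correspond; without it the transport of extreme points would fail. Apart from these points, the proof is a direct assembly of the cited results.
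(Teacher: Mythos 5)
Your proposal is correct and follows exactly the route the paper takes: apply the unnumbered lemma on linear bijections and extreme points to $\Pcal_\rho:\BV(\R^d)\to\BV^\rho(\R^d)$, using Lemmas~\ref{le:fromnonlocaltolocal} and \ref{le:fromlocaltononlocal} to verify that $\Pcal_\rho$ carries $\Bcal_{\TV}$ onto $\Bcal_{\TV_\rho}$, and then substitute the classical description of $\Ext(\Bcal_{\TV})$. The paper leaves the verification to the reader, so your explicit check of $\Pcal_\rho(\Bcal_{\TV})=\Bcal_{\TV_\rho}$, and your derivation of the second description via $D_\rho(\Pcal_\rho v)=Dv$ and the uniqueness of $L^1$-antiderivatives, fill in precisely the details the paper omits.
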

\begin{rem}\label{rem:extrho}
In the case that the reduced boundary of $E$ is a bounded set, we find by Proposition~\ref{prop:nftoc} that
\[
\Pcal_\rho \left(\pm\frac{\mathds{1}_{E}}{\Per(E)}\right)(x) = \pm\frac{1}{\Per(E)} \int_{\partial^*E} V_\rho(x-y) \cdot \nu_{E}(y)\dd \Hcal^{d-1}(y) \quad \text{for a.e.~$x \in \R^d$,}
\]
which gives a representation of the extreme point in terms of a lower dimensional integral.
\end{rem}

In the fractional case, the operator $(-\Delta)^{\frac{1-\alpha}{2}}:\BV(\R^d) \to \BV^{\alpha}(\R^d)$ is not an isomorphism, cf.~Remark~\ref{rem:fractionalcase}\,(i). However, we can still prove the same result using truncation arguments. 
\begin{thm}\label{thm:extalphadistributional}
    It holds that
    \begin{align*}
    \Ext(\Bcal_{\TV_\alpha})&= \left\{(-\Delta)^{\frac{1-\alpha}{2}} \left(\pm\frac{\mathds{1}_{E}}{\Per(E)}\right) \,\middle|\, E \in \Scal \right\}\\
    &=\left\{u \in \Bcal_{\TV_\alpha} \,\middle|\, D^\alpha u = \pm\frac{\nu_E}{\Per(E)}\,\Hcal^{d-1}\mres{\partial^*E}, \  E \in \Scal \right\}
    \end{align*}
    with $\partial^*E$ the reduced boundary of $E$, and $\nu_E$ the generalized inner unit normal.
\end{thm}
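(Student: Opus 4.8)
The plan is to mimic the proof of Theorem~\ref{thm:exteps}, with the transform $\Pcal_\alpha=(-\Delta)^{\frac{1-\alpha}{2}}$ and the Riesz potential $\Qcal_\alpha u:=I_{1-\alpha}*u$ playing the roles formerly taken by the identity on indicator functions. We will repeatedly invoke the following facts from Section~\ref{sec:distrBV} and its references (\cite{ComSte19, BelMorSch24, cueto2024gamma}): for $w\in\BV(\R^d)$ one has $\Pcal_\alpha w\in\BV^\alpha(\R^d)$ with $D^\alpha(\Pcal_\alpha w)=D(I_{1-\alpha}*\Pcal_\alpha w)=Dw$; for $u\in\BV^\alpha(\R^d)$ one has $\Qcal_\alpha u\in\BV_{\mathrm{loc}}(\R^d)$ with $D(\Qcal_\alpha u)=D^\alpha u$, and by the Hardy--Littlewood--Sobolev inequality $\Qcal_\alpha u\in L^{d/(d-1+\alpha),\infty}(\R^d)$, so that $|\{\,|\Qcal_\alpha u|>t\,\}|<\infty$ for all $t>0$; and the multiplier identities $\Qcal_\alpha\Pcal_\alpha=\Pcal_\alpha\Qcal_\alpha=\mathrm{id}$ on tempered distributions (understood wherever the intermediate symbols produce a tempered distribution, e.g. on $L^1$ and on weak $L^{d/(d-1+\alpha)}$). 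In particular the two descriptions in the statement coincide: if $u\in\Bcal_{\TV_\alpha}$ has $D^\alpha u=\pm\nu_E\Ha^{d-1}\mres\partial^*E/\Per(E)=\pm D(\1_E/\Per(E))$, then $\Qcal_\alpha$ of $u\mp\Pcal_\alpha(\1_E/\Per(E))$ has vanishing distributional gradient and lies in weak $L^{d/(d-1+\alpha)}$, hence is $0$, and applying $\Pcal_\alpha$ gives $u=\pm\Pcal_\alpha(\1_E/\Per(E))$; conversely $\Pcal_\alpha(\pm\1_E/\Per(E))$ has the stated nonlocal gradient and $\TV_\alpha$ equal to $1$.

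\emph{Sufficiency.} Fix $E\in\Scal$ and set $u:=\Pcal_\alpha(\1_E/\Per(E))$ (the sign $-$ is symmetric). If $u=\lambda u_1+(1-\lambda)u_2$ with $u_i\in\Bcal_{\TV_\alpha}$, $\lambda\in(0,1)$, then $\TV_\alpha(u_i)=1$; put $v_i:=\Qcal_\alpha u_i$. Applying $\Qcal_\alpha$ yields $\1_E/\Per(E)=\lambda v_1+(1-\lambda)v_2$, and since $|D(\1_E/\Per(E))|(\R^d)=|Dv_1|(\R^d)=|Dv_2|(\R^d)=1$, a standard polar-decomposition argument forces $Dv_i=\theta_i\,\nu_E\Ha^{d-1}\mres\partial^*E/\Per(E)$ for Borel functions $\theta_i\ge0$. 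Thus $|Dv_i|$ is concentrated on $\partial^*E\subset\partial^eE$; since $E$ is indecomposable, $v_i$ is a.e.\ constant $a_i$ on $E$, and since $E$ is saturated each indecomposable component of $E^c$ has infinite measure, on which $v_i$ is constant, whence $v_i\equiv0$ a.e.\ on $E^c$ by $|\{\,|v_i|>t\,\}|<\infty$ (both reductions by arguments as in \cite{AmbCasMasMor01}). Therefore $v_i=a_i\1_E$ with $|a_i|\Per(E)=|Dv_i|(\R^d)=1$, and $\lambda a_1+(1-\lambda)a_2=1/\Per(E)>0$ forces $a_1=a_2=1/\Per(E)$, i.e.\ $u_1=u_2=u$.

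\emph{Necessity.} Let $u\in\Ext(\Bcal_{\TV_\alpha})$; necessarily $\TV_\alpha(u)=1$, else $u$ is a convex combination of rescalings of itself. Put $v:=\Qcal_\alpha u$, so $Dv=D^\alpha u$, $|Dv|(\R^d)=1$, $|\{\,|v|>t\,\}|<\infty$ for all $t>0$; after replacing $u$ by $-u$ we may assume $|\{v>\eta_0\}|>0$ for some $\eta_0>0$. Fix $\eta\in(0,\eta_0)$ with $|Dv|(\{v=\eta\})=0$, so that $v=(v-\eta)^++(v\wedge\eta)$ with $|Dv|=|D(v-\eta)^+|+|D(v\wedge\eta)|$; here $(v-\eta)^+$ is supported on the finite-measure set $\{v>\eta\}$ and bounded there by $v$, hence lies in $L^1(\R^d)\cap\BV(\R^d)$, while $v\wedge\eta\in\Scal'(\R^d)$. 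One checks both summands of $|Dv|(\R^d)$ are positive, so $\lambda:=|D(v-\eta)^+|(\R^d)\in(0,1)$. Then $u_1:=\Pcal_\alpha((v-\eta)^+)/\lambda$ and $u_2:=\Pcal_\alpha(v\wedge\eta)/(1-\lambda)$ both lie in $\Bcal_{\TV_\alpha}$ (for $u_1$ because $(v-\eta)^+\in\BV(\R^d)$; for $u_2$ because $u_2=(u-\lambda u_1)/(1-\lambda)$ lies in $L^1(\R^d)$ as a difference of $L^1$ functions, while $D^\alpha u_2=D(v\wedge\eta)/(1-\lambda)$ is a finite measure), and $u=\lambda u_1+(1-\lambda)u_2$. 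Extremality gives $u=u_1$, hence $v=\Qcal_\alpha u=(v-\eta)^+/\lambda$, which is possible only if $v=a\1_E$ with $E:=\{v>\eta\}$, $a:=\eta/(1-\lambda)>0$, $|E|\in(0,\infty)$ and $\Per(E)=1/a\in(0,\infty)$; therefore $u=\Pcal_\alpha(\1_E)/\Per(E)$. Finally $E\in\Scal$: if $E=E_1\sqcup E_2$ with $|E_i|>0$ and $\Per(E)=\Per(E_1)+\Per(E_2)$, then $u=\frac{\Per(E_1)}{\Per(E)}\,\Pcal_\alpha(\1_{E_1})/\Per(E_1)+\frac{\Per(E_2)}{\Per(E)}\,\Pcal_\alpha(\1_{E_2})/\Per(E_2)$ contradicts extremality; and if $E^c$ has an indecomposable component $F_1$ of finite measure, write $E^c=F_1\sqcup F_2$ with $\Per(E)=\Per(F_1)+\Per(F_2)$, note $F_2^c=E\cup F_1$ has finite measure and $\1_{F_2^c}-\1_{F_1}=\1_E$, and observe that $u=\frac{\Per(F_1)}{\Per(E)}\bigl(-\Pcal_\alpha(\1_{F_1})/\Per(F_1)\bigr)+\frac{\Per(F_2)}{\Per(E)}\bigl(\Pcal_\alpha(\1_{F_2^c})/\Per(F_2)\bigr)$ is again a nontrivial convex combination in $\Bcal_{\TV_\alpha}$, a contradiction.

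The crux, and the only genuine departure from the proof of Theorem~\ref{thm:exteps}, is that $\Pcal_\alpha$ is not onto $\BV^\alpha(\R^d)$, so the value-truncations in the necessity argument must be shown to stay in the relevant class. This is exactly what the integrability gain $\Qcal_\alpha u\in L^{d/(d-1+\alpha),\infty}(\R^d)$ delivers: it makes $\{v>\eta\}$ have finite measure (so $(v-\eta)^+\in L^1$, hence in $\BV$) and it eliminates nonzero constants on infinite-measure components. One must also bear in mind that $\Qcal_\alpha u$ need not be integrable, so all identities relating $\Qcal_\alpha$ and $\Pcal_\alpha$ are to be read in $\Scal'(\R^d)$; everything else is a routine translation of the Gagliardo proof.
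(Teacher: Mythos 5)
Your proposal is correct and reaches the same conclusion through the same overall skeleton as the paper (pass to $v=I_{1-\alpha}*u$, exploit $Dv=D^\alpha u$ and the weak-$L^{d/(d-1+\alpha)}$ gain, truncate in the necessity part, and define $u_2$ as $(u-\lambda u_1)/(1-\lambda)$ rather than by applying $(-\Delta)^{\frac{1-\alpha}{2}}$ to the non-$\BV$ piece), but it diverges at the two key reductions. In the sufficiency part the paper does not argue with polar decompositions at all: it observes via the Sobolev embedding that $v,v_1,v_2$ lie in the larger ball $\{w\in L^{1^*}(\R^d)\,:\,|Dw|(\R^d)\leq 1\}$ and quotes \cite[Proposition~4.4]{BonGus22}, which says that $\pm\1_E/\Per(E)$ with $E$ simple is extreme in that ball; you instead re-derive this fact by the no-cancellation/polar argument forcing $Dv_i=\theta_i\nu_E\,\Hcal^{d-1}\mres\partial^*E/\Per(E)$, constancy on indecomposable sets, and the weak-$L^p$ decay to kill constants on the infinite-measure components of $E^c$. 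In the necessity part the paper shows $v\in\Ext(\Bcal_{\TV})$ and then invokes the known characterization of $\Ext(\Bcal_{\TV})$ to get $E$ simple, whereas you prove simplicity directly by exhibiting convex decompositions (including the correct complement trick $\1_E=\1_{F_2^c}-\1_{F_1}$ for the saturation half), exactly as in Theorem~\ref{thm:exteps}. Your route is more self-contained and makes the analogy with the Gagliardo case transparent; the paper's route is shorter because the delicate ``extremality in a class larger than $\BV(\R^d)$'' issue is outsourced to \cite{BonGus22}.

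The one point you should make explicit is precisely that issue: your $v_1,v_2$ are only in $\BV_{\rm loc}(\R^d)\cap L^{d/(d-1+\alpha),\infty}(\R^d)$ with finite total variation, not in $\BV(\R^d)$, while the constancy-on-indecomposable-sets statement you import from \cite{AmbCasMasMor01} is formulated for $\BV(\R^d)$ functions. The extension is true (the argument is local and only uses the coarea formula and the decomposition theorem for the level sets, or alternatively one can first place $v_i$ in $L^{1^*}(\R^d)$ by the Sobolev inequality, as the paper does), but as written it is a citation stretched beyond its literal hypotheses and deserves a line of justification; the same remark applies to the tacit identities $\Pcal_\alpha\Qcal_\alpha=\id$ on $L^1$ used to return from $v$ to $u$, which the paper also uses implicitly and which follow from \cite{ComSte19}.
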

\begin{proof}
\textit{Sufficiency.} Let $E \in \Scal$, $v:=\mathds{1}_{E}/\Per(E) \in \BV(\R^d)$ and 
    \[
    u:=(-\Delta)^{\frac{1-\alpha}{2}}v= (-\Delta)^{\frac{1-\alpha}{2}} \left(\pm\frac{\mathds{1}_{E}}{\Per(E)}\right) \in \BV^{\alpha}(\R^d).
    \]
    Additionally, let $\lambda \in (0,1)$ and $u_1,u_2 \in \Bcal_{\TV_\alpha}$ with $u=\lambda u_1+(1-\lambda)u_2$. If we set $v_1:=I_{1-\alpha}*u_1 \in \BV_{\rm loc}(\R^d)$ and $v_2:=I_{1-\alpha}*u_2 \in \BV_{\rm loc}(\R^d)$, we find
    \[
    v=I_{1-\alpha}*u=\lambda v_1+(1-\lambda)v_2, \quad Dv_1 = D^\alpha u_1 \quad \text{and} \quad Dv_2 = D^\alpha u_2,
    \]
    see~Remark~\ref{rem:fractionalcase}\,(i). Additionally, by the Hardy-Littlewood-Sobolev inequality (cf.~\cite[Theorem~V.1]{Ste70}), we deduce that $v_1,v_2 \in L^{p,\infty}(\R^d)$ with $p=d/(d-1+\alpha)$. Hence, we find by the Sobolev embedding in $\BV(\R^d)$ \cite[Theorem~3.47]{AmbFusPal00}, that
    \[
    v,v_1,v_2 \in \Bcal_{\rm FV}:=\{w \in L^{1^*}(\R^d) \,|\, |Dw|(\R^d)\leq 1\}.
    \]
    with $1^*=d/(d-1)$ for $d>1$ and $1^*=\infty$ for $d=1$. Since $v$ is also an extreme point of $\Bcal_{\rm FV}$ by \cite[Proposition~4.4]{BonGus22}, we infer that $v=v_1=v_2$, and hence,
    \[
    u_1 = (-\Delta)^{\frac{1-\alpha}{2}}v_1 = (-\Delta)^{\frac{1-\alpha}{2}}v_2 = u_2.
    \]
    This proves that $u \in \Ext(\Bcal_{\TV_\alpha})$. \smallskip

    \textit{Necessity.} Let $u \in \Ext(\Bcal_{\TV_\alpha})$ and define $v:=I_{1-\alpha}*u \in \BV_{\rm loc}(\R^d)$, which satisfies $Dv=D^\alpha u$. We suppose without loss of generality that $|\{v>0\}| >0$, since otherwise we can consider $-v$. For $t>0$ small with $|\{v > t\}| >0$, we consider the (non-constant) function $v_t:=\max\{v-t,0\}$. Given the fact that $v \in L^{p,\infty}(\R^d)$ with $p=d/(d-1+\alpha)$, due to the Hardy-Littlewood-Sobolev inequality (cf.~\cite[Theorem~V.1]{Ste70}), we find that
    \begin{align*}
        \|v_t\|_{L^1(\R^d)} &\leqslant\|\mathds{1}_{\{v > t\}}v\|_{L^1(\R^d)} = \|\mathds{1}_{\{v > t\}}v\|_{L^{1,1}(\R^d)}\\
        & \leqslant C \|\mathds{1}_{\{v > t\}}\|_{L^{p',1}(\R^d)}\|v\|_{L^{p,\infty}(\R^d)}=Cp'|\{v>t\}|^{1/p'}\|v\|_{L^{p,\infty}(\R^d)} < \infty,
    \end{align*}
    where we have used H\"{o}lder's inequality on the scale of Lorentz spaces. We conclude that $v_t \in L^1(\R^d)$. Moreover, by \cite[Lemma~3.2]{BonGus22}, we find that $v_t \in \BV(\R^d)$ with
    \begin{equation}\label{eq:splitting}
    1=|D^\alpha u|(\R^d) = |Dv|(\R^d) = |Dv_t|(\R^d) + |D(v-v_t)|(\R^d).
    \end{equation}
    We now define with $\lambda :=|Dv_t|(\R^d) \in (0,1)$, the functions
    \[
    u_1 = \frac{1}{\lambda}(-\Delta)^{\frac{1-\alpha}{2}}v_t \quad \text{and} \quad u_2:=\frac{1}{1-\lambda}(u-\lambda u_1),
    \]
    which both lie in $\BV^\alpha(\R^d)$. It holds that $u=\lambda u_1+(1-\lambda)u_2$ and $u_1,u_2 \in \Bcal_{\TV_\alpha}$ by \eqref{eq:splitting}. From $u \in \Ext(\Bcal_{\TV_\alpha})$, we conclude that $u=u_1$, or equivalently, $v=v_t/\lambda$. This is only possible if $v$ is constant on $\{v >t\}$ and zero outside of this set, that is,
    \[
    v = \sigma \mathds{1}_{\{v>t\}} \ \text{for some $\sigma >0$.}
    \]
    This shows that $v \in \BV(\R^d)$, since the set $\{v >t\}$ has finite measure from the observation that $v \in L^{p,\infty}(\R^d)$. It now follows that $v\in \Ext(\Bcal_{\TV})$. Indeed, if $v=\lambda v_1 + (1-\lambda)v_2$ with $v_1,v_2 \in \Bcal_{\TV}$ satisfying $v_1\not =v_2$, then we find that
    \[
    u = \lambda (-\Delta)^{\frac{1-\alpha}{2}}v_1 + (1-\lambda)(-\Delta)^{\frac{1-\alpha}{2}}v_2,
    \]
    which contradicts $u \in \Ext(\Bcal_{\TV_\alpha})$. Therefore, it holds that $v = \pm \mathds{1}_E/\Per(E)$ for some $E \in \Scal$, from which we conclude that 
    \[
    u = (-\Delta)^{\frac{1-\alpha}{2}}v=(-\Delta)^{\frac{1-\alpha}{2}}\left(\pm\frac{\mathds{1}_{E}}{\Per(E)}\right)
    \]
    as desired.\qedhere    
\end{proof}
\begin{rem}
\leavevmode
\begin{itemize}\label{rem:ext1d}
    \item[(i)] If the reduced boundary of $E$ is bounded, we find by Remark~\ref{rem:fractionalcase}\,(ii) that
    \[
    (-\Delta)^{\frac{1-\alpha}{2}} \left(\pm\frac{\mathds{1}_{E}}{\Per(E)}\right)(x)=\pm\frac{c_{d,-\alpha}}{\Per(E)} \int_{\partial^*E} \frac{x -y}{|x -y|^{d+1-\alpha}} \cdot \nu_{E}(y)\dd \Hcal^{d-1}(y) \quad \text{for a.e.~$x \in \R^d$.}
    \]

\item[(ii)] In the case where $d=1$, the collection of simple sets is given by intervals $(a,b)$ with $a<b$. Hence, Theorem~\ref{thm:extalphadistributional} and part (i) of this remark yields that
\[
\Ext(\Bcal_{\TV_\alpha})=\left\{ x \mapsto \pm \frac{c_{1,-\alpha}}{2} \left(\frac{x-a}{|x - a|^{2-\alpha}}\ - \frac{x-b}{|x-b|^{2-\alpha}}\right) \,\middle|\, a<b\right\}.
\]
This shows that the extreme points in $\BV^\alpha$ are not indicator functions, nor do they need to have compact support, cf.~Figure~\ref{fig:uab} for an illustration. \qedhere
\end{itemize}
\end{rem}
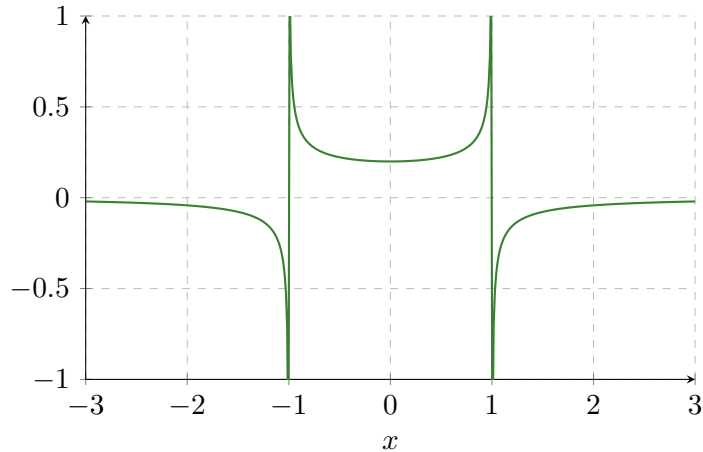
\begin{figure}[ht]
    \centering
\begin{tikzpicture}
\begin{axis}[
    height=0.4\textwidth,
    width=0.6\textwidth,
    axis lines = left,    
    xlabel = \(x\),
    grid = major,
    ymin=-1,
    ymax=1,
]
\addplot [
    domain=-3:3, 
    samples=500, 
    color=OliveGreen,
    thick,
    point meta={0.0997*abs((x+1)/abs(x+1)^(3/2) - (x-1)/abs(x-1)^(3/2))< 3 ? nan : y},
    ] 
{0.0997*((x+1)/abs(x+1)^(3/2) - (x-1)/abs(x-1)^(3/2))};
\end{axis}
\end{tikzpicture}
\caption{Plot of the one-dimensional extreme point of Remark~\ref{rem:ext1d}\,(ii) with $\alpha=1/2$, $a=-1$ and $b=1$.}\label{fig:uab}
\end{figure}

Using the operators from Remark~\ref{rem:fractionalcase}\,(i), we can also deduce a type of coarea formula, albeit by using a special decomposition. Compare also with the fractional coarea inequality in \cite[Theorem~3.11 and Corollary~5.6]{ComSte19}.
\begin{prop}[Non-standard coarea formula]
    Let $u \in \BV^\alpha(\R^d)$, then it holds that
    \[
    D^\alpha u = \int_{\R} D^\alpha u_t \dd t \quad \text{and} \quad |D^\alpha u| = \int_{\R} |D^\alpha u_t|\dd t=\int_{\R}\Per(E_t)\dd{t},
    \]
    where $u_t:=(-\Delta)^{\frac{1-\alpha}{2}} (\mathds{1}_{E_t}) \in \BV^\alpha(\R^d)$ for a.e.~$t \in \R$ with $E_t:=\{I_{1-\alpha}*u>t\}$. In particular, $D^\alpha u_t = \nu_{E_t}\Hcal^{d-1}\mres{\partial^* E_t}$ for a.e.~$t \in \R$.
\end{prop}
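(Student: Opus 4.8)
The plan is to transport the classical coarea formula for $\BV$ functions through the Riesz-potential/fractional-Laplacian correspondence established in Section~\ref{sec:distrBV}. First I set $v := I_{1-\alpha} * u$. By Remark~\ref{rem:fractionalcase}\,(i) (and exactly as in the proof of Theorem~\ref{thm:extalphadistributional}), $v \in \BV_{\rm loc}(\R^d)$ with $Dv = D^\alpha u$ as $\R^d$-valued measures, so in particular $|Dv| = |D^\alpha u|$ and $|Dv|(\R^d) = |D^\alpha u|(\R^d) < \infty$. Since $u \in L^1(\R^d)$, the Hardy--Littlewood--Sobolev inequality (already used in the proof of Theorem~\ref{thm:extalphadistributional}) gives $v \in L^{p,\infty}(\R^d)$ with $p = d/(d-1+\alpha)$; hence $|E_t| = |\{v > t\}| < \infty$ for every $t > 0$, while $|E_t^c| = |\{v \leqslant t\}| < \infty$ for every $t < 0$.

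Next I apply the classical coarea formula for $\BV$ functions to $v$, localized to balls $B_R(0)$ and then letting $R \to \infty$ using $|Dv|(\R^d) < \infty$ and monotone convergence (cf.~\cite[Theorem~3.40]{AmbFusPal00}). This yields: $\Per(E_t) < \infty$ for a.e.~$t$; the vector-valued identity $Dv = \int_\R D\mathds{1}_{E_t}\,\dd t$; and the measure identity $|Dv| = \int_\R |D\mathds{1}_{E_t}|\,\dd t$, whose total mass is $\int_\R \Per(E_t)\,\dd t$. By De Giorgi's structure theorem, $D\mathds{1}_{E_t} = \nu_{E_t}\,\Hcal^{d-1}\mres{\partial^* E_t}$ for a.e.~$t$.

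It remains to identify $D^\alpha u_t$ with $D\mathds{1}_{E_t}$. For a.e.~$t$, the set $E_t$ (when $t > 0$) or its complement $E_t^c$ (when $t < 0$) has finite measure and finite perimeter, so $\mathds{1}_{E_t} \in \BV(\R^d)$, respectively $\mathds{1}_{E_t} - 1 = -\mathds{1}_{E_t^c} \in \BV(\R^d)$; since constants are annihilated by $(-\Delta)^{\frac{1-\alpha}{2}}$, the function $u_t := (-\Delta)^{\frac{1-\alpha}{2}}\mathds{1}_{E_t}$ is in either case well defined in $\BV^\alpha(\R^d)$ through the extension of $(-\Delta)^{\frac{1-\alpha}{2}}$ to $\BV(\R^d)$ from Theorem~\ref{thm:extalphadistributional}. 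Convolving with $I_{1-\alpha}$ and using that $I_{1-\alpha}$ and $(-\Delta)^{\frac{1-\alpha}{2}}$ are mutually inverse together with $Dw = D^\alpha\big((-\Delta)^{\frac{1-\alpha}{2}}w\big)$ for $w \in \BV(\R^d)$ --- both facts as exploited in the proof of Theorem~\ref{thm:extalphadistributional} --- one obtains $D^\alpha u_t = D\mathds{1}_{E_t}$ for a.e.~$t$. Substituting this into the coarea identities of the previous paragraph and replacing $Dv,|Dv|$ by $D^\alpha u, |D^\alpha u|$ gives $D^\alpha u = \int_\R D^\alpha u_t\,\dd t$ and $|D^\alpha u| = \int_\R |D^\alpha u_t|\,\dd t = \int_\R \Per(E_t)\,\dd t$, while the final assertion is immediate from $D^\alpha u_t = D\mathds{1}_{E_t} = \nu_{E_t}\,\Hcal^{d-1}\mres{\partial^* E_t}$.

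I expect the main obstacle to be the bookkeeping forced by integrability: $v$ lies only in $\BV_{\rm loc}(\R^d) \cap L^{p,\infty}(\R^d)$ and in general not in $L^1(\R^d)$, so the coarea formula must be used in its localized form and one must split into $t > 0$ and $t < 0$ (with $E_t$ of infinite measure but finite perimeter in the latter range) in order to make sense of $(-\Delta)^{\frac{1-\alpha}{2}}\mathds{1}_{E_t}$ and of $u_t \in \BV^\alpha(\R^d)$. Once this is set up, everything reduces to the classical coarea formula, De Giorgi's theorem, and the mapping properties of $I_{1-\alpha}$ and $(-\Delta)^{\frac{1-\alpha}{2}}$ recorded in Section~\ref{sec:distrBV}.
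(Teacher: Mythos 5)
Your proof is correct and follows essentially the same route as the paper's: transfer to $v=I_{1-\alpha}*u\in\BV_{\rm loc}(\R^d)$, invoke the classical coarea formula for $v$, split into $t>0$ (where $|E_t|<\infty$) and $t<0$ (where $|E_t^c|<\infty$) to make sense of $u_t$, and identify $D^\alpha u_t=D\mathds{1}_{E_t}$ via $(-\Delta)^{\frac{1-\alpha}{2}}$. The only cosmetic differences are that you spell out the localization-and-limit step in the coarea formula and the invocation of De Giorgi's theorem, which the paper leaves implicit.
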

\begin{proof}
    Define the function $v:=I_{1-\alpha}*u \in \BV_{\rm loc}(\R^d)$, which satisfies $Dv=D^\alpha u$. By the classical coarea formula \cite[Theorem~3.40]{AmbFusPal00}, we find that
    \begin{equation}\label{eq:clascoarea}
    D^\alpha u = Dv= \int_{\R} D \mathds{1}_{\{I_{1-\alpha}*u>t\}} \dd t \quad \text{and} \quad |D^\alpha u| = |Dv|= \int_{\R} |D \mathds{1}_{\{I_{1-\alpha}*u>t\}}| \dd t.
    \end{equation}
    Note also that $\mathds{1}_{\{I_{1-\alpha}*u>t\}} \in \BV(\R^d)$ for a.e.~$t >0$ since $I_{1-\alpha}*u \in L^{p,\infty}(\R^d)$ with $p=d/(d-1+\alpha)$. Hence, we can define $u_t:=(-\Delta)^{\frac{1-\alpha}{2}} (\mathds{1}_{\{I_{1-\alpha}*u>t\}}) \in \BV^\alpha(\R^d)$, which satisfies $D^\alpha u_t =D\mathds{1}_{\{I_{1-\alpha}*u>t\}}$. On the other hand, for a.e.~$t<0$, we note that $\mathds{1}_{\{I_{1-\alpha}*u\leqslant t\}} \in \BV(\R^d)$ and hence, also 
    \[
    u_t:=(-\Delta)^{\frac{1-\alpha}{2}} (\mathds{1}_{\{I_{1-\alpha}*u>t\}})=(-\Delta)^{\frac{1-\alpha}{2}} (\mathds{1}_{\R^d}-\mathds{1}_{\{I_{1-\alpha}*u\leqslant t\}})=-(-\Delta)^{\frac{1-\alpha}{2}}(\mathds{1}_{\{I_{1-\alpha}*u\leqslant t\}})
    \]
    is well-defined with $D^\alpha u_t =D\mathds{1}_{\{I_{1-\alpha}*u>t\}}$. The statement now follows from \eqref{eq:clascoarea}.
\end{proof}

\section{Localization of nonlocal Caccioppoli perimeters}\label{sec:gammadistributional}

Here, we prove that the nonlocal Caccioppoli perimeters $\Gamma$-converge to the classical perimeter as the interaction range vanishes. In fact, we will prove this more generally for the nonlocal $\TV$-functional, and subsequently restrict to indicator functions to obtain the analogous statement for the associated perimeters. The results in this section extend the localization from \cite{MenSpe15, cueto2024gamma} for nonlocal Sobolev spaces to the nonlocal bounded variation spaces.

We assume that $\rho$ satisfies \ref{itm:h1}-\ref{itm:upper} and is normalized to $\int_{\R^d} \rho \dd z = d$ and $\supp \rho = \overline{B_1(0)}$. We introduce the rescaled kernels
\[
\rho_\epsilon(z):=\frac{1}{\epsilon^d}\rho(z/\epsilon)
\]
and naturally consider the functionals $\TV_{\epsilon}:L^1_{\rm loc}(\R^d) \to [0,\infty]$ given by
\[
\TV_{\epsilon} (u):=\sup\left\{ \int_{\R^d} u \div_{\rho_\epsilon} p \dd x \,\middle\vert\, p \in C_c^{\infty}(\R^d;\R^d),\, \|p\|_{L^{\infty}(\R^d;\R^d)}\leq1\right\},
\]
which can be equivalently characterized as
\[
\TV_{\epsilon} (u)=\begin{cases}
    |D_{\rho_\epsilon}u|(\R^d) &\text{if $D_{\rho_\epsilon} u \in \Mcal(\R^d;\R^d)$,}\\
    \infty & \text{else}.
    \end{cases}
\]
We have the following localization result in terms of $\Gamma$-convergence, cf.~\cite{braides2002gamma, DM92} for a general introduction to $\Gamma$-convergence.
\begin{thm}\label{thm:localizationgradient}
    It holds that $\TV_{\epsilon}$ $\Gamma$-converges with respect to the $L^1_{\rm loc}(\R^d)$-topology to the functional $\TV$ as $\epsilon \to 0$.
\end{thm}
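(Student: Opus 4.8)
The plan is to reduce the $\Gamma$-convergence to the single fact that the rescaled potentials $Q_{\rho_\epsilon}$ form an approximate identity, after which both half-inequalities become soft. Writing $\rho(\cdot)=\overline{\rho}(|\cdot|)$, a change of variables in the definition of $Q_\rho$ gives $Q_{\rho_\epsilon}(z)=\epsilon^{-d}Q_\rho(z/\epsilon)$, so the $Q_{\rho_\epsilon}$ are rescalings of the fixed kernel $Q_\rho$. Since $\rho$ has compact support, $Q_\rho\in L^1(\R^d)$ with $Q_\rho\geqslant0$, and by Tonelli's theorem together with the normalization $\int_{\R^d}\rho\,\dd z=d$ one computes $\int_{\R^d}Q_\rho\,\dd z=\tfrac1d\int_{\R^d}\rho\,\dd z=1$; hence $(Q_{\rho_\epsilon})_\epsilon$ is a standard approximate identity. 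I would also record the two convolution identities $\div_{\rho_\epsilon}p=Q_{\rho_\epsilon}*\div p$ for $p\in C^\infty_c(\R^d;\R^d)$ and $D_{\rho_\epsilon}u=Q_{\rho_\epsilon}*Du$ whenever $Du\in\Mcal(\R^d;\R^d)$: the first follows from \eqref{eq:potential} and the duality \eqref{eq:intbyparts}, while the second is obtained first in the sense of distributions (i.e.\ $D_{\rho_\epsilon}u=D(Q_{\rho_\epsilon}*u)$, via Fubini's theorem using that $Q_{\rho_\epsilon}$ is even and compactly supported and $u\in L^1_{\mathrm{loc}}(\R^d)$) and then as an $L^1$-identity by commuting the distributional gradient with convolution against the $L^1$-kernel $Q_{\rho_\epsilon}$, exactly as in Lemma~\ref{le:fromnonlocaltolocal}.

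For the $\liminf$-inequality, take $u_\epsilon\to u$ in $L^1_{\mathrm{loc}}(\R^d)$ and pass to a subsequence realizing $\ell:=\liminf_{\epsilon\to0}\TV_\epsilon(u_\epsilon)$, assuming $\ell<\infty$ (otherwise there is nothing to prove). Fix $p\in C^\infty_c(\R^d;\R^d)$ with $\|p\|_{L^\infty}\leqslant1$. When $\TV_\epsilon(u_\epsilon)<\infty$ the measure $D_{\rho_\epsilon}u_\epsilon$ exists and
\[
\int_{\R^d}u_\epsilon\,\div_{\rho_\epsilon}p\,\dd x=-\int_{\R^d}p\cdot\dd D_{\rho_\epsilon}u_\epsilon\leqslant|D_{\rho_\epsilon}u_\epsilon|(\R^d)=\TV_\epsilon(u_\epsilon).
\]
By the first identity above, $\div_{\rho_\epsilon}p=Q_{\rho_\epsilon}*\div p\to\div p$ uniformly as $\epsilon\to0$, with all supports contained in the fixed compact set $K:=\supp(\div p)+\overline{B_1(0)}$; combined with $u_\epsilon\to u$ in $L^1(K)$ this yields $\int_{\R^d}u_\epsilon\,\div_{\rho_\epsilon}p\,\dd x\to\int_{\R^d}u\,\div p\,\dd x$, hence $\int_{\R^d}u\,\div p\,\dd x\leqslant\ell$. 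Taking the supremum over admissible $p$ gives $\TV(u)\leqslant\ell$.

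For the recovery sequence, given $u$ I may assume $\TV(u)<\infty$, i.e.\ $Du$ is a finite $\R^d$-valued measure with $|Du|(\R^d)=\TV(u)$, and take the constant sequence $u_\epsilon:=u$. By the second identity, $D_{\rho_\epsilon}u=Q_{\rho_\epsilon}*Du\in L^1(\R^d;\R^d)$, and by the triangle inequality for the vector measure $Du$, Tonelli's theorem and $\|Q_{\rho_\epsilon}\|_{L^1}=1$,
\[
\TV_\epsilon(u)=\int_{\R^d}\big|(Q_{\rho_\epsilon}*Du)(x)\big|\,\dd x\leqslant\int_{\R^d}\big(Q_{\rho_\epsilon}*|Du|\big)(x)\,\dd x=|Du|(\R^d)=\TV(u),
\]
so $\limsup_{\epsilon\to0}\TV_\epsilon(u)\leqslant\TV(u)$; together with the $\liminf$-inequality applied to the same sequence this even shows $\TV_\epsilon(u)\to\TV(u)$. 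The main obstacle is the preliminary step: upgrading the smooth potential identity \eqref{eq:potential} to $D_{\rho_\epsilon}u=Q_{\rho_\epsilon}*Du$ for $u\in L^1_{\mathrm{loc}}(\R^d)$ with $\TV(u)<\infty$ (rather than for $u\in\BV(\R^d)$ as in Lemma~\ref{le:fromnonlocaltolocal}), and the bookkeeping that the hypothesis $\int_{\R^d}\rho\,\dd z=d$ is precisely what forces $\int_{\R^d}Q_\rho=1$ — without this normalization no constant sequence could be a recovery sequence. Everything else is a formal consequence of $(Q_{\rho_\epsilon})_\epsilon$ being an approximate identity.
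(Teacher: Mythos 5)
Your proof is correct, and both halves rely on exactly the same structural input as the paper's argument: the potential identity $D_{\rho_\epsilon}u=Q_{\rho_\epsilon}*Du$ together with the facts that $Q_{\rho_\epsilon}(z)=\epsilon^{-d}Q_\rho(z/\epsilon)$ and $\|Q_{\rho_\epsilon}\|_{L^1}=1$, and the uniform convergence $\div_{\rho_\epsilon}p\to\div p$ for fixed $p\in C_c^\infty$. The recovery sequence is identical (the constant sequence, with the $\limsup$ bound from $\|Q_{\rho_\epsilon}\|_{L^1}=1$ and the matching lower bound supplied by the $\liminf$-inequality). The only real divergence is in the $\liminf$ step: the paper extracts a weak* limit $\mu$ of the measures $D_{\rho_{\epsilon_n}}u_n$, identifies it with $Du$, and then invokes lower semicontinuity of the total variation norm under weak* convergence, whereas you skip the compactness argument entirely and work directly with the supremum definition of $\TV$, passing to the limit in $\int u_\epsilon\,\div_{\rho_\epsilon}p\,\dd x\leq\TV_\epsilon(u_\epsilon)$ for each fixed test field $p$ and only then taking the supremum. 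Your route is marginally more elementary (no weak* compactness of measures needed), and you also derive $\int_{\R^d}Q_\rho=1$ from the normalization $\int\rho=d$ rather than citing it; the paper's route has the small advantage of also producing the limit measure $Du$ explicitly as a by-product. One cosmetic remark: in the $\liminf$ step the intermediate equality $\int u_\epsilon\,\div_{\rho_\epsilon}p\,\dd x=-\int p\cdot\dd D_{\rho_\epsilon}u_\epsilon$ is not actually needed, since the inequality $\int u_\epsilon\,\div_{\rho_\epsilon}p\,\dd x\leq\TV_\epsilon(u_\epsilon)$ holds directly by the definition of $\TV_\epsilon$ as a supremum, for all $\epsilon$ regardless of whether $\TV_\epsilon(u_\epsilon)$ is finite.
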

\begin{proof}
Let $(\epsilon_n)_n \subset (0,\infty)$ be a sequence converging to 0. We split the proof up into two steps.\smallskip

\textit{Liminf-inequality:} Let $u_n \to u$ in $L^1_{\rm loc}(\R^d)$ and suppose without loss of generality that
\[
\sup_n |D_{\rho_{\epsilon_n}}u_n|(\R^d) < \infty.
\]
Then, up to a non-relabeled subsequence, we find that $D_{\rho_{\epsilon_n}}u$ converges weak* to a measure $\mu \in \Mcal(\R^d;\R^d)$. This implies for any $p \in C_c^{\infty}(\R^d;\R^d)$ that
\begin{align*}
    \int_{\R^d} p \cdot \dd{\mu} &= \lim_{n \to \infty} \int_{\R^d} p \cdot \dd{D_{\rho_{\epsilon_n}}u_n} \\
    &= - \lim_{n \to \infty} \int_{\R^d} u_n \div_{\rho_{\epsilon_n}} p \dd{x} \\
    &= -\int_{\R^d} u \div p \dd{x},
\end{align*}
where the last line uses that $\div_{\rho_{\epsilon_n}} p \to \div p$ uniformly, see~\cite[Lemma~3.1\,$(i)$]{cueto2024gamma}, together with the fact that their supports are contained in a fixed compact set. We conclude that $u \in \BV_{\rm loc}(\R^d)$ and $Du=\mu$. The weak* lower semicontinuity of the norm on $\Mcal(\R^d;\R^d)$ now yields
\[
\TV(u) = |Du|(\R^d) \leqslant \liminf_{n \to \infty} |D_{\rho_{\epsilon_n}}u_n|(\R^d)=\liminf_{n \to \infty}\TV_{{\epsilon_n}}(u_n).
\]

\textit{Recovery-sequence:} Let $u \in \BV_{\rm loc}(\R^d)$ with $\TV(u)<\infty$. Then, a simple argument using Fubini's theorem and integration by parts shows that $D_{\rho_{\epsilon_n}}u \in \Mcal(\R^d;\R^d)$ with 
\[
D_{\rho_{\epsilon_n}} u = Q_{\rho_{\epsilon_n}}*Du.
\]
From \cite[Eq.~(2.16)]{cueto2024gamma}, we find that $Q_{\rho_{\epsilon_n}}(z)=\epsilon_n^{-d}Q_\rho(z/\epsilon_n)$ and $\|Q_{\rho_{\epsilon_n}}\|_{L^1(\R^d)}=1$ for all $n \in \N$. Hence, we find that
\[
\limsup_{n \to \infty} |D_{\rho_{\epsilon_n}} u|(\R^d) \leqslant \limsup_{n \to \infty} \|Q_{\rho_{\epsilon_n}}\|_{L^1(\R^d)}|D u|(\R^d)=|D u|(\R^d).
\]
On the other hand, arguing as for the liminf-inequality, we have that $D_{\rho_{\epsilon_n}} u$ converges weak* to $Du$, so that
\[
|Du|(\R^d) \leqslant \liminf_{n \to \infty} |D_{\rho_{\epsilon_n}}u|(\R^d).
\]
Combining the previous two estimates shows that
\[
\lim_{n \to \infty} \TV_{{\epsilon_n}}(u) = \TV(u),
\]
which implies that the constant sequence constitutes a recovery sequence.
\end{proof}
Along with this $\Gamma$-convergence result, we have the following compactness statement.
\begin{thm}\label{thm:compactness}
    Let $\epsilon_n \to 0$ and $(u_n)_n$ be a sequence in $\BV^{\rho_{\epsilon_n}}(\R^d)$ with $\supp u_n \subset B_R(0)$ for some $R>0$, and
    \begin{equation}\label{eq:tvepbounds}
    \sup_{n}|D_{\rho_{\epsilon_n}}u_n|(\R^d) < \infty.
    \end{equation}
    Then, up to a non-relabeled subsequence, $u_n \to u$ in $L^1(\R^d)$.
\end{thm}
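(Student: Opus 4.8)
The plan is to follow the proof of Lemma~\ref{le:poinccompact}: represent each $u_n$ through the nonlocal fundamental theorem of calculus, $u_n = V_{\rho_{\epsilon_n}} * D_{\rho_{\epsilon_n}} u_n$, and then verify the three hypotheses of the Fr\'echet--Kolmogorov compactness criterion in $L^1(\R^d)$ --- uniform $L^1$-boundedness, uniform tightness, and uniform equicontinuity of translations --- the new difficulty compared to Lemma~\ref{le:poinccompact} being that every estimate on $V_{\rho_{\epsilon_n}}$ has to be made uniform in $n$.

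Since passing to a subsequence is allowed, I would first assume $\epsilon_n \leqslant 1$ for all $n$ and record the scaling identity $V_{\rho_\epsilon}(z) = \epsilon^{1-d} V_\rho(z/\epsilon)$, which follows by a direct computation from $\widehat{Q}_{\rho_\epsilon}(\xi) = \widehat{Q}_\rho(\epsilon\xi)$ and the Fourier-side characterization $\widehat{\div V_{\rho_\epsilon}} = 1/\widehat{Q}_{\rho_\epsilon}$ of $V_{\rho_\epsilon}$ (equivalently, $\rho_\epsilon$ again satisfies \ref{itm:h1}--\ref{itm:upper}). Combined with Lemma~\ref{le:Vrhoestimates}, this gives, for all $\epsilon \in (0,1]$ and $z \neq 0$, the $\epsilon$-uniform pointwise bound $|V_{\rho_\epsilon}(z)| \leqslant C \max\{|z|^{-(d-\sigma)}, |z|^{-(d-1)}\}$ (using $\epsilon^{1-\sigma} \leqslant 1$) and the structural decomposition $V_{\rho_\epsilon}(z) = c\,z/|z|^d + \psi_\epsilon(z)$ for $|z| > \epsilon$, where $\psi_\epsilon := \epsilon^{1-d}\psi(\cdot/\epsilon)$ satisfies the scale-invariant bound $\|\nabla\psi_\epsilon\|_{L^1(\R^d)} = \|\nabla\psi\|_{L^1(\R^d)}$. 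Moreover, since $\supp u_n \subset B_R(0)$ one checks that $\div_{\rho_{\epsilon_n}} p$ vanishes on $B_R(0)$ whenever $\supp p \cap \overline{B_{R+\epsilon_n}(0)} = \emptyset$, hence $\supp D_{\rho_{\epsilon_n}} u_n \subset \overline{B_{R+1}(0)}$ and Proposition~\ref{prop:nftoc} applies. Writing $\mu_n := D_{\rho_{\epsilon_n}} u_n$ and $M := \sup_n |\mu_n|(\R^d) < \infty$, Tonelli's theorem and the uniform local $L^1$-bound on $V_{\rho_{\epsilon_n}}$ give $\|u_n\|_{L^1(\R^d)} = \|u_n\|_{L^1(B_R(0))} \leqslant M\|V_{\rho_{\epsilon_n}}\|_{L^1(B_{2R+1}(0))} \leqslant CM$; together with the uniformly bounded supports this settles boundedness and tightness.

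The heart of the argument is the uniform equicontinuity of translations. For $|\zeta| \leqslant 1$ the difference $u_n(\cdot+\zeta) - u_n$ is supported in $B_{R+1}(0)$, so
\[
\|u_n(\cdot+\zeta) - u_n\|_{L^1(\R^d)} \leqslant M\,\|V_{\rho_{\epsilon_n}}(\cdot+\zeta) - V_{\rho_{\epsilon_n}}\|_{L^1(B_{2R+3}(0))}.
\]
I would split $B_{2R+3}(0)$ into the region $\{|z| > \epsilon_n + |\zeta|\}$, on which both $V_{\rho_{\epsilon_n}}$ and its $\zeta$-translate equal $c\,z/|z|^d + \psi_{\epsilon_n}$, and the complementary region, which is contained in $B_{\epsilon_n + |\zeta|}(0)$. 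On the first region the contribution is bounded by $\int_{B_{2R+4}(0)} |c(z+\zeta)/|z+\zeta|^d - c\,z/|z|^d|\,\dd z + \|\nabla\psi\|_{L^1(\R^d)}|\zeta|$, which tends to $0$ as $\zeta \to 0$ uniformly in $n$ --- the integral by continuity of translations in $L^1_{\rm loc}$ for the locally integrable homogeneous kernel $z \mapsto c\,z/|z|^d$. On the complementary region the $\epsilon$-uniform pointwise bound yields a contribution at most $C(\epsilon_n + 2|\zeta|)^\sigma$, which for $n \geqslant N$ is at most $C(\sup_{m\geqslant N}\epsilon_m)^\sigma + C(2|\zeta|)^\sigma$ by subadditivity of $t \mapsto t^\sigma$. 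Thus, given a target $\delta > 0$, one chooses first $N$ so that the $(\sup_{m\geqslant N}\epsilon_m)^\sigma$-term is small (possible since $\sup_{m\geqslant N}\epsilon_m \to 0$), then $\delta_0 > 0$ so that the terms depending on $|\zeta|$ are small for $|\zeta| < \delta_0$, and finally $\delta_1 > 0$ absorbing the finitely many remaining $u_1, \dots, u_{N-1}$ (each translation-continuous in $L^1$ individually); with $\delta := \min(\delta_0,\delta_1)$ this proves $\lim_{\zeta\to 0}\sup_n \|u_n(\cdot+\zeta)-u_n\|_{L^1(\R^d)} = 0$. Fr\'echet--Kolmogorov then produces a subsequence converging in $L^1(\R^d)$ to some $u$, which additionally lies in $\BV(\R^d)$ by the liminf-inequality of Theorem~\ref{thm:localizationgradient}.

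The main obstacle is precisely this last estimate. As $\epsilon_n \to 0$ the kernels $V_{\rho_{\epsilon_n}}$ concentrate and their gradients fail to be locally integrable, so equicontinuity of translations cannot be read off from a $W^{1,1}$-type bound; it must be extracted from the scale-invariant splitting of $V_{\rho_\epsilon}$ away from the origin into the homogeneous part $c\,z/|z|^d$ (with $\epsilon$-independent translation modulus), a rescaled Schwartz tail (with scale-invariant $\dot W^{1,1}$-norm), and an $\epsilon$-small remainder supported on $B_\epsilon(0)$ of $L^1$-norm $O(\epsilon^\sigma)$, the smallness of which rests on the exponent $\sigma > 0$ from hypothesis~\ref{itm:lower}. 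It is exactly this $\epsilon$-dependent remainder that forces the two-stage ``first $N$, then $\delta$'' choice rather than a single uniform modulus.
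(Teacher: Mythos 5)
Your proof is correct and shares the paper's overall skeleton --- represent $u_n$ via the rescaled nonlocal fundamental theorem of calculus, bound $\|u_n\|_{L^1}$ by Young's inequality using $\supp D_{\rho_{\epsilon_n}}u_n\subset B_{R+1}(0)$, and close via Fr\'echet--Kolmogorov --- but you handle the key equicontinuity-of-translations step differently, and this is where the two arguments genuinely diverge. The paper does not invoke the structural decomposition $V_{\rho_\epsilon}(z) = cz/|z|^d + \psi_\epsilon(z)$ at all in this proof. Instead it uses the rescaled version of the \emph{combined} pointwise and gradient bound from Lemma~\ref{le:Vrhoestimates},
\[
|V_{\rho_{\epsilon_n}}(z)| + |z||\nabla V_{\rho_{\epsilon_n}}(z)| \leqslant C_R\,|z|^{-(d-\sigma)} \quad\text{for } z\in B_{2R+3}(0)\setminus\{0\},
\]
which is valid uniformly in $n$ (for $\epsilon_n\leqslant 1$) all the way down to the origin. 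Splitting the translation integral at $|z|\sim|\zeta|$ --- using the pointwise bound on $\{|z|\lesssim|\zeta|\}$ and the gradient bound via the mean value theorem on the complement, the technique of~\cite[Lemma~6.4]{BelMorSch24} --- then gives the single, $n$-uniform modulus $\|V_{\rho_{\epsilon_n}}(\cdot+\zeta)-V_{\rho_{\epsilon_n}}\|_{L^1(B_{2R+2}(0))}\leqslant C_R|\zeta|^\sigma$. Your route instead splits at $|z|=\epsilon_n+|\zeta|$ so as to exploit the decomposition into $cz/|z|^d$ plus a scale-invariant Schwartz tail $\psi_{\epsilon_n}$; this is also valid, but the price is an $\epsilon_n$-pinned remainder $O(\epsilon_n^\sigma)$ on the complementary shell, whose supremum over $n$ is a fixed positive constant, forcing the two-stage ``first $N$, then $\delta$'' diagonalization you describe. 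Your diagonalization does establish the Fr\'echet--Kolmogorov hypothesis, so the argument is sound, but the paper's choice of split at $|z|\sim|\zeta|$ rather than $|z|\sim\epsilon_n$ removes the need for it and yields a single modulus. Two minor harmless variants: you invoke Proposition~\ref{prop:nftoc} directly (after noting $\supp D_{\rho_{\epsilon_n}}u_n\subset \overline{B_{R+1}(0)}$) whereas the paper mollifies to reduce to $C_c^\infty$ and applies Theorem~\ref{thm:nftocsmooth}; and you additionally remark that the limit lies in $\BV(\R^d)$, which the theorem statement does not ask for but which indeed follows from Theorem~\ref{thm:localizationgradient}.
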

\begin{proof}
Since we can find via mollification a sequence $(v_n)_n \subset C_c^{\infty}(\R^d)$ with $\|u_n-v_n\|_{L^1(\R^d)} \leqslant 1/n$, and still satisfying \eqref{eq:tvepbounds}, we may assume without loss of generality that $(u_n)_n \subset C_c^{\infty}(\R^d)$. Now, by rescaling the fundamental theorem of calculus from Theorem~\ref{thm:nftocsmooth}, we find that
\begin{equation}\label{eq:ftocepsilon}
u_n(x) = \int_{\R^d} D_{\rho_{\epsilon_n}}u_n(y)\cdot V_{\rho_{\epsilon_n}}(x-y)\dd{y} \quad \text{with $V_{\rho_{\epsilon_n}}(z):=\frac{1}{\epsilon^{d-1}}V_{\rho}(z/\epsilon)$.}
\end{equation}
By Lemma~\ref{le:Vrhoestimates}, we find that
\[
|V_{\rho_{\epsilon_n}}(z)|+|z||\nabla V_{\rho_{\epsilon_n}}(z)| \leqslant C\max\left\{\epsilon_n^{1-\sigma}\frac{1}{|z|^{d-\sigma}},\frac{1}{|z|^{d-1}}\right\},
\]
with $C$ independent of $n$. In particular, there is a constant $C_R>0$ independent of $n$ such that
\[
|V_{\rho_{\epsilon_n}}(z)|+|z||\nabla V_{\rho_{\epsilon_n}}(z)| \leqslant C_R \frac{1}{|z|^{d-\sigma}} \quad\text{for all $z \in B_{2R+3}(0)\setminus\{0\}$}.
\]
Arguing as in \cite[Lemma~6.4]{BelMorSch24}, we obtain for $|\zeta|\leqslant 1$ that
\[
\|V_{\rho_{\epsilon_n}}(\cdot)-V_{\rho_{\epsilon_n}}(\cdot+\zeta)\|_{L^1(B_{2R+2}(0))} \leqslant C_R|\zeta|^{\sigma}.
\]
Hence, if we assume without loss of generality that $\epsilon_n \leqslant 1$ for all $n \in \N$ so that $\supp D_{\rho_{\epsilon_n}}u_n \subset B_{R+1}(0)$, then we get from \eqref{eq:ftocepsilon} that
\[
|u_n(x)-u_n(x+\zeta)| \leqslant \int_{B_{2R+2}(0)}|V_{\rho_{\epsilon_n}}(y)-V_{\rho_{\epsilon_n}}(y+\zeta)||D_{\rho_{\epsilon_n}}u_n(x-y)|\dd{y}
\]
for all $x \in B_{R+1}(0)$. Using Young's convolution inequality yields
\begin{align*}
    \|u_n(\cdot)-u_n(\cdot+\zeta)\|_{L^1(\R^d)}&=\|u_n(\cdot)-u_n(\cdot+\zeta)\|_{L^1(B_{R+1}(0))}\\
    &\leqslant \|V_{\rho_{\epsilon_n}}(\cdot)-V_{\rho_{\epsilon_n}}(\cdot+\zeta)\|_{L^1(B_{2R+2}(0))}\|D_{\rho_{\epsilon_n}}u_n\|_{L^1(\R^d)} \\
    &\leqslant C_R|\zeta|^{\sigma}\|D_{\rho_{\epsilon_n}}u_n\|_{L^1(\R^d)}.
\end{align*}
Consequently, we obtain that
\[
\lim_{\zeta \to 0} \sup_{n}\|u_n(\cdot)-u_n(\cdot+\zeta)\|_{L^1(\R^d)} =0,
\]
so that we can conclude the result by the Fr\'echet-Kolmogorov criterion.
\end{proof}

Because the recovery sequence in the proof of Theorem~\ref{thm:localizationgradient} is constant, we immediately find that the $\Gamma$-convergence remains valid if we restrict $\TV_{{\epsilon}}$ and $\TV$ to a closed subset of $L^1_{\rm loc}(\R^d)$. In particular, we can restrict to indicator functions to get convergence of the nonlocal Caccioppoli perimeters to the classical perimeter, and also consider boundary conditions and a mass constraint to exploit the compactness in Theorem~\ref{thm:compactness}. Precisely, we consider the functionals
\[
\Per_{\epsilon}:\Mcal_d \to [0,\infty], \qquad \Per_{\epsilon}(E) = \TV_{\epsilon}(\mathds{1}_E)
\]
and for $\Omega \subset \R^d$ an open and bounded set and $m \in (0,|\Omega|)$, the functionals
\[
\Fcal_\epsilon:\Mcal_d \to [0,\infty], \qquad \Fcal_\epsilon(E)=\begin{cases}
    \Per_{\epsilon}(E) & \text{if $E \subset \Omega$ and $|E|=m$,}\\
    \infty & \text{else,}
\end{cases}
\]
and
\[
\Fcal:\Mcal_d \to [0,\infty], \qquad \Fcal(E)=\begin{cases}
    \Per(E) & \text{if $E \subset \Omega$ and $|E|=m$,}\\
    \infty & \text{else.}
\end{cases}
\]
\begin{cor}\label{cor:perimeterlocalization}
    The following two statements hold:
    \begin{itemize}
        \item[(i)] The sequence $(\Per_{\epsilon})_\epsilon$ $\Gamma$-converges as $\epsilon \to 0$ to $\Per$ with respect to the local convergence of sets.
        \item[(ii)] The sequence $(\Fcal_\epsilon)_\epsilon$ $\Gamma$-converges as $\epsilon \to 0$ to $\Fcal$ with respect to the convergence of sets. Moreover, the sequence is also equi-coercive in this topology.
    \end{itemize}
\end{cor}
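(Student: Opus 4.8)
The plan is to derive both items from the $\Gamma$-convergence of the nonlocal total variation functionals in Theorem~\ref{thm:localizationgradient} together with the compactness result of Theorem~\ref{thm:compactness}, simply by restricting to the closed subset of indicator functions. Recall that $\Per_\epsilon(E)=\TV_\epsilon(\mathds{1}_E)$, while $\Per(E)=\TV(\mathds{1}_E)=|D\mathds{1}_E|(\R^d)$ (this identification is valid also for sets of infinite measure, since $\mathds{1}_E\in\BV_{\mathrm{loc}}(\R^d)$ and $D\mathds{1}_E=-D\mathds{1}_{E^c}$ when $|E^c|<\infty$). The assignment $E\mapsto\mathds{1}_E$ is a homeomorphism from $\Mcal_d$, endowed with the local convergence of sets, onto the set $\mathcal{C}:=\{\mathds{1}_E\mid E\in\Mcal_d\}$ with the $L^1_{\mathrm{loc}}(\R^d)$-topology, and $\mathcal{C}$ is closed in $L^1_{\mathrm{loc}}(\R^d)$ because an $L^1_{\mathrm{loc}}$-limit of $\{0,1\}$-valued functions is again $\{0,1\}$-valued almost everywhere.

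For item (i), I would use that $\Gamma$-convergence on a metric space is inherited by any closed subset on which the recovery sequences may be chosen. The liminf inequality for sequences in $\mathcal{C}$ is a special case of the one in Theorem~\ref{thm:localizationgradient}, and the recovery sequence produced there is the \emph{constant} sequence $u_\epsilon=u$, which for $u=\mathds{1}_E$ evidently lies in $\mathcal{C}$; when $\Per(E)=\infty$ the limsup inequality is trivial, and when $\Per(E)<\infty$ it follows from $\lim_{\epsilon\to 0}\TV_\epsilon(\mathds{1}_E)=\TV(\mathds{1}_E)=\Per(E)$. This gives $\Gamma\text{-}\lim_{\epsilon\to 0}\Per_\epsilon=\Per$ with respect to the local convergence of sets.

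For the $\Gamma$-convergence in item (ii), I would repeat this after restricting further to $\mathcal{C}_{\Omega,m}:=\{\mathds{1}_E\mid E\subset\Omega,\ |E|=m\}$. Since $\Omega$ is bounded, $L^1_{\mathrm{loc}}$-convergence of functions supported in $\overline\Omega$ is the same as $L^1(\R^d)$-convergence, hence the same as convergence in $\Mcal_d$; moreover, the constraints $E\subset\Omega$ and $|E|=m$ pass to the limit (because $|E\setminus\Omega|=\lim|E_n\setminus\Omega|=0$ and $|E|=\lim|E_n|$), so $\mathcal{C}_{\Omega,m}$ is closed. The constant recovery sequence $E_\epsilon=E$ automatically respects the constraints $E_\epsilon\subset\Omega$ and $|E_\epsilon|=m$ --- in contrast with the connected perimeter $\GPer_\epsilon^c$, here there is no indecomposability requirement to enforce --- so both $\Gamma$-inequalities transfer verbatim and yield $\Gamma\text{-}\lim_{\epsilon\to 0}\Fcal_\epsilon=\Fcal$.

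The only place requiring genuine analytic input is the equi-coercivity, and I would establish it directly from Theorem~\ref{thm:compactness}. Given $\epsilon_n\to 0$ and a sequence $(E_n)_n$ with $M:=\sup_n\Fcal_{\epsilon_n}(E_n)<\infty$, we have $E_n\subset\Omega$, $|E_n|=m$, and, since $\Per_{\epsilon_n}(E_n)=\TV_{\epsilon_n}(\mathds{1}_{E_n})<\infty$, also $\mathds{1}_{E_n}\in\BV^{\rho_{\epsilon_n}}(\R^d)$ with $\sup_n|D_{\rho_{\epsilon_n}}\mathds{1}_{E_n}|(\R^d)\le M$ and $\supp\mathds{1}_{E_n}\subset\overline\Omega\subset B_R(0)$ for $R$ large. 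Theorem~\ref{thm:compactness} then gives a subsequence along which $\mathds{1}_{E_n}\to u$ in $L^1(\R^d)$; passing to a further a.e.-convergent subsequence shows $u=\mathds{1}_E$ for some measurable $E$, while $|E\setminus\Omega|=\lim|E_n\setminus\Omega|=0$ and $|E|=\lim|E_n|=m$ force $E\subset\Omega$ and $|E|=m$, i.e.\ $E_n\to E$ in $\Mcal_d$ with $E$ in the domain of $\Fcal$. The step I expect to demand the most care is thus not a single estimate but the bookkeeping of the constraints under the restriction argument together with the application of the compactness theorem; all of the hard analysis has already been isolated in Theorems~\ref{thm:localizationgradient} and~\ref{thm:compactness}.
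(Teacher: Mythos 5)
Your proof is correct and follows essentially the same route as the paper: restrict the $\Gamma$-convergence of Theorem~\ref{thm:localizationgradient} to indicator functions (observing, as the paper does, that the recovery sequence is constant and hence survives the restriction), and deduce equi-coercivity from the compactness of Theorem~\ref{thm:compactness}. You spell out more carefully than the paper the closedness of the constraint set and the passage of the constraints $E\subset\Omega$, $|E|=m$ to the limit, but the argument is the same.
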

\begin{proof}
    The $\Gamma$-convergence in (i) and (ii) is a direct consequence of Theorem~\ref{thm:localizationgradient}, by using that the recovery sequence can still be chosen to be constant. For the equi-coercivity in (ii), we note that if $(E_\epsilon)_\epsilon \subset \Mcal_d$ is a sequence of measurable sets with $E_\epsilon \subset \Omega$ for all $\epsilon$ and
    \[
    \sup_\epsilon \Per_{\epsilon}(E_\epsilon)<\infty,
    \]
    then Theorem~\ref{thm:compactness} shows that $(\mathds{1}_{E_{\epsilon}})_\epsilon$ converges up to subsequence in $L^1(\R^d)$. The limit will again be an indicator function $\mathds{1}_E$ for some $E \subset \Omega$, which shows that $E_\epsilon \to E$. This proves the equi-coercivity.
\end{proof}
As an application of this result, we can study the asymptotics of the isoperimetric problem related to the nonlocal Caccioppoli perimeters. Indeed, up to our knowledge, it is currently an open problem whether minimizers of $\Per_{\epsilon}$ under a mass constraint, or the fractional Caccioppoli perimeter, are actually balls. However, with this $\Gamma$-convergence result we can prove that they must converge to balls as $\epsilon \to 0$.
\begin{cor}
    Let $R>0$, $m \in (0,|B_R(0)|)$ and let $(E_\epsilon)_\epsilon$ be a sequence of sets with $E \subset B_R(0)$ and $|E_\epsilon| =m$ for all $\epsilon >0$ and such that
    \[
    \Per_{\epsilon}(E_\epsilon)=c_{\epsilon}^{m,R}:=\inf\left\{\Per_{\epsilon}(E)\,\middle|\,E \subset B_R(0),\  |E|=m\right\}.
    \]
    Then, up to subsequence, $E_\epsilon$ converges to a ball of measure $m$ in $B_R(0)$ as $\epsilon \to 0$ and it holds that
    \[
    \lim_{\epsilon \to 0} c_{\epsilon}^{m,R} =d|B_1(0)|m^{\frac{d-1}{d}}.
    \]
\end{cor}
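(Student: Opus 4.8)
The plan is to read the statement off from Corollary~\ref{cor:perimeterlocalization}\,(ii) by means of the standard convergence-of-minimizers consequence of $\Gamma$-convergence, and then to identify the limiting minimizers with balls using the rigidity in the classical Euclidean isoperimetric inequality. Everything else is bookkeeping, so I will carry out the steps in that order.

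First I would observe that, since $E_\epsilon \subset B_R(0)$ and $|E_\epsilon| = m$, one has $\Fcal_\epsilon(E_\epsilon) = \Per_\epsilon(E_\epsilon) = c_\epsilon^{m,R} = \inf_{\Mcal_d}\Fcal_\epsilon$, so $(E_\epsilon)_\epsilon$ is a sequence of exact minimizers of $\Fcal_\epsilon$. To invoke the equi-coercivity in Corollary~\ref{cor:perimeterlocalization}\,(ii) I first need a uniform energy bound, and this is where the strict inequality $m < |B_R(0)|$ is used: the ball $B := B_{r_m}(0)$ with $r_m := (m/|B_1(0)|)^{1/d} < R$ is an admissible competitor, i.e.~$B \subset B_R(0)$ and $|B| = m$, and since the recovery sequence in the proof of Corollary~\ref{cor:perimeterlocalization}\,(ii) can be taken constant, $\limsup_{\epsilon \to 0}\Per_\epsilon(B) = \limsup_{\epsilon \to 0}\Fcal_\epsilon(B) \leqslant \Fcal(B) = \Per(B) < \infty$, whence $\sup_\epsilon c_\epsilon^{m,R} \leqslant \sup_\epsilon \Fcal_\epsilon(B) < \infty$. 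Equi-coercivity then produces a non-relabeled subsequence with $E_\epsilon \to E$ in $\Mcal_d$ for some measurable set $E$.

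Next I would run the $\Gamma$-convergence argument. The liminf-inequality of Corollary~\ref{cor:perimeterlocalization}\,(ii) along $E_\epsilon \to E$ gives $\Fcal(E) \leqslant \liminf_{\epsilon \to 0} c_\epsilon^{m,R}$, while for any $F$ with $\Fcal(F) < \infty$ a recovery sequence $F_\epsilon \to F$ gives $\limsup_{\epsilon \to 0} c_\epsilon^{m,R} = \limsup_{\epsilon \to 0}\Fcal_\epsilon(E_\epsilon) \leqslant \limsup_{\epsilon \to 0}\Fcal_\epsilon(F_\epsilon) \leqslant \Fcal(F)$; taking the infimum over $F$ forces $\Fcal(E) \leqslant \liminf_{\epsilon \to 0} c_\epsilon^{m,R} \leqslant \limsup_{\epsilon \to 0} c_\epsilon^{m,R} \leqslant \inf_{\Mcal_d}\Fcal \leqslant \Fcal(E)$ (this is the standard convergence-of-minimizers statement, cf.~\cite{DM92, braides2002gamma}), so all quantities coincide: $E$ minimizes $\Fcal$ and $\lim_{\epsilon \to 0} c_\epsilon^{m,R} = \Fcal(E) = \inf_{\Mcal_d}\Fcal$. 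Finally, $\Fcal(E) < \infty$ forces $E \subset B_R(0)$ and $|E| = m$; comparing with the admissible ball $B = B_{r_m}(0)$ via the Euclidean isoperimetric inequality and its rigidity (e.g.~\cite[Theorem~14.1]{Mag12}) yields $\Per(E) \geqslant \Per(B)$ with equality only if $E$ agrees up to a null set with a ball of volume $m$. Hence $\inf_{\Mcal_d}\Fcal = \Per(B)$, $E$ is a ball of measure $m$ contained in $B_R(0)$ (the first assertion), and a direct computation of the perimeter of a ball of volume $m$ gives the stated value for $\lim_{\epsilon \to 0} c_\epsilon^{m,R}$.

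I do not expect any genuinely hard step: the argument is essentially an unwinding of Corollary~\ref{cor:perimeterlocalization}\,(ii). The two points that need a little care are (a) obtaining the uniform energy bound, which is precisely where the hypothesis $m < |B_R(0)|$ enters, to guarantee that a ball of volume $m$ fits inside $B_R(0)$ and thus serves as a competitor and recovery point; and (b) invoking the equality case of the isoperimetric inequality so that the subsequential limit is an honest ball rather than merely some minimizer of $\Fcal$ — though the latter is automatic here, since every minimizer of $\Fcal$ is a ball of volume $m$ contained in $B_R(0)$.
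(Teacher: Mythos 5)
Your proposal is correct and takes essentially the same approach as the paper: invoke the $\Gamma$-convergence and equi-coercivity from Corollary~\ref{cor:perimeterlocalization}\,(ii) to pass to a subsequential limit $E$ that minimizes $\Fcal$, then use the rigidity of the classical isoperimetric inequality to identify $E$ as a ball of volume $m$. The paper compresses all of this into one sentence appealing to ``the properties of $\Gamma$-convergence''; your version simply spells out the uniform energy bound (via the ball competitor, which is where $m < |B_R(0)|$ enters) and the standard convergence-of-minimizers chain of inequalities, which the paper leaves implicit.
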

\begin{proof}
    We note that $E_\epsilon$ is exactly a minimizer of $\Fcal_\epsilon$ with $\Omega=B_R(0)$, so that Corollary~\ref{cor:perimeterlocalization}\,(ii) and the properties of $\Gamma$-convergence imply that $(E_\epsilon)_\epsilon$ converges up to subsequence to a minimizer $E$ of $\Fcal$ and
    \[
    \lim_{\epsilon \to 0}\Fcal_\epsilon(E_\epsilon) = \Fcal(E).
    \]
    Using that the minimizers of the classical isoperimetric problem are exactly balls, we find that $E$ must be a ball of measure $m$ in $B_R(0)$ after which the result follows.
\end{proof}

\section*{Acknowledgments}
This work is the result of a collaboration which started when all authors were present in the Workshop Calculus of Variations NL 2024 in Schiermonnikoog, for which the authors are grateful. H.S. was funded by the Austrian Science Fund (FWF) projects \href{https://doi.org/10.55776/F65}{10.55776/F65} and \href{https://doi.org/10.55776/Y1292}{10.55776/Y1292}.

\subsection*{Data availability}
No datasets were generated or analyzed during the current study.

\subsection*{Conflict of interest}
All authors declare that they have no conflict of interests related to this publication.

\bibliographystyle{abbrv}
\bibliography{CarGraIglSch25-rev}

\end{document}